\documentclass[3p,longnamesfirst,sort&compress]{elsarticle}

\usepackage{amsmath,amsthm,amssymb,mathrsfs}
\usepackage{thmtools,thm-restate}
\usepackage{enumerate}
\usepackage{slashed}
\usepackage{txfonts,dsfont}
\usepackage[breaklinks,colorlinks]{hyperref}
\usepackage{nameref}
\usepackage{cleveref}

\makeatletter
\newcommand{\autorefcheckize}[1]{%
  \expandafter\let\csname @@\string#1\endcsname#1%
  \expandafter\DeclareRobustCommand\csname relax\string#1\endcsname[1]{%
    \csname @@\string#1\endcsname{##1}\wrtusdrf{##1}}%
  \expandafter\let\expandafter#1\csname relax\string#1\endcsname
}
\makeatother

\declaretheorem[numberwithin=section]{theorem}
\declaretheorem[sibling=theorem, name=Lemma]{lem}

\declaretheorem[sibling=theorem, name=Remark]{rem}

\numberwithin{equation}{section}

\newcommand{\norm}[1]{\left\lVert#1\right\rVert}
\newcommand{\abs}[1]{\left\lvert#1\right\rvert}
\newcommand{\set}[1]{\left\{#1\right\}}
\newcommand{\hin}[2]{\left\langle#1,#2\right\rangle}

\newcommand*{\To}{\longrightarrow}
\newcommand*{\Rmn}[1]{\uppercase\expandafter{\romannumeral#1}}
\newcommand*{\dif}{\mathop{}\!\mathrm{d}}

\journal{}

\allowdisplaybreaks

\begin{document}

\begin{frontmatter}

\title{A flow approach to the Toda system\tnoteref{LSY}}
    
\author[cqut]{Yong Luo}
\ead{yongluo-math@cqut.edu.cn}
    \address[cqut]{Mathematical Science Research Center of Mathematics, Chongqing University of Technology, Chongqing, 400054, China}

\author[xtu]{Linlin Sun
}
    \ead{sunll@xtu.edu.cn}
    \address[xtu]{School of Mathematics and Computational Science \& Hunan Research Center of the Basic Discipline Fundamental Algorithmic Theory and Novel Computational Methods, Xiangtan University, Xiangtan, 411105, China}

\author[uni-freiburg]{Guofang Wang}
\ead{guofang.wang@math.uni-freiburg.de}
    \address[uni-freiburg]{Inst of Math., University of Freiburg, Ernst-Zermelo-Str.1, Freiburg, 79104, Germany}



\tnotetext[LSY]{Y. Luo acknowledges support from the National Natural Science Foundation of China (Grant Nos. 12271069, 12571055), L. Sun acknowledges support from the National Natural Science Foundation of China (Grant No. 12571055), partially by the 111 Project (No. D23017), as well as the Program for Science and Technology Innovative Research Team in Higher Educational Institutions of Hunan Province, China.}

\begin{abstract}

In this paper we introduce a flow  to study the Toda system, which we call {\it Toda flow.} 
More generally, we introduce a flow of the Liouville systems, formulated as a coupled parabolic system with nonlocal interactions. 
Finite-time singularities are characterized and both necessary and sufficient conditions for convergence are provided in this general setting,
even when the prescribed functions are allowed to change sign.

As an application, we prove a global existence for  the  Toda flow in the critical case without restricting the sign of the prescribed functions. 
We provide a detailed description of blow-up behavior at infinity and obtain a sharp lower bound for the functional in cases where global convergence fails. By constructing appropriate test functions, we further establish a sufficient condition for the global convergence of the flow. These results are not affected by the sign-changing nature of the prescribed functions, and  extend the theorem of Jost, Lin and Wang (Comm. Pure Appl. Math. 59, 526–558, 2006) to systems of multiple equations under this more general and physically relevant condition.
\end{abstract}
\begin{keyword}
Toda flow \sep global existence \sep global convergence \sep blowup analysis

 \MSC[2020] 35B33 \sep 58J35

\end{keyword}

\end{frontmatter}

\section{Introduction}


The $\mathrm{SU}(N+1)$ Toda system occupies a special place in geometric analysis as the most algebraically structured generalization of the Liouville equation. Let $\Sigma$ be a closed Riemannian surface, with Laplace operator $\Delta$ and volume form $\dif\mu_{\Sigma}$.  Let $\rho = \left(\rho_1,\dots,\rho_N\right) \in \mathbb{R}^N$,  
$h = \left(h_1,\dots,h_N\right) \in C^\infty(\Sigma, \mathbb{R}^N)$ and $Q = (Q_1,\dots,Q_N) \in C^\infty(\Sigma, \mathbb{R}^N)$, where 
\begin{align*}
    \rho_i=\int_{\Sigma}Q_i\dif\mu_{\Sigma},\quad i\in I\coloneqq\set{1,\dots,N}.
\end{align*}
 The $\mathrm{SU}(N+1)$ Toda system takes the form
\begin{align}\label{eq:toda_system}
-\Delta u_i=\sum_{j=1}^N a_{Nij}\left(\rho_j\dfrac{h_je^{u_j}}{\int_\Sigma h_je^{u_j}d\mu}-Q_j\right),\quad i\in I,
\end{align}
where $A_N=\left(a_{Nij}\right)_{N\times N}$ is the Cartan matrix of $\mathrm{SU}(N+1)$ given by
\begin{align*}
A_N=\begin{pmatrix}2&-1&0&\hdots&\hdots&\hdots&0\\
-1&2&-1&0&\hdots&\hdots&0\\
0&-1&2&-1&0&\ddots&0\\
\vdots&\ddots&\ddots&\ddots&\ddots&\ddots&\vdots\\
0&\ddots&0&-1&2&-1&0\\
0&\hdots&\hdots&0&-1&2&-1\\
0&\hdots&\hdots&\hdots&0&-1&2\\
\end{pmatrix}.
\end{align*}

\vspace{2ex}
If necessary, one can set $Q_j = \rho_j$ by appropriately choosing $h_j$. This is achieved by solving $\Delta v_j =\frac{\rho_j}{\abs{\Sigma}}-Q_j,\int_{\Sigma}v_j=0,$ (where $\abs{\Sigma}$ denotes the area of $\Sigma$), defining $\tilde v_j = \sum_i a_{N}^{ij}v_j$, and then setting $\tilde u_i = u_i - \tilde v_i$. The new functions $\tilde u_i$ thus satisfy a system where $Q_j = \rho_j$ and $\tilde h_j = h_j e^{\tilde v_j}$.
The solutions to the Toda system \eqref{eq:toda_system} are critical points of the following functional
\begin{align}\label{eq:Toda-Functional}
    J(u)\coloneqq\dfrac12\sum_{i,j=1}^Na_N^{ij}\int_{\Sigma}\hin{\nabla u_i}{\nabla u_j}\dif\mu_{\Sigma}+\sum_{i=1}^N\left(\int_{\Sigma}Q_iu_i\dif\mu_{\Sigma}-\rho_i\ln\int_{\Sigma}h_ie^{u_i}\dif\mu_{\Sigma}\right),    
\end{align} for $$ u=(u_1,\cdots, u_N)\in H^1\left(\Sigma,\mathbb{R}^N\right).$$
Here $A_N^{-1}=\left(a_N^{ij}\right)$ is the inverse matrix of $A_N$. 
When $N=1$, this reduces to the classical mean field equation, but for $N\geq2$ the off-diagonal entries introduce genuine coupling that fundamentally changes analysis.
\vspace{2ex}

In geometry, the system is closely related 
to the theory of holomorphic curves. In fact, it arises as a system satisfied by 
the Frenet frame associated to a holomorphic curve in $\mathbb{CP}^N$ (see e.g. \cite{BolWood97some, Cal53isometric, CheWol87harmonic}). This link provides a direct correspondence between solutions of the PDE and geometric objects, making the system a powerful analytical tool for studying problems in projective geometry, such as the existence and properties of curves with prescribed ramification. The classification of entire solutions for the $U(N+1)$ Toda system crucially uses this link 
\cite{JosWan02classification}.

\vspace{2ex}

Beyond geometry, the system arises in several distinct contexts, each providing motivation for its study. In theoretical physics, it appears as the self-duality equation for vortex configurations in non-abelian Chern-Simons gauge theories \cite{Tar08selfdual, Yan01solitons}. Here the functions $u_i$ relate to the magnitudes of the Higgs fields, and the parameters $\rho_i$ correspond to coupling constants. The physical interpretation helps to explain certain analytical features: the possible presence of singular sources represents vortex points where the wave function vanishes, while the coupling matrix reflects interactions between different types of particles.
\vspace{2ex}

Mathematically, what makes the Toda system particularly interesting is how the algebraic structure of the Cartan matrix recognizes  the analytical behavior. When studying sequences of solutions that may blow up, one finds that the local masses $\sigma_i$ at a concentration point must satisfy the following (local) Pohozaev identity (see \cite{JosWan01analytic,LinWeiZhang})
\begin{align*}
    \sum_{j=1}^N a_{Nij}\sigma_i\sigma_j = 2\sum_{i=1}^N \sigma_i.
\end{align*}
This forces the local masses $\sigma_i$ to satisfy integrality conditions, leading to quantization of energy concentration. 

\vspace{2ex}
This interplay between algebra and analysis distinguishes the Toda system from more general coupled equations. The specific form of the coupling, dictated by the Cartan matrix, imposes constraints that make the analysis tractable while still exhibiting rich phenomena. The system thus serves as a natural testing ground for developing methods that combine Lie theory with nonlinear PDE techniques, with insights that may apply to other geometrically motivated systems.

\subsection{Motivations} 

The Nirenberg problem of prescribing Gaussian curvature on the sphere is one of the central problems in geometric analysis. Its study over the past half century has shaped the development of nonlinear PDE theory in two dimensions (see e.g. \cite{Kou72gaussian,Mos73on,ChaYan88conformal,ChaYan87prescribing,Han90prescribing,And21nirenberg,Ji04on,ChenDin87scalar} ). The problem is simply stated: given a smooth function $K$ on the standard sphere $\left(\mathbb{S}^2, g_0\right)$, does there exist a function $u$ such that the conformal metric $g = e^{2u}g_0$ has Gaussian curvature $K$? This leads to the equation
\begin{align*}
   -\Delta_{g_0} u + 1 = K e^{2u}. 
\end{align*}
 Kazdan and Warner \cite{KazWar74curvature} observed that any solution must satisfy the condition
\[
\int_{\mathbb{S}^2} \langle \nabla K, \nabla x_i \rangle e^{2u}\dif\mu_{g_0} = 0, \quad i = 1,2,3,
\]
where $x_i$ are the coordinate functions of $\mathbb{S}^2 \subset \mathbb{R}^3$. This shows that the symmetry of the sphere imposes genuine constraints: not every smooth positive function can be a prescribed curvature.

\vspace{2ex}

The mean field equation represents a natural extension of the ideas developed for the Nirenberg problem to a broader setting.  
 The mean field equation also appears  in the abelian Chern–Simons–Higgs models (see e.g. \cite{CafYan95vortex,DinJosLiPenWan01self}).  On a closed Riemannian surface $\Sigma$, one considers the equation
\begin{align}\label{eq:mean_field}
    -\Delta u = \rho\left( \dfrac{h e^{u}}{\int_\Sigma h e^{u}\dif\mu_g} - \dfrac{1}{\abs{\Sigma}} \right),
\end{align}
where $\rho$ is a positive parameter and $h$ is a smooth positive function. This equation arises as the Euler-Lagrange equation for the variational functional
 \begin{align*}
     H^1\left(\Sigma\right)\ni u\mapsto J(u)\coloneqq\dfrac12\int_{\Sigma}\abs{\nabla u}^2\dif\mu_{\Sigma}+\rho\left(\bar u-\ln\int_{\Sigma}he^{u}\dif\mu_{\Sigma}\right),
 \end{align*}
 where $\bar{u} = \frac{1}{\abs{\Sigma}} \int_{\Sigma} u \dif\mu_{\Sigma}$ is the average of $u$ over $\Sigma$. Critical points of $J$ in the Sobolev space $H^{1}\left(\Sigma\right)$ correspond precisely to solutions of \eqref{eq:mean_field}.
 \vspace{2ex}
 
This generalization reveals new structure. The parameter $\rho$ introduces a continuum of problems, with different behavior appearing at certain critical values. The threshold $\rho = 8\pi$ is particularly important: for $\rho < 8\pi$, the existence theory follows from standard variational methods using the Moser-Trudinger inequality; at $\rho = 8\pi$ and its multiples, the analysis becomes more delicate, with solutions developing concentration phenomena similar to those observed in the Nirenberg problem.
\vspace{2ex}

The equation arises in several contexts. In statistical mechanics, it describes the equilibrium states of a system of point vortices in the mean field limit. Here $\rho$ plays the role of inverse temperature. In gauge theory, it governs the abelian Chern-Simons vortex equations, with $\rho$ corresponding to a coupling constant. These connections give the equation physical significance beyond its mathematical interest.

\vspace{2ex}

The existence of solutions of \eqref{eq:mean_field} has been widely studied in recent decades when the prescribing function $h$ is positive everywhere. Many partial existence results have been obtained for noncritical cases according to the Euler characteristic of $\Sigma$ (see for example \cite{BreMer91uniform,Li94blow-up,CheLin03topological,DinJosLiWan99existence,SunZhu24existence,LiSunYan23boundary,Mal08morse,Zhu24remark}). In particular, Chen and Lin \cite{CheLin02sharp} obtained an existence result for general critical cases (i.e., $\rho\in 8\pi\mathbb{N}^*$) which generalized Ding-Jost-Li-Wang's result \cite{DinJosLiWan97differential}. Moreover, Djadli \cite{Dja08existence} established the existence of solutions for all surfaces when $\rho\notin 8\pi\mathbb{N}^*$ by studying the topology of sublevels to achieve a min-max scheme which already introduced by Ding, Jost, Li and Wang \cite{DinJosLiWan99existence}, Djadli and Malchiodi in \cite{DjaMal08existence}. For sign-changing potential $h$, we refer the reader to \cite{CheLi08priori,MarLopRui18compactness,Han90prescribing} and references therein.

\vspace{2ex}

While the elliptic theory of the mean field equation has been extensively studied, a different perspective emerges when considering its time-dependent counterpart. If $h$ is a positive function, then the mean field equation \eqref{eq:mean_field} is in fact equivalent to the following PDE
 \begin{align}\label{eq:mean_field'}
     -\Delta u=\rho\dfrac{e^{u}}{\int_{\Sigma}e^u\dif\mu_{\Sigma}}-Q
 \end{align}
 where $\int_{\Sigma}Q\dif\mu_{\Sigma}=\rho$. 
The following evolution problem associated to \eqref{eq:mean_field'}
\begin{align}\label{eq:mean_field_flow}
    \dfrac{\partial e^{u}}{\partial t} = \Delta u + \rho\dfrac{e^{u}}{\int_\Sigma  e^{u}\dif\mu_{\Sigma}} - Q,
\end{align}
 was also well studied by Castéras \cite{Cas15mean} for noncritical cases, i.e., $\rho <8\pi$.
This flow possesses a structure that is very similar to the Calabi and Ricci-Hamilton flows. When $Q$ is a constant equal to the scalar curvature with respect to the metric $g$,  flow \eqref{eq:mean_field_flow} has been studied by Struwe \cite{Struwe02curvature}. A flow approach to Nirenberg's problem was studied by Struwe in \cite{Struwe05flow}. Castéras \cite{Cas15mean2} obtained the global convergence of the mean field flow \eqref{eq:mean_field_flow} when $\rho\notin 8\pi\mathbb{N}^*$. Later, Li and Zhu \cite{LiZhu19convergence} studied the critical case $\rho=8\pi$ and reprove the existence result of Ding, Jost, Li and Wang \cite{DinJosLiWan97differential} (see also \cite{FeiJM} for a critical sinh-Gordon flow). Zhu and the second named author \cite{SunZhu21global} extended to the case that the prescribed function is nonnegative.  For some other generalizations, we refer the reader to \cite{WanYan22mean,CheLiLiXu21gaussian,LiXu22flow}.

\vspace{2ex}

This parabolic equation describes how a configuration evolves toward equilibrium. Its study provides insights not readily available from elliptic methods. For instance, the flow can reveal how different solutions are connected through dynamical processes, which solutions are stable, and how singularities develop in finite time.  The flow also serves as the basis for numerical methods, offering practical algorithms for finding solutions.


\vspace{2ex}

To study the Toda system, \eqref{eq:toda_system}, Jost and Wang \cite[Theorem 1.3]{JosWan01analytic} obtained the following sharp Moser-Trudinger inequality  that for every $u\in H^1\left(\Sigma,\mathbb{R}^N\right)$
 \begin{align}\label{eq:TM}
     \dfrac12\sum_{i,j=1}^Na_N^{ij}\int_{\Sigma}\hin{\nabla u_i}{\nabla u_j}\dif\mu_{\Sigma}+4\pi\sum_{i=1}^N\left(\fint_{\Sigma}u_i\dif\mu_{\Sigma}-\ln\fint_{\Sigma}e^{u_i}\dif\mu_{\Sigma}\right)\geq-c,
 \end{align}
 where $c$ is a constant depending only on the geometric of $\Sigma$. In particular, when $N=1$, this inequality is just the classical  Trudinger-Moser inequality (cf. \cite[Theorem 1.7]{Fon93sharp})
\begin{align}\label{eq:classical-TM}
\dfrac14\int_{\Sigma}\abs{\nabla u}^2\dif\mu_{\Sigma}+4\pi\left(\fint_{\Sigma}u\dif\mu_{\Sigma}-\ln\fint_{\Sigma}e^{u}\dif\mu_{\Sigma}\right)\geq-c,\quad\forall u\in H^1\left(\Sigma\right).
\end{align}
 Consequently, if all $\rho_i<4\pi$, then the functional $J$ mentioned in \eqref{eq:Toda-Functional}  is coercive, and therefore it has a smooth minimizer in this case, which solves the Toda system \eqref{eq:toda_system}.
\vspace{2ex}

If $\max_{i}\rho_i\geq 4\pi$, the situation becomes much more complicated and much efforts has been made to solve the Toda system even in this case for $N=2$.  When $\max\set{\rho_1,\rho_2}>4\pi$, the functional $J$ is unbounded from below, and solutions to the Toda system have been found in cases of: $\Sigma$ with positive genus and $\rho_1\in (4\pi,8\pi), \rho_2<4\pi$ (see \cite{JosLinWan06analytic}); $\rho_1\in (4m\pi,4(m+1)\pi), \rho_2<4\pi$ (see \cite{MalNdi07some}); $\Sigma$ with positive genus and $\rho_i\neq 4m\pi$ for both $i=1, 2$ (see \cite{BatJevMalRui15general});   $\rho_1\in (4m\pi, 4(m+1)\pi), \rho_2\in (4\pi, 8\pi)$ (see \cite{JevKalMal15topological,MalRui13variational}), where $m\in \mathbb{N}$. Most of these existence results for the Toda system were obtained by variational methods, based on compactness theorem through blowup analysis (see the excellent survey papers \cite{Mal17variational, Mal17minmax} for more information).
\vspace{2ex}

When $\max\set{\rho_1,\rho_2}=4\pi$, the functional $J$ is bounded from below but non-coercive, and the Toda system is solvable under some proper assumptions on the prescribed functions $h_i$ and the Gaussian  curvature $\kappa$ of the Riemannian surface $\Sigma$ (see \cite{JosLinWan06analytic, LiLi05solutions}). It should be mentioned that Jost, Lin and Wang \cite[Theorem 1.1]{JosLinWan06analytic} proposed the following sufficient conditions: if $\abs{\Sigma}=1$,  $h_1, h_2$ are positive smooth functions, $\rho_1=4\pi, \rho_2\in(0,4\pi)$ and
\begin{align*}
    \Delta\ln h_1+8\pi-\rho_2-2\kappa>0,\quad\text{in}\ \Sigma,
\end{align*}
then the following Toda system
\begin{align*}
    \begin{cases}
    -\Delta u_1=8\pi\left(h_1e^{u_1}-1\right)-\rho_2\left(h_2e^{u_2}-1\right),\\
    -\Delta u_2=2\rho_2\left(h_2e^{u_2}-1\right)-4\pi\left(h_1e^{u_1}-1\right)
    \end{cases}
\end{align*}
has a minimizing solution. Very recently, the second named author and Zhu extended Jost-Lin-Wang's existence result to sign changed prescibed functions (see \cite{SunZhu25existence1, SunZhu25existence2}).


\subsection{Main results}
In this paper, we focus on the $\mathrm{SU}(N+1)$ Toda system \eqref{eq:toda_system} on a closed Riemannian surface. 
The main aim of this paper is  to introduce the following   flow to study the Toda system 

\begin{align}\label{eq:toda}
\dfrac{\partial e^{u_i}}{\partial t}=\sum_{j=1}^Na_N^{ij}\Delta u_j+\dfrac{\rho_ih_ie^{u_i}}{\int_{\Sigma}h_ie^{u_i}\dif\mu_{\Sigma}}-Q_i, \quad i\in I,
\end{align}
which is a Yamabe type flow.
Actually we will  define a flow  for a more general Liouville system as follows. 
\vspace{2ex}

Let $A = (a_{ij})$ be a real symmetric, positive definite matrix of order $N$, $\rho = \left(\rho_1,\dots,\rho_N\right) \in \mathbb{R}^N$,  
$h = \left(h_1,\dots,h_N\right) \in C^\infty\left(\Sigma, \mathbb{R}^N\right)$, and $Q = \left(Q_1,\dots,Q_N\right) \in C^\infty\left(\Sigma, \mathbb{R}^N\right)$, where 
\begin{align*}
    \rho_i=\int_{\Sigma}Q_i\dif\mu_{\Sigma},\quad \max_{\Sigma}h_i>0,\quad i\in I\coloneqq\set{1,\dots,N}.
\end{align*}
Given a initial data $u_0 = \left(u_{1,0},\dots,u_{N,0}\right)$ with 
\begin{align}\label{eq:initial-regularity}
    \int_{\Sigma}h_ie^{u_{i,0}}\dif\mu_{\Sigma}>0,\quad i\in I,
\end{align}
we want to evolve $u_0$ towards a map  $u_{\infty}$ of solution to the Liouville system
\begin{align}\label{eq:liouville-system}
    -\sum_{j=1}^Na^{ij}\Delta u_j=\dfrac{\rho_ih_ie^{u_i}}{\int_{\Sigma}h_ie^{u_i}\dif\mu_{\Sigma}}-Q_i, \quad i\in I,
\end{align}
through a family of maps $u(t)=u(t,\cdot), t\geq0$, by solving the following   \emph{flow of Liouville system}:
\begin{align}\label{eq:liouville}
\dfrac{\partial e^{u_i}}{\partial t}=\sum_{j=1}^Na^{ij}\Delta u_j+\dfrac{\rho_ih_ie^{u_i}}{\int_{\Sigma}h_ie^{u_i}\dif\mu_{\Sigma}}-Q_i, \quad i\in I.
\end{align}
\vspace{2ex}

When $A$ is the Cartan Matrix $A_N$ for $\mathrm{SU}(N+1)$, it is the Toda flow defined by \eqref{eq:toda}.
Notice that in this paper, we  allow the prescribed functions change sign and assume that the initial data satisfies the regularity condition. Therefore, for  flow \eqref{eq:liouville} to be well-defined, the following regularity conditions are required for all the existing time (and this will be one of the subtle parts in  proof of global existence of the flow of  Liouville system \eqref{eq:liouville}):
\begin{align*}
    \int_{\Sigma}h_ie^{u_i(t)}\dif\mu_{\Sigma}>0,\quad i\in I.
\end{align*}
\vspace{2ex}

Since  flow \eqref{eq:liouville} is parabolic, the short time existence is clear. 
We establish that when the maximum existence time $T$ is finite, the following finite time blowup condition is satisfied:
\begin{align}
\liminf_{t\nearrow T}\min_{1\leq i\leq N}\int_{\Sigma}h_ie^{u_i(t)}\dif\mu_{\Sigma}=0. \label{problem}
\end{align}
In particular, if the  the prescribed functions $h_i$ are positive everywhere, then the mass preservation (cf. \autoref{lem:conservation}) implies that \eqref{problem} would not happen for $T<\infty$ and hence the longtime existence  holds in this case.
When the prescribed functions $h_i$ can change sign, the global existence  of this flow becomes subtle, since we do not know if \eqref{problem} occurs for $T<\infty$.   However, thanks to the Trudinger-Moser inequality \eqref{eq:TM} of Jost and Wang \cite{JosWan01analytic}, we can also obtain the global existence for the Toda flow if all $\rho_i\in(0,4\pi]$, which is the case we are interested in. 
\vspace{2ex}

Precisely, we state our first main theorem about the  global existence for the flow of Liouville system as follows.
 \begin{theorem}\label{main:thm1}
    Assume that $A\leq A_N$ and  $\rho_i\in(0,4\pi]$  for each $i$. Then for every smooth initial data $u_0=\left(u_{1,0},\dots,u_{N,0}\right)$ satisfying the regularity condition \eqref{eq:initial-regularity},  the flow of Liouville system \eqref{eq:liouville} admits a unique smooth solution defined for all $[0,\infty)$. 
\end{theorem}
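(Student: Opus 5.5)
The plan is to combine the finite-time blowup characterization with an energy argument. By short-time existence (the flow is parabolic as long as $\int_\Sigma h_ie^{u_i}\dif\mu_{\Sigma}>0$) and the characterization stated before the theorem, if the maximal time $T$ is finite then \eqref{problem} must hold. So it suffices to prove that on every finite interval $[0,T)$ each $\int_\Sigma h_ie^{u_i(t)}\dif\mu_{\Sigma}$ stays bounded below by a positive constant. Uniqueness follows from standard parabolic theory once smoothness is known.

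First I would establish the two monotonicity/conservation facts along \eqref{eq:liouville}. Integrating the $i$-th equation over $\Sigma$ and using $\rho_i=\int_\Sigma Q_i$ gives $\frac{d}{dt}\int_\Sigma e^{u_i}\dif\mu_{\Sigma}=0$ (\autoref{lem:conservation}). Next, the functional
\[
J_A(u)=\frac12\sum_{i,j}a^{ij}\int_\Sigma\hin{\nabla u_i}{\nabla u_j}\dif\mu_{\Sigma}+\sum_i\Bigl(\int_\Sigma Q_iu_i\dif\mu_{\Sigma}-\rho_i\ln\int_\Sigma h_ie^{u_i}\dif\mu_{\Sigma}\Bigr)
\]
is nonincreasing along the flow, with
\[
\frac{d}{dt}J_A(u(t))=-\sum_i\int_\Sigma e^{u_i}\abs{\partial_tu_i}^2\dif\mu_{\Sigma}\le0.
\]
This holds because $\partial_t e^{u_i}=e^{u_i}\partial_tu_i$ equals minus the $L^2$-gradient of $J_A$ in the $i$-th slot. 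Hence $J_A(u(t))\le J_A(u_0)$ for all $t<T$.

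The key step is to turn this bound into a lower bound for $\int h_ie^{u_i}$. Write $\rho_i\ln\int h_ie^{u_i}\ge \frac12\sum a^{ij}\int\hin{\nabla u_i}{\nabla u_j}+\sum\int Q_iu_i-J_A(u_0)-\sum_{k\ne i}\rho_k\ln\int h_ke^{u_k}$. Since $\int h_ke^{u_k}\le \max\abs{h_k}\int e^{u_k}=\max\abs{h_k}\int e^{u_{k,0}}$ by conservation, the terms with $k\neq i$ are bounded above, so it remains to bound the remaining quantity from below. The hypothesis $A\le A_N$ gives $A^{-1}\ge A_N^{-1}$, so the Dirichlet part dominates $\frac12\sum a_N^{ij}\int\hin{\nabla u_i}{\nabla u_j}$. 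For the $Q$ terms, I would first reduce to $Q_i=\rho_i/\abs{\Sigma}$ via the change of variables described in the introduction, which only modifies $h_i$ by positive factors and $u_i$ by fixed smooth functions. Then $\int Q_iu_i=\rho_i\fint u_i$. By Jensen, $\fint u_i\le\ln\fint e^{u_i}$, which is bounded above by conservation, so $(\rho_i-4\pi)\fint u_i\ge(\rho_i-4\pi)C$ since $\rho_i\le4\pi$. Thus $\rho_i\fint u_i\ge 4\pi\fint u_i-C$. Applying the Jost--Wang inequality \eqref{eq:TM} together with conservation of $\int e^{u_i}$, we get $\frac12\sum a_N^{ij}\int\hin{\nabla u_i}{\nabla u_j}+4\pi\sum\fint u_i\ge -C$. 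Therefore $\rho_i\ln\int h_ie^{u_i}\ge -C(T$-independent$)$, and since $\rho_i>0$, $\int h_ie^{u_i}\ge e^{-C/\rho_i}>0$. This contradicts \eqref{problem}, so $T=\infty$.

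I expect the main obstacle to be the bookkeeping in this last step, specifically the reduction $Q_i\mapsto\rho_i/\abs{\Sigma}$. One must make sure the shift by $\tilde v_i$ is compatible with the flow. The flow involves $e^{u_i}$ on the left side, so the substitution changes the time-derivative weight to $e^{\tilde v_i}e^{\tilde u_i}$. For this reason I would avoid transforming the flow and only transform the functional inequality. Concretely, I would apply \eqref{eq:TM} to $u_i-\tilde v_i$ at each fixed time and absorb the cross terms $\int\hin{\nabla u_i}{\nabla \tilde v_j}$ using Young's inequality with the fixed smooth $\tilde v$. That absorption requires a small loss in the Dirichlet term, which is unavailable when $\rho_i=4\pi$. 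So I would instead handle it exactly: the cross terms combine with $\int Q_iu_i$ to give exactly $\rho_i\fint u_i$ plus the constant $J$-value of $\tilde v$. This is precisely the computation behind the reduction in the introduction.
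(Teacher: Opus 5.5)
Your proposal is correct and follows essentially the same route as the paper's proof of \autoref{thm:global}. Both arguments combine monotonicity of $J$ with mass conservation and the exact decomposition of $J$ after shifting by a fixed potential $\phi$ solving $-\sum_j a^{ij}\Delta\phi_j=\rho_i/\abs{\Sigma}-Q_i$ (the paper's identity \eqref{eq:TM'}, which is the ``exact'' handling you settle on). They then use $A^{-1}\geq A_N^{-1}$, Jensen for the $(\rho_i-4\pi)$ terms, and the Jost--Wang inequality \eqref{eq:TM} to get $\sum_i\rho_i\ln\int_{\Sigma}h_ie^{u_i}\dif\mu_{\Sigma}\geq -C$, which contradicts the blowup criterion of \autoref{thm:local}; the only cosmetic difference is that you apply $A^{-1}\geq A_N^{-1}$ before shifting, whereas the paper shifts first, and either order works.
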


\begin{rem}
It remains to be an interesting question if the global existence is always true. In other words, one may ask if \eqref{problem} really occurs.
\end{rem}

By a standard argument using the Łojasiewicz-Simon gradient inequality, we conclude that this flow converges globally to  a smooth solution of the Liouville system \eqref{eq:liouville-system}  if the trajectory $\set{u(t):t\geq0}$ is bounded in $H^1\left(\Sigma,\mathbb{R}^N\right)$ (see \autoref{thm:convergence-general}). As an immediate consequence, by using the Trudinger-Moser inequality \eqref{eq:TM} again,  we conclude that this flow converges at infinite time if $A\leq A_N$ and $\rho_i\in(0,4\pi)$ for every $i\in I$ (see \autoref{thm:convergence-special}). 
\vspace{2ex}

Next we focus on the Toda flow in \eqref{eq:toda}. 
The main result of this paper is the global convergence of the Toda flow \eqref{eq:toda} which is stated as follows.

\begin{theorem}\label{main:thm2} 
For a given integer $1\le k \le N$. Assume $\rho _k =4\pi$ and $0<\rho_i<4\pi$ for any $i\neq k$. Assume the following conditions holds on the set $\set{p\in\Sigma:h_k(p)>0}$: 
 \begin{align}\label{eq:JLW}
     \Delta\ln h_k+\sum_{j=1}^Na_{Nkj}Q_j-2\kappa>0,
 \end{align}
where $\kappa$ is the Gaussian curvature of $\Sigma$. Then there exists a smooth initial date such that the Toda flow \eqref{eq:toda} exists for all the time and converges smoothly at infinite time to a smooth solution of the Toda system \eqref{eq:toda_system}.  
\end{theorem}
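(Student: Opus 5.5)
Global existence is already provided by \autoref{main:thm1}: $A=A_N$ and all $\rho_i\in(0,4\pi]$. By the convergence criterion of \autoref{thm:convergence-general} (Łojasiewicz--Simon), it then suffices to choose initial data for which the trajectory $\set{u(t):t\geq0}$ stays bounded in $H^1(\Sigma,\mathbb{R}^N)$. The flow is a gradient flow of the Toda functional $J$, so $J(u(t))$ is nonincreasing. The plan is to show:
\begin{enumerate}[(i)]
\item if the trajectory is unbounded in $H^1$, then $\liminf_{t\to\infty}J(u(t))$ is at least an explicit threshold $C^*$;
\item under \eqref{eq:JLW}, there is a smooth $u_0$ satisfying \eqref{eq:initial-regularity} with $J(u_0)<C^*$.
\end{enumerate}
Monotonicity of $J$ then rules out unboundedness.

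For (i) I would run a blow-up analysis as $t\to\infty$. First normalize by subtracting the averages $\bar u_i(t)$. Using \eqref{eq:TM} together with $\rho_i<4\pi$ for $i\neq k$, the components $u_i$ with $i\neq k$ stay controlled, so loss of $H^1$-boundedness can only come from concentration of $h_ke^{u_k}/\int_\Sigma h_ke^{u_k}$. Since $\rho_k=4\pi$ is exactly the first quantization level, this measure concentrates to a Dirac mass at a single point $p$. Because the integral is positive and the mass is positive, one must have $h_k(p)\geq0$, and the refined analysis should give $h_k(p)>0$.

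The lower bound is then obtained in the style of Ding--Jost--Li--Wang and Jost--Lin--Wang. Away from $p$, $u_k-\bar u_k$ converges to a Green-type function $G_p$ and the other components converge. Near $p$, comparing with the Liouville bubble gives
\begin{align*}
\liminf_{t\to\infty}J(u(t))\geq C^*,
\end{align*}
where $C^*$ is the infimum over $p\in\set{h_k>0}$ of
\[
-4\pi\bigl(1+\ln\pi+\ln h_k(p)+ 2\pi A(p)\bigr)
\]
plus the reduced energy of the remaining components $\set{u_i}_{i\neq k}$. Here $A(p)$ is the regular part of the Green function, and the remaining components solve the reduced system with a Dirac source of coupling $a_{Nik}$. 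Making this step rigorous along a flow, rather than along a sequence of solutions, is the main obstacle. I would first extract a time sequence $t_n\to\infty$ with $\norm{\partial_te^{u}(t_n)}_{L^2}\to0$, which exists by the energy identity. This makes $u(t_n)$ a Palais--Smale-type sequence, so the elliptic blow-up and the Pohozaev-type mass quantization ($\sigma_k=4\pi$, $\sigma_i=0$ for $i\ne k$) apply up to error terms. Sign changes of $h_k$ are handled by showing that concentration cannot occur where $h_k\leq0$.

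For (ii) I would use Jost--Lin--Wang-type test functions. Pick $p$ attaining the infimum in $C^*$, so $h_k(p)>0$. Set
\[
\phi_\varepsilon=\ln\frac{\varepsilon^2}{(\varepsilon^2+r^2)^2}
\]
near $p$, glued to the appropriate multiple of $G_p$ outside a small ball. For $i\neq k$, take $u_i$ to be the minimizer of the reduced problem plus a matching correction $-\tfrac12\phi_\varepsilon$-type term, reflecting $a_N^{ik}$ in the coupling. An expansion as $\varepsilon\to0$ should give
\begin{align*}
J(u_\varepsilon)=C^*-c\,\varepsilon^2\abs{\ln\varepsilon}\Bigl(\Delta\ln h_k+\sum_ja_{Nkj}Q_j-2\kappa\Bigr)(p)+o\bigl(\varepsilon^2\abs{\ln\varepsilon}\bigr).
\end{align*}
By \eqref{eq:JLW} the bracket is positive, so $J(u_\varepsilon)<C^*$ for small $\varepsilon$. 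Since $h_k(p)>0$ and the other $h_i$ have positive maxima, a small adjustment ensures \eqref{eq:initial-regularity}. This expansion is computation-heavy but standard; the delicate part remains step (i).
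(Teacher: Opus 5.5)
The overall structure matches the paper's. You use global existence from \autoref{main:thm1}, the criterion \autoref{thm:convergence-general}, a blow-up lower bound for $J$ (\autoref{thm:lower}), Jost--Lin--Wang test functions (\autoref{thm:test}), and monotonicity of $J$. The gap is in step (i), at the point you yourself flag. A sequence $t_n$ taken from the energy identity only satisfies $\sum_i\int_\Sigma e^{u_i}\abs{\partial_tu_i}^2(t_n)\to0$. Nothing in your argument makes $u(t_n)$ blow up. Your hypothesis is only $\limsup_{t\to\infty}\norm{u(t)}_{H^1}=\infty$. If $u(t_n)$ stays bounded in $H^1$, you get neither a lower bound nor a contradiction, because \autoref{thm:convergence-general} needs the \emph{whole} trajectory to be bounded.

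The paper closes this with \autoref{thm:decay}. The evolving Trudinger--Moser inequality \eqref{eq:evolve-TM}, available because $A\le A_N$ and $\rho_i\le4\pi$, bounds $\int_\Sigma\abs{\partial_tu_i}^pe^{u_i}$ by $\norm{\nabla\partial_tu}_{L^2}$ plus the energy density. This gives a differential inequality for $\sum_i\int_\Sigma\abs{\partial_tu_i}^2e^{u_i}$. Together with time-integrability from \eqref{eq:monotonicity}, it forces this quantity, $\norm{\nabla\partial_tu}_{L^2}$, and every weighted $L^p$ norm to tend to $0$ as $t\to\infty$, not just along a sequence. So any sequence with $\bar u_k(t_n)\to-\infty$ is automatically almost stationary.

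A cheaper fix of your own route also works. Take $s_n$ with $\norm{u(s_n)}_{H^1}\to\infty$. Let $t_n\in[s_n-1,s_n]$ minimize the energy density on that window; that minimum tends to $0$ by \eqref{eq:monotonicity}. If $\norm{u(t_n)}_{H^1}$ stayed bounded along a subsequence, the a priori bound \eqref{eq:H^1} restarted at $t_n$ (with $c_{\mathrm{reg}}$ uniform by \eqref{eq:uniformly_regularity}) would bound $\norm{u(s_n)}_{H^1}$, a contradiction. One of these two arguments has to be supplied.

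Also, the energy identity does not give $\norm{\partial_te^{u}(t_n)}_{L^2}\to0$. In the blow-up regime $u_k$ is unbounded above, so you only get $\norm{\partial_te^{u_k}}_{L^1}\le\norm{e^{u_k/2}}_{L^2}\norm{e^{u_k/2}\partial_tu_k}_{L^2}\to0$. That is enough for the Brezis--Merle step, and, since $\tilde u_k\le0$ after rescaling, for the bubble limit. You should state it that way.

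Second, if ``the components $u_i$, $i\ne k$, stay controlled'' means bounded in $H^1$, it is false. The neighbours $u_{k\pm1}$ lose their $H^1$ bound: their limits $G_{k\pm1}=2H_{k\pm1}-H_{k\pm2}-H_k$ behave like $2\ln\mathrm{dist}(\cdot,p_0)$. What \eqref{eq:TM} gives directly is only $\bar u_i\ge-C$ for $i\ne k$. The usable statement is $\norm{\nabla U_i}_{L^2}\le C$ for $i\ne k$, with $U=A_N^{-1}u$ (\autoref{thm:partial-H^1}). The paper gets it by a Schur-complement splitting of the quadratic form into a Toda energy on a block containing $k$, handled by \eqref{eq:TM}, plus a coercive term in the remaining $\nabla U_i$.

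This bound is what yields $u_{k\pm1}\le C$ (\autoref{lem:upper}) and $H^2$-compactness of $U_i(t_n)-\bar U_i(t_n)$ for $i\ne k$. From these follows $J_k(u(t_n))\to J_k(G)$. Without it you have neither compactness of the non-concentrating part of $J$ nor control of $\ln\int_\Sigma h_{k\pm1}e^{u_{k\pm1}}$.

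The same structure fixes the test function. Put the bubble in $U_k$ and set $u^{(\epsilon)}=A_NU^{(\epsilon)}$, as in \autoref{thm:test}. Then only $u_{k\pm1}$ receive the $-\tfrac12\phi_\varepsilon$-type correction, with coefficient $a_{Nik}/a_{Nkk}$, not $a_N^{ik}$. Defining the threshold through the infimum of the reduced energy and using its minimizer is sound. It handles the possible non-uniqueness of $G(\cdot,p_0)$ more cleanly than the paper's $F(p_0)$.
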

 
\begin{rem}
When all prescribed functions $h_i$ are positive and  $\rho_i<4\pi$, the global existence and converges of the Toda flow was proved in the preminilary version of this paper (see \cite{LuoWang}). Recently, another type of flow of the Liouville system, a heat flow,  was studied by Park and Zhang in \cite{ParkZhang} and they proved its global existence and convergence under proper small parameters assumption. 
\end{rem}
\begin{rem}
The condition \eqref{eq:JLW} was first found by Ding-Jost-Li-Wang when $N=1$, in their paper \cite{DinJosLiWan97differential}  on the existence of solutions to the mean field equation in the critical case.
\end{rem}

\subsection{Strategy of proof of \autoref{main:thm2}.}
For the flow of Liouville system  \eqref{eq:liouville}, the monotonicity formula (see \autoref{lem:monotonicity}) gives
\begin{align*}
    \frac{dJ(u(t))}{dt}=-\sum_{i=1}^N\int_{\Sigma}e^{u_i}\abs{\dfrac{\partial u_i}{\partial t}}^2\dif\mu_{\Sigma}.
\end{align*}
Hence there is a sequence $t_n\to\infty$
\begin{align*}
    \lim_{n\to\infty}\int_{\Sigma}e^{u_i(t_n)}\abs{\dfrac{\partial u_i(t_n)}{\partial t}}^2\dif\mu_{\Sigma}=0,\quad\forall i\in I.
\end{align*}
Moreover, for the Toda flow \eqref{eq:toda}, after more refined analysis, we obtain the following decay estimates (see \autoref{thm:decay})
\begin{align*}
    \lim_{t\to\infty}\int_{\Sigma}e^{u_i}\abs{\dfrac{\partial u_i}{\partial t}}^2\dif\mu_{\Sigma}=0,\quad\forall i\in I.
\end{align*}
\vspace{2ex}

If this flow does not converge at infinity, then we conclude that $\set{u(t)}$ can not be bounded in $H^1\left(\Sigma\right)$ (see \autoref{thm:convergence-general}). By Jensen's inequality, we know that
 \begin{align*}
     \bar u_i(t)\leq C,\quad\forall i\in I.
 \end{align*}
 Moreover, according to the Trudinger-Moser inequality \eqref{eq:TM}, we obtain
 \begin{align*}
     \bar u_i(t)\geq-C,\quad i\neq k.
 \end{align*}
 Therefore, we conclude that $\bar u_k(t)$ subconverges to $-\infty$ (see \autoref{lem:blowup}). Denote by
 \begin{align*}
     U_i=\sum_{j=1}^Na_N^{ij}u_j,\quad i\in I.
 \end{align*}
 A key observation of this paper is the following (see \autoref{thm:partial-H^1})
 \begin{align*}
     \norm{\nabla U_i(t)}_{L^2\left(\Sigma\right)}\leq C,\quad i\neq k.
 \end{align*}
 \vspace{2ex}
 Now  using blowup analysis, we conclude that the total singular set  consists of only one point $\set{p_0}$ (see \autoref{lem:singular-set}). Moreover, $h_k(p_0)>0$. Then we can show that (see \autoref{lem:upper})
 \begin{align*}
     u_i(t)\leq C,\quad\forall i\neq k.
 \end{align*}
 Since $e^{u_i(t)}$ is bounded in $L^1\left(\Sigma\right)$, we conclude that $u_i(t)-\bar u_i(t)$ subconverges weakly to $G_i(\cdot,p_0)$ in $W^{1}_p\left(\Sigma\right)$ for $1<p<2$. Notice that the functions $G_i(\cdot,p_0)$ satisfies the following elliptic system
\begin{align*}
    \begin{cases}
    -\sum_{j=1}^Na_N^{ij}\Delta G_j(\cdot,p_0)=\rho_i\frac{h_ie^{G_i(\cdot,p_0)}}{\int_{\Sigma}h_ie^{G_i(\cdot,p_0)}\dif\mu_{\Sigma}}-Q_i,\quad\bar G_i(\cdot, p_0)=0,\quad i\neq k,\\
        -\sum_{j=1}^Na_N^{kj}\Delta G_j(\cdot,p_0)=4\pi\delta_{p_0}-Q_k,\quad \bar G_k(\cdot,p_0)=0.
    \end{cases}
\end{align*}
\vspace{2ex}

Consider the functional
\begin{align*}
    J_k(u)\coloneqq J(u)-\left(\dfrac14\int_{\Sigma}\abs{\nabla u_k}^2\dif\mu_{\Sigma}+\int_{\Sigma}Q_ku_k\dif\mu_{\Sigma}-\ln\int_{\Sigma}h_ke^{u_k}\dif\mu_{\Sigma}\right).
\end{align*}
We show that (see \autoref{thm:lower})
\begin{align*}
    J(u(t))\geq F(p_0)\coloneqq&J_k\left(G(\cdot,p_0)\right)+\dfrac14\int_{\Sigma}\abs{\nabla\left(H_{k-1}(\cdot,p_0)+H_{k+1}(\cdot,p_0)\right)}^2\dif\mu_{\Sigma}+\int_{\Sigma}Q_kH_k(\cdot,p_0)\dif\mu_{\Sigma}\\
    &-4\pi\left(\lim_{p\to p_0}\left(H_k(p,p_0)+2\ln \mathrm{dist}(p,p_0)\right)+\ln h_k(p_0)\right)-4\pi-4\pi\ln\pi.
\end{align*}
Here
\begin{align*}
    H_i=\sum_{j=1}^Na_N^{ij}G_j,\quad i\in I
\end{align*}
and for convenience, $H_{0}=H_{N+1}=0$. Finally, under Jost-Lin-Wang's condition  \eqref{eq:JLW}, we can find a family of smooth maps $u^{(\epsilon)}$ such that
\begin{align*}
    J(u^{(\epsilon)})\uparrow F(p_0),\quad \text{as}\ \epsilon\to 0^+.
\end{align*}
In particular, there is an initial smooth data $u_0$ such that
\begin{align*}
    J(u_0)<F(p_0).
\end{align*}
Therefore, if we choose $p_0$ as the minimal point of $F$ in $\set{p\in\Sigma:h_k(p)>0}$, then we conclude that the Toda flow \eqref{eq:toda} converges globally. 

\subsection{Notations}
In this paper we will use notations which for readers' convenience we list below:
$$W^k_p\left(\Sigma\right)\coloneqq\set{u:\Sigma\to\mathbb{R}\vert\nabla^lu\in L^p\left(\Sigma\right), l=0,1,\cdots,k}.$$
In particular, $W^k_2\left(\Sigma\right)$ is denoted by $H^k\left(\Sigma\right)$.
$$C^{k,\alpha}\left(\Sigma\right) (0<\alpha<1)\coloneqq\{u:\Sigma\to\mathbb{R}|\nabla^l\partial^r_tu\in C^\alpha\left(\Sigma\right), l=0,1,\cdots,k\}.$$ 
$$ W^{k,2k}_p\left([0, T]\times\Sigma\right)\coloneqq\set{u:[0,T]\times\Sigma\to \mathbb{R}|\nabla^l \partial^r_tu\in L^p\left([0,T]\times\Sigma\right), \forall l+2r\leq 2k }.$$ In particular, $W^{k,2k}_2\left([0, T]\times\Sigma\right)\coloneqq H^{k,2k}\left([0, T]\times\Sigma\right).$
\vspace{2ex}

For $p=(x,t_x), q=(y,t_y)\in\Sigma\times [0,T],$ the parabolic distance is defined by $\delta(p, q)\coloneqq(d(x,y)^2+|t_x-t_y|)^\frac{1}{2}$. 
$$[u]_{\alpha,[0,T]\times\Sigma}=\sup_{p,q\in\Sigma\times[0,T],p\neq q}\frac{|u(p)-u(q)|}{\delta^\alpha(p,q)},$$
$$[u]^t_{\alpha, [0, T]\times\Sigma}=\sup_{p\in\sigma}\sup_{t,s\in [0, T], t\neq s}\frac{|u(x, t)-u(x,s)|}{|t-s|^\alpha},$$
$$[u]_{k+\alpha,[0,T]\times\Sigma}=\sum_{l+2r= k}[\nabla^l\partial^r_tu]_{\alpha,[0,T]\times\Sigma}+\sum_{l+2r= k-1}[\nabla^l\partial^r_tu]^t_{\frac{\alpha}{2},[0,T]\times\Sigma},$$ and
$$|u|_{k+\alpha, [0, T]\times\Sigma}\coloneqq\sum_{l+2r=k}[\nabla^l\partial^r_tu]_{\alpha, [0, T]\times\Sigma}+\sum_{l+2r=k-1}[\nabla^l\partial^r_tu]^t_{\alpha, [0, T]\times\Sigma}.$$ Then
$$C^{2k+\alpha, k+\frac{\alpha}{2}}\left([0, T]\times\Sigma\right)\coloneqq\set{u:[0, T]\times\Sigma\to \mathbb{R}| |u|_{k+\alpha, [0, T]\times\Sigma}<\infty}.$$ 
Moreover, $W^k_p\left(\Sigma, \mathbb{R^N}\right)$ is the space of vector valued functions whose components belong to $W^k_p (\Sigma)$, and $C^{k,\alpha}\left(\Sigma, \mathbb{R}^N\right)$, $H^k\left(\Sigma, \mathbb{R}^N\right)$, $W^{k,2k}_p\left([0, T]\times\Sigma, \mathbb{R}^N\right)$, $H^{k,2k}\left([0, T]\times\Sigma, \mathbb{R}^N\right)$, $C^{k+\alpha/2, 2k+\alpha}\left([0, T]\times\Sigma, \mathbb{R}^N\right)$ should be interpreted in the same way. 

\section{A priori estimates}
In this section, we assume that the solution to the flow of  Liouville system  \eqref{eq:liouville} exists on the time interval \([0, T]\) and provide several a priori estimates for it.

Since the matrix \(A\) is positive definite, we have
\begin{align*}
    0<c_A^{-1}\leq A\leq c_A,
\end{align*}
for some constant \(c_A>0\). Moreover, we impose the following uniform lower bound condition:
\begin{align*}
   \int_{\Sigma}h_ie^{u_i}\dif\mu_{\Sigma}\geq c_{\text{reg}}^{-1}>0,\quad i\in I,
\end{align*}
where \(c_{\text{reg}}>0\) is a constant.

Recall that the flow of Liouville system is given by
\begin{align*} 
\dfrac{\partial e^{u_i}}{\partial t}=\sum_{j=1}^Na^{ij}\Delta u_j+\dfrac{\rho_ih_ie^{u_i}}{\int_{\Sigma}h_ie^{u_i}\dif\mu_{\Sigma}}-Q_i,\quad i=1, 2\dots, N.
\end{align*}
and the funcational $J$ is defined by 
\begin{align*}
J(u)\coloneqq\dfrac12\sum_{i,j=1}^N\int_{\Sigma}a^{ij}\langle \nabla u_i,\nabla u_j\rangle\dif\mu_{\Sigma}+\sum_{i=1}^N\left(\int_{\Sigma}Q_iu_i\dif\mu_{\Sigma}-\rho_i\ln\int_{\Sigma}h_ie^{u_i}\dif\mu_{\Sigma}\right),\quad u\in H^1(\Sigma, \mathbb{R}^N).
\end{align*}
WE have  the following monotonicity property for the functional along the flow.

 \begin{lem}[Monotonicity formula]\label{lem:monotonicity}
     Along the flow of Liouville system \eqref{eq:liouville}, the following monotonicity holds:
     \begin{align}\label{eq:monotonicity}
    \dfrac{\dif}{\dif t}J(u)=-\sum_{i=1}^N\int_{\Sigma}e^{u_i}\abs{\dfrac{\partial u_i}{\partial t}}^2\dif\mu_{\Sigma}.
\end{align}
 \end{lem}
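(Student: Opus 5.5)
The plan is to differentiate $J(u(t))$ directly in time and then substitute the flow equation \eqref{eq:liouville}. We assume the solution is smooth on $[0,T]\times\Sigma$, so differentiation under the integral sign is justified. We also use the regularity condition $\int_{\Sigma}h_ie^{u_i}\dif\mu_{\Sigma}>0$, which keeps the logarithms differentiable.

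First compute each piece of the derivative.

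\begin{itemize}
\item \emph{Dirichlet term.} Since $(a^{ij})$ is symmetric, integrating by parts on the closed surface gives
\begin{align*}
\dfrac{\dif}{\dif t}\dfrac12\sum_{i,j}\int_{\Sigma}a^{ij}\hin{\nabla u_i}{\nabla u_j}\dif\mu_{\Sigma}=-\sum_{i,j}\int_{\Sigma}a^{ij}\Delta u_j\,\dfrac{\partial u_i}{\partial t}\dif\mu_{\Sigma}.
\end{align*}
\item \emph{Linear term.} This contributes $\sum_i\int_{\Sigma}Q_i\,\partial_tu_i\dif\mu_{\Sigma}$.
\item \emph{Logarithmic term.} This contributes
\begin{align*}
-\sum_i\rho_i\dfrac{\int_{\Sigma}h_ie^{u_i}\partial_tu_i\dif\mu_{\Sigma}}{\int_{\Sigma}h_ie^{u_i}\dif\mu_{\Sigma}}.
\end{align*}
\end{itemize}

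Collecting the three pieces,
\begin{align*}
\dfrac{\dif}{\dif t}J(u)=-\sum_{i}\int_{\Sigma}\left(\sum_ja^{ij}\Delta u_j+\dfrac{\rho_ih_ie^{u_i}}{\int_{\Sigma}h_ie^{u_i}\dif\mu_{\Sigma}}-Q_i\right)\dfrac{\partial u_i}{\partial t}\dif\mu_{\Sigma}.
\end{align*}
The bracket is exactly the right-hand side of \eqref{eq:liouville}, so it equals $\partial_te^{u_i}=e^{u_i}\partial_tu_i$. Substituting yields $-\sum_i\int_{\Sigma}e^{u_i}\abs{\partial_tu_i}^2\dif\mu_{\Sigma}$, which is \eqref{eq:monotonicity}.

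There is no genuine obstacle; the computation is routine. The only points needing care are the symmetry of $A^{-1}$, which gives the correct factor in the Dirichlet term, and the positivity of $\int_{\Sigma}h_ie^{u_i}\dif\mu_{\Sigma}$ on the existence interval, which the setting assumes.
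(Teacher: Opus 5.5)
Your proof is correct and is the same computation as the paper's: differentiate $J(u(t))$, integrate the Dirichlet term by parts using the symmetry of $(a^{ij})$, recognize the bracket as the right-hand side of \eqref{eq:liouville}, and replace it by $e^{u_i}\partial_t u_i$. Your remarks on smoothness and on the positivity of $\int_{\Sigma}h_ie^{u_i}\dif\mu_{\Sigma}$ spell out assumptions that the paper uses without stating them.
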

\begin{proof}
By direct computation, we obtain
\begin{align*}
    \dfrac{\dif}{\dif t}J(u)=&\sum_{i,j=1}^N\int_{\Sigma}a^{ij}\hin{\nabla\left(\dfrac{\partial u_i}{\partial t}\right)}{\nabla u_j}\dif\mu_{\Sigma}+\sum_{i=1}^N\left(\int_{\Sigma}Q_i\dfrac{\partial u_i}{\partial t}\dif\mu_{\Sigma}-\rho_i\dfrac{\int_{\Sigma}h_ie^{u_i}\frac{\partial u_i}{\partial t}\dif\mu_{\Sigma}}{\int_{\Sigma}h_ie^{u_i}\dif\mu_{\Sigma}}\right)\\
    =&-\int_{\Sigma}\sum_{i=1}^N\left(\sum_{j=1}^Na^{ij}\Delta u_j-Q_i+\rho_i\dfrac{h_ie^{u_i}}{\int_{\Sigma}h_ie^{u_i}\dif\mu_{\Sigma}}\right)\dfrac{\partial u_i}{\partial t}\dif\mu_{\Sigma}\\
    =&-\sum_{i=1}^N\int_{\Sigma}e^{u_i}\abs{\dfrac{\partial u_i}{\partial t}}^2\dif\mu_{\Sigma}.
\end{align*}
    
\end{proof}

Another useful property is the preservation of masses along the flow.
\begin{lem}[Mass preservation]\label{lem:conservation}
 For the flow of Liouville system \eqref{eq:liouville}, the following  holds:
\begin{align}\label{eq:conservation}
    \dfrac{\dif}{\dif t}\int_{\Sigma}e^{u_i}\dif\mu_{\Sigma}=0,\quad i\in I.
\end{align}
\end{lem}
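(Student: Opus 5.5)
The plan is to differentiate under the integral sign and integrate the flow equation \eqref{eq:liouville} over the closed surface $\Sigma$. Since $u$ is a smooth solution on $[0,T]$, we may write
\begin{align*}
    \dfrac{\dif}{\dif t}\int_{\Sigma}e^{u_i}\dif\mu_{\Sigma}=\int_{\Sigma}\dfrac{\partial e^{u_i}}{\partial t}\dif\mu_{\Sigma}=\sum_{j=1}^Na^{ij}\int_{\Sigma}\Delta u_j\dif\mu_{\Sigma}+\rho_i\dfrac{\int_{\Sigma}h_ie^{u_i}\dif\mu_{\Sigma}}{\int_{\Sigma}h_ie^{u_i}\dif\mu_{\Sigma}}-\int_{\Sigma}Q_i\dif\mu_{\Sigma}.
\end{align*}
Here the denominator $\int_{\Sigma}h_ie^{u_i}\dif\mu_{\Sigma}$ is positive on the existence interval, by the regularity condition (the lower bound $c_{\text{reg}}^{-1}$ assumed in this section). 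This is what makes the right-hand side well defined.

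Each term is then evaluated separately. Because $\Sigma$ is closed, the divergence theorem gives $\int_{\Sigma}\Delta u_j\dif\mu_{\Sigma}=0$ for every $j$. The middle term equals exactly $\rho_i$. By the standing normalization $\rho_i=\int_{\Sigma}Q_i\dif\mu_{\Sigma}$, the last term equals $-\rho_i$. Hence the right-hand side vanishes, which gives \eqref{eq:conservation}.

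There is no real obstacle here. The only points needing care are justifying the interchange of $\frac{\dif}{\dif t}$ and $\int_{\Sigma}$, which follows from smoothness of $u$ on $[0,T]\times\Sigma$ and compactness of $\Sigma$, and the positivity of the normalizing integrals noted above. As a consequence, $\int_{\Sigma}e^{u_i(t)}\dif\mu_{\Sigma}=\int_{\Sigma}e^{u_{i,0}}\dif\mu_{\Sigma}$ for all $t$ in the existence interval.
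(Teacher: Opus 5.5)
Your proof is correct and matches the paper's argument: integrate the flow equation over the closed surface and use $\int_{\Sigma}\Delta u_j\dif\mu_{\Sigma}=0$. The normalized nonlocal term then integrates to $\rho_i$, which cancels $\int_{\Sigma}Q_i\dif\mu_{\Sigma}=\rho_i$.
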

\begin{proof}
    Integrating both sides of \eqref{eq:liouville} over $\Sigma$ yields
    \begin{align*}
        \dfrac{\dif}{\dif t}\int_{\Sigma}e^{u_i}\dif\mu_{\Sigma}=&\int_{\Sigma}\dfrac{\partial e^{u_i}}{\partial t}\dif\mu_{\Sigma}\\
        =&\int_{\Sigma}\left(\sum_{j=1}^Na^{ij}\Delta u_j+\dfrac{\rho_ih_ie^{u_i}}{\int_{\Sigma}h_ie^{u_i}\dif\mu_{\Sigma}}-Q_i\right)\dif\mu_{\Sigma}\\
        =&\rho_i-\int_{\Sigma}Q_i\dif\mu_{\Sigma}
        =0.
    \end{align*}
\end{proof}

According to the classical Trudinger–Moser inequality \eqref{eq:classical-TM}, we first establish the following a priori estimate for the flow  of Liouville system \eqref{eq:liouville}.

\begin{lem}[Upper bound estimate]
There exists a constant $C>0$, depending only on the upper bounds of $c_{A}, c_{\text{reg}}, \norm{u_0}_{H^1\left(\Sigma\right)}, \abs{\rho}, \norm{h}_{L^{\infty}\left(\Sigma\right)}$ and $\norm{Q}_{L^2\left(\Sigma\right)}$,  such that
\begin{align}\label{eq:e^{2u}}
    \sum_{i=1}^N\int_{\Sigma}e^{2u_i(t)}\dif\mu_{\Sigma}\leq Ce^{Ct}.
\end{align}
\end{lem}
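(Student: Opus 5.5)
The plan is to reduce \eqref{eq:e^{2u}} to a bound on the Dirichlet energy of $u(t)$ that grows at most linearly in $t$. Once that is available, the classical Trudinger--Moser inequality \eqref{eq:classical-TM}, applied to $2u_i$, gives
\begin{align*}
\int_{\Sigma}e^{2u_i}\dif\mu_{\Sigma}\leq C\exp\left(\dfrac{1}{4\pi}\int_{\Sigma}\abs{\nabla u_i}^2\dif\mu_{\Sigma}+2\bar u_i\right).
\end{align*}
By Jensen's inequality and \autoref{lem:conservation}, $\bar u_i(t)\leq \ln\left(\abs{\Sigma}^{-1}\int_{\Sigma}e^{u_{i,0}}\dif\mu_{\Sigma}\right)\leq C$. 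Hence it suffices to prove $\norm{\nabla u(t)}_{L^2(\Sigma)}^2\leq C(1+t)$; the factor $\frac{1}{4\pi}$ then only changes the constant in $e^{Ct}$.

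\textbf{Step 1: energy bound from monotonicity.} By \autoref{lem:monotonicity}, $J(u(t))\leq J(u_0)$, and $J(u_0)$ is controlled by $\norm{u_0}_{H^1}$, $c_{\text{reg}}$ and the data. Since $A^{-1}\geq c_A^{-1}$, this gives
\begin{align*}
\dfrac{1}{2c_A}\norm{\nabla u}_{L^2}^2\leq J(u_0)-\sum_{i}\int_{\Sigma}Q_iu_i\dif\mu_{\Sigma}+\sum_i\rho_i\ln\int_{\Sigma}h_ie^{u_i}\dif\mu_{\Sigma}.
\end{align*}
The logarithmic terms are handled as follows. For $\rho_i>0$, bound $\ln\int_{\Sigma}h_ie^{u_i}\leq\ln\left(\norm{h_i}_{L^\infty}\int_{\Sigma}e^{u_{i,0}}\right)$ by mass preservation. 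For $\rho_i<0$, use $\int_{\Sigma}h_ie^{u_i}\geq c_{\text{reg}}^{-1}$. For the linear term, split $\int_{\Sigma}Q_iu_i=\int_{\Sigma}Q_i(u_i-\bar u_i)+\rho_i\bar u_i$. Poincaré's inequality gives
\begin{align*}
\abs{\int_{\Sigma}Q_i(u_i-\bar u_i)\dif\mu_{\Sigma}}\leq \epsilon\norm{\nabla u_i}_{L^2}^2+C_\epsilon\norm{Q_i}_{L^2}^2,
\end{align*}
which is absorbed on the left. For $\rho_i\leq 0$, the term $-\rho_i\bar u_i$ is bounded above by the Jensen bound. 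Thus
\begin{align*}
\norm{\nabla u(t)}_{L^2}^2\leq C+C\sum_{\rho_i>0}\rho_i\left(-\bar u_i(t)\right)_+ ,
\end{align*}
and everything reduces to a lower bound on the averages $\bar u_i(t)$ that is linear in $t$.

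\textbf{Step 2: lower bound on $\bar u_i$ (main obstacle).} I would write
\begin{align*}
\bar u_i(t)-\bar u_i(0)=\fint_0^t\!\!\int_{\Sigma}\partial_tu_i\dif\mu_{\Sigma}\dif s\cdot\frac{t}{\abs{\Sigma}}\cdot\frac1t,
\end{align*}
that is, $\abs{\Sigma}^{-1}\int_0^t\int_{\Sigma}\partial_tu_i\dif\mu_{\Sigma}\dif s$. Then I would try to bound $\int_{\Sigma}\abs{\partial_tu_i}$ by Cauchy--Schwarz against the dissipation:
\begin{align*}
\int_{\Sigma}\abs{\partial_tu_i}\dif\mu_{\Sigma}\leq\left(\int_{\Sigma}e^{u_i}\abs{\partial_tu_i}^2\dif\mu_{\Sigma}\right)^{1/2}\left(\int_{\Sigma}e^{-u_i}\dif\mu_{\Sigma}\right)^{1/2}.
\end{align*}
The factor $\int_{\Sigma}e^{-u_i}$ is again controlled by \eqref{eq:classical-TM}, in terms of $\norm{\nabla u_i}^2$ and $-\bar u_i$. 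This step is the delicate one. First, $J$ need not be bounded below, so $\int_0^t\int_{\Sigma}e^{u}\abs{\partial_t u}^2$ is only controlled by $J(u_0)-J(u(t))$, which itself depends on the energy. Second, the exponential factor $\int_{\Sigma}e^{-u_i}$ threatens a double-exponential Gronwall argument instead of a linear one.

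What I would try first is a different route: test the $i$-th equation \eqref{eq:liouville} against $e^{-u_i}$, which computes $\frac{\dif}{\dif t}\int_{\Sigma}u_i$ exactly. One gets
\begin{align*}
\frac{\dif}{\dif t}\int_{\Sigma}u_i\dif\mu_{\Sigma}=\sum_ja^{ij}\int_{\Sigma}e^{-u_i}\hin{\nabla u_i}{\nabla u_j}\dif\mu_{\Sigma}+\dfrac{\rho_i\int_{\Sigma}h_i\dif\mu_{\Sigma}}{\int_{\Sigma}h_ie^{u_i}\dif\mu_{\Sigma}}-\int_{\Sigma}Q_ie^{-u_i}\dif\mu_{\Sigma}.
\end{align*}
The middle term is bounded by $\abs{\rho_i}\norm{h_i}_{L^1}c_{\text{reg}}$. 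The hope is to sum over $i$ with suitable weights, so that the quadratic gradient terms have a favorable sign or can be compared with the dissipation, yielding $\frac{\dif}{\dif t}\bar u_i\geq -C$. If this closes, Step 1 gives $\norm{\nabla u(t)}^2_{L^2}\leq C(1+t)$, and the Trudinger--Moser reduction yields \eqref{eq:e^{2u}}, with constants depending only on the listed quantities.

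If the sign structure fails, the fallback is to run the Cauchy--Schwarz estimate above on a short time interval, on which the energy is a priori bounded, and iterate over unit time steps. Each step should multiply the bound by a fixed factor, which again gives exponential growth $Ce^{Ct}$.
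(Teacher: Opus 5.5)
Your Step 1 and the Trudinger--Moser reduction are correct. However, together they reduce the lemma to the claim $-\bar u_i(t)\leq C+Ct$, equivalently $\norm{\nabla u(t)}_{L^2(\Sigma)}^2\leq C(1+t)$, and Step 2 never proves this. This is a genuine gap, not a technicality. The linear bound is stronger than anything the paper obtains: its subsequent $H^1$-estimate gives only $\norm{u(t)}_{H^1(\Sigma)}\leq Ce^{Ct}$, and that estimate is itself derived from the present lemma.

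Neither of your routes closes the gap.
\begin{itemize}
\item \textbf{Testing with $e^{-u_i}$.} In the resulting identity, the term $-\int_{\Sigma}Q_ie^{-u_i}\dif\mu_{\Sigma}$ has no sign. It is uncontrolled exactly in the regime $u_i\ll 0$ that you are trying to exclude. For $N\geq 2$ there is a second problem: the cross terms $a^{ij}\int_{\Sigma}e^{-u_i}\hin{\nabla u_i}{\nabla u_j}\dif\mu_{\Sigma}$ carry different, pointwise-varying weights $e^{-u_i}$. So in general no fixed choice of weights makes the sum over $i$ a nonnegative quadratic form.
\item \textbf{The fallback iteration.} It fails quantitatively. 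By \eqref{eq:classical-TM} applied to $-u_i$, one only has $\int_{\Sigma}e^{-u_i}\dif\mu_{\Sigma}\leq C\exp\bigl(\frac{1}{16\pi}\norm{\nabla u_i}_{L^2}^2-\bar u_i\bigr)$. If the energy and $\abs{\bar u}$ are bounded by $E$ at the start of a step, the loss in $\bar u_i$ over a unit step is therefore bounded only by roughly $e^{CE}$. Moreover, the dissipation $J(u(s))-J(u(t))$ needs a lower bound on $J$, which again depends on $-\bar u_i$. The resulting recursion behaves like $E'\leq Ce^{CE}$. That does not even rule out finite-time blow-up of the bound, let alone give a fixed multiplicative factor per step.
\end{itemize}

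The missing idea is to bypass $J$ and $\bar u_i$ entirely and run Gronwall directly on the $A$-weighted $L^2$-norm of $(e^{u_1},\dots,e^{u_N})$. Set $Y(t)=\frac12\sum_{i,j}a_{ij}\int_{\Sigma}e^{u_i+u_j}\dif\mu_{\Sigma}$, where $(a_{ij})=A$ is the inverse of the diffusion matrix $(a^{ij})$ in \eqref{eq:liouville}.
\begin{itemize}
\item By symmetry, $Y'=\sum_{i,j}a_{ij}\int_{\Sigma}e^{u_i}\partial_te^{u_j}\dif\mu_{\Sigma}$.
\item We have $\sum_ja_{ij}\partial_te^{u_j}=\Delta u_i+\sum_ja_{ij}\bigl(\rho_jh_je^{u_j}/\int_{\Sigma}h_je^{u_j}\dif\mu_{\Sigma}-Q_j\bigr)$. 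So the coupled diffusion decouples and contributes $-\sum_i\int_{\Sigma}e^{u_i}\abs{\nabla u_i}^2\dif\mu_{\Sigma}\leq 0$.
\item The reaction terms are bounded by $CY+C\sqrt{Y}$, using $c_{\text{reg}}$, $\abs{\rho}$, $\norm{h}_{L^\infty}$, $\norm{Q}_{L^2}$ and $A\leq c_A$.
\item Gronwall gives $Y(t)\leq e^{Ct}(1+Y(0))$. Here $Y(0)$ is controlled by $\norm{u_0}_{H^1}$ via \eqref{eq:classical-TM}.
\item Finally, $A\geq c_A^{-1}$ gives $\sum_i\int_{\Sigma}e^{2u_i}\dif\mu_{\Sigma}\leq 2c_AY$.
\end{itemize}
So the exponential in the lemma comes from Gronwall on $\norm{e^{u}}_{L^2}$, not from linear growth of the energy. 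In the paper the logical order is the reverse of yours: this bound is used afterwards, via the sets where $e^{u_i}$ is not small, to control $\bar u_i$ and $\norm{\nabla u}$.
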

\begin{proof}
Using \eqref{eq:liouville}, we compute
    \begin{align*}
        \dfrac12\dfrac{\dif}{\dif t}\sum_{i,j=1}^Na_{ij}\int_{\Sigma}e^{u_i+u_j}\dif\mu_{\Sigma}=&\sum_{i,j=1}^Na_{ij}\int_{\Sigma}e^{u_i}\dfrac{\partial e^{u_j}}{\partial t}\dif\mu_{\Sigma}\\
        =&\sum_{i=1}^N\int_{\Sigma}e^{u_i}\left(\Delta u_i+\sum_{j=1}^Na_{ij}\left(\rho_j\dfrac{h_je^{u_j}}{\int_{\Sigma}h_je^{u_j}\dif\mu_{\Sigma}}-Q_j\right)\right)\dif\mu_{\Sigma}\\
        =&-\sum_{i=1}^N\int_{\Sigma}e^{u_i}\abs{\nabla u_i}^2\dif\mu_{\Sigma}+\sum_{i=1}^N\int_{\Sigma}e^{u_i}\sum_{j=1}^Na_{ij}\left(\rho_j\dfrac{h_je^{u_j}}{\int_{\Sigma}h_je^{u_j}\dif\mu_{\Sigma}}-Q_j\right)\dif\mu_{\Sigma}.
    \end{align*}
    Since $A$ is positive definite, for any matrix $D$ and vector $x$ we have
    \begin{align*}
        \abs{x^TADx}\leq\abs{A^{1/2}x}\abs{A^{1/2}Dx}\leq c_A^{1/2}\abs{A^{1/2}x}\abs{Dx}\leq c_A^{1/2}\lambda_D\abs{A^{1/2}x}\abs{x}\leq c_A\lambda_D\abs{A^{1/2}x}^2,
    \end{align*}
    where $\lambda_D$ denotes the largest singular value of $D$. 
    
    Applying H\"older's inequality, we obtain
    \begin{align*}
        &\sum_{i=1}^N\int_{\Sigma}e^{u_i}\sum_{j=1}^Na_{ij}\left(\rho_j\dfrac{h_je^{u_j}}{\int_{\Sigma}h_je^{u_j}\dif\mu_{\Sigma}}-Q_j\right)\dif\mu_{\Sigma}\\
    \leq&c_Ac_{\text{reg}}\abs{\rho}\norm{h}_{L^{\infty}\left(\Sigma\right)}\sum_{i,j=1}^Na_{ij}\int_{\Sigma}e^{u_i+u_j}\dif\mu_{\Sigma}+\left(\sum_{i,j=1}^Na_{ij}\int_{\Sigma}Q_iQ_j\dif\mu_{\Sigma}\right)^{1/2}\left(\sum_{i,j=1}^Na_{ij}\int_{\Sigma}e^{u_i+u_j}\dif\mu_{\Sigma}\right)^{1/2}.
    \end{align*}
Hence we derive the differential inequality
    \begin{align*}
        \dfrac{\dif}{\dif t}\sum_{i,j=1}^Na_{ij}\int_{\Sigma}e^{u_i+u_j}\dif\mu_{\Sigma}\leq \left(2c_Ac_{\text{reg}}\abs{\rho}\norm{h}_{L^{\infty}\left(\Sigma\right)}+1\right)\sum_{i,j=1}^Na_{ij}\int_{\Sigma}e^{u_i+u_j}\dif\mu_{\Sigma}+c_A\norm{Q}^2_{L^2\left(\Sigma\right)}.
    \end{align*}
Applying Gronwall's inequality yields
\begin{align*}
\sum_{i,j=1}^Na_{ij}\int_{\Sigma}e^{u_{i}(t)+u_{j}(t)}\dif\mu_{\Sigma}\leq e^{Ct}\left(1+\sum_{i,j=1}^Na_{ij}\int_{\Sigma}e^{u_{i,0}+u_{j,0}}\dif\mu_{\Sigma}\right),
\end{align*}
where $C$ depends only on the quantities listed in the lemma. Combining this bound with the classical Trudinger-Moser inequality \eqref{eq:classical-TM} gives \eqref{eq:e^{2u}}.

\end{proof}

Based on the upper bound estimate \eqref{eq:e^{2u}} together with the monotonicity property \eqref{eq:monotonicity} and the mass preservation \eqref{eq:conservation}, we obtain the following a priori estimate.

\begin{lem}[$H^1$-estimate]
There exists a constant $C>0$, depending only on $c_{A}, c_{\text{reg}}, \norm{u_0}_{H^1\left(\Sigma\right)}, \abs{\rho}, \norm{h}_{L^{\infty}\left(\Sigma\right)}$ and $\norm{Q}_{L^2\left(\Sigma\right)}$, such that
\begin{align}\label{eq:H^1}
    \norm{u(t)}_{H^1\left(\Sigma\right)}\leq Ce^{Ct}.
\end{align}
\end{lem}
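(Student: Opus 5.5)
The idea is to control the gradient part through the monotone functional $J$, and the mean values through mass preservation together with the upper bound estimate \eqref{eq:e^{2u}}.

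\emph{Gradient bound.} By \autoref{lem:monotonicity}, $J(u(t))\leq J(u_0)$, and $J(u_0)$ is bounded in terms of $\norm{u_0}_{H^1}$, $\norm{Q}_{L^2}$, $\abs{\rho}$ and $c_{\text{reg}}$. Since $A^{-1}\geq c_A^{-1}$, we obtain
\begin{align*}
\dfrac{1}{2c_A}\sum_{i=1}^N\int_{\Sigma}\abs{\nabla u_i}^2\dif\mu_{\Sigma}\leq J(u_0)-\sum_{i=1}^N\int_{\Sigma}Q_iu_i\dif\mu_{\Sigma}+\sum_{i=1}^N\rho_i\ln\int_{\Sigma}h_ie^{u_i}\dif\mu_{\Sigma}.
\end{align*}
We bound the two terms on the right as follows.

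\emph{Logarithmic term.} Mass preservation (\autoref{lem:conservation}) gives
\begin{align*}
\int_{\Sigma}h_ie^{u_i}\leq\norm{h}_{L^\infty}\int_{\Sigma}e^{u_{i,0}}\dif\mu_{\Sigma},
\end{align*}
and the right-hand side is controlled by $\norm{u_0}_{H^1}$ via Trudinger–Moser. The lower bound $\int_{\Sigma}h_ie^{u_i}\geq c_{\text{reg}}^{-1}$ bounds the logarithm from below. So this term is bounded in absolute value by a constant, whatever the sign of $\rho_i$.

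\emph{Linear term.} Split $u_i=(u_i-\bar u_i)+\bar u_i$. By Poincaré and Young,
\begin{align*}
\abs{\int_{\Sigma}Q_i(u_i-\bar u_i)\dif\mu_{\Sigma}}\leq\varepsilon\norm{\nabla u_i}_{L^2}^2+C_\varepsilon\norm{Q}_{L^2}^2,
\end{align*}
which is absorbed into the left side. The remaining part $\rho_i\bar u_i$ needs two-sided control of $\bar u_i$.

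\emph{Mean values.} For the upper bound, Jensen gives $e^{\bar u_i}\leq\fint_{\Sigma}e^{u_i}=\fint_{\Sigma}e^{u_{i,0}}$, so $\bar u_i\leq C$. The lower bound is the main obstacle, and it is where \eqref{eq:e^{2u}} enters. Mass preservation combined with Cauchy–Schwarz gives
\begin{align*}
\int_{\Sigma}e^{u_{i,0}}=\int_{\Sigma}e^{u_i}=\int_{\Sigma}e^{(u_i-\bar u_i)/2}e^{u_i/2}\,e^{\bar u_i/2}\leq e^{\bar u_i/2}\left(\int_{\Sigma}e^{u_i-\bar u_i}\right)^{1/2}\left(\int_{\Sigma}e^{u_i}\right)^{1/2}.
\end{align*}
That alone is circular. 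A better choice is
\begin{align*}
\int_{\Sigma}e^{u_i}\leq\left(\int_{\Sigma}e^{2u_i}\right)^{1/3}\left(\int_{\Sigma}e^{u_i/2}\right)^{2/3},
\end{align*}
which follows from Hölder with the interpolation $1=\tfrac13\cdot2+\tfrac23\cdot\tfrac12$. Then
\begin{align*}
\int_{\Sigma}e^{u_i/2}=e^{\bar u_i/2}\int_{\Sigma}e^{(u_i-\bar u_i)/2}\leq e^{\bar u_i/2}\exp\left(C+C\norm{\nabla u_i}_{L^2}^2\right)
\end{align*}
by Trudinger–Moser, with a small constant in front of the gradient. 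Combined with \eqref{eq:e^{2u}}, this yields
\begin{align*}
-\bar u_i\leq Ct+C+\delta\norm{\nabla u_i}^2_{L^2},
\end{align*}
where $\delta$ can be made small by choosing the Hölder exponents appropriately; for instance, use $e^{su_i}$ with $s$ small. Indeed, taking $p$ close to $2$ and $s\to0$, the Trudinger–Moser coefficient becomes $s^2/(16\pi)$ times a bounded power, so it is tiny.

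\emph{Conclusion.} Inserting the bounds for the linear and logarithmic terms and for $\bar u_i$ into the gradient inequality, and absorbing all small gradient terms, gives
\begin{align*}
\norm{\nabla u}_{L^2}^2\leq C+Ct.
\end{align*}
This in turn gives $\abs{\bar u_i}\leq C+Ct$. Together these bound $\norm{u(t)}_{H^1}$ by $C(1+t)\leq Ce^{Ct}$, which is \eqref{eq:H^1}.
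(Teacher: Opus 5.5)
Your proof is correct, but at the one delicate point, a lower bound on $\bar u_i$ when $\rho_i>0$, it takes a different route from the paper.

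The paper never estimates $\bar u_i$ by itself. It considers the superlevel set $A_i(t)=\set{e^{u_i(t)}\geq\frac12\fint_{\Sigma}e^{u_{i,0}}\dif\mu_{\Sigma}}$. Mass preservation together with $\int_{\Sigma}e^{2u_i}\dif\mu_{\Sigma}\leq Ce^{Ct}$ gives $\abs{A_i(t)}\geq e^{-C-Ct}$. From this, using only Cauchy--Schwarz and Poincar\'e, the paper deduces $\norm{u(t)}_{L^2}\leq Ce^{Ct}\left(1+\norm{\nabla u(t)}_{L^2}\right)$. It then treats the whole term $\int_{\Sigma}Q_iu_i\dif\mu_{\Sigma}$ at once by Young's inequality with $\varepsilon\sim e^{-Ct}$.

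You instead interpolate by H\"older between the conserved mass, $\int_\Sigma e^{2u_i}$ and $\int_\Sigma e^{su_i}$, and apply Trudinger--Moser to $s(u_i-\bar u_i)$. Write $e^{u}=e^{2\theta u}e^{(1-\theta)su}$ with $\theta=\frac{1-s}{2-s}$, and divide by $(1-\theta)s$. This gives
\begin{align*}
-\bar u_i\leq \dfrac{C(1+t)}{s}+\dfrac{s}{16\pi}\norm{\nabla u_i}_{L^2}^2 .
\end{align*}
So the effective coefficient is $s/(16\pi)$, not $s^2/(16\pi)$ as you suggest, but it is still small. Choosing $s$ with $\abs{\rho}\,s/(16\pi)<1/(4c_A)$ closes the argument. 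You should also say explicitly that $\int_{\Sigma}e^{u_{i,0}}\dif\mu_{\Sigma}\geq\left(c_{\text{reg}}\norm{h}_{L^{\infty}\left(\Sigma\right)}\right)^{-1}$, which follows from the regularity condition at $t=0$.

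The payoff of your route is that the $e^{2u}$ bound enters only through its logarithm. You therefore get $\norm{\nabla u}_{L^2}^2+\abs{\bar u}\leq C(1+t)$, linear rather than exponential growth. The paper's superlevel-set argument loses a factor $e^{Ct}$ multiplicatively through the measure of $A_i(t)$. In exchange, the paper's step is more elementary: it needs no second application of Trudinger--Moser and no splitting of $\int_\Sigma Q_iu_i$ into mean and oscillation.
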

\begin{proof}
For each $i\in I$ and $t\in[0,T]$ define
\begin{align*}
A_i(t)=\set{p\in\Sigma: e^{u_i(t,p)}\geq\dfrac12\fint_{\Sigma}e^{u_{i,0}}\dif\mu_{\Sigma}}.
\end{align*}
By the mass preservation \eqref{eq:conservation} and the estimate \eqref{eq:e^{2u}}, we have
\begin{align*}
    \int_{\Sigma}e^{u_{i,0}}\dif\mu_{\Sigma}=\int_{\Sigma}e^{u_i(t)}\dif\mu_{\Sigma}=\int_{\Sigma\setminus A_i(t)}e^{u_i(t)}\dif\mu_{\Sigma}+\int_{A_i(t)}e^{u_i(t)}\dif\mu_{\Sigma}\leq \dfrac12\int_{\Sigma}e^{u_{i,0}}\dif\mu_{\Sigma}+Ce^{Ct}\abs{A_i(t)}^{1/2},
\end{align*}
where $\abs{A_i(t)}$ denotes the area of $A_i(t)$. Hence
\begin{align}\label{eq:area-bound}
    \abs{A_i(t)}\geq e^{-C-Ct},\quad\abs{\int_{A_i(t)}u_i(t)
    \dif\mu_{\Sigma}}\leq C+Ct,\quad\forall t\in[0,T].
\end{align}

 Using \eqref{eq:area-bound},
\begin{align*}
 \abs{\int_{\Sigma}u_i(t)
    \dif\mu_{\Sigma}}\leq&\abs{\int_{\Sigma\setminus A_i(t)}u_i(t)
    \dif\mu_{\Sigma}}+\abs{\int_{A_i(t)}u_i(t)
    \dif\mu_{\Sigma}}\\
    \leq&\abs{\Sigma\setminus A_i(t)}^{1/2}\left(\int_{\Sigma\setminus A_i(t)}\abs{u_i(t)}^2
    \dif\mu_{\Sigma}\right)^{1/2}+C+Ct\\
    \leq&\sqrt{\abs{\Sigma}-e^{-C-Ct}}\norm{u_i(t)}_{L^2\left(\Sigma\right)}+C+Ct.
\end{align*}
Applying Poincar\'e's inequality,
\begin{align*}
    \norm{u_i(t)}_{L^2\left(\Sigma\right)}\leq C\norm{\nabla u_i(t)}_{L^2\left(\Sigma\right)}+\sqrt{\abs{\Sigma}}\abs{\bar u_i(t)}\leq C\norm{\nabla u_i(t)}_{L^2\left(\Sigma\right)}+\sqrt{1-e^{-C-Ct}}\norm{u_i(t)}_{L^2\left(\Sigma\right)}+C+Ct
\end{align*}
which yields
\begin{align}\label{eq:L^2}
    \norm{u(t)}_{L^2\left(\Sigma\right)}\leq Ce^{Ct}\left(1+\norm{\nabla u(t)}_{L^2\left(\Sigma\right)}\right).
\end{align}

From the classical Trudinger-Moser inequality \eqref{eq:classical-TM} we have
\begin{align*}
    J(u_0)\leq C,\quad \sum_{i=1}^N\int_{\Sigma}e^{u_{i,0}}\dif\mu_{\Sigma}\leq C.
\end{align*}
Using  \eqref{eq:monotonicity} and \eqref{eq:conservation},
we have \begin{align*} 
\dfrac12\sum_{i,j=1}^na^{ij}\int_{\Sigma}\hin{\nabla u_i(t)}{\nabla u_j(t)}\dif\mu_{\Sigma}+\sum_{i=1}^N\int_{\Sigma}Q_iu_i(t)\dif\mu_{\Sigma}=J(u(t))+\sum_{i=1}^N\rho_i\ln\int_{\Sigma}h_ie^{u_i(t)}\dif\mu_{\Sigma}\leq C.
\end{align*}
By Young's inequality, for any $\varepsilon>0$ we obtain
\begin{align}\label{eq:H_0^1}
    C\geq \int_{\Sigma}\abs{\nabla u(t)}^2\dif\mu_{\Sigma}-\varepsilon\int_{\Sigma}\abs{u(t)}^2\dif\mu_{\Sigma}-\dfrac{C}{\varepsilon}.
\end{align}
Choosing $\varepsilon$ sufficiently small and substituting \eqref{eq:L^2} into \eqref{eq:H_0^1} finally gives \eqref{eq:H^1}.

\end{proof}

Using the $H^1$-estimate obtained above and differentiating the flow of Liouville system \eqref{eq:liouville}, we can derive the following refined a priori estimate.

\begin{lem}[$H^2$-estimate]\label{lem:H2-estimate}
There exists a constant $C_T>0$, depending only on $T, c_{A}, c_{\text{reg}}, \norm{u_0}_{H^2\left(\Sigma\right)}, \abs{\rho}, \norm{h}_{C^{1}\left(\Sigma\right)}$ and $\norm{Q}_{H^1\left(\Sigma\right)}$,  such that 
\begin{align}\label{eq:H^2}
\norm{u(t)}_{H^2\left(\Sigma\right)}\leq C_T,\quad t\in[0,T].
\end{align}
\end{lem}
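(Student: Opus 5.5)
We would obtain the $H^2$ bound from an energy estimate for $\nabla u$, found by testing the flow against $\Delta$ of the components. The natural quantity is $E(t)\coloneqq\frac12\sum_{i,j}a^{ij}\int_\Sigma\hin{\nabla u_i}{\nabla u_j}\dif\mu_\Sigma$. It is already bounded on $[0,T]$ by the $H^1$-estimate \eqref{eq:H^1}. What we need instead is a bound on $\int_\Sigma\abs{\sum_j a^{ij}\Delta u_j}^2\dif\mu_\Sigma$, which is equivalent to $\norm{\Delta u}_{L^2}^2$ because $c_A^{-1}\le A\le c_A$.

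We set $w_i\coloneqq\sum_j a^{ij}\Delta u_j$ and $f_i\coloneqq\rho_ih_ie^{u_i}/\int_\Sigma h_ie^{u_i}\dif\mu_\Sigma-Q_i$, so the flow reads $e^{u_i}\partial_tu_i=w_i+f_i$.

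\textbf{Step 1.} The monotonicity formula \eqref{eq:monotonicity} integrated over $[0,T]$, together with the lower bound on $J$, controls $\int_0^T\sum_i\int_\Sigma e^{u_i}\abs{\partial_tu_i}^2\dif\mu_\Sigma\dif t$. The lower bound on $J$ comes from the $H^1$ bound and the regularity constant $c_{\text{reg}}$, which keeps the $\ln\int h_ie^{u_i}$ terms under control.

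\textbf{Step 2.} We differentiate $\mathcal{E}(t)\coloneqq\frac12\sum_i\int_\Sigma e^{-u_i}\left(w_i+f_i\right)^2\dif\mu_\Sigma=\frac12\sum_i\int_\Sigma e^{u_i}\abs{\partial_tu_i}^2\dif\mu_\Sigma$. A cleaner alternative is to compute $\frac{\dif}{\dif t}\frac12\sum_{i,j}a_{ij}\int_\Sigma w_iw_j\dif\mu_\Sigma=\sum_i\int_\Sigma w_i\Delta\partial_tu_i\dif\mu_\Sigma$. Substituting $\partial_tu_i=e^{-u_i}(w_i+f_i)$ and integrating by parts gives
\begin{align*}
-\sum_i\int_\Sigma\hin{\nabla w_i}{\nabla\left(e^{-u_i}(w_i+f_i)\right)}\dif\mu_\Sigma .
\end{align*}
The leading term is $-\int_\Sigma e^{-u_i}\abs{\nabla w_i}^2\dif\mu_\Sigma$, which is good. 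The error terms are $\int_\Sigma e^{-u_i}\nabla w_i\cdot\nabla u_i\,(w_i+f_i)\dif\mu_\Sigma$ and $\int_\Sigma e^{-u_i}\nabla w_i\cdot\nabla f_i\dif\mu_\Sigma$.

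\textbf{Step 3, the main obstacle.} The weight $e^{-u_i}$ is not a priori bounded, because the flow is degenerate where $u_i$ is very negative. So we first need a pointwise lower bound for $u_i$ on $[0,T]$. We would get it from a maximum-principle or Moser-iteration argument on the equation for $e^{-u_i}$, or alternatively by working on a short interval and bootstrapping. The conservation law \eqref{eq:conservation} and the bound \eqref{eq:e^{2u}} give $L^p$ control of $e^{u_i}$, which handles $\nabla f_i$ via $\norm{h}_{C^1}$ and $\norm{Q}_{H^1}$. Given two-sided bounds on $e^{\pm u_i}$, we absorb the cross terms with Young's inequality. The troublesome factor $\abs{\nabla u_i}$ is handled by Gagliardo--Nirenberg in two dimensions, $\norm{\nabla u}_{L^4}^2\le C\norm{\nabla u}_{L^2}\norm{u}_{H^2}$, together with the $H^1$ bound. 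This yields a differential inequality $\frac{\dif}{\dif t}Y\le C_T(1+Y)$ with $Y\simeq\norm{\Delta u}_{L^2}^2$.

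\textbf{Step 4.} Gronwall's inequality, the initial bound by $\norm{u_0}_{H^2}$, and elliptic $L^2$ estimates (plus the mean bound from the $H^1$-estimate) give \eqref{eq:H^2}.

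If the pointwise lower bound in Step 3 cannot be obtained directly, the fallback is to use the $L^2_tL^2_x$ bound from Step 1 on $e^{u_i/2}\partial_tu_i$. We would then estimate $\Delta$ of the system elliptically at each time, treating $e^{u_i}\partial_tu_i$ as a right-hand side controlled through \eqref{eq:e^{2u}}.
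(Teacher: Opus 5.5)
Your Step 1 is correct, and so is the identity $\frac{\dif}{\dif t}\frac12\sum_{i,j}a^{ij}\int_\Sigma\Delta u_i\Delta u_j\dif\mu_\Sigma=\sum_i\int_\Sigma w_i\Delta\partial_tu_i\dif\mu_\Sigma$ in Step 2; the paper differentiates the same quantity. The gap is Step 3. It rests on two-sided pointwise bounds for $e^{\pm u_i}$ on $[0,T]$, which you cannot obtain at this stage.

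\textbf{Why the pointwise bounds are unavailable.} The $H^1$-estimate combined with Trudinger--Moser only gives $\int_\Sigma e^{pu_i(t)}\dif\mu_\Sigma\le C_{p,T}$ for all $p\in\mathbb{R}$. In the paper, the $L^\infty$ and H\"older bounds are deduced \emph{from} the $H^2$-estimate, so presupposing them is circular.

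The maximum principle does not supply them. The principal part $\sum_ja^{ij}\Delta u_j$ couples the components through $A^{-1}$, so at a minimum of $u_i$ nothing has a sign. Even for $N=1$, at a minimum point you only get $\partial_te^{u}\ge-C$. That gives a lower bound on $e^{\min u}$ which degenerates after a time of order $e^{\min u_0}/C$. Restarting on successive short intervals does not obviously reach an arbitrary $T$, since each interval's length depends on bounds that may deteriorate.

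Your fallback does not prove the statement either. The right-hand side $e^{u_i}\partial_tu_i$ is controlled only in $L^2$ in time. Fixed-time elliptic estimates therefore give time-integrated bounds on $\Delta u$, not $\sup_{t\le T}\norm{u(t)}_{H^2(\Sigma)}$.

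Even with pointwise bounds, the cross term does not yield $\frac{\dif}{\dif t}Y\le C_T(1+Y)$ with a constant coefficient. Bounding $w_i+f_i$ through $Y^{1/2}$ produces $Y^2$. A linear inequality only comes out through $\norm{e^{u_i/2}\partial_tu_i}_{L^2(\Sigma)}^2$, which is integrable in time but not bounded.

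\textbf{The missing idea.} Work with $W_i\coloneqq e^{u_i/2}\partial_tu_i$, so that $w_i+f_i=e^{u_i/2}W_i$ and $\partial_tu_i=e^{-u_i/2}W_i$. The degenerate weights then cancel exactly in the principal term:
\begin{align*}
-\int_\Sigma\hin{\nabla\left(e^{u_i/2}W_i\right)}{\nabla\left(e^{-u_i/2}W_i\right)}\dif\mu_\Sigma=-\int_\Sigma\abs{\nabla W_i}^2\dif\mu_\Sigma+\frac14\int_\Sigma\abs{\nabla u_i}^2W_i^2\dif\mu_\Sigma .
\end{align*}
The contribution of $f_i$ involves only factors $e^{\pm u_i/2}$ multiplying $\nabla u_i$, $\nabla h_i$ and $\nabla Q_i$. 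These are handled by the $L^p$ bounds on $e^{\pm u_i}$ for all $p\in\mathbb{R}$, so no pointwise bound is needed.

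Estimate $\int_\Sigma\abs{\nabla u}^2W^2\dif\mu_\Sigma\le\norm{\nabla u}_{L^4}^2\norm{W}_{L^4}^2$, apply Gagliardo--Nirenberg to both factors, and use Young's inequality. This gives
\begin{align*}
\frac{\dif}{\dif t}Y\le-\frac12\norm{\nabla W}_{L^2(\Sigma)}^2+C_T\left(1+Y\right)\left(1+\norm{W}_{L^2(\Sigma)}^2\right).
\end{align*}
Gronwall then closes precisely because $\int_0^T\norm{W}_{L^2(\Sigma)}^2\dif t\le C_T$ by your Step 1. This is the paper's argument, and it never uses a pointwise bound on $u$.
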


\begin{proof}
First, we differentiate the quadratic form involving the Laplacians:
\begin{align*}
\dfrac{\dif}{\dif t}\sum_{i,j=1}^Na^{ij}\int_{\Sigma}\Delta u_i\Delta u_j\dif\mu_{\Sigma}=&2\sum_{i=1}^N\int_{\Sigma}\left(\dfrac{\partial e^{u_i}}{\partial t}+\rho_i\dfrac{h_ie^{u_i}}{\int_{\Sigma}h_ie^{u_i}\dif\mu_{\Sigma}}-Q_i\right)\Delta\dfrac{\partial u_i}{\partial t}\dif\mu_{\Sigma}.
\end{align*}
Set $w_i=e^{u_i/2}\frac{\partial u_i}{\partial t}$. Then
\begin{align*}
\dfrac{\dif}{\dif t}\sum_{i,j=1}^Na^{ij}\int_{\Sigma}\Delta u_i\Delta u_j\dif\mu_{\Sigma}=&2\sum_{i=1}^N\int_{\Sigma}\left(e^{u_i/2}w_i+\rho_i\dfrac{h_ie^{u_i}}{\int_{\Sigma}h_ie^{u_i}\dif\mu_{\Sigma}}-Q_i\right)\Delta\left(e^{-u_i/2}w_i\right)\dif\mu_{\Sigma}\\
=&-2\int_{\Sigma}\abs{\nabla w}^2\dif\mu_{\Sigma}+\dfrac12\sum_{i=1}^N\int_{\Sigma}\abs{\nabla u_i}^2w_{i}^2\dif\mu_{\Sigma}\\
&-2\sum_{i=1}^N\int_{\Sigma}\hin{\rho_i\dfrac{1}{\int_{\Sigma}h_ie^{u_i}\dif\mu_{\Sigma}}e^{u_i/2}\left(h_i\nabla u_i+\nabla h_i\right)-e^{-u_i/2}\nabla Q_i}{\nabla w_i-\dfrac12w_i\nabla u_i}\dif\mu_{\Sigma}.
\end{align*}
From the Trudinger-Moser inequality \eqref{eq:classical-TM} and the $H^1$-estimate \eqref{eq:H^1} we obtain, for every $p\in\mathbb{R}$ and $i\in I$,
\begin{equation*}
\int_{\Sigma} e^{p u_i(t)}\dif\mu_{\Sigma}\le C_{p,T},\qquad t\in[0,T].
\end{equation*}
Consequently, using Young's inequality we have for $t\in[0,T]$,
\begin{align*}
\dfrac{\dif}{\dif t}\sum_{i,j=1}^Na^{ij}\int_{\Sigma}\Delta u_i(t)\Delta u_j(t)\dif\mu_{\Sigma}\leq&-\int_{\Sigma}\abs{\nabla w(t)}^2\dif\mu_{\Sigma}+\norm{\nabla u(t)}_{L^4\left(\Sigma\right)}^2\norm{w(t)}_{L^4\left(\Sigma\right)}^2+C_T\norm{\nabla u(t)}_{L^4\left(\Sigma\right)}^2+C_T.
\end{align*}

Applying the Gagliardo–Nirenberg interpolation inequality
\begin{align}\label{eq:interpolation}
\norm{f}_{L^4\left(\Sigma\right)}^2\leq C\norm{f}_{L^2\left(\Sigma\right)}\norm{f}_{H^1\left(\Sigma\right)},\quad\forall f\in H^1\left(\Sigma\right),
\end{align}
together with \eqref{eq:H^1}, we estimate further:
\begin{align*}
&\dfrac{\dif}{\dif t}\sum_{i,j=1}^Na^{ij}\int_{\Sigma}\Delta u_i(t)\Delta u_j(t)\dif\mu_{\Sigma}\\
\leq&-\int_{\Sigma}\abs{\nabla w(t)}^2\dif\mu_{\Sigma}+C_T\norm{u(t)}_{H^2\left(\Sigma\right)}\norm{w(t)}_{L^2\left(\Sigma\right)}\norm{w(t)}_{H^1\left(\Sigma\right)}+C_T\norm{u(t)}_{H^2\left(\Sigma\right)}+C_T\\
\leq&-\dfrac12\int_{\Sigma}\abs{\nabla w(t)}^2\dif\mu_{\Sigma}+C_T\norm{u(t)}_{H^2\left(\Sigma\right)}^2\norm{w(t)}_{L^2\left(\Sigma\right)}^2+C_T\norm{u(t)}^2_{H^2\left(\Sigma\right)}+C_T\norm{w(t)}_{L^2\left(\Sigma\right)}^2+C_T\\
\leq&-\dfrac12\int_{\Sigma}\abs{\nabla w(t)}^2\dif\mu_{\Sigma}+C_T\norm{\Delta u(t)}_{L^2\left(\Sigma\right)}^2\norm{w(t)}_{L^2\left(\Sigma\right)}^2+C_T\norm{\Delta u(t)}^2_{L^2\left(\Sigma\right)}+C_T\norm{w(t)}_{L^2\left(\Sigma\right)}^2+C_T\\
\leq&-\dfrac12\int_{\Sigma}\abs{\nabla w(t)}^2\dif\mu_{\Sigma}+C_T\left(1+\sum_{i,j=1}^Na^{ij}\int_{\Sigma}\Delta u_i(t)\Delta u_j(t)\dif\mu_{\Sigma}\right)\left(1+\norm{w(t)}_{L^2\left(\Sigma\right)}^2\right).
\end{align*}
By Gronwall's inequality we obtain
\begin{align*}
\sum_{i,j=1}^Na^{ij}\int_{\Sigma}\Delta u_i(t)\Delta u_j(t)\dif\mu_{\Sigma}+\int_0^T\int_{\Sigma}\abs{\nabla w}^2\dif\mu_{\Sigma}\dif t\leq &C_T+C_T\sum_{i=1}^N\int_0^T\int_{\Sigma}e^{u_i(t)}\abs{\dfrac{\partial u_i}{\partial t}}^2\dif\mu_{\Sigma}\dif t\leq C_T,
\end{align*}
where we used \eqref{eq:monotonicity}. In particular, this yields the $H^2$-estimate \eqref{eq:H^2}.
\end{proof}

Building upon the $H^2$-estimate, we apply the Sobolev embedding theorem to obtain the following H\"older continuity result.
 
\begin{lem}[H\"older estimate]
    For every $\alpha\in(0,1)$ there exists a constant $C_{T,\alpha}>0$,
depending only on $T$, $\alpha$ and upper bounds of $c_{A}, c_{\text{reg}}, \norm{u_0}_{H^2\left(\Sigma\right)}, \abs{\rho}, \norm{h}_{C^{1}\left(\Sigma\right)}$ and $\norm{Q}_{H^1\left(\Sigma\right)}$,  such that 
\begin{align}\label{eq:holder}
\norm{u}_{C^{\alpha/2, \alpha}\left([0, T]\times\Sigma\right)}\leq C_{T,\alpha}.
\end{align}
\end{lem}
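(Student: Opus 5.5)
Spatial Hölder regularity comes from the $H^2$-estimate, and time regularity from the space--time integral bound obtained in its proof.

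\emph{Spatial regularity.} By \autoref{lem:H2-estimate}, $\norm{u(t)}_{H^2(\Sigma)}\le C_T$ for every $t\in[0,T]$. Since $\Sigma$ is two-dimensional, $H^2(\Sigma)\hookrightarrow C^{\beta}(\Sigma)$ for every $\beta\in(0,1)$. This gives
\begin{align*}
\sup_{t\in[0,T]}\norm{u(t)}_{C^{\alpha}(\Sigma)}\le C_{T,\alpha},
\end{align*}
together with $\norm{u}_{L^\infty([0,T]\times\Sigma)}\le C_T$. As a consequence, $e^{\pm u_i}$ are bounded above and below on $[0,T]\times\Sigma$.

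\emph{Temporal regularity.} The Gronwall step in the proof of \autoref{lem:H2-estimate} controls $\int_0^T\!\int_\Sigma\abs{\nabla w}^2$, and \eqref{eq:monotonicity} controls $\int_0^T\!\int_\Sigma w^2$, where $w_i=e^{u_i/2}\partial_tu_i$. Together with the $L^\infty$ and $H^2$ bounds on $u$, this gives $\partial_t u_i\in L^2([0,T];H^1(\Sigma))$. A cruder bound suffices: $\partial_t u_i\in L^2([0,T]\times\Sigma)$, since $e^{u_i}$ is bounded below. For fixed $x$ and $t<s$, I would compare $u_i(x,t)$ and $u_i(x,s)$ through spatial averages over a geodesic ball $B_r(x)$ with $r=\abs{t-s}^{1/2}$. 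The spatial $C^\alpha$ bound gives
\begin{align*}
\abs{u_i(y,\tau)-u_i(x,\tau)}\le Cr^{\alpha}\quad\text{for } y\in B_r(x),\ \tau\in\{t,s\}.
\end{align*}
The difference of the averages is
\begin{align*}
\fint_{B_r(x)}\int_t^s\partial_\tau u_i\,\dif\tau\dif\mu_\Sigma
\le Cr^{-1}\abs{t-s}^{1/2}\norm{\partial_t u_i}_{L^2([0,T]\times\Sigma)}.
\end{align*}
This is bounded, but it does not yet give the decay we need.

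\emph{Main obstacle.} The sharper time Hölder bound is the expected obstacle. I would first try to use $\partial_tu\in L^2([0,T];H^1(\Sigma))\subset L^2([0,T];L^p(\Sigma))$ for every $p<\infty$. Hölder's inequality then gives
\begin{align*}
\fint_{B_r}\int_t^s\abs{\partial_\tau u_i}
\le Cr^{-2/p}\abs{t-s}^{1/2}=C\abs{t-s}^{1/2-1/p},
\end{align*}
which is Hölder continuity in time with any exponent below $1/2$, hence in particular $\alpha/2$. Combining the spatial and temporal estimates in the parabolic metric $\delta$ yields \eqref{eq:holder}.

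\emph{Alternative route.} If this fails, I would instead rewrite the flow as $e^{u_i}\partial_tu_i-\sum_j a^{ij}\Delta u_j=f_i$. Here $f_i$ is bounded in $L^\infty$ by the above, and the coefficient $e^{u_i}$ is bounded above and below. I would then apply a De Giorgi--Nash type parabolic Hölder estimate, after diagonalizing $A$ or running the energy method on the system with $A$ positive definite.
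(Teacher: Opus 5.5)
Your proof is correct, and its skeleton matches the paper's: spatial $C^\alpha$ bounds come from the $H^2$ bound via Sobolev embedding, and temporal regularity comes from averaging over a geodesic ball of radius $\sqrt{|t-s|}$. The difference is in which bound on $\partial_t u$ you feed into the averaging.

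The paper reads the bound off the equation pointwise, $|\partial_t u_i|\le e^{-u_i}\bigl(C|\Delta u|+C+|Q_i|\bigr)$. The statement of \autoref{lem:H2-estimate} alone then gives $\sup_{t\le T}\norm{\partial_t u(t)}_{L^2(\Sigma)}\le C_T$. Hence $\fint_{B_r}\int_s^t|\partial_\tau u|\le |B_r|^{-1/2}|t-s|\sup_\tau\norm{\partial_\tau u}_{L^2(\Sigma)}\le C|t-s|^{1/2}$, which is the full exponent $1/2$ in time. Your first ``cruder'' attempt stalled only because you used the space--time $L^2$ norm instead of this $L^\infty_tL^2_x$ bound.

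Your actual route instead reaches inside the proof of \autoref{lem:H2-estimate}. You use $\int_0^T\norm{\nabla w}_{L^2}^2\le C_T$ together with monotonicity to get $\partial_t u\in L^2(0,T;H^1)\subset L^2(0,T;L^p)$, which yields the exponent $1/2-1/p$. That is slightly weaker than the paper's exponent but enough for $\alpha/2$. It has the mild advantage of needing only time-integrated control of $\partial_t u$. Your Fubini bookkeeping is also cleaner than the paper's display, which puts a $\sup_\tau$ inside the spatial integral. The De Giorgi--Nash fallback is unnecessary.
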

\begin{proof}
 By the Sobolev embedding
\begin{align*}
    \norm{f}_{C^{\alpha}\left(\Sigma\right)}\leq C_{\alpha}\norm{f}_{H^2\left(\Sigma\right)},\quad\forall f\in H^2\left(\Sigma\right),\quad\forall\alpha\in(0,1)
\end{align*}
 together with the estimate \eqref{eq:H^2}, we obtain
\begin{align}\label{eq:spatial-holder}
    \norm{u(t)}_{C^{\alpha}\left(\Sigma\right)}\leq C_{T,\alpha},\quad\forall \alpha\in(0,1).
\end{align}
Moreover, equation \eqref{eq:liouville} implies the pointwise bound
\begin{align*}
    \abs{\dfrac{\partial u}{\partial t}}\leq C_T\abs{\Delta u}+C_T,
\end{align*}
where the constant $C_T$ again relies on the a priori estimate \eqref{eq:H^2}. In particular,
\begin{align*}
    \norm{\dfrac{\partial u}{\partial t}}_{L^2\left(\Sigma\right)}\leq C_T. 
\end{align*}

To obtain the joint H\"older continuity, it suffices to prove the
temporal H\"older estimate
\begin{align}\label{eq:temp-holder}
    \abs{u(t,p)-u(s,p)}\leq C_{T,\alpha}\abs{t-s}^{\alpha/2}.
\end{align}
Indeed, combining \eqref{eq:spatial-holder} with \eqref{eq:temp-holder} yields
\begin{align*}
    \abs{u(t,p)-u(s,q)}
\leq\abs{u(t,p)-u(s,p)}+\abs{u(s,p)-u(s,q)}
\leq C_{T,\alpha}(\abs{t-s}^{\alpha/2}+\mathrm{dist}(p,q)^\alpha),
\end{align*}
which is exactly \eqref{eq:holder}.

  We now prove \eqref{eq:temp-holder}. Without loss of generality assume $0<t-s<1$.
    Denote by $B^{\Sigma}_r(p)$ the geodesic ball of radius $r$ centered at $p$. Then
\begin{align*}
    \abs{u(t,p)-u(s,p)}\leq&\dfrac{1}{\abs{B^{\Sigma}_{\sqrt{t-s}}(p)}}\int_{B^{\Sigma}_{\sqrt{t-s}}(q)}\abs{u(t,p)-u(s,p)}\dif\mu_{\Sigma}(q)\\
    \leq&\dfrac{1}{\abs{B^{\Sigma}_{\sqrt{t-s}}(p)}}\int_{B^{\Sigma}_{\sqrt{t-s}}(q)}\abs{u(t,p)-u(t,q)}\dif\mu_{\Sigma}(q)\\
    &+\dfrac{1}{\abs{B^{\Sigma}_{\sqrt{t-s}}(p)}}\int_{B^{\Sigma}_{\sqrt{t-s}}(q)}\abs{u(t,q)-u(s,q)}\dif\mu_{\Sigma}(q)\\
    &+\dfrac{1}{\abs{B^{\Sigma}_{\sqrt{t-s}}(p)}}\int_{B^{\Sigma}_{\sqrt{t-s}}(q)}\abs{u(s,q)-u(s,p)}\dif\mu_{\Sigma}(q)\\
    \leq&C_{T,\alpha}\abs{t-s}^{\alpha/2}+C\int_{B^{\Sigma}_{\sqrt{t-s}}(q)}\sup_{s\leq\tau\leq t}\abs{\dfrac{\partial u(\tau,q)}{\partial\tau}}\dif\mu_{\Sigma}(q)\\
    \leq&C_{T,\alpha}\abs{t-s}^{\alpha/2}+\sup_{0\leq \tau\leq T}\norm{\dfrac{\partial u(\tau)}{\partial\tau}}_{L^2\left(\Sigma\right)}\abs{B^{\Sigma}_{\sqrt{t-s}}(q)}^{1/2}\\
    \leq&C_{T,\alpha}\abs{t-s}^{\alpha/2}.
\end{align*}
 This completes the proof.

\end{proof}

We now state the uniqueness result for the flow of Liouville system \eqref{eq:liouville}.

\begin{lem}[$H^2$-uniqueness]\label{lem:uniqueness}
If the initial datum satisfies $u_0\in H^2\left(\Sigma,\mathbb{R}^N\right)$, then the flow of Liouville system \eqref{eq:liouville} admits at most one solution.
\end{lem}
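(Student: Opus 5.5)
Let $u$ and $v$ be two solutions of \eqref{eq:liouville} on $[0,T]$ with the same initial datum $u_0\in H^2\left(\Sigma,\mathbb{R}^N\right)$. I will work in the class where the a priori estimates of this section hold: $\norm{u(t)}_{H^2},\norm{v(t)}_{H^2}\le C_T$, hence $\norm{u}_{L^\infty},\norm{v}_{L^\infty}\le C_T$ by \eqref{eq:spatial-holder}. The regularity condition also holds, with $\int_\Sigma h_ie^{u_i}\dif\mu_{\Sigma}$ and $\int_\Sigma h_ie^{v_i}\dif\mu_{\Sigma}$ bounded below by $c_{\text{reg}}^{-1}$ on $[0,T]$ (by continuity and compactness of $[0,T]$). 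The plan is an energy estimate on the difference, tested against $A$ so that the coupled Laplacian term becomes a signed diagonal term, followed by Gronwall's inequality.

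Subtract the two equations and pair the $i$-th one with $\sum_k a_{ik}(u_k-v_k)$, where $A=(a_{ik})$ is the inverse of $(a^{ij})$. Summing over $i$, the Laplacian part becomes
\[
\sum_{i,j,k}\int_\Sigma a_{ik}a^{ij}\Delta(u_j-v_j)(u_k-v_k)\dif\mu_{\Sigma}=-\norm{\nabla(u-v)}_{L^2}^2 .
\]
This is a good term, and exploiting it is exactly why the test function should involve $A$. The time-derivative part is $\sum_{i,k}\int_\Sigma a_{ik}\,\partial_t(e^{u_i}-e^{v_i})(u_k-v_k)\dif\mu_{\Sigma}$. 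This is the main obstacle, because the natural quantity is not a time derivative of a norm of $u-v$. I would first try writing $e^{u_i}-e^{v_i}=m_i(u_i-v_i)$ with $m_i=\int_0^1e^{su_i+(1-s)v_i}\dif s$, which satisfies $C_T^{-1}\le m_i\le C_T$.

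If that becomes messy, the cleaner alternative is to use $w_i=e^{u_i}-e^{v_i}$ as the unknown and test the difference equation against $\sum_k a_{ik}\,(\ln(e^{v_k}+w_k)-v_k)$. A more robust choice is to test against $(-\Delta)^{-1}$-type quantities, i.e.\ an $H^{-1}$ estimate, which is standard for equations of the form $\partial_t\beta(u)=\Delta u+f$. Concretely, set $W=\sum_k a_{ik}w_k$-weighted and test the difference equation (after multiplying by $A$) against $(-\Delta)^{-1}$ of the mean-free part of $w$. The mass preservation \autoref{lem:conservation} makes $w$ mean-free, so $(-\Delta)^{-1}w$ is well-defined. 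This yields
\[
\frac12\frac{\dif}{\dif t}\norm{w}_{H^{-1}_A}^2+\sum_i\int_\Sigma (u_i-v_i)(e^{u_i}-e^{v_i})\dif\mu_{\Sigma}=\text{(nonlocal terms)}.
\]
The second term is $\ge C_T^{-1}\norm{w}_{L^2}^2$ by monotonicity of the exponential and the $L^\infty$ bounds.

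Next I bound the nonlocal terms. The difference of $\rho_ih_ie^{u_i}/\int h_ie^{u_i}$ and the corresponding expression for $v$ is controlled by $C_T\big(\abs{w_i}+\norm{w_i}_{L^1}\big)$ pointwise, using the lower bound on the denominators and the bound on $h$. Paired with the $H^{-1}$ test function, it is bounded by $C_T\norm{w}_{L^2}\norm{w}_{H^{-1}}\le \varepsilon\norm{w}_{L^2}^2+C_\varepsilon\norm{w}_{H^{-1}}^2$. The $Q_i$ terms cancel in the difference. Absorbing gives $\frac{\dif}{\dif t}\norm{w}_{H^{-1}}^2\le C_T\norm{w}_{H^{-1}}^2$. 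Since $w(0)=0$, Gronwall's inequality yields $w\equiv0$, so $e^{u_i}=e^{v_i}$ and hence $u=v$.

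The step I expect to need care is the choice of norm adapted to the matrix $A$. The left side must actually be a time derivative, so I will test the $i$-th equation by $G_i=(-\Delta)^{-1}\sum_k a_{ik}w_k$ (or use the $A$-weighted $H^{-1}$ norm $\sum a_{ik}\langle w_i,(-\Delta)^{-1}w_k\rangle$, which is equivalent to $\norm{w}_{H^{-1}}^2$ by positivity of $A$). With this choice the Laplacian term produces exactly $\sum_i\int(u_i-v_i)w_i$, because $A$ cancels against its inverse.
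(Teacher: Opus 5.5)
Your proof is correct, but it takes a different route from the paper.

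\textbf{The paper's route.} The paper works with $\eta_i=u_i-v_i$ and the energy $\sum_i\int_\Sigma(e^{u_i}-e^{v_i})\eta_i\dif\mu_\Sigma=\sum_i\int_\Sigma m_i\eta_i^2\dif\mu_\Sigma$. The obstacle you hit in your first attempt is handled by the identity $\frac{\dif}{\dif t}\int_\Sigma m_i\eta_i^2\dif\mu_\Sigma=2\int_\Sigma\partial_t(e^{u_i}-e^{v_i})\,\eta_i\dif\mu_\Sigma-\int_\Sigma(\partial_t m_i)\,\eta_i^2\dif\mu_\Sigma$. Testing with $\eta_i$ itself then gives the good term $-2\sum_{i,j}a^{ij}\int_\Sigma\hin{\nabla\eta_i}{\nabla\eta_j}\dif\mu_\Sigma$; no factor $A$ is needed, since $(a^{ij})$ is already positive definite. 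The cost is the term containing $\partial_t m_i$, for which the paper asserts $\abs{\partial_t m_i}\le C_T$. With only $H^2$ control one really has $\abs{\partial_t m_i}\le C_T(\abs{\partial_t u_i}+\abs{\partial_t v_i})$, with $\partial_t u,\partial_t v$ bounded merely in $L^2$. So that step needs an extra H\"older/Gagliardo--Nirenberg absorption into the gradient term.

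\textbf{Your route.} Your $A$-weighted $H^{-1}$ estimate for $w=e^u-e^v$ never differentiates a coefficient in time. It needs only upper bounds on $u,v$, the lower bound on $\int_\Sigma h_ie^{u_i}\dif\mu_\Sigma$, and enough regularity that $\partial_t w\in L^2(0,T;H^{-1})$. In exchange it relies on mass conservation, which the paper's argument does not use, to make $w$ mean-free so that $(-\Delta)^{-1}w$ is defined. Symmetry of $A$ then turns the left side into an exact time derivative, and the Laplacian term collapses to the coercive quantity $\sum_i\int_\Sigma(u_i-v_i)(e^{u_i}-e^{v_i})\dif\mu_\Sigma\ge C_T^{-1}\norm{w}_{L^2}^2$.

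\textbf{Comparison.} Your argument is the more economical of the two as far as a priori regularity goes. The paper's version has the advantage of controlling $\norm{\nabla(u-v)}_{L^2}$ directly and of not depending on $\int_\Sigma Q_i\dif\mu_\Sigma=\rho_i$. In a final write-up, drop the abandoned candidates (including the garbled ``$W=\sum_k a_{ik}w_k$-weighted'') and state only the test function $(-\Delta)^{-1}\sum_k a_{ik}w_k$ for the $i$-th equation.
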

\begin{proof}
Let $u$ and $v$ be two solutions of \eqref{eq:liouville} with the same initial datum $u_0$, and define $\eta_i\coloneqq u_i-v_i$. From \eqref{eq:liouville} we obtain
\begin{align*}
    \dfrac{\partial\left(e^{u_i}-e^{v_i}\right)}{\partial t}=\sum_{j=1}^Na^{ij}\Delta (u_j-v_j)+\dfrac{\rho_ih_ie^{u_i}}{\int_{\Sigma}h_ie^{u_i}\dif\mu_{\Sigma}}-\dfrac{\rho_ih_ie^{v_i}}{\int_{\Sigma}h_ie^{v_i}\dif\mu_{\Sigma}},\quad i\in I.
\end{align*}
A direct computation yields
\begin{align*}
    \dfrac{\dif}{\dif t}\sum_{i=1}^N\int_{\Sigma}\left(e^{u_i}-e^{v_i}\right)\eta_i\dif\mu_{\Sigma}=&-2\sum_{i,j=1}^Na^{ij}\int_{\Sigma}\hin{\nabla\eta_i}{\nabla\eta_j}\dif\mu_{\Sigma}\\
    &+2\sum_{i=1}^N\int_{\Sigma}\left(\dfrac{\rho_ih_ie^{u_i}}{\int_{\Sigma}h_ie^{u_i}\dif\mu_{\Sigma}}-\dfrac{\rho_ih_ie^{v_i}}{\int_{\Sigma}h_ie^{v_i}\dif\mu_{\Sigma}}\right)\eta_i-\sum_{i=1}^N\int_{\Sigma}\dfrac{\partial}{\partial t}\dfrac{e^{u_i}-e^{v_i}}{u_i-v_i}\eta_i^2\dif\mu_{\Sigma}.
\end{align*}

Because $u_0\in H^2\left(\Sigma,\mathbb{R}^N\right)$, the H\"older estimate \eqref{eq:holder} shows that
both $u$ and $v$ lie in $C^{\alpha, \alpha/2}([0, T]\times\Sigma,\mathbb{R}^N)$. Consequently there exists a constant $C_T>0$ such that
\begin{align}\label{eq:equiv-metric}
C_T^{-1}|\eta_i|^2\le\bigl(e^{u_i}-e^{v_i}\bigr)\eta_i\le C_T|\eta_i|^2,
\qquad i\in I.
\end{align}
We first estimate the non‑local terms. Observe that
\begin{align*}
    \abs{\dfrac{\rho_ih_ie^{u_i}}{\int_{\Sigma}h_ie^{u_i}\dif\mu_{\Sigma}}-\dfrac{\rho_ih_ie^{v_i}}{\int_{\Sigma}h_ie^{v_i}\dif\mu_{\Sigma}}}=&\abs{\rho_ih_i\dfrac{\left(e^{u_i}-e^{v_i}\right)\int_{\Sigma}h_ie^{v_i}\dif\mu_{\Sigma}+e^{v_i}\int_{\Sigma}h_i\left(e^{u_i}-e^{v_i}\right)\dif\mu_{\Sigma}}{\left(\int_{\Sigma}h_ie^{u_i}\dif\mu_{\Sigma}\right)\left(\int_{\Sigma}h_ie^{v_i}\dif\mu_{\Sigma}\right)}}\\
    \leq&C_T\abs{\eta_i}+C_T\norm{\eta_i}_{L^2\left(\Sigma\right)}.
\end{align*}
For the time‑derivative term we use the identity
\begin{align*}
    \dfrac{e^{u_i}-e^{v_i}}{u_i-v_i}
=\int_0^1 e^{(1-\theta)u_i+\theta v_i}\dif\theta.
\end{align*}
Differentiating under the integral and employing the uniform bounds provided by the $H^2$-estimate \eqref{eq:H^2}, we obtain
\begin{align*}
   \abs{\dfrac{\partial}{\partial t}\dfrac{e^{u_i}-e^{v_i}}{u_i-v_i}}=&\abs{\dfrac{\left(e^{u_i}\frac{\partial u_i}{\partial t}-e^{v_i}\frac{\partial v_i}{\partial t}\right)\left(u_i-v_i\right)-\left(e^{u_i}-e^{v_i}\right)(\frac{\partial u_i}{\partial t}-\frac{\partial v_i}{\partial t})}{\left(u_i-v_i\right)^2}}\\
   =&\abs{\dfrac{e^{u_i}\left(e^{-\eta_i}+\eta_i-1\right)\frac{\partial u_i}{\partial t}-e^{v_i}\left(e^{\eta_i}-\eta_i-1\right)\frac{\partial v_i}{\partial t}}{\eta_i^2}}\\
   \leq&C_T.
\end{align*}
Collecting these estimates and using \eqref{eq:equiv-metric} we arrive at
\begin{align*}
    \dfrac{\dif}{\dif t}\sum_{i=1}^N\int_{\Sigma}\left(e^{u_i}-e^{v_i}\right)\eta_i\dif\mu_{\Sigma}\leq&-\sum_{i,j=1}^Na^{ij}\int_{\Sigma}\hin{\nabla\eta_i}{\nabla\eta_j}\dif\mu_{\Sigma}+C_T\sum_{i=1}^N\int_{\Sigma}\left(e^{u_i}-e^{v_i}\right)\eta_i\dif\mu_{\Sigma}.
\end{align*}
Applying Gronwall's inequality gives
\begin{align*}
    \int_{\Sigma}\left(e^{u_i}-e^{v_i}\right)\left(u_i-v_i\right)\dif\mu_{\Sigma}=0,
\end{align*}
which forces $u_i\equiv v_i$ for every $i$. Hence the solution is unique.

\end{proof}

Additionally, we derive a priori estimates for higher regularity through an inductive argument.

\begin{lem}[Higher regularities]Let $k$ be a positive integer. There exists a positive constant $C_{T,k}$, depending only on $T$, $k$ and upper bounds of   $c_{A}, c_{\text{reg}}, \norm{u_0}_{H^{2k}\left(\Sigma\right)}, \abs{\rho}, \norm{h}_{C^{2k-1}\left(\Sigma\right)}$ and $\norm{Q}_{H^{2k-1}\left(\Sigma\right)}$, such that
\begin{align}\label{eq:H^{2k}}
\norm{u(t)}_{H^{2k}\left(\Sigma\right)}\leq C_{T,k},\quad\forall t\in[0,T].
\end{align}
\end{lem}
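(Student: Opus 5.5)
We argue by induction on $k$. The case $k=1$ is exactly \autoref{lem:H2-estimate}. Assume \eqref{eq:H^{2k}} holds for some $k\geq1$; we aim to bound $\norm{u(t)}_{H^{2k+2}\left(\Sigma\right)}$ on $[0,T]$. By the inductive hypothesis and Sobolev embedding, $u$ is bounded in $C^{2k-1,\alpha}\left(\Sigma\right)$ uniformly in $t\in[0,T]$. In particular $e^{\pm u_i}$ and the nonlocal coefficients $\rho_i/\int_{\Sigma}h_ie^{u_i}\dif\mu_{\Sigma}$ are uniformly bounded, the latter by $c_{\text{reg}}$. We also expect a bound on $\int_0^T\norm{\partial_tu}^2_{H^{2k-1}}\dif t$ carried along from the previous step, just as the $k=1$ step produced $\int_0^T\norm{\nabla w}^2_{L^2}\dif t\le C_T$.

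Mimicking the $H^2$ computation, we differentiate the energy
\begin{align*}
E_k(t)\coloneqq\sum_{i,j=1}^Na^{ij}\int_{\Sigma}\Delta^{k+1}u_i\,\Delta^{k}\Delta u_j\dif\mu_{\Sigma}.
\end{align*}
Writing $\sum_ja^{ij}\Delta u_j=e^{u_i}\partial_tu_i-f_i$, with $f_i=\rho_ih_ie^{u_i}/\int_{\Sigma}h_ie^{u_i}\dif\mu_{\Sigma}-Q_i$, we get
\begin{align*}
\tfrac12\dfrac{\dif}{\dif t}E_k=\sum_i\int_{\Sigma}\Delta^{k}\left(e^{u_i}\partial_tu_i-f_i\right)\Delta^{k+1}\partial_tu_i\dif\mu_{\Sigma}.
\end{align*}
The top-order term is $\int e^{u_i}\Delta^k\partial_tu_i\,\Delta^{k+1}\partial_tu_i$. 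Integrating by parts once, it gives the good term $-\int e^{u_i}\abs{\nabla\Delta^k\partial_tu_i}^2\le -c\norm{\nabla\Delta^k\partial_t u}^2_{L^2}$, plus a commutator involving $\nabla u_i$, which is bounded. The remaining terms come from distributing derivatives in $\Delta^k(e^{u_i}\partial_tu_i)$ and $\Delta^kf_i$. Each of them is a product of lower derivatives of $u$ (at most $2k+2$ derivatives on one factor), derivatives of $\partial_tu$ of order at most $2k-1$, and derivatives of $h,Q$; this is where $\norm{h}_{C^{2k+1}}$ and $\norm{Q}_{H^{2k+1}}$ enter. We pair each such term with $\Delta^{k+1}\partial_tu_i$, move one derivative over to obtain $\nabla\Delta^k\partial_tu_i$, and absorb it into the good term by Young's inequality.

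The main obstacle is controlling the products where a factor has near-top order, for example $\nabla^{2k+2}u\cdot\partial_tu$ or $\nabla^{2k}u\cdot\nabla^2\partial_tu$. For these we use the Gagliardo–Nirenberg inequality \eqref{eq:interpolation} and its higher-order analogues, interpolating between the inductive $H^{2k}$ bound and the quantities $E_k$ and $\norm{\nabla\Delta^k\partial_tu}_{L^2}$. This should yield
\begin{align*}
\dfrac{\dif}{\dif t}E_k\le-\tfrac12\norm{\nabla\Delta^k\partial_tu}^2_{L^2}+C_{T,k}\left(1+E_k\right)\left(1+\norm{\partial_t u}^2_{H^{2k-1}}\right).
\end{align*}
The time-integrability of the last factor comes from the inductive step, or, at level $k=1$, from \eqref{eq:monotonicity}. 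Gronwall then bounds $E_k$ and $\int_0^T\norm{\nabla\Delta^k\partial_tu}^2\dif t$.

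Since $E_k$ controls $\norm{\Delta^{k+1}u}^2_{L^2}$ via the positive definiteness of $A^{-1}$, elliptic regularity combined with the $H^1$ bound \eqref{eq:H^1} gives $\norm{u(t)}_{H^{2k+2}}\le C_{T,k+1}$, which closes the induction.
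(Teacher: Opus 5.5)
Your proof is correct. It uses the same energy method as the paper: differentiate the $A^{-1}$-weighted $L^2$-norm of the top iterated Laplacian, substitute the flow, extract the dissipation $\int_{\Sigma}e^{u_i}\abs{\nabla\partial_t\Delta^{m}u_i}^2\dif\mu_{\Sigma}$, interpolate, then apply Gronwall and elliptic regularity. You organize the error terms differently, though, and this changes the shape of the induction.

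The paper applies $\Delta^{k-1}$ to the flow written as $\partial_tu_i=e^{-u_i}\sum_ja^{ij}\Delta u_j+\tilde h_i-e^{-u_i}Q_i$. Then $\sum_ja^{ij}\Delta^ku_j$ equals $e^{u_i}\partial_t\Delta^{k-1}u_i$ minus $e^{u_i}\sum_ja^{ij}[\Delta^{k-1},e^{-u_i}\Delta]u_j$ and data terms. These error terms contain only spatial derivatives of $u$. With $\phi_i=e^{u_i/2}\partial_t\Delta^{k-1}u_i$ the paper arrives at $\frac{\dif}{\dif t}\sum_{i,j}a^{ij}\int_{\Sigma}\Delta^ku_i\Delta^ku_j\dif\mu_{\Sigma}\le-\frac12\norm{\nabla\phi}^2_{L^2\left(\Sigma\right)}+C\norm{u}^2_{H^{2k}\left(\Sigma\right)}+C$. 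That is a plain linear Gronwall using only the $H^{2k-2}$ and $H^2$ bounds.

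You instead expand $\Delta^k(e^{u_i}\partial_tu_i)$ by Leibniz, which leaves mixed products $\nabla^a(e^{u_i})\,\nabla^b\partial_tu_i$. That is why you need three extra ingredients:
\begin{itemize}
\item the strengthened inductive hypothesis $\int_0^T\norm{\partial_tu}^2_{H^{2k-1}\left(\Sigma\right)}\dif t\le C$;
\item a Gronwall inequality with a time-integrable coefficient;
\item propagation of $\int_0^T\norm{\nabla\Delta^k\partial_tu}^2_{L^2\left(\Sigma\right)}\dif t\le C$ to the next level.
\end{itemize}
Both routes close. The paper's commutator form is more economical. Yours is a more mechanical expansion, paid for by carrying a space-time bound on $\partial_tu$ through the induction.

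A few small corrections.
\begin{itemize}
\item In two dimensions $H^{2k}\subset C^{2k-2,\alpha}$, not $C^{2k-1,\alpha}$. So at the first step ($H^2\to H^4$), $\nabla u$ is bounded in every $L^p$ with $p<\infty$, but not in $L^\infty$. H\"older and Gagliardo--Nirenberg still handle the term $\int_{\Sigma}e^{u_i}\Delta^k\partial_tu_i\hin{\nabla u_i}{\nabla\Delta^k\partial_tu_i}\dif\mu_{\Sigma}$.
\item At that step, the bound $\int_0^T\norm{\partial_tu}^2_{H^1\left(\Sigma\right)}\dif t\le C_T$ does not follow from \eqref{eq:monotonicity} alone, which only controls $\int_0^T\norm{e^{u/2}\partial_tu}^2_{L^2\left(\Sigma\right)}\dif t$. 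It comes from the term $\int_0^T\norm{\nabla w}^2_{L^2\left(\Sigma\right)}\dif t$ in the proof of \autoref{lem:H2-estimate}, as you noted at the start. It must be stated as part of the induction hypothesis, because the lemma itself does not record it.
\item After moving one derivative, the error terms are $\nabla^a(e^{u_i})\nabla^b\partial_tu_i$ with $a+b=2k+1$, $a\geq1$, plus data terms. So at most $2k+1$ derivatives fall on $u$. The products $\nabla^{2k+2}u\cdot\partial_tu$ and $\nabla^{2k}u\cdot\nabla^2\partial_tu$ you single out never occur. The worst terms that do occur, $\nabla^{2k+1}u\cdot\partial_tu$ and $\nabla u\cdot\nabla^{2k}\partial_tu$, are exactly what your interpolation handles.
\end{itemize}
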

\begin{proof}
The case $k=1$ has been established in \autoref{lem:H2-estimate}. We therefore assume $k\ge 2$.

Rewrite the flow of Liouville system \eqref{eq:liouville} as
\begin{align*}
    \dfrac{\partial u_i}{\partial t}=e^{-u_i}\sum_{j=1}^Na^{ij}\Delta u_j+\tilde h_i-e^{-u_i}Q_i,
\end{align*}
where
\begin{align*}
    \tilde h_i=\dfrac{\rho_ih_i}{\int_{\Sigma}h_ie^{u_i}\dif\mu_{\Sigma}}.
\end{align*}
Applying the operator $\Delta^{k-1}$ to both sides gives
\begin{align*}
    \dfrac{\partial(\Delta^{k-1}u_i)}{\partial t}=e^{-u_i}\sum_{j=1}^Na^{ij}\Delta (\Delta^{k-1}u_j)+\sum_{j=1}^{N}a^{ij}\left[\Delta^{k-1},e^{-u_i}\Delta \right]u_j+\Delta^{k-1}\tilde h_i-\Delta^{k-1}\left(e^{-u_i}Q_i\right).
\end{align*}
Define $\phi_{i}=e^{u_i/2}\frac{\partial(\Delta^{k-1}u_i)}{\partial t}$.  Differentiating the quadratic form of higher-order Laplacians yields
\begin{align*}
    &\dfrac{\dif}{\dif t}\sum_{i,j=1}^Na^{ij}\int_{\Sigma}\Delta^{k}u_i\Delta^{k}u_j\dif\mu_{\Sigma}\\
    =&2\sum_{i=1}^N\int_{\Sigma}\Delta\left(e^{-u_i/2}\phi_i\right)\left(e^{u_i/2}\phi_i-e^{u_i}\sum_{j=1}^{N}a^{ij}\left[\Delta^{k-1},e^{-u_i}\Delta \right]u_j+e^{u_i}\Delta^{k-1}\left(e^{-u_i}Q_i-\tilde h_i\right)\right)\dif\mu_{\Sigma}\\
    =&-2\int_{\Sigma}\abs{\nabla\phi}^2\dif\mu_{\Sigma}+\dfrac12\int_{\Sigma}\abs{\nabla u_i}^2\phi_i^2\dif\mu_{\Sigma}\\
    &+2\sum_{i=1}^N\int_{\Sigma}\hin{\nabla\phi_i-\dfrac12\phi_i\nabla u_i}{e^{-u_i/2}\nabla\left(e^{u_i}\sum_{j=1}^{N}a^{ij}\left[\Delta^{k-1},e^{-u_i}\Delta \right]u_j-e^{u_i}\Delta^{k-1}\left(e^{-u_i}Q_i-\tilde h_i\right)\right)}\dif\mu_{\Sigma}\\
    \leq&-\norm{\nabla\phi}_{L^2\left(\Sigma\right)}^2+\norm{\nabla u}_{L^4\left(\Sigma\right)}^2\norm{\phi}_{L^4\left(\Sigma\right)}^2\\
    &+c\sum_{i=1}^N\norm{e^{-u_i/2}\nabla\left(e^{u_i}\sum_{j=1}^{N}a^{ij}\left[\Delta^{k-1},e^{-u_i}\Delta \right]u_j-e^{u_i}\Delta^{k-1}\left(e^{-u_i}Q_i-\tilde h_i\right)\right)}_{L^2\left(\Sigma\right)}^2.
\end{align*}
Using the H\"older estimate \eqref{eq:holder} together with the already obtained bounds in $H^{2\ell}\left(\Sigma\right)$ for $\ell<k$, we can control the commutator and the nonlinear terms. Indeed, by standard commutator estimates and the Sobolev inequalities,
\begin{align*}
    \sum_{i=1}^N\norm{e^{-u_i/2}\nabla\left(e^{u_i}\sum_{j=1}^{N}a^{ij}\left[\Delta^{k-1},e^{-u_i}\Delta \right]u_j-e^{u_i}\Delta^{k-1}\left(e^{-u_i}Q_i\right)\right)}_{L^2\left(\Sigma\right)}^2\leq C_{T,k}\norm{u}_{H^{2k}\left(\Sigma\right)}^2+C_{T,k}
\end{align*}
Applying the Gagliardo-Nirenberg interpolation inequality \eqref{eq:interpolation} to $\phi$ and employing the $H^{2k-2}$ bound (induction hypothesis) together with the $H^2$ estimate \eqref{eq:H^2}, we obtain
\begin{align*}
    \dfrac{\dif}{\dif t}\sum_{i,j=1}^Na^{ij}\int_{\Sigma}\Delta^{k}u_i\Delta^{k}u_j\dif\mu_{\Sigma}\leq-\dfrac12\norm{\nabla\phi}_{L^2\left(\Sigma\right)}^2+C_{T,k}\norm{u}_{H^{2k}\left(\Sigma\right)}^2+C_{T,k}.
\end{align*}
Gronwall's inequality then yields
\begin{align*}
    \sup_{t\in[0,T]}
\sum_{i,j=1}^N a^{ij}\int_{\Sigma}\Delta^k u_i\,\Delta^k u_j\dif\mu_{\Sigma}
\le C_{T,k}.
\end{align*}
By elliptic regularity this implies $\norm{u(t)}_{H^{2k}\left(\Sigma\right)}\le C_{T,k}$ for all $t\in[0,T]$, which completes the induction step.

\end{proof}

As a consequence of the higher regularity estimates, we obtain the smoothness of the flow of Liouville system with smooth initial data.

\begin{lem}\label{lem:smooth}
     If the initial datum $u_0$ is smooth, i.e. $u_0\in C^\infty\left(\Sigma,\mathbb{R}^N\right)$,
    then the solution $u$ of the flow of Liouville system \eqref{eq:liouville} is smooth on $[0, T]\times\Sigma$.
\end{lem}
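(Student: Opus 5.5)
The plan is to combine the higher-regularity estimate \eqref{eq:H^{2k}} with Sobolev embedding to get spatial smoothness. The equation itself then converts spatial regularity into temporal regularity, and an induction on the number of time derivatives finishes the argument.

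\emph{Spatial smoothness.} Since $u_0\in C^\infty\left(\Sigma,\mathbb{R}^N\right)$, we have $u_0\in H^{2k}\left(\Sigma\right)$ for every $k$, and $h,Q$ are smooth. Hence \eqref{eq:H^{2k}} gives $\sup_{t\in[0,T]}\norm{u(t)}_{H^{2k}\left(\Sigma\right)}\le C_{T,k}$ for all $k\geq1$. The embedding $H^{2k}\left(\Sigma\right)\hookrightarrow C^{2k-2,\alpha}\left(\Sigma\right)$ then shows that $\nabla^l u$ is bounded on $[0,T]\times\Sigma$ for every $l$, uniformly in $t$. Throughout, the regularity condition $\int_\Sigma h_ie^{u_i}\dif\mu_\Sigma\ge c_{\text{reg}}^{-1}$ keeps the coefficient $\tilde h_i$ bounded together with all its spatial derivatives.

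\emph{Time derivatives.} We write the flow as
\begin{align*}
\partial_t u_i=e^{-u_i}\sum_{j}a^{ij}\Delta u_j+\tilde h_i-e^{-u_i}Q_i,\qquad \tilde h_i=\frac{\rho_ih_i}{\int_\Sigma h_ie^{u_i}\dif\mu_\Sigma}.
\end{align*}
The right-hand side involves $u$, $\nabla^2u$ and the scalar function of time $\left(\int_\Sigma h_ie^{u_i}\dif\mu_\Sigma\right)^{-1}$, which is bounded. So $\partial_t u$ is bounded in $C^m\left(\Sigma\right)$ for every $m$, uniformly on $[0,T]$. Next, the derivative
\begin{align*}
\frac{\dif}{\dif t}\int_\Sigma h_ie^{u_i}\dif\mu_\Sigma=\int_\Sigma h_ie^{u_i}\partial_tu_i\dif\mu_\Sigma
\end{align*}
is bounded, so $\tilde h_i$ is Lipschitz in $t$. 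Differentiating the equation in $t$ repeatedly, $\partial_t^r\nabla^l u$ is a polynomial expression in $e^{\pm u}$, in spatial derivatives of $\partial_t^{s}u$ with $s<r$, and in time derivatives of the nonlocal factor of order $<r$. The latter are themselves integrals of $\partial_t^{s}u$, $s<r$, against smooth weights. By induction on $r$, every $\partial_t^r\nabla^l u$ is therefore bounded and continuous on $[0,T]\times\Sigma$, and hence $u\in C^\infty\left([0,T]\times\Sigma\right)$.

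\emph{Main obstacle.} The delicate point is justifying these manipulations: the a priori estimates are stated for a solution already known to exist, and differentiating the equation in $t$ requires some initial regularity. I would handle this by first using the Hölder estimate \eqref{eq:holder} and the spatial smoothness. With these, the equation becomes a linear uniformly parabolic system in $u$ with coefficients $e^{-u_i}a^{ij}$ that are Hölder continuous in $(t,x)$ and smooth in $x$, while the nonlocal terms enter only as Hölder-continuous source terms. Parabolic Schauder theory then gives $u\in C^{2+\alpha,1+\alpha/2}$. Bootstrapping the Schauder estimates, alternating with the formal differentiation above, yields $C^{2m+\alpha,m+\alpha/2}$ bounds for every $m$. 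Uniqueness (\autoref{lem:uniqueness}) ensures that the solution obtained this way coincides with the given one.
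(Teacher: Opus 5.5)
Your proposal is correct and follows essentially the same route as the paper. Both get spatial regularity from the $H^{2k}$ estimates plus Sobolev embedding, then differentiate the equation in $t$ inductively to turn spatial bounds into bounds on all mixed derivatives $\partial_t^r\nabla^l u$. Your closing paragraph, which justifies the formal manipulations via parabolic Schauder theory and uniqueness, addresses a point the paper passes over. Your embedding $H^{2k}\hookrightarrow C^{2k-2,\alpha}$ is also the correct one; the paper's stated $C^{2k+\alpha}$ is off by two derivatives, which is harmless since every $k$ is available.
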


\begin{proof}
By \eqref{eq:holder}, for every $\alpha\in(0,1)$ we have
\begin{align*}
    \norm{u}_{ C^{\alpha/2,\alpha}\left([0, T]\times\Sigma\right)}\leq C_{T,\alpha}.
\end{align*}
  Suppose that for some positive integer $k$ the estimate
\begin{align*}
    \norm{u}_{ C^{k-1+\alpha/2, 2(k-1)+\alpha}\left([0,T]\times\Sigma\right)}\leq C_{T,k-1
    \,\alpha}
\end{align*}
holds.
According to \eqref{eq:H^{2k}} and the Sobolev embedding theorem, we obtain
\begin{align*}
    \norm{u(t)}_{C^{2k+\alpha}\left(\Sigma\right)}\leq C_{T,k,\alpha}.
\end{align*}
      Observe that
   \begin{align*}
    \dfrac{\partial^{k}u_i}{\partial t^{k}}=e^{-u_i}\sum_{j=1}^Na^{ij}\Delta\dfrac{\partial^{k-1}u_j}{\partial t^{k-1}}+\left[e^{-u_i}\sum_{j=1}^Na^{ij}\Delta,\dfrac{\partial^{k-1}}{\partial t^{k-1}}\right]u_j+\dfrac{\partial^{k-1}\tilde h_i}{\partial t^{k-1}}-\dfrac{\partial^{k-1}}{\partial t^{k-1}}\left(e^{-u_i}Q_i\right).
   \end{align*}
   Consequently,
   \begin{align*}
       \norm{\dfrac{\partial^{k}u}{\partial t^{k}}}_{C^{\alpha/2}\left(\Sigma\right)}\leq C_{T,k,\alpha}.
   \end{align*}
     Hence
   \begin{align*}
       \norm{u}_{C^{k+\alpha/2, 2k+\alpha}\left([0, T]\times\Sigma\right)}\leq C_{T,k,\alpha}.
   \end{align*}
   Iterating this argument yields smoothness of $u$ along the flow \eqref{eq:liouville}.
\end{proof}

\section{Global existence}

In this section, we first establish the short time existence and uniqueness of the flow of Liouville system  under the following regularity condition:
\begin{align*}
\min_{1\leq i\leq N}\int_{\Sigma}h_ie^{u_i}\dif\mu_{\Sigma}>0.
\end{align*}

\begin{theorem}[Short time existence]\label{thm:local}
Let $u_0=\left(u_{1,0},\dots,u_{N,0}\right)\in C^{\infty}\left(\Sigma,\mathbb{R}^N\right)$ satisfy
\begin{align*}
\int_{\Sigma}h_ie^{u_{i,0}}\dif\mu_{\Sigma}>0, \quad i\in I.
\end{align*}
Then the flow of Liouville system \eqref{eq:liouville} admits a unique smooth solution on some time interval $[0,\epsilon)$. Moreover, if the maximal existence time $T$ is finite, then
\begin{align*}
\liminf_{t\nearrow T}\min_{1\leq i\leq N}\int_{\Sigma}h_ie^{u_i(t)}\dif\mu_{\Sigma}=0.
\end{align*}
\end{theorem}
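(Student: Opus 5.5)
We prove short-time existence by rewriting the flow as a quasilinear parabolic system for $u$ and applying a fixed-point argument. Dividing \eqref{eq:liouville} by $e^{u_i}$ gives
\begin{align*}
\dfrac{\partial u_i}{\partial t}=e^{-u_i}\sum_{j=1}^Na^{ij}\Delta u_j+\dfrac{\rho_ih_i}{\int_{\Sigma}h_ie^{u_i}\dif\mu_{\Sigma}}-e^{-u_i}Q_i,\quad i\in I.
\end{align*}
The principal part is $\operatorname{diag}(e^{-u_i})A^{-1}\Delta$. Since $A^{-1}$ is symmetric positive definite and $\operatorname{diag}(e^{-u_i})$ is positive diagonal, this is similar to the positive definite matrix $D^{1/2}A^{-1}D^{1/2}$. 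So the system is parabolic in the sense of Petrovskii, with constants controlled by $c_A$ and $\norm{u}_{L^\infty}$. The nonlocal term is a smooth functional of $u$ as long as $\int_\Sigma h_ie^{u_i}>0$.

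Fix $\delta>0$ with $\int_\Sigma h_ie^{u_{i,0}}\ge 2\delta$. Work in the closed set $X$ of $v\in C^{\alpha/2,\alpha}([0,\epsilon]\times\Sigma,\mathbb{R}^N)$ with $v(0)=u_0$ and $\norm{v-u_0}_{C^0}\le 1$. For $\epsilon$ small, every $v\in X$ keeps $\int h_ie^{v_i}\ge\delta$. Given $v\in X$, solve the linear system
\begin{align*}
\partial_t w_i=e^{-v_i}\sum_j a^{ij}\Delta w_j+\rho_ih_i\Big/\int h_ie^{v_i}-e^{-v_i}Q_i,\qquad w(0)=u_0,
\end{align*}
using linear Schauder theory for parabolic systems with Hölder coefficients. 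This gives $w\in C^{1+\alpha/2,2+\alpha}$ with bounds uniform in $v\in X$. Compactness of the embedding into $C^{\alpha/2,\alpha}$, together with smallness of $\epsilon$, shows the map $v\mapsto w$ sends $X$ into itself and is a contraction in a weaker norm (or is compact, and Schauder's theorem applies). The resulting fixed point solves the flow. Uniqueness is \autoref{lem:uniqueness}. Smoothness follows from bootstrapping exactly as in \autoref{lem:smooth}.

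For the blow-up alternative, argue by contradiction. Suppose $T<\infty$ and $\liminf_{t\nearrow T}\min_i\int_\Sigma h_ie^{u_i(t)}>0$. Then there is $c_{\text{reg}}$ with $\int_\Sigma h_ie^{u_i(t)}\ge c_{\text{reg}}^{-1}$ for all $t\in[0,T)$. This is exactly the hypothesis of Section 2.
\begin{itemize}
\item The $H^2$, Hölder and higher-order estimates \eqref{eq:H^2}, \eqref{eq:holder}, \eqref{eq:H^{2k}} then hold on $[0,T)$ with constants depending only on $T$ and the data.
\item Hence $u(t)$ is bounded in every $H^{2k}$, and $\partial_t u$ is bounded as well.
\item So $u(t)\to u(T)$ smoothly as $t\nearrow T$, and $\int h_ie^{u_i(T)}\ge c_{\text{reg}}^{-1}>0$.
\end{itemize}
Restarting the flow from $u(T)$ with the short-time result and gluing, using uniqueness, extends the solution beyond $T$. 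This contradicts maximality.

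The main obstacle is the local existence step. The system is coupled through the full matrix $A^{-1}$ in the highest order, so scalar maximum-principle arguments fail, and we need linear parabolic theory for strongly parabolic systems. Strong parabolicity here comes from the similarity to $D^{1/2}A^{-1}D^{1/2}$. If citing Solonnikov/Ladyzhenskaya-type theory is problematic, a fallback is Galerkin approximation with the energy structure of \autoref{lem:H2-estimate}. The $H^2$ estimate there is genuinely an energy estimate and adapts to Galerkin truncations, giving a solution in $H^{1,2}$, after which regularity follows.
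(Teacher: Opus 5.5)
\paragraph{Comparison with the paper.} Your treatment of the blow-up alternative, of uniqueness (\autoref{lem:uniqueness}) and of smoothness (\autoref{lem:smooth}) matches the paper's. For local existence you take a different route.

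The paper follows Chang's argument for the harmonic map heat flow. It works in $W^{1,2}_p([0,\delta]\times\Sigma)$ with $p>4$ and linearizes the full operator $u\mapsto\partial_t e^{u}-A^{-1}\Delta u-\rho h e^{u}/\int_\Sigma h e^{u}+Q$, nonlocal term included, at the time-independent extension of $u_0$. That extension solves the equation perturbed by $\psi$. The inverse function theorem, applied to the truncated datum $\psi_\epsilon$ (zero on $[0,\epsilon]$, with $\norm{\psi_\epsilon-\psi}_{L^p}\le C\epsilon^{1/p}$), then produces an exact solution on $[0,\epsilon]$.

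You instead freeze coefficients, treat the nonlocal term as a source, and close with a fixed point in H\"older spaces using Schauder theory. This spares you checking differentiability of the nonlinear map and invertibility of a linearization containing a nonlocal rank-one term. It also lands directly in $C^{1+\alpha/2,2+\alpha}$, where the bootstrap of \autoref{lem:smooth} starts. The paper's version keeps the equation in its original form $\partial_t e^{u}$ and works at the Sobolev level of the $H^2$-uniqueness lemma.

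Both routes rest on the same linear theory for the principal matrix $\mathrm{diag}(e^{-u_i})A^{-1}$, and your similarity to $D^{1/2}A^{-1}D^{1/2}$ is the right justification. Note that it gives Petrovskii parabolicity only: the symmetric part of $\mathrm{diag}(e^{-u_i})A^{-1}$ can be indefinite when the $u_i$ differ a lot. So ``strongly parabolic'' in your last paragraph must be read in the Petrovskii sense, which is what Solonnikov-type Schauder theory requires anyway.

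\paragraph{The set $X$ needs repair.} As defined, $X$ carries no bound on $\norm{v}_{C^{\alpha/2,\alpha}}$. This breaks two of your claims.
\begin{itemize}
\item The Schauder constants for the coefficients $e^{-v_i}$ are not uniform on $X$.
\item The condition $\norm{v-u_0}_{C^0}\le 1$ does not give $\int_\Sigma h_i e^{v_i}\ge\delta$ for small $\epsilon$. Because $h_i$ may change sign, a $C^0$-deviation of size $1$ can make that integral nonpositive, and nothing in $X$ stops $v$ from reaching such a deviation immediately.
\end{itemize}

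The fix is to add the constraint $\norm{v}_{C^{\alpha/2,\alpha}}\le K$.
\begin{itemize}
\item Then $\abs{v(t)-u_0}\le K t^{\alpha/2}$, which gives the lower bound on $\int_\Sigma h_i e^{v_i}$ for small $\epsilon$.
\item The bound $\norm{w}_{C^{1+\alpha/2,2+\alpha}}\le C(K)$ becomes uniform on $X$.
\item For $K$ large depending on $u_0$, and $\epsilon$ small, $w$ returns to $X$.
\end{itemize}

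Finally, the coupled system has no maximum principle, so a contraction in $C^0$ is not readily available. You can either apply Schauder's theorem on $X$, which is convex and compact in $C^{\beta/2,\beta}$ for $\beta<\alpha$. Or you can get a contraction in $C^{\alpha/2,\alpha}$ from the Schauder estimate for $w-\tilde w$: this difference vanishes at $t=0$ and hence gains a factor $\epsilon^{\gamma}$.
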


\begin{proof}
The proof follows standard techniques and is analogous to the approach used for the harmonic map heat flow as presented by Chang \cite{Cha89heat}. For the convenience of the reader we provide a complete proof.

For $p>4$, given $u_i\in W^{1,2}_p\left([0,T]\times\Sigma\right), f_i\in L^p\left([0, T]\times\Sigma\right)$ and $u_{i,0}, g_i\in W^{2}_p\left(\Sigma\right)$ with
\begin{align*}
    u_i(0)=u_{i,0},\quad \int_{\Sigma}h_ie^{u_{i,0}}\dif\mu_{\Sigma}>0,
\end{align*}
we consider the linearized system obtained from the flow of Liouville system \eqref{eq:liouville}:
\begin{align}\label{eq:linearization}
\dfrac{\partial}{\partial t}(e^{u_i}\eta_i)=\sum_{j=1}^Na^{ij}\Delta \eta_j+\dfrac{\rho_ih_ie^{u_i}\eta_i}{\int_{\Sigma}h_ie^{u_i}\dif\mu_{\Sigma}}-\dfrac{\rho_ih_ie^{u_i}}{\left(\int_{\Sigma}h_ie^{u_i}\dif\mu_{\Sigma}\right)^2}\int_{\Sigma}h_ie^{u_i}\eta_i\dif\mu_{\Sigma}+f_i,\quad \eta_i(0)=g_i,\quad i\in I.
\end{align}
For every $p\in(2,4)\cup(4,\infty)$, the embedding (see \cite[Theorem 3.4]{Che23second})
\begin{align*}
W^{1,2}_p\left([0, T]\times\Sigma\right)\subset C^{1-2/p, 2-4/p}\left([0, T]\times\Sigma\right)
\end{align*}
holds; in particular $u$ is continuous.
Thus we may assume that for some $\delta>0$,
\begin{align*}
    \int_{\Sigma}h_ie^{u_{i}(t)}\dif\mu_{\Sigma}\geq c_{\text{reg}}^{-1}>0,\quad \forall t\in[0,\delta].
\end{align*} 
Consequently, system \eqref{eq:linearization} is uniformly parabolic and admits a unique solution $\eta\in W^{1,2}_p([0,\delta]\times\Sigma,\mathbb{R}^N)$.

By the inverse function theorem, we obtain a local solution of the flow of Liouville system  \eqref{eq:liouville}. Define
\begin{align*}
    \psi_i\coloneqq-\sum_{j=1}^Na^{ij}\Delta u_{j,0}-\dfrac{\rho_ih_ie^{u_{i,0}}}{\int_{\Sigma}h_ie^{u_{i,0}}\dif\mu_{\Sigma}}+Q_i,\quad i\in I,
\end{align*}
and for each $\epsilon\in(0,\delta)$ set
\begin{align*}
    \psi_{i,\epsilon}(t)=\begin{cases}
        0,&t\in[0,\epsilon],\\
        \psi_i(t),&t\in(\epsilon,\delta].
    \end{cases}
\end{align*}
Then $\psi=(\psi_{1,\epsilon},\dots,\psi_{N,\epsilon})$ satisfies 
\begin{align*}
    \norm{\psi_{\epsilon}-\psi}_{L^{p}([0,\delta]\times\Sigma)}\leq C\epsilon^{1/p}.
\end{align*}
Thus, for sufficiently small $\epsilon>0$, the inverse function theorem yields a solution $\left(u_{1,\epsilon},\dots,u_{N,\epsilon}\right)\in W^{1,2}_p\left([0,\delta]\times\Sigma,\mathbb{R}^N\right)$ of 
\begin{align*}
    \dfrac{\partial e^{u_{i,\epsilon}}}{\partial t}=\sum_{j=1}^Na^{ij}\Delta u_{j,\epsilon}+\dfrac{\rho_ih_ie^{u_{i,\epsilon}}}{\int_{\Sigma}h_ie^{u_{i,\epsilon}}\dif\mu_{\Sigma}}-Q_i+\psi_{i,\epsilon},\quad u_{i,\epsilon}(0)=u_{i,0},\quad i\in I.
\end{align*}
In particular, the flow of Liouville system has a solution in $W^{1,2}_p([0,\epsilon]\times\Sigma)$. By the $H^2$-uniqueness result \autoref{lem:uniqueness}, this solution is unique; moreover, since the initial data are smooth, the solution itself is smooth according to the regularity result \autoref{lem:smooth}.
Furthermore, if the maximal existence time $T$ is finite, then we must have  
 \begin{align*}
     \liminf_{t\nearrow T}\min_{1\leq i\leq N}\int_{\Sigma}h_ie^{u_i(t)}\dif\mu_{\Sigma}=0.
 \end{align*}
Indeed, otherwise we would obtain
 \begin{align*}
     \int_{\Sigma}h_ie^{u_i(t)}\dif\mu_{\Sigma}\geq c^{-1}>0,\quad\forall t\in[0,T),
 \end{align*}
 and standard parabolic estimates would imply that for some $\alpha\in(0,1)$ 
 \begin{align*}
     \norm{u}_{C^{1+\alpha/2, 2+\alpha}\left([0,T]\times\Sigma\right)}\leq C_T.
 \end{align*}
 Hence the flow of Liouville system \eqref{eq:liouville} could be extended continuously to $t=T$, and the argument above would then yield an extension to $[0,T+\epsilon]$ for some $\epsilon>0$, contradicting the maximality of $T$.

 Finally, the uniqueness of the flow follows directly from \autoref{lem:uniqueness}.
\end{proof}

Now we establish the existence of a positive constant $C$ such that the following uniform regularity condition holds along the flow of Liouville system \eqref{eq:liouville}:
\begin{align}\label{eq:uniformly_regularity}
    \int_{\Sigma}h_ie^{u_i}\dif\mu_{\Sigma}\geq C^{-1},\quad i\in I,    
\end{align}
provided that
\begin{align*}
 A\leq A_N\quad \hbox{ and } \quad 0<\rho_i\leq 4\pi.
\end{align*}
 As an immediate consequence, we obtain the global existence result stated in \autoref{main:thm1}.

Recall the Trudinger-Moser inequality \eqref{eq:TM} for system: for every $u\in H^1\left(\Sigma,\mathbb{R}^N\right)$
\begin{align*}
    \dfrac12\sum_{i,j=1}^N\int_{\Sigma}a_N^{ij}\hin{\nabla u_i}{\nabla u_j}\dif\mu_{\Sigma}+4\pi\sum_{i=1}^N\left(\bar u_i-\ln\fint_{\Sigma}e^{u_i}\dif\mu_{\Sigma}\right)\geq-c.
\end{align*}

We can now state the following global existence theorem for the flow of Liouville system \eqref{eq:liouville}.

\begin{theorem}\label{thm:global}
    Assume that 
    $A\leq A_N$ and  $\rho_i\in(0,4\pi]$ for each $i$. Then for every initial datum $u_0=\left(u_{1,0},\dots,u_{N,0}\right)\in H^{2}(\Sigma,\mathbb{R}^N )$ satisfying
 \begin{align*}
     \int_{\Sigma}h_ie^{u_{i,0}}\dif\mu_{\Sigma}>0,\quad i\in I,
 \end{align*}
 the flow of Liouville system \eqref{eq:liouville} admits a unique solution defined for all $[0,\infty)$. 
\end{theorem}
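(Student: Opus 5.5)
The plan is to prove the uniform regularity bound \eqref{eq:uniformly_regularity} on the whole maximal interval $[0,T)$. Combined with \autoref{thm:local}, this rules out $T<\infty$. Uniqueness then follows from \autoref{lem:uniqueness}. For $H^2$ data I would either run the $W^{1,2}_p$ construction of \autoref{thm:local} directly, or approximate $u_0$ by smooth data and pass to the limit using the a priori estimates of Section 2, whose constants depend only on $\norm{u_0}_{H^2}$. The ingredients are the monotonicity formula \eqref{eq:monotonicity}, the mass preservation \eqref{eq:conservation}, and the Jost--Wang inequality \eqref{eq:TM}.

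\emph{Step 1: reduction to constant $Q_i$.} Solve $\Delta v_j=\rho_j/\abs{\Sigma}-Q_j$ with $\bar v_j=0$ and set $\tilde v_i=\sum_j a_{ij}v_j$ (here $a_{ij}$ are the entries of $A$ itself). Writing $u_i=\tilde u_i+\tilde v_i$ and $\tilde h_i=h_ie^{\tilde v_i}$, the cross terms in $\frac12\sum a^{ij}\int\hin{\nabla u_i}{\nabla u_j}$ cancel against $\int Q_iu_i$ after integration by parts. This gives $J(u)=\tilde J(\tilde u)+C_0$, where $\tilde J$ is the same functional with $Q_i$ replaced by the constant $\rho_i/\abs{\Sigma}$ and $h_i$ by $\tilde h_i$. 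Since $\tilde v_i$ is smooth and bounded, we have $\int\tilde h_ie^{\tilde u_i}=\int h_ie^{u_i}$, and $\int e^{\tilde u_i}$ is comparable to $\int e^{u_i}$. By \eqref{eq:conservation}, $\int e^{u_i(t)}=\int e^{u_{i,0}}=:m_i>0$ for all $t$, so $\int e^{\tilde u_i(t)}$ stays between two positive constants.

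\emph{Step 2: the lower bound.} Because $A\le A_N$, we have $A^{-1}\ge A_N^{-1}$, so the Dirichlet part of $\tilde J$ dominates $\frac12\sum a_N^{ij}\int\hin{\nabla\tilde u_i}{\nabla\tilde u_j}$. Split
\begin{align*}
\rho_i\Bigl(\bar{\tilde u}_i-\ln\int_\Sigma\tilde h_ie^{\tilde u_i}\dif\mu_\Sigma\Bigr)=\rho_i\Bigl(\bar{\tilde u}_i-\ln\fint_\Sigma e^{\tilde u_i}\dif\mu_\Sigma\Bigr)+\rho_i\ln\dfrac{\int_\Sigma e^{\tilde u_i}\dif\mu_\Sigma}{\abs{\Sigma}\int_\Sigma\tilde h_ie^{\tilde u_i}\dif\mu_\Sigma}.
\end{align*}
By Jensen, the first bracket is $\le0$. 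Since $0<\rho_i\le4\pi$, the first term is therefore bounded below by $4\pi$ times the same bracket. Applying \eqref{eq:TM} and then \eqref{eq:monotonicity}, this gives
\begin{align*}
J(u_0)\ge J(u(t))\ge -C+\sum_{i=1}^N\rho_i\ln\dfrac{\int_\Sigma e^{\tilde u_i}\dif\mu_\Sigma}{\int_\Sigma\tilde h_ie^{\tilde u_i}\dif\mu_\Sigma}.
\end{align*}
Each logarithm is bounded below by $-\ln\max_\Sigma\tilde h_i$, because $\int\tilde h_ie^{\tilde u_i}\le\max\tilde h_i\int e^{\tilde u_i}$ and the denominator is positive as long as the flow exists. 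Hence each summand is also bounded above, and since $\rho_i>0$ we get $\int h_ie^{u_i(t)}\ge c\,\int e^{\tilde u_i(t)}\ge C^{-1}$ uniformly in $t\in[0,T)$. This is exactly \eqref{eq:uniformly_regularity}, and it contradicts the blow-up alternative of \autoref{thm:local} if $T<\infty$.

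\emph{Expected difficulty.} The delicate point is Step 2 at the endpoint $\rho_i=4\pi$, where \eqref{eq:TM} is sharp and leaves no room to absorb error terms. That is why Step 1 must make the $Q$-terms enter exactly as $\rho_i\bar{\tilde u}_i$, with no leftover $\varepsilon\norm{\nabla u}^2$. I also need to check that the substitution interacts correctly with the matrix $A$ and not only with $A_N$: the shift is $\tilde v=Av$, while the comparison $A^{-1}\ge A_N^{-1}$ is applied only to the quadratic form in $\tilde u$. A further point is that positivity of $\int h_ie^{u_i}$ is only known on $[0,T)$. The argument above is an a priori bound valid wherever the flow exists, so no circularity arises.
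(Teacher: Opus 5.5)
Your proof follows the paper's route. The paper subtracts the potential $\phi$ (solving $-\sum_j a^{ij}\Delta\phi_j=\rho_i/\abs{\Sigma}-Q_i$, $\bar\phi_i=0$) to get the identity \eqref{eq:TM'}, and uses Jensen to discard $(\rho_i-4\pi)\bigl(\overline{u_i-\phi_i}-\ln\fint_\Sigma e^{u_i-\phi_i}\dif\mu_\Sigma\bigr)\ge0$. It then applies \eqref{eq:TM} via $A^{-1}\ge A_N^{-1}$ and combines \eqref{eq:monotonicity} with \eqref{eq:conservation} to get $\sum_i\rho_i\ln\int_\Sigma h_ie^{u_i}\dif\mu_\Sigma\ge-C$. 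It concludes with \autoref{thm:local}, \autoref{lem:uniqueness} and approximation for $H^2$ data.

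The one error is a sign in Step 1. With $\Delta v_j=\rho_j/\abs{\Sigma}-Q_j$ and $u=\tilde u+Av$, the cross term is $\sum_i\int_\Sigma Q_i\tilde u_i\dif\mu_\Sigma-\sum_i\rho_i\bar{\tilde u}_i$, which doubles the $Q$-term instead of cancelling it. You need $\Delta v_j=Q_j-\rho_j/\abs{\Sigma}$. Then $Av=\phi$ and $\tilde u=u-\phi$ exactly as in the paper, and your Step 2 goes through unchanged.
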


\begin{proof}

 Uniqueness follows directly from   \autoref{lem:uniqueness}.

Firstly, assume $u_0\in C^{\infty}\left(\Sigma,\mathbb{R}^N\right)$. A direct computation yields
\begin{align}\label{eq:TM'}
    \begin{split}
J(u)=&\dfrac12\sum_{i,j=1}^N\int_{\Sigma}a^{ij}\hin{\nabla\left(U_i-\Phi_i\right)}{\nabla \left(u_j-\phi_j\right)}\dif\mu_{\Sigma}+4\pi\sum_{i=1}^N\left(\overline{u_i-\phi_i}-\ln\fint_{\Sigma}e^{u_i-\phi_i}\dif\mu_{\Sigma}\right)\\
    &+\sum_{i=1}^N\left(\rho_i-4\pi\right)\left(\overline{u_i-\phi_i}-\ln\fint_{\Sigma}e^{u_i-\phi_i}\dif\mu_{\Sigma}\right)+\sum_{i=1}^N\rho_i\ln\fint_{\Sigma}e^{u_i-\phi_i}\dif\mu_{\Sigma}-\sum_{i=1}^N\rho_i\ln\int_{\Sigma}h_ie^{u_i}\dif\mu_{\Sigma}\\
    &-\dfrac12\sum_{i,j=1}^Na^{ij}\int_{\Sigma}\hin{\nabla\phi_i}{\nabla\phi_j}\dif\mu_{\Sigma}.
    \end{split}
\end{align}
Here $\phi=(\phi_1,\dots,\phi_N)$ is the unique solution of the elliptic system
\begin{align*}
    -\sum_{j=1}^Na^{ij}\Delta\phi_j=\dfrac{\rho_i}{\abs{\Sigma}}-Q_i,\quad \bar\phi_i=0,\quad 1\leq i\leq N.
\end{align*}
Using  \eqref{eq:monotonicity} and  \eqref{eq:conservation}, we obtain
\begin{align*}
    C\geq&\dfrac12\sum_{i,j=1}^N\int_{\Sigma}a_N^{ij}\hin{\nabla\left(U_i-\Phi_i\right)}{\nabla \left(u_j-\phi_j\right)}\dif\mu_{\Sigma}+4\pi\sum_{i=1}^N\left(\overline{u_i-\phi_i}-\ln\fint_{\Sigma}e^{u_i-\phi_i}\dif\mu_{\Sigma}\right)\\
    &+\sum_{i=1}^N\left(\rho_i-4\pi\right)\left(\overline{u_i-\phi_i}-\ln\fint_{\Sigma}e^{u_i-\phi_i}\dif\mu_{\Sigma}\right)-\sum_{i=1}^N\rho_i\ln\int_{\Sigma}h_ie^{u_i}\dif\mu_{\Sigma}.
\end{align*}
Applying the Trudinger-Moser inequality \eqref{eq:TM}, we deduce
\begin{align*}
    \sum_{i=1}^N\rho_i\ln\int_{\Sigma}h_ie^{u_i}\dif\mu_{\Sigma}\geq&-C,
\end{align*}
 which implies the uniform regularity condition \eqref{eq:uniformly_regularity}. By the local existence result \autoref{thm:local}, we conclude that the flow of Liouville system \eqref{eq:liouville} exists globally.

Finally, a standard argument by approximation yields that it is also true for $u_0\in H^2\left(\Sigma,\mathbb{R}^N\right)$.

\end{proof}

\section{Global convergence}

In this section, we apply the Łojasiewicz–Simon gradient inequality to study the global convergence of the flow \eqref{eq:liouville}. Since the flow is of negative gradient type with respect to the real‑analytic functional $J$, the Łojasiewicz–Simon inequality yields the following general convergence result. In particular, it implies an existence result for solutions of the  Liouville system
\begin{align*}
    -\sum_{j=1}^Na^{ij}\Delta u_j=\dfrac{\rho_ih_ie^{u_i}}{\int_{\Sigma}h_ie^{u_i}\dif\mu_{\Sigma}}-Q_i,\quad i\in I.
\end{align*}

\begin{theorem}\label{thm:convergence-general}
Let $u$ be the global solution of the flow of Liouville system  \eqref{eq:liouville}. If the trajectory $\set{u(t):t\geq0}$ is bounded in $H^1(\Sigma, \mathbb{R}^N)$, then $u(t)$ converges in $C^{\infty}(\Sigma, \mathbb{R}^N)$ as $t\to\infty$ to a smooth solution of the Liouville system \eqref{eq:liouville-system}.
\end{theorem}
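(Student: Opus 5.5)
Assume $\sup_{t\ge0}\norm{u(t)}_{H^1}\le C_0$. The plan is to upgrade this to uniform-in-time smooth bounds, extract a subsequential limit that is a critical point of $J$, and then use the Łojasiewicz–Simon inequality to pass from subsequential to full convergence.

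\textbf{Step 1: uniform regularity.} By the Trudinger–Moser inequality \eqref{eq:classical-TM}, the $H^1$ bound gives $\int_\Sigma e^{p u_i(t)}\dif\mu_\Sigma\le C_p$ for every $p\in\mathbb{R}$, uniformly in $t$. Since $J(u(t))\le J(u_0)$ by \eqref{eq:monotonicity} and the quadratic and linear parts of $J$ are bounded, the logarithmic terms are bounded below. Hence $\int_\Sigma h_ie^{u_i(t)}\dif\mu_\Sigma\ge c>0$ uniformly (assuming $\rho_i>0$; if $\rho_i$ could vanish, that component decouples). Moreover $J(u(t))$ is bounded below, so
\begin{align*}
\int_0^\infty\sum_{i=1}^N\int_\Sigma e^{u_i}\abs{\partial_t u_i}^2\dif\mu_\Sigma\dif t<\infty .
\end{align*}
Next I redo the computation of \autoref{lem:H2-estimate} on unit windows $[s,s+1]$, with all constants now independent of $T$. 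The differential inequality
\begin{align*}
y'\le C(1+y)\left(1+\norm{w}_{L^2}^2\right)
\end{align*}
only gives local control, so I combine it with the time-integrability of $\norm{w}_{L^2}^2$. Integrating from a good time $s_0\in[s-1,s]$ where $y(s_0)$ is controlled (via an $L^1$-in-time bound on $\norm{\Delta u}_{L^2}^2$, obtained by multiplying the equation by $\Delta u$) yields $\sup_t\norm{u(t)}_{H^2}\le C$. The higher-order lemma, run on windows in the same way, then gives uniform $C^k$ bounds for every $k$. \textbf{I expect this uniform-in-time $H^2$ bound to be the main technical point}, since the earlier a priori estimates all grow in $T$.

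\textbf{Step 2: subsequential limit is a critical point.} By the finiteness of the dissipation, there are $t_n\to\infty$ with $\norm{e^{u/2}\partial_tu(t_n)}_{L^2}\to0$. By the uniform smooth bounds and compactness, $u(t_n)\to u_\infty$ in $C^\infty$ along a subsequence. Since $u$ is uniformly bounded above and below, $\partial_tu(t_n)\to0$ in $L^2$, so $u_\infty$ solves \eqref{eq:liouville-system}.

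\textbf{Step 3: Łojasiewicz–Simon and full convergence.} Because $J$ is real analytic on $H^2$ (or $C^{2,\alpha}$) near $u_\infty$, with Fredholm linearization given by $A^{-1}$ times the Laplacian plus a compact nonlocal perturbation, there exist $\theta\in(0,1/2]$ and $\sigma>0$ such that
\begin{align*}
\abs{J(v)-J(u_\infty)}^{1-\theta}\le C\norm{\nabla J(v)}_{L^2}\quad\text{whenever } \norm{v-u_\infty}_{H^2}<\sigma .
\end{align*}
Along the flow, $\nabla J(u)_i=-e^{u_i}\partial_tu_i$, and $e^{u_i}$ is uniformly bounded above and below. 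Hence
\begin{align*}
-\frac{\dif}{\dif t}\bigl(J(u)-J(u_\infty)\bigr)^\theta\ge c\norm{\partial_tu}_{L^2}
\end{align*}
while $u$ stays in the neighborhood. This gives $\int\norm{\partial_tu}_{L^2}\dif t<\infty$ on such intervals. The standard trapping argument applies: take $n$ large so that $\norm{u(t_n)-u_\infty}$ is small and $J(u(t_n))-J(u_\infty)$ is small, and use interpolation between the $L^2$ displacement and the uniform $C^k$ bounds. This shows $u(t)$ never leaves the neighborhood, so $\partial_tu\in L^1([t_n,\infty);L^2)$. Therefore $u(t)\to u_\infty$ in $L^2$, and by interpolation with the uniform higher-order bounds, in $C^\infty$.
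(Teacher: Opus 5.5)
Your Steps 2 and 3 follow the paper's \L{}ojasiewicz--Simon argument: the same gradient $\mathcal{M}(u)_i=-\partial_te^{u_i}$, the same differential inequality for a power of $J(u)-J(u_\infty)$, and the same exit-time trapping. You differ in how $H^2$-closeness to $u_\infty$ is obtained.

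The paper proves no uniform-in-time higher regularity. It takes a weak $H^1$ limit along $t_n$ and gets smoothness of $u_\infty$ from Brezis--Merle. It then uses the elliptic estimate \eqref{eq:converges-H^2}, which bounds $\norm{u(t)-u_\infty}_{H^2}$ by the $L^2$-distance plus the instantaneous dissipation, and measures the exit time in $L^2$.

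You instead prove $\sup_t\norm{u(t)}_{H^2}<\infty$ (and higher bounds) on unit time windows, then trap by interpolation. This makes the trapping step cleaner and gives $C^\infty$ convergence directly. It also supplies the bound $\norm{u(t)}_{H^2}\le C$ on $[t_n,s_n]$, which the paper asserts without a separate argument; \eqref{eq:converges-H^2} itself needs $u$ bounded in $L^\infty$ to put $\partial_te^{u_i}$ into $L^2$. Your restriction to $\rho_i>0$ for the lower bound on $\int_\Sigma h_ie^{u_i}\dif\mu_\Sigma$ matches a uniform regularity the paper also uses tacitly.

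The justification you give for the good starting time $s_0$ does not work, however. With $w_i=e^{u_i/2}\partial_tu_i$, testing the flow with $\Delta u$ gives
\begin{align*}
\sum_{i,j=1}^Na^{ij}\int_{\Sigma}\Delta u_i\Delta u_j\dif\mu_{\Sigma}=\sum_{i=1}^N\int_{\Sigma}e^{u_i/2}w_i\Delta u_i\dif\mu_{\Sigma}-\sum_{i=1}^N\int_{\Sigma}\left(\frac{\rho_ih_ie^{u_i}}{\int_{\Sigma}h_ie^{u_i}\dif\mu_{\Sigma}}-Q_i\right)\Delta u_i\dif\mu_{\Sigma}.
\end{align*}
The first term on the right is only bounded by $\norm{w_i}_{L^2}\norm{e^{u_i/2}\Delta u_i}_{L^2}$. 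Absorbing it needs an $L^\infty$ bound on $u$, which is exactly what you are trying to prove.

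Integrating by parts does not help. It trades that term for $\frac{\dif}{\dif t}\int_\Sigma e^{u_i}\abs{\nabla u_i}^2\dif\mu_\Sigma$, which is uncontrolled at the endpoints, and the cubic term $\int_\Sigma e^{u_i}\partial_tu_i\abs{\nabla u_i}^2\dif\mu_\Sigma$. With only $e^{u_i}\in\bigcap_pL^p$, the cubic term requires $\int\norm{w}_{L^2}^{2r}\dif t<\infty$ for some $r>1$, whereas finite dissipation gives only $r=1$. Reading $\norm{\Delta u}_{L^2}$ off the equation fails for a similar reason: it involves $\int e^{2u_i}\abs{\partial_tu_i}^2$, not the dissipated $\int e^{u_i}\abs{\partial_tu_i}^2$.

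The repair is to select $s_0$ using the dissipation directly. Choose $s_0\in[s-1,s]$ with $\norm{w(s_0)}_{L^2}^2\le\int_{s-1}^s\norm{w}_{L^2}^2\dif t$. At $s_0$ the flow is the elliptic system
\begin{align*}
-\sum_{j=1}^Na^{ij}\Delta u_j=\frac{\rho_ih_ie^{u_i}}{\int_{\Sigma}h_ie^{u_i}\dif\mu_{\Sigma}}-Q_i-e^{u_i/2}w_i.
\end{align*}
For every $q<2$ we have $\norm{e^{u_i/2}w_i}_{L^q}\le\norm{e^{u_i/2}}_{L^{2q/(2-q)}}\norm{w_i}_{L^2}\le C$. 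The embedding $W^{2,q}\hookrightarrow L^\infty$ then gives $\norm{u(s_0)}_{L^\infty}\le C$, and the same equation gives $\norm{\Delta u(s_0)}_{L^2}\le C$. Your Gronwall argument on $[s_0,s+1]$ then yields the uniform $H^2$ bound, and the rest of the proposal goes through.
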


\begin{proof}
The argument follows standard lines and we provide the details for completeness.

The functional $J:H^2(\Sigma, \mathbb{R}^N)\To\mathbb{R}$ is real-analytic. Its gradient map $\mathcal{M}:H^2(\Sigma, \mathbb{R}^N)\To L^2\left(\Sigma,\mathbb{R}^N\right)$  is given by
\begin{align*}
\mathcal{M}(u)_{i}\coloneqq-\sum_{j=1}^Na^{ij}\Delta u_j-\dfrac{\rho_ih_ie^{u_i}}{\int_{\Sigma}h_ie^{u_i}\dif\mu_{\Sigma}}+Q_i,\quad i\in I,
\end{align*}
with $\mathcal{M}(u)=(\mathcal{M}(u)_1,\dotsc,\mathcal{M}(u)_N)$. At a critical point $u_{\infty}$ the Jacobi operator of $\mathcal{L}_{ u_{\infty}}: H^2\left(\Sigma, \mathbb{R}^N\right)\To L^2\left(\Sigma,\mathbb{R}^N\right)$ acts as
\begin{align*}
\mathcal{L}_{u_{\infty}}(\xi)_{i}\coloneqq-\sum_{j=1}^Na^{ij}\Delta\xi_j-\dfrac{\rho_ih_ie^{u_{\infty,i}}}{\int_{\Sigma}h_ie^{u_{\infty,i}}\dif\mu_{\Sigma}}\xi_i+\dfrac{\rho_ih_ie^{u_{\infty,i}}}{\left(\int_{\Sigma}h_ie^{u_{\infty,i}}\dif\mu_{\Sigma}\right)^2}\int_{\Sigma}h_ie^{u_{\infty,i}}\xi_i\dif\mu_{\Sigma},\quad i\in I,
\end{align*}
where $u_{\infty}=(u_{\infty,1},\dotsc,u_{\infty,N})$ and $\xi=(\xi_1,\dotsc,\xi_N)$. 
The operator $\mathcal{L}_{u_{\infty}}: H^2(\Sigma, \mathbb{R}^N)\To L^2\left(\Sigma,\mathbb{R}^N\right)$ is Fredholm of index zero.   Consequently the \L ojasiewicz-Simon gradient inequality (see \cite[Theorem 2]{FeeMar20lojasiewicz} or \cite[Proposition 1.3]{Jen98simple}) holds:  
\begin{quotation}
There exist positive constants $\sigma$ and $\theta\in[1/2,1)$ such that
\begin{align*}
\forall u\in H^2(\Sigma, \mathbb{R}^N),\ \norm{u-u_{\infty}}_{H^2\left(\Sigma\right)}<\sigma \quad \Longrightarrow\quad\abs{J(u)-J(u_{\infty})}^{\theta}\leq\norm{\mathcal{M}(u)}_{L^{2}\left(\Sigma\right)}.
\end{align*}
\end{quotation}

Because the trajectory $\set{u(t):t\geq0}$ is bounded in $H^1\left(\Sigma, \mathbb{R}^N\right)$, we may extract a sequence $t_n\to \infty$ such that $u(t_n)$ converges weakly in $H^1\left(\Sigma, \mathbb{R}^N\right)$ to some limit $u_{\infty}$. By compactness we may assume the convergence is strong in every $L^p\left(\Sigma,\mathbb{R}^N\right)$ for $1<p<+\infty$. Since $\set{u(t):t\geq0}$ is bounded in $H^1(\Sigma, \mathbb{R}^N)$, the Truingder-Moser inequality \eqref{eq:classical-TM} yields that $J(u(t))$ is bounded. Then the monotonicity formula \eqref{eq:monotonicity} implies that $u_{\infty}$ is a weak solution of the system \eqref{eq:liouville-system}. Brezis-Merle's estimate \autoref{lem:BM} together with standard elliptic regularity then show that $u_{\infty}$ is smooth. 

Now we compare $u(t_n)$ with $u_{\infty}$.  From 
\begin{align*}
-\sum_{j=1}^Na^{ij}\Delta(u_j(t_n)-u_{j,\infty})=&\rho_i\left(\dfrac{h_ie^{u_i(t_n)}}{\int_{\Sigma}h_ie^{u_i(t_n)}\dif\mu_{\Sigma}}-\dfrac{h_ie^{u_{i,\infty}}}{\int_{\Sigma}h_ie^{u_{i,\infty}}\dif\mu_{\Sigma}}\right)-\dfrac{\partial e^{u_i(t_n)}}{\partial t},
\end{align*}
the classical Trudinger–Moser inequality \eqref{eq:classical-TM} and the $H^{2}$-elliptic estimate for the Laplacian yield
\begin{align}\label{eq:LS-inequality}
\norm{u(t_n)-u_{\infty}}_{H^2\left(\Sigma\right)}\leq& C\left(\norm{u(t_n)-u_{\infty}}_{L^2\left(\Sigma\right)}+\sum_{i=1}^N\norm{e^{u_i(t_n)/2}\dfrac{\partial u_i(t_n)}{\partial t}}_{L^2\left(\Sigma\right)}\right).
\end{align}
Consequently, $u(t_n)$ converges strongly in $H^2(\Sigma, \mathbb{R}^N)$ to $u_{\infty}$, and a bootstrap argument gives smooth convergence.

Define
\begin{align*}
s_n=\inf\set{t\geq t_n: \norm{u(t)-u(t_n)}_{L^2\left(\Sigma\right)}\geq\sigma},
\end{align*}
and assume $t_n<s_n\leq+\infty$. 
Since the trajectory is uniformly bounded in $H^1\left(\Sigma\right)$, we have the estimate
\begin{align}\label{eq:converges-H^2}
\norm{u(t)-u_{\infty}}_{H^2\left(\Sigma\right)}\leq C\left(\norm{u(t)-u_{\infty}}_{L^2\left(\Sigma\right)}+\sum_{i=1}^N\norm{e^{u_i(t)/2}\dfrac{\partial u_i(t)}{\partial t}}_{L^2\left(\Sigma\right)}\right),\quad\forall t>0.
\end{align}
Choosing $\sigma>0$ sufficiently small, the Łojasiewicz-Simon inequality \eqref{eq:LS-inequality} implies
\begin{align*}
\forall t_n\leq t<s_n,\ \norm{u(t)-u_{\infty}}_{L^2\left(\Sigma\right)}<\sigma \quad \Longrightarrow\quad\abs{J(u(t))-J(u_{\infty})}^{\theta}\leq\norm{\mathcal{M}(u(t))}_{L^{2}\left(\Sigma\right)}.
\end{align*}
Moreover,
\begin{align*}
\norm{u(t)}_{H^2\left(\Sigma\right)}\leq C,\quad\forall t_n\leq t\leq s_n.
\end{align*}
Recall
\begin{align*}
\dfrac{\dif}{\dif t}J(u(t))=-\sum_{i=1}^N\int_{\Sigma}e^{u_i(t)}\abs{\dfrac{\partial u_i(t)}{\partial t}}^2\dif\mu_{\Sigma}=-\sum_{i=1}^N\int_{\Sigma}e^{-u_i(t)}\abs{\mathcal{M}(u(t))_i}^2\dif\mu_{\Sigma}.
\end{align*}
Hence for $t_n\leq t\leq s_n$,
\begin{align*}
\dfrac{\dif}{\dif t}J(u(t))\leq&-C^{-1}\norm{\dfrac{\partial u(t)}{\partial t}}_{L^2\left(\Sigma\right)}\norm{\mathcal{M}(u(t))}_{L^2\left(\Sigma\right)}\\
\leq&-C^{-1}\norm{\dfrac{\partial u(t)}{\partial t}}_{L^2\left(\Sigma\right)}\abs{J(u(t))-J(u_{\infty})}^{\theta}.
\end{align*}

We claim that for all sufficiently large $n$ one has $s_n=+\infty$. If not, after passing to a subsequence we may assume $s_n<+\infty$ for every $n$. By \eqref{eq:monotonicity} we may suppose $J(u(t))>J(u_{\infty})$ for all $t$. Then
\begin{align*}
\dfrac{\dif}{\dif t}\left(J(u(t))-J(u_{\infty})\right)^{1-\theta}\leq-\dfrac{1-\theta}{C}\norm{\dfrac{\partial u(t)}{\partial t}}_{L^2\left(\Sigma\right)},\quad\forall t_n\leq t<s_n.
\end{align*}
Integrating from $t_n$ to $s_n$ gives
\begin{align*}
\int_{t_n}^{s_n}\norm{\dfrac{\partial u(t)}{\partial t}}_{L^2\left(\Sigma\right)}\dif t\leq \dfrac{1-\theta}{C}\left(J(u(t_n))-J(u_{\infty})\right)^{1-\theta}.
\end{align*}
Consequently
\begin{align*}
\sigma=&\norm{u(s_n)-u_{\infty}}_{L^2\left(\Sigma\right)}\\
\leq&\norm{u(t_n)-u_{\infty}}_{L^2\left(\Sigma\right)}+\norm{u(s_n)-u(t_n)}_{L^2\left(\Sigma\right)}\\
\leq& \norm{u(t_n)-u_{\infty}}_{L^2\left(\Sigma\right)}+\dfrac{1-\theta}{C}\left(J(u(t_n))-J(u_{\infty})\right)^{1-\theta}.
\end{align*}
Letting $n\to\infty$ yields a contradiction, because the right‑hand side tends to zero while the left‑hand side equals $\sigma>0$.
 
Thus $s_n=\infty$ for large $n$, and the previous estimate actually shows
\begin{align*}
\int_0^{+\infty}\norm{\dfrac{\partial u(t)}{\partial t}}_{L^2\left(\Sigma\right)}\dif t<+\infty.
\end{align*}
In particular,
\begin{align*}
\lim_{t\to+\infty}\norm{u(t)-u_{\infty}}_{L^2\left(\Sigma\right)}=0,
\end{align*}
which, together with \eqref{eq:converges-H^2} and standard elliptic estimates, implies that $u(t)$ converges strongly to $u_{\infty}$ in $C^{\infty}(\Sigma, \mathbb{R}^N)$.
\end{proof}

As an immediate consequence, we obtain the following convergence result.

\begin{theorem}\label{thm:convergence-special}
Assume that $A\leq A_N$ and $\rho_i\in(0,4\pi)$ for every $i$. Then for every initial datum $u_0=\left(u_{1,0},\dots,u_{N,0}\right)\in H^{2}\left(\Sigma,\mathbb{R}^N\right)$ satisfying
\begin{align*}
    \int_{\Sigma}h_ie^{u_{i,0}}\dif\mu_{\Sigma}>0,\quad i\in I,
\end{align*}
the flow of Liouville system \eqref{eq:liouville} admits a unique global solution, and this solution converges smoothly as $t\to\infty$ to a solution of the Liouville system \eqref{eq:liouville-system}.
\end{theorem}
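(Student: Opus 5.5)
Global existence and uniqueness come directly from \autoref{thm:global}, since $\rho_i\in(0,4\pi)\subset(0,4\pi]$ and $A\leq A_N$. The remaining task is to show that the trajectory $\set{u(t):t\geq0}$ is bounded in $H^1(\Sigma,\mathbb{R}^N)$. Once that is done, \autoref{thm:convergence-general} gives smooth convergence to a solution of \eqref{eq:liouville-system}.

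For the gradient bound I will use the decomposition \eqref{eq:TM'} from the proof of \autoref{thm:global}. By \eqref{eq:monotonicity}, $J(u(t))\leq J(u_0)$. Also $\int_\Sigma h_ie^{u_i}\dif\mu_{\Sigma}\leq\norm{h_i}_{L^\infty}\int_\Sigma e^{u_{i,0}}\dif\mu_{\Sigma}$ by \eqref{eq:conservation}, and $\ln\fint e^{u_i-\phi_i}$ is bounded above by the same conservation law. Pick $\delta>0$ with $\rho_i\leq 4\pi(1-\delta)$ for every $i$. I then split the quadratic part and the $4\pi$-terms into a $(1-\delta)$-portion and a $\delta$-portion. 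The first is bounded below by $-C$ using \eqref{eq:TM} applied to $u-\phi$, after using $A\leq A_N$ to compare $A^{-1}$-type forms. The second leaves
\begin{align*}
\frac{\delta}{2}\sum_{i,j}a^{ij}\int_\Sigma\hin{\nabla(u_i-\phi_i)}{\nabla(u_j-\phi_j)}\dif\mu_{\Sigma}+\sum_i\bigl(\rho_i-4\pi(1-\delta)\bigr)\Bigl(\overline{u_i-\phi_i}-\ln\fint_\Sigma e^{u_i-\phi_i}\dif\mu_{\Sigma}\Bigr).
\end{align*}
Jensen gives $\overline{u_i-\phi_i}-\ln\fint e^{u_i-\phi_i}\leq0$, so the second term here has the wrong sign. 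To control it I instead split the quadratic form against the whole $4\pi$ bracket, using \eqref{eq:TM} scaled by $(1-\delta)$: the functional $\tfrac{1-\delta}{2}\langle\cdot\rangle+4\pi(1-\delta)(\cdots)\geq-C$. This leaves $\tfrac{\delta}{2}\sum a^{ij}\int\hin{\nabla u_i}{\nabla u_j}$ plus $(\rho_i-4\pi(1-\delta))(\cdots)$, which is $\geq0$ for $\rho_i\leq4\pi(1-\delta)$. With the remaining $\ln\int h_ie^{u_i}$ terms bounded from above, the result is $\tfrac{\delta}{2}c_A^{-1}\norm{\nabla u(t)}^2_{L^2}\leq C$. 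Here I need to check carefully that the form appearing in $J$ is the $A^{-1}$ form and that $A\leq A_N$ gives $A^{-1}\geq A_N^{-1}$, so that \eqref{eq:TM} applies to it.

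For the means, Jensen together with \eqref{eq:conservation} gives $\bar u_i(t)\leq C$. For the lower bound I will use the argument of the $H^1$-estimate lemma. The sets $A_i(t)$ defined there have measure bounded below uniformly in $t$, because the $L^2$ bound on $e^{u_i}$ is now uniform: the Trudinger--Moser inequality \eqref{eq:classical-TM} with a gradient bound and $\bar u_i\leq C$ gives it. On these sets $u_i\geq -C$. Combined with Poincaré, this bounds $\norm{u(t)}_{L^2}$ uniformly, and hence the trajectory is bounded in $H^1$.

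The main obstacle is the sign bookkeeping in the Trudinger--Moser step, namely obtaining a strictly positive multiple of the Dirichlet energy from the gap $4\pi-\rho_i$ while all the $\ln\int h_ie^{u_i}$ terms are bounded only from above. The sign-changing $h_i$ causes no trouble here, because positivity of $\int h_ie^{u_i}$ is already guaranteed by \autoref{thm:global}.
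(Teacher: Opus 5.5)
Your proposal is correct and follows the paper's route: \autoref{thm:global} gives global existence and uniqueness, \eqref{eq:TM'} together with the Jost--Wang inequality \eqref{eq:TM} gives a uniform $H^1$ bound by exploiting the gap $4\pi-\rho_i$, and \autoref{thm:convergence-general} then gives convergence; the only difference is the order of the two bounds, since the paper (as in \autoref{thm:partial-H^1}) first gets $\sum_i(4\pi-\rho_i)\bar u_i\geq-C$ and hence the means, whereas you first get the gradient from the $(1-\delta)$-splitting and then the means from the sets $A_i(t)$ (or more simply from $\int_\Sigma e^{u_{i,0}}\dif\mu_{\Sigma}=e^{\bar u_i}\int_\Sigma e^{u_i-\bar u_i}\dif\mu_{\Sigma}\leq Ce^{\bar u_i}$). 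There are two harmless slips: the term $\bigl(\rho_i-4\pi(1-\delta)\bigr)\bigl(\overline{u_i-\phi_i}-\ln\fint_\Sigma e^{u_i-\phi_i}\dif\mu_{\Sigma}\bigr)$ in your first split is already nonnegative because both factors are $\leq0$, so there was no wrong sign to repair; and what the argument needs from \eqref{eq:conservation} is a lower bound on $\ln\fint_\Sigma e^{u_i-\phi_i}\dif\mu_{\Sigma}$, which conservation also provides since $\phi_i$ is bounded.
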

\begin{rem}
From proofs of  \autoref{thm:convergence-general} and \autoref{thm:convergence-special} we see that if we assume all $h_i$ are positive functions on $\Sigma$, then we can weaken the assumption on $\rho_i$ to $\rho_i\in\mathbb{R}$ for all $i$ and $\rho_i<4\pi$ for all $i$, respectively. 
\end{rem}

\section{Decay estimates}

In this section, we establish an evolving Trudinger–Moser type inequality along the flow of Liouville system \eqref{eq:liouville}. Specifically, under the conditions
\begin{align*}
     A\leq A_N,\quad\rho_i\leq 4\pi,\quad i\in I,
 \end{align*}
 we prove that for every $w\in H^1(\Sigma, \mathbb{R}^N)$
\begin{align}\label{eq:evolve-TM}
    \dfrac{1}{2}\sum_{i,j=1}^Na^{ij}\int_{\Sigma}\hin{\nabla w_i}{\nabla w_j}\dif\mu_{\Sigma}+\sum_{i=1}^N\int_{\Sigma}\left(\rho_i\dfrac{h_i}{\int_{\Sigma}h_ie^{u_i}\dif\mu_{\Sigma}}-\dfrac{\partial u_i}{\partial t}\right)w_ie^{u_i}\dif\mu_{\Sigma}-\sum_{i=1}^N\rho_i\ln\int_{\Sigma}e^{w_i}e^{u_i}\dif\mu_{\Sigma}\geq-C,
\end{align}
holds along the flow of Liouville system \eqref{eq:liouville}. As a consequence, we obtain the following decay estimates along the flow.

\begin{theorem}\label{thm:decay}
Assume that $A\leq A_N$ and $\rho_i\in(0,4\pi]$ for each $1\leq i\leq N$. Then along the flow of Liouville system \eqref{eq:liouville} we have, for every $p>0$,
\begin{align}\label{eq:t-lp}
    \lim_{t\to\infty}\sum_{i=1}^N\int_{\Sigma}\abs{\dfrac{\partial u_i(t)}{\partial t}}^pe^{u_i(t)}\dif\mu_{\Sigma}=0
\end{align}
and
\begin{align}\label{eq:t-gradient}
    \lim_{t\to\infty}\int_{\Sigma}\abs{\nabla\dfrac{\partial u(t)}{\partial t}}^2\dif\mu_{\Sigma}=0.
\end{align}
\end{theorem}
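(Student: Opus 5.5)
The plan is an energy argument for $E(t)\coloneqq\sum_{i=1}^N\int_{\Sigma}e^{u_i}\abs{\partial_t u_i}^2\dif\mu_{\Sigma}$. The evolving Trudinger--Moser inequality \eqref{eq:evolve-TM} is what controls the bad nonlinear terms.

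\textbf{Step 1: prove \eqref{eq:evolve-TM}.} Expand $J(u+w)$ around $u=u(t)$. The cross term is $\sum a^{ij}\int\hin{\nabla u_j}{\nabla w_i}=-\int\sum_j a^{ij}\Delta u_j\,w_i$. Using \eqref{eq:liouville}, this equals $\int\bigl(\rho_i h_ie^{u_i}/\int h_ie^{u_i}-Q_i-e^{u_i}\partial_tu_i\bigr)w_i$. The $Q_i$-terms cancel, which gives
\begin{align*}
J(u+w)=J(u)+\dfrac12\sum_{i,j}a^{ij}\int\hin{\nabla w_i}{\nabla w_j}+\sum_i\int\Bigl(\dfrac{\rho_ih_i}{\int h_ie^{u_i}}-\partial_tu_i\Bigr)w_ie^{u_i}-\sum_i\rho_i\ln\dfrac{\int h_ie^{u_i+w_i}}{\int h_ie^{u_i}}.
\end{align*}
Next, $J$ is bounded below on $H^1$. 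This follows from $A\le A_N$, $\rho_i\le 4\pi$ and \eqref{eq:TM}, as in the decomposition \eqref{eq:TM'}, where $h_i$ is replaced by $\norm{h_i}_{L^\infty}$ and $\ln^+$ is used when $\int h_ie^{u_i+w_i}\le 0$. We also have $J(u(t))\le J(u_0)$, and by the proof of \autoref{thm:global} the uniform regularity bound gives $\int h_ie^{u_i}\ge C^{-1}$. Bounding $\int h_ie^{u_i+w_i}\le\norm{h}_\infty\int e^{w_i}e^{u_i}$ then yields \eqref{eq:evolve-TM}.

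\textbf{Step 2: derive a differential inequality for $E$.} Set $v_i=\partial_tu_i$ and differentiate \eqref{eq:liouville} in $t$:
\begin{align*}
\partial_t(e^{u_i}v_i)=\sum_j a^{ij}\Delta v_j+\dfrac{\rho_ih_ie^{u_i}v_i}{\int h_ie^{u_i}}-\dfrac{\rho_ih_ie^{u_i}\int h_ie^{u_i}v_i}{(\int h_ie^{u_i})^2}.
\end{align*}
Test this with $v_i$ and sum over $i$. This gives
\begin{align*}
\dfrac12\dfrac{\dif E}{\dif t}+\dfrac12\sum_i\int e^{u_i}v_i^3=-\sum_{i,j} a^{ij}\int\hin{\nabla v_i}{\nabla v_j}+O(E).
\end{align*}
The $O(E)$ term comes from $\norm{h}_\infty$ and the regularity lower bound. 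The cubic term is the crux. To control it, apply \eqref{eq:evolve-TM} with $w=\pm s v$ for a small $s$, and use the weight $e^{u_i}\dif\mu_{\Sigma}$, whose total mass is constant by \autoref{lem:conservation}. This bounds $\ln\int e^{s\abs{v_i}}e^{u_i}$ by $Cs^2\norm{\nabla v}_{L^2}^2+Cs\,(E^{1/2}+E)+C$. Since $e^{x}\ge x^3/6$, one can then optimize $s$ as a function of $\norm{\nabla v}$ and $E$. An alternative route for the same bound is a weighted Gagliardo--Nirenberg inequality for $e^{u_i/2}v_i$, exploiting that $e^{u_i}\dif\mu_\Sigma$ has exponential integrability controlled via \eqref{eq:evolve-TM}. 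Either way, the target inequality is
\begin{align*}
\dfrac{\dif E}{\dif t}\le -c\norm{\nabla v}_{L^2}^2+C\,E\,(1+E).
\end{align*}
I expect this step to be the main obstacle.

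\textbf{Step 3: conclude.} Since $J$ is bounded below, \eqref{eq:monotonicity} gives $\int_0^\infty E\,\dif t<\infty$. A standard ODE lemma then applies: if $f\ge0$, $f\in L^1(0,\infty)$ and $f'\le Cf(1+f)$, then $f\to0$. Hence $E(t)\to0$, and integrating also gives $\int_0^\infty\norm{\nabla v}^2\,\dif t<\infty$.

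For general $p$ in \eqref{eq:t-lp}: the exponential integrability of $s\abs{v_i}$ with respect to $e^{u_i}\dif\mu_{\Sigma}$, together with $E\to0$, interpolates to give $\int\abs{v_i}^pe^{u_i}\to0$ for every $p$. This requires $\norm{\nabla v}$ to stay bounded, which is exactly \eqref{eq:t-gradient}.

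For \eqref{eq:t-gradient}, differentiate $\sum a^{ij}\int\hin{\nabla v_i}{\nabla v_j}$ in the same manner as in \autoref{lem:H2-estimate}, testing the differentiated equation with $\partial_tv_i$. This should give
\begin{align*}
\dfrac{\dif}{\dif t}\norm{\nabla v}^2\le C\bigl(1+\norm{\nabla v}^2\bigr)\norm{\nabla v}^2+C\,E,
\end{align*}
where the cubic/quartic terms are again handled through \eqref{eq:evolve-TM}. Combined with $\int_0^\infty\norm{\nabla v}^2\,\dif t<\infty$, the same ODE lemma gives $\norm{\nabla v(t)}_{L^2}\to0$.
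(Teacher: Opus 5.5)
Your architecture is the paper's: the evolving Trudinger--Moser inequality \eqref{eq:evolve-TM}, homogeneous moment bounds for $v=\partial_t u$ in $L^p(e^{u_i}\dif\mu_{\Sigma})$, the energy identity for $E$, $\int_0^\infty E\,\dif t<\infty$ from \eqref{eq:monotonicity}, an ODE argument, and a second energy estimate for $\norm{\nabla v}_{L^2}^2$. However, the step you flag as the crux does not close as you describe it. Write $D=\norm{\nabla v}_{L^2(\Sigma)}$ and $\Lambda=E^{1/2}+E$. From \eqref{eq:evolve-TM} with $w=\pm sv$ you correctly get $\int_{\Sigma}e^{s\abs{v_i}}e^{u_i}\dif\mu_{\Sigma}\le C\exp\bigl(Cs^2D^2+Cs\Lambda\bigr)$. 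But if you use $\abs{v_i}^3\le 6s^{-3}e^{s\abs{v_i}}$ and optimize in $s$, the best you can get is $\int_{\Sigma}\abs{v_i}^3e^{u_i}\dif\mu_{\Sigma}\le C(D+\Lambda)^3$, since the minimum of $s^{-3}e^{Cs^2D^2}$ is of order $D^3$. This is cubic in $D$, so it cannot be absorbed by $-cD^2$. Nor is $D$ yet known to be bounded or square-integrable in time; that is exactly what this step must produce.

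The missing idea is to interpolate against $E$. The same optimization gives the homogeneous bound $\bigl(\int_{\Sigma}\abs{v_i}^pe^{u_i}\dif\mu_{\Sigma}\bigr)^{1/p}\le C_p(D+\Lambda)$ for every $p$; this is the paper's \eqref{eq:evolve-lp}. Then H\"older gives
\[
\int_{\Sigma}\abs{v_i}^3e^{u_i}\dif\mu_{\Sigma}\le\Bigl(\int_{\Sigma}\abs{v_i}^6e^{u_i}\dif\mu_{\Sigma}\Bigr)^{1/4}E^{3/4}\le C(D+\Lambda)^{3/2}E^{3/4},
\]
and Young's inequality $D^{3/2}E^{3/4}\le\varepsilon D^2+C_\varepsilon E^3$ brings the power of $D$ below $2$. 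Your ``weighted Gagliardo--Nirenberg'' alternative amounts to exactly this, but it has to be carried out.

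With this fix you obtain $E'\le -cD^2+C(E+E^3)$ rather than your target $CE(1+E)$; this route yields a power of $E$ above $2$. That is harmless: $\frac{\dif}{\dif t}\arctan E\le CE$, and your ODE lemma works verbatim for any polynomial growth. The rest of your plan matches the paper and is fine. This includes the gradient step, where the quartic term is bounded by $C(D+\Lambda)^4$ and $E\to0$ is already known, and the final $L^p$ decay from the homogeneous moment bound.
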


\begin{proof}
A direct calculation shows that for every $w=\left(w_1,\dotsc,w_N\right)\in H^1(\Sigma, \mathbb{R}^N)$,
\begin{align*}
    &\dfrac{1}{2}\sum_{i,j=1}^Na^{ij}\int_{\Sigma}\hin{\nabla w_i}{\nabla w_j}\dif\mu_{\Sigma}+\sum_{i=1}^N\int_{\Sigma}\left(\rho_i\dfrac{h_i}{\int_{\Sigma}h_ie^{u_i}\dif\mu_{\Sigma}}-\dfrac{\partial u_i}{\partial t}\right)w_ie^{u_i}\dif\mu_{\Sigma}-\sum_{i=1}^N\rho_i\ln\int_{\Sigma}e^{w_i}e^{u_i}\dif\mu_{\Sigma}\\
    =&\dfrac{1}{2}\sum_{i,j=1}^Na^{ij}\int_{\Sigma}\hin{\nabla(w_i+u_i)}{\nabla (w_j+u_j)}\dif\mu_{\Sigma}+\sum_{i=1}^N\int_{\Sigma}Q_i(w_i+u_i)\dif\mu_{\Sigma}-\sum_{i=1}^N\rho_i\ln\int_{\Sigma}e^{w_i+u_i}\dif\mu_{\Sigma}\\
    &-J(u)-\sum_{i=1}^N\rho_i\ln\int_{\Sigma}h_ie^{u_i}.
\end{align*}
Using  \eqref{eq:conservation} and \eqref{eq:monotonicity}, we obtain
\begin{align*}
    &\dfrac{1}{2}\sum_{i,j=1}^Na^{ij}\int_{\Sigma}\hin{\nabla w_i}{\nabla w_j}\dif\mu_{\Sigma}+\sum_{i=1}^N\int_{\Sigma}\left(\rho_i\dfrac{h_i}{\int_{\Sigma}h_ie^{u_i}\dif\mu_{\Sigma}}-\dfrac{\partial u_i}{\partial t}\right)w_ie^{u_i}\dif\mu_{\Sigma}-\sum_{i=1}^N\rho_i\ln\int_{\Sigma}e^{w_i}e^{u_i}\dif\mu_{\Sigma}\\
    \geq&\dfrac{1}{2}\sum_{i,j=1}^Na_N^{ij}\int_{\Sigma}\hin{\nabla(w_i+u_i)}{\nabla (w_j+u_j)}\dif\mu_{\Sigma}+\sum_{i=1}^N\int_{\Sigma}Q_i(w_i+u_i)\dif\mu_{\Sigma}-\sum_{i=1}^N\rho_i\ln\int_{\Sigma}e^{w_i+u_i}\dif\mu_{\Sigma}-C.
\end{align*}
In particular, if $\max_{1\leq i\leq N}\rho_i\leq4\pi$, the Trudinger-Moser inequality \eqref{eq:TM} yields the evolving Trudinger-Moser inequality \eqref{eq:evolve-TM}, i.e.,
\begin{align*}
    \dfrac{1}{2}\sum_{i,j=1}^Na^{ij}\int_{\Sigma}\hin{\nabla w_i}{\nabla w_j}\dif\mu_{\Sigma}+\sum_{i=1}^N\int_{\Sigma}\left(\rho_i\dfrac{h_i}{\int_{\Sigma}h_ie^{u_i}\dif\mu_{\Sigma}}-\dfrac{\partial u_i}{\partial t}\right)w_ie^{u_i}\dif\mu_{\Sigma}-\sum_{i=1}^N\rho_i\ln\int_{\Sigma}e^{w_i}e^{u_i}\dif\mu_{\Sigma}\geq-C.
\end{align*}

Assume first that
\begin{align*}
    \norm{\nabla w_i}_{L^2\left(\Sigma\right)}\leq 1,\quad\int_{\Sigma}\left(\rho_i\dfrac{h_i}{\int_{\Sigma}h_ie^{u_i}\dif\mu_{\Sigma}}-\dfrac{\partial u_i}{\partial t}\right)w_ie^{u_i}\dif\mu_{\Sigma}=0,\quad i\in I.
\end{align*}
Then we conclude that
\begin{align*}
    \sum_{i=1}^N\rho_i\ln\int_{\Sigma}e^{w_i}e^{u_i}\dif\mu_{\Sigma}\leq C.
\end{align*}
Hence
\begin{align*}
    \rho_1\ln\int_{\Sigma}e^{w_1}e^{u_1}\dif\mu_{\Sigma}\leq C-\sum_{i=2}^N\rho_i\ln\int_{\Sigma}e^{w_i}e^{u_i}\dif\mu_{\Sigma}.
\end{align*}
Since $\rho_1>0$, we conclude that
\begin{align*}
    \int_{\Sigma}e^{w_1}e^{u_1}\dif\mu_{\Sigma}\leq \exp\left[\dfrac{1}{\rho_1}\left(C-\sum_{i=2}^N\rho_i\ln\int_{\Sigma}e^{w_i}e^{u_i}\dif\mu_{\Sigma}\right)\right].
\end{align*}
Replacing $w_1$ by $-w_1$ gives
\begin{align*}
    \int_{\Sigma}e^{-w_1}e^{u_1}\dif\mu_{\Sigma}\leq \exp\left[\dfrac{1}{\rho_1}\left(C-\sum_{i=2}^N\rho_i\ln\int_{\Sigma}e^{w_i}e^{u_i}\dif\mu_{\Sigma}\right)\right].
\end{align*}
We obtain
\begin{align*}
    \rho_1\ln\int_{\Sigma}\cosh\left(w_1\right)e^{u_1}\dif\mu_{\Sigma}\leq C-\sum_{i=2}^N\rho_i\ln\int_{\Sigma}e^{w_i}e^{u_i}\dif\mu_{\Sigma}.
\end{align*}
Since $\rho_i>0$ for each $i\in I$, we may repeating the argument above yields
\begin{align*}
    \sum_{i=1}^N\rho_i\ln\int_{\Sigma}\cosh\left(w_i\right)e^{u_i}\dif\mu_{\Sigma}\leq C,
\end{align*}
which gives for each $i\in I$
\begin{align*}
    \int_{\Sigma}\cosh\left(w_i\right)e^{u_i}\dif\mu_{\Sigma}\leq C. 
\end{align*}
Thus for every $p>0$
\begin{align*}
    \int_{\Sigma}\abs{w_i}^pe^{u_i}\dif\mu_{\Sigma}\leq C_p,\quad i\in I.
\end{align*}

Since for every positive number $p$ and nonnegative numbers $a, b$, we have
 \begin{align*}
     \left(a+b\right)^p\leq \max\set{1,2^{p-1}}\left(a^p+b^p\right).
 \end{align*}
For a general $w\in H^1(\Sigma, \mathbb{R}^N)$, we have for every $p>0$ 
\begin{align*}
\left(\sum_{i=1}^N\int_{\Sigma}\abs{w_i}^{p}e^{u_i}\dif\mu_{\Sigma}\right)^{1/p}\leq C_p\left(\norm{\nabla w}_{L^2\left(\Sigma\right)}+\sum_{i=1}^N\abs{\int_{\Sigma}\left(\dfrac{\rho_ih_i}{\int_{\Sigma}h_ie^{u_i}\dif\mu_{\Sigma}}-\dfrac{\partial u_i}{\partial t}\right)w_ie^{u_i}\dif\mu_{\Sigma}}\right).
\end{align*}
In particular
\begin{align}\label{eq:evolve-lp}
    \left(\sum_{i=1}^N\int_{\Sigma}\abs{\dfrac{\partial u_i}{\partial t}}^{p}e^{u_i}\dif\mu_{\Sigma}\right)^{1/p}\leq C_p\left(\norm{\nabla \dfrac{\partial u}{\partial t}}_{L^2\left(\Sigma\right)}+\sum_{i=1}^N\int_{\Sigma}\abs{\dfrac{\partial u_i}{\partial t}}^2e^{u_i}\dif\mu_{\Sigma}+\sum_{i=1}^N\left(\int_{\Sigma}\abs{\dfrac{\partial u_i}{\partial t}}^2e^{u_i}\dif\mu_{\Sigma}\right)^{1/2}\right).
\end{align}

Now we compute
\begin{align*}
    \dfrac{\dif}{\dif t}\sum_{i=1}^N\int_{\Sigma}\abs{\dfrac{\partial u_i}{\partial t}}^2e^{u_i}\dif\mu_{\Sigma}=&2\sum_{i=1}^N\int_{\Sigma}\dfrac{\partial^2 u_i}{\partial t^2}\dfrac{\partial u_i}{\partial t}e^{u_i}\dif\mu_{\Sigma}+\sum_{i=1}^N\int_{\Sigma}\left(\dfrac{\partial u_i}{\partial t}\right)^3e^{u_i}\dif\mu_{\Sigma}\\
    =&2\sum_{i=1}^N\int_{\Sigma}\dfrac{\partial^2 e^{u_i}}{\partial t^2}\dfrac{\partial u_i}{\partial t}\dif\mu_{\Sigma}-\sum_{i=1}^N\int_{\Sigma}\left(\dfrac{\partial u_i}{\partial t}\right)^3e^{u_i}\dif\mu_{\Sigma}\\
    =&2\sum_{i=1}^N\int_{\Sigma}\dfrac{\partial }{\partial t}\left(\sum_{j=1}^Na^{ij}\Delta u_j+\rho_i\dfrac{h_ie^{u_i}}{\int_{\Sigma}h_ie^{u_i}\dif\mu_{\Sigma}}-Q_i\right)\dfrac{\partial u_i}{\partial t}\dif\mu_{\Sigma}-\sum_{i=1}^N\int_{\Sigma}\left(\dfrac{\partial u_i}{\partial t}\right)^3e^{u_i}\dif\mu_{\Sigma}\\
    =&-2\sum_{i,j=1}^Na^{ij}\int_{\Sigma}\hin{\nabla\dfrac{\partial u_i}{\partial t}}{\nabla\dfrac{\partial u_j}{\partial t}}\dif\mu_{\Sigma}+2\sum_{i=1}^N\rho_i\dfrac{\int_{\Sigma}\abs{\frac{\partial u_i}{\partial t}}^2h_ie^{u_i}}{\int_{\Sigma}h_ie^{u_i}\dif\mu_{\Sigma}}-2\sum_{i=1}^N\rho_i\abs{\dfrac{\dif}{\dif t}\ln\int_{\Sigma}h_ie^{u_i}\dif\mu_{\Sigma}}^2\\
    &-\sum_{i=1}^N\int_{\Sigma}\left(\dfrac{\partial u_i}{\partial t}\right)^3e^{u_i}\dif\mu_{\Sigma}.
\end{align*}
By Young's inequality, for any $\varepsilon>0$,
 \begin{align*}
     -\sum_{i=1}^N\int_{\Sigma}\left(\dfrac{\partial u_i}{\partial t}\right)^3e^{u_i}\dif\mu_{\Sigma}\leq&\left(\sum_{i=1}^N\int_{\Sigma}\abs{\dfrac{\partial u_i}{\partial t}}^6e^{u_i}\dif\mu_{\Sigma}\right)^{1/4}\left(\sum_{i=1}^N\int_{\Sigma}\abs{\dfrac{\partial u_i}{\partial t}}^2e^{u_i}\dif\mu_{\Sigma}\right)^{3/4}\\
     \leq&\varepsilon\left(\sum_{i=1}^N\int_{\Sigma}\abs{\dfrac{\partial u_i}{\partial t}}^6e^{u_i}\dif\mu_{\Sigma}\right)^{1/3}+\dfrac{27}{256\epsilon^3}\left(\sum_{i=1}^N\int_{\Sigma}\abs{\dfrac{\partial u_i}{\partial t}}^2e^{u_i}\dif\mu_{\Sigma}\right)^{3}.
 \end{align*}
 Applying \eqref{eq:evolve-lp}, we obtain
\begin{align}\label{eq:evolve-lp'}
    \dfrac{\dif}{\dif t}\sum_{i=1}^N\int_{\Sigma}\abs{\dfrac{\partial u_i}{\partial t}}^2e^{u_i}\dif\mu_{\Sigma}\leq&-\sum_{i,j=1}^Na^{ij}\int_{\Sigma}\hin{\nabla\dfrac{\partial u_i}{\partial t}}{\nabla\dfrac{\partial u_j}{\partial t}}\dif\mu_{\Sigma}+C\sum_{i=1}^N\int_{\Sigma}\abs{\dfrac{\partial u_i}{\partial t}}^2e^{u_i}\dif\mu_{\Sigma}+C\left(\sum_{i=1}^N\int_{\Sigma}\abs{\dfrac{\partial u_i}{\partial t}}^2e^{u_i}\dif\mu_{\Sigma}\right)^{3}.
\end{align}
Consequently, for all $T\geq t_0\geq0$,
\begin{align*}
    \arctan\left(\sum_{i=1}^N\int_{\Sigma}\abs{\dfrac{\partial u_i(T)}{\partial t}}^2e^{u_i(T)}\dif\mu_{\Sigma}\right)\leq \arctan\left(\sum_{i=1}^N\int_{\Sigma}\abs{\dfrac{\partial u_i(t_0)}{\partial t}}^2e^{u_i(t_0)}\dif\mu_{\Sigma}\right)+C\sum_{i=1}^N\int_{t_0}^T\int_{\Sigma}\abs{\dfrac{\partial u_i(t)}{\partial t}}^2e^{u_i(t)}\dif\mu_{\Sigma}\dif t.
\end{align*}
The monotonicity formula \eqref{eq:monotonicity} then gives
\begin{align}\label{eq:t-l2}
    \lim_{t\to\infty}\sum_{i=1}^N\int_{\Sigma}\abs{\dfrac{\partial u_i(t)}{\partial t}}^2e^{u_i(t)}\dif\mu_{\Sigma}=0.
\end{align}
Thus the decay estimate \eqref{eq:evolve-lp} holds for $p=2$. 

Inserting \eqref{eq:t-l2} into \eqref{eq:evolve-lp'}, we obtain
\begin{align*}
   \dfrac{\dif}{\dif t}\sum_{i=1}^N\int_{\Sigma}\abs{\dfrac{\partial u_i}{\partial t}}^2e^{u_i}\dif\mu_{\Sigma}\leq&-\sum_{i,j=1}^Na^{ij}\int_{\Sigma}\hin{\nabla\dfrac{\partial u_i}{\partial t}}{\nabla\dfrac{\partial u_j}{\partial t}}\dif\mu_{\Sigma}+C\sum_{i=1}^N\int_{\Sigma}\abs{\dfrac{\partial u_i}{\partial t}}^2e^{u_i}\dif\mu_{\Sigma}.
\end{align*}
Hence
\begin{align*}
    \sum_{i,j=1}^Na^{ij}\int_0^{\infty}\int_{\Sigma}\hin{\nabla\dfrac{\partial u_i}{\partial t}}{\nabla\dfrac{\partial u_j}{\partial t}}\dif\mu_{\Sigma}\dif t\leq C.
\end{align*}

Now we compute
\begin{align*}
    \dfrac{\dif}{\dif t}\sum_{i,j=1}^Na^{ij}\int_{\Sigma}\hin{\nabla\dfrac{\partial u_i}{\partial t}}{\nabla\dfrac{\partial u_j}{\partial t}}\dif\mu_{\Sigma}=&2\sum_{i,j=1}^Na^{ij}\int_{\Sigma}\hin{\nabla\dfrac{\partial^2 u_i}{\partial t^2}}{\nabla\dfrac{\partial u_j}{\partial t}}\dif\mu_{\Sigma}\\
    =&-2\sum_{i,j=1}^Na^{ij}\int_{\Sigma}\dfrac{\partial^2 u_i}{\partial t^2}\dfrac{\partial \Delta u_j}{\partial t}\dif\mu_{\Sigma}\\
    =&-2\sum_{i=1}^N\int_{\Sigma}\dfrac{\partial^2u_i}{\partial t^2}\dfrac{\partial}{\partial t}\left(\dfrac{\partial e^{u_i}}{\partial t}-\rho_i\dfrac{h_ie^{u_i}}{\int_{\Sigma}h_ie^{u_i}\dif\mu_{\Sigma}}+Q_i\right)\dif\mu_{\Sigma}\\
    =&-2\sum_{i=1}^N\int_{\Sigma}\abs{\dfrac{\partial^2u_i}{\partial t^2}}^2e^{u_i}\dif\mu_{\Sigma}-2\sum_{i=1}^N\int_{\Sigma}\dfrac{\partial^2u_i}{\partial t^2}\abs{\dfrac{\partial u_i}{\partial t}}^2e^{u_i}\dif\mu_{\Sigma}\\
    &+2\sum_{i=1}^N\dfrac{\rho_i}{\int_{\Sigma}h_ie^{u_i}\dif\mu_{\Sigma}}\int_{\Sigma}\dfrac{\partial^2u_i}{\partial t^2}\dfrac{\partial u_i}{\partial t}h_ie^{u_i}\dif\mu_{\Sigma}-2\sum_{i=1}^N\dfrac{\rho_i\int_{\Sigma}h_ie^{u_i}\frac{\partial u_i}{\partial t}\dif\mu_{\Sigma}}{\left(\int_{\Sigma}h_ie^{u_i}\dif\mu_{\Sigma}\right)^2}\int_{\Sigma}\dfrac{\partial^2u_i}{\partial t^2}h_ie^{u_i}\dif\mu_{\Sigma}\\
    \leq&-\sum_{i=1}^N\int_{\Sigma}\abs{\dfrac{\partial^2u_i}{\partial t^2}}^2e^{u_i}\dif\mu_{\Sigma}+C\sum_{i=1}^N\int_{\Sigma}\abs{\dfrac{\partial u_i}{\partial t}}^2e^{u_i}\dif\mu_{\Sigma}+C\sum_{i=1}^N\int_{\Sigma}\abs{\dfrac{\partial u_i}{\partial t}}^4e^{u_i}\dif\mu_{\Sigma}.
\end{align*}
Thus
\begin{align*}
    & \dfrac{\dif}{\dif t}\left(\sum_{i=1}^N\int_{\Sigma}\abs{\dfrac{\partial u_i}{\partial t}}^2e^{u_i}\dif\mu_{\Sigma}+\sum_{i,j=1}^Na^{ij}\int_{\Sigma}\hin{\nabla\dfrac{\partial u_i}{\partial t}}{\nabla\dfrac{\partial u_j}{\partial t}}\dif\mu_{\Sigma}\right)\\
    \leq&-\sum_{i=1}^N\int_{\Sigma}\abs{\dfrac{\partial^2u_i}{\partial t^2}}^2e^{u_i}\dif\mu_{\Sigma}+C\left(\sum_{i=1}^N\int_{\Sigma}\abs{\dfrac{\partial u_i}{\partial t}}^2e^{u_i}\dif\mu_{\Sigma}+\sum_{i,j=1}^Na^{ij}\int_{\Sigma}\hin{\nabla\dfrac{\partial u_i}{\partial t}}{\nabla\dfrac{\partial u_j}{\partial t}}\dif\mu_{\Sigma}\right)\\
    &+C\left(\sum_{i=1}^N\int_{\Sigma}\abs{\dfrac{\partial u_i}{\partial t}}^2e^{u_i}\dif\mu_{\Sigma}+\sum_{i,j=1}^Na^{ij}\int_{\Sigma}\hin{\nabla\dfrac{\partial u_i}{\partial t}}{\nabla\dfrac{\partial u_j}{\partial t}}\dif\mu_{\Sigma}\right)^2.
\end{align*}
By an argument analogous to the one above, we obtain the estimate \eqref{eq:t-gradient}.

Finally, combining \eqref{eq:evolve-lp}, \eqref{eq:t-l2} and \eqref{eq:t-gradient} yields the desired estimate \eqref{eq:t-lp}.
\end{proof}

\section{Partial \texorpdfstring{$H^1$-estimates}{H1-estimates}}
Define
\begin{align*}
    U_i=\sum_{j=1}^Na^{ij}u_j.
\end{align*}
Then we obtain the following partial $H^1$-estimate.

\begin{theorem}[Partial $H^1$-estimate]\label{thm:partial-H^1}
     Assume that $A\leq A_N$ and $\max_{1\leq i\leq N}\rho_i\leq 4\pi$. 
    Then along the flow of Liouville system \eqref{eq:liouville} the following bounds hold:
    \begin{align*}
        \bar u_i \geq -C, \qquad \norm{\nabla U_i}_{L^2\left(\Sigma\right)} \leq C,
    \end{align*}
    provided $\rho_i < 4\pi$.
\end{theorem}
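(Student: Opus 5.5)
The plan is to extract everything from the energy bound $J(u(t))\le J(u_0)$ (\autoref{lem:monotonicity}), mass preservation (\autoref{lem:conservation}), and the system Trudinger--Moser inequality \eqref{eq:TM}, using the slack $4\pi-\rho_i>0$ in the $i$-th component. As in the proof of \autoref{thm:global}, I would pass to $u-\phi$ via the decomposition \eqref{eq:TM'}. Since $A\le A_N$ gives $A^{-1}\ge A_N^{-1}$, the quadratic form in \eqref{eq:TM'} dominates the $a_N^{ij}$-form. Set
\begin{align*}
X_j\coloneqq\ln\fint_{\Sigma}e^{u_j-\phi_j}\dif\mu_{\Sigma}-\overline{u_j-\phi_j}\ \ (\geq0\ \text{by Jensen}),\qquad
\mathcal{Q}(u)\coloneqq\frac12\sum_{j,l=1}^Na^{jl}\int_{\Sigma}\hin{\nabla (u_j-\phi_j)}{\nabla (u_l-\phi_l)}\dif\mu_{\Sigma}.
\end{align*}
Mass preservation and $\int_\Sigma h_je^{u_j}\dif\mu_{\Sigma}\le\norm{h_j}_{L^\infty}\int_\Sigma e^{u_{j,0}}\dif\mu_{\Sigma}$ bound $\ln\fint e^{u_j-\phi_j}$ and $\ln\int h_je^{u_j}$ from above. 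Hence $J(u(t))\le C$ becomes
\begin{align*}
\mathcal{Q}(u(t))-\sum_{j=1}^N\rho_jX_j(t)\le C .
\end{align*}

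\emph{Step 1: $\bar u_i\ge -C$.} Write $\sum_j\rho_jX_j=4\pi\sum_jX_j-\sum_j(4\pi-\rho_j)X_j$. Then \eqref{eq:TM} gives $\mathcal{Q}(u)-4\pi\sum_jX_j\ge -c$, so $\sum_j(4\pi-\rho_j)X_j\le C$. Every summand is nonnegative, so $(4\pi-\rho_i)X_i\le C$. Since $\ln\fint e^{u_i-\phi_i}$ is a fixed constant by mass preservation, this is exactly $\bar u_i\ge -C$.

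\emph{Step 2: $\norm{\nabla U_i}_{L^2}\le C$.} Now I would use the slack in the gradient term, applying \eqref{eq:TM} to a perturbed competitor rather than to $u$. In the $U$-variables the form reads $\mathcal{Q}=\frac12\sum a_{jl}\int\hin{\nabla U_j}{\nabla U_l}$, up to the fixed $\phi$-terms. I would take $w$ with $W_l=U_l$ for $l\ne i$ and $W_i=(1+\delta)U_i$, i.e.\ $w=u+\delta U_i\,Ae_i$, with $\delta>0$ small. Then
\begin{align*}
\mathcal{Q}(w)=\mathcal{Q}(u)+\delta\int_\Sigma\hin{\nabla U_i}{\nabla u_i}\dif\mu_{\Sigma}+O(\delta^2)\norm{\nabla U_i}_{L^2}^2 .
\end{align*}
The perturbation changes only the components $u_l$ with $a_{il}\ne0$. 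For each such $l$, $X_l(w)-X_l(u)$ has to be controlled using convexity of $s\mapsto\ln\fint e^{f+sg}-\overline{f+sg}$, together with the Jensen-type bound $\ln\fint e^{u_l}e^{\delta a_{il}(U_i-\bar U_i)}\ge \ln\fint e^{u_l}+\delta a_{il}\,\fint(U_i-\bar U_i)e^{u_l}/\fint e^{u_l}$. The aim is an inequality of the form
\begin{align*}
\delta\, c_0\norm{\nabla U_i}_{L^2}^2\le \mathcal{Q}(w)-4\pi\sum_jX_j(w)+C+(\text{terms absorbed using } (4\pi-\rho_i)X_i\le C) ,
\end{align*}
with the left side coming from the quadratic term in $U_i$. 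Combined with the upper bound on $\mathcal{Q}(u)-\sum\rho_jX_j$ and with \eqref{eq:TM} for $w$, this would bound $\norm{\nabla U_i}_{L^2}$.

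\emph{Main obstacle.} The crux is making Step 2 close. The cross term $\int\hin{\nabla U_i}{\nabla u_i}$ mixes $U$- and $u$-variables, and the first-order changes of the $X_l$ carry the weights $e^{u_l}$. My first attempt would choose the sign of $\delta$ and the scaling so that the linear terms cancel against $\sum_l\rho_l\,\partial_\delta X_l$. What survives should be $\delta(4\pi-\rho_i)$ times a coercive quantity in $\nabla U_i$, using that $\rho_i<4\pi$ is the only slack available. If this bookkeeping fails, the fallback is a contradiction argument. Suppose $\norm{\nabla U_i(t_n)}_{L^2}\to\infty$. Normalize $U_i$ and combine Step 1 with the equation, using the decay of \autoref{thm:decay} so that $u(t_n)$ nearly solves \eqref{eq:liouville-system}. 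One then tests the $i$-th equation against $U_i$, expecting $\int\abs{\nabla U_i}^2\lesssim\rho_i\int\abs{U_i-\bar U_i}h_ie^{u_i}/\int h_ie^{u_i}+\dots$. The right side would be controlled by the weighted Trudinger--Moser estimate of the form \eqref{eq:evolve-lp} proved in \autoref{thm:decay}, yielding a contradiction.
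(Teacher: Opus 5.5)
Your Step 1 is correct and is the paper's argument. Subtracting \eqref{eq:TM} from the energy bound \eqref{eq:TM''} gives $\sum_j(4\pi-\rho_j)X_j\le C$, and $X_j\ge0$ isolates $X_i$. (Two small slips: $\ln\fint_\Sigma e^{u_j-\phi_j}\dif\mu_\Sigma$ is not constant, only bounded above and below by mass preservation. Also, it is the \emph{lower} bound you need to pass from $J(u(t))\le C$ to $\mathcal Q-\sum_j\rho_jX_j\le C$.)

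Step 2, however, has a genuine gap, and the infinitesimal perturbation $W_i=(1+\delta)U_i$ cannot close it. Rewrite $\delta\int_\Sigma\hin{\nabla U_i}{\nabla u_i}\dif\mu_\Sigma$ using the flow equations $-\Delta U_l=\rho_lh_le^{u_l}/\int_\Sigma h_le^{u_l}\dif\mu_\Sigma-Q_l-\partial_te^{u_l}$, and combine it with your Jensen lower bound on $X_l(w)$. The first-order coefficient of $\delta$ in $\mathcal Q(w)-4\pi\sum_lX_l(w)$ then reduces to three kinds of terms:
\begin{itemize}
\item $-\sum_l(4\pi-\rho_l)a_{il}$ times averages of $U_i-\bar U_i$ against $e^{u_l-\phi_l}$;
\item terms from the mismatch between the weights $h_le^{u_l}$ and $e^{u_l-\phi_l}$;
\item terms from $\partial_te^{u_l}$.
\end{itemize}
None of these is a Dirichlet norm of $U_i$. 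Moreover, $w$ changes every $u_l$ with $a_{il}\ne0$, in particular $u_k$ when $i=k\pm1$, which is exactly the case needed later. The $l=k$ terms are averages of $U_i$ against a possibly concentrating measure, and $\norm{\nabla U_i}_{L^2(\Sigma)}$ does not control those. Equivalently, in
$$\int_\Sigma\hin{\nabla U_i}{\nabla u_i}\dif\mu_\Sigma=a_{ii}\norm{\nabla U_i}^2_{L^2(\Sigma)}+\sum_{l\ne i}a_{il}\int_\Sigma\hin{\nabla U_i}{\nabla U_l}\dif\mu_\Sigma,$$
the favorable diagonal term is offset by the cross term with $\nabla U_k$, which is unbounded under blow-up. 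The only genuine $\norm{\nabla U_i}^2_{L^2}$ term left is $\frac{\delta^2}{2}a_{ii}\norm{\nabla U_i}^2_{L^2}$. It enters with a plus sign on the side where \eqref{eq:TM} gives only a lower bound, so it cannot bound $\norm{\nabla U_i}_{L^2}$ from above.

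Your fallback is circular. Apply the general-$w$ weighted estimate behind \eqref{eq:evolve-lp} to $(U_i-\bar U_i)e_i$. Its right-hand side contains $\abs{\int_\Sigma(\rho_ih_i/\int_\Sigma h_ie^{u_i}\dif\mu_\Sigma-\partial_tu_i)(U_i-\bar U_i)e^{u_i}\dif\mu_\Sigma}$. By the $i$-th equation this equals $\abs{\norm{\nabla U_i}^2_{L^2}+\int_\Sigma Q_i(U_i-\bar U_i)\dif\mu_\Sigma}$, which is the quantity you are trying to bound.

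The missing idea is a \emph{finite} change of competitor that uses the exact quadratic structure and leaves $u_k$ untouched. The paper achieves this with a Schur-complement splitting of $A^{-1}$, grouping the indices where $\rho_i<4\pi$ into a block (tail block first, middle blocks by permutation). Since $A\le A_N$ makes the corresponding principal block of $A$ at most the smaller Cartan matrix, the quadratic form is bounded below by two pieces:
\begin{itemize}
\item the Toda form of that smaller Cartan matrix in the remaining components;
\item $C^{-1}\sum\norm{\nabla(U_i-\Phi_i)}^2_{L^2}$ over the block, where $\Phi_i=\sum_ja^{ij}\phi_j$.
\end{itemize}
The lower-rank Trudinger--Moser inequality absorbs the first piece, and Step 1 handles the exponential terms in the block.

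Within your own framework the same effect comes from perturbing the $u_i$-slot rather than the $U_i$-slot. Apply \eqref{eq:TM}, using $A^{-1}\ge A_N^{-1}$, to $v=u-\phi-\frac{1}{a^{ii}}(U_i-\Phi_i)e_i$. Only the $i$-th component changes, so:
\begin{itemize}
\item for $l\ne i$, the term $\bar v_l-\ln\fint_\Sigma e^{v_l}\dif\mu_\Sigma$ is exactly $-X_l(u)$;
\item for $l=i$, this term is $\le0$ by Jensen;
\item the derivative of $\mathcal Q$ in the $i$-th slot is $\nabla(U_i-\Phi_i)$ with second derivative $a^{ii}$, so the quadratic form at $v$ equals $\mathcal Q(u)-\frac{1}{2a^{ii}}\norm{\nabla(U_i-\Phi_i)}^2_{L^2(\Sigma)}$.
\end{itemize}
Together with $\mathcal Q(u)-\sum_l\rho_lX_l(u)\le C$ this gives
\[
\frac{1}{2a^{ii}}\norm{\nabla(U_i-\Phi_i)}^2_{L^2(\Sigma)}\le\rho_iX_i(u)-\sum_{l\ne i}(4\pi-\rho_l)X_l(u)+C\le\rho_iX_i(u)+C.
\]
The right-hand side is bounded by Step 1, and this route avoids the paper's block case analysis.
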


\begin{proof}
    Let $\phi=(\phi_1,\dots,\phi_N)$ be the unique solution of the system
\begin{align*}
    -\sum_{j=1}^Na^{ij}\Delta\phi_j=\dfrac{\rho_i}{\abs{\Sigma}}-Q_i,\quad \bar\phi_i=0,\quad 1\leq i\leq N.
\end{align*} 
 From \eqref{eq:TM'}, which appeared in the proof of \autoref{thm:global}, we have
\begin{align}\label{eq:TM''}
    \dfrac12\sum_{i,j=1}^N\int_{\Sigma}a^{ij}\hin{\nabla\left(U_i-\Phi_i\right)}{\nabla \left(u_j-\phi_j\right)}\dif\mu_{\Sigma}+\sum_{i=1}^N\rho_i\left(\overline{u_i-\phi_i}-\ln\fint_{\Sigma}e^{u_i-\phi_i}\dif\mu_{\Sigma}\right)\leq C.
\end{align}
Hence the Trudinger-Moser inequality \eqref{eq:TM} implies
\begin{align*}
    \sum_{i=1}^N(4\pi-\rho_i)\bar u_i\geq-C.
\end{align*}
    By Jensen's inequality and \eqref{eq:conservation}, 
\begin{align*}
    \bar u_i \leq C,\quad\forall i\in I.
\end{align*}
Consequently,
\begin{align*}
    \bar u_i \geq -C,\quad\text{whenever}\quad  \rho_i<4\pi.
\end{align*}

 We now prove the second part of the statement.

 Consider two cases.
    
 \textbf{Case 1.} Assume $\rho_i<4\pi$ for all $k+1\leq i\leq N$.

  Write $A^{-1}$ in block form as
\begin{align*}
    A^{-1}=\begin{pmatrix}
        A_{11}&A_{12}\\
        A_{21}&A_{22}
    \end{pmatrix},
\end{align*}
 where $A_{11}$ is a $k\times k$ matrix. For a vector $x=(x_1^T,x_2^T)^T$ we have
\begin{align*}
    x^TA^{-1}x=&\begin{pmatrix}
        x_1^T&x_2^T
    \end{pmatrix}\begin{pmatrix}
        A_{11}&A_{12}\\
        A_{21}&A_{22}
    \end{pmatrix}\begin{pmatrix}
        x_1\\
        x_2
    \end{pmatrix}\\
    =&x_{1}^TA_{11}x_1+2x_1^TA_{12}x_2+x_2^TA_{22}x_2\\
    =&\abs{A_{22}^{1/2}x_2}^2+2\hin{A_{22}^{1/2}x_2}{A_{22}^{-1/2}A_{21}x_1}+x_{1}^TA_{11}x_1\\
    =&\abs{A_{22}^{1/2}x_2+A_{22}^{-1/2}A_{21}x_1}^2-\abs{A_{22}^{-1/2}A_{21}x_1}^2+x_{1}^TA_{11}x_1\\
    =&x_1^T\left(A_{11}-A_{12}A_{22}^{-1}A_{21}\right)x_1+\begin{pmatrix}
        x_1^T&x_2^T
    \end{pmatrix}\begin{pmatrix}
        A_{12}\\
        A_{22}
    \end{pmatrix}A_{22}^{-1}\begin{pmatrix}
        A_{21}&A_{22}
    \end{pmatrix}\begin{pmatrix}
        x_1\\
        x_2
    \end{pmatrix}.
\end{align*}
That is, the following identity holds
\begin{align*}
    x^TA^{-1}x=&x_1^T\left(A_{11}-A_{12}A_{22}^{-1}A_{21}\right)x_1+x^T\begin{pmatrix}
        A_{12}\\
        A_{22}
    \end{pmatrix}A_{22}^{-1}\begin{pmatrix}
        A_{21}&A_{22}
    \end{pmatrix}x.
\end{align*}
Hence
\begin{align*}
    x^TA^{-1}x\geq x_1^T\left(A_{11}-A_{12}A_{22}^{-1}A_{21}\right)x_1+C^{-1}\abs{\begin{pmatrix}
        A_{21}&A_{22}
    \end{pmatrix}x}^2.
\end{align*}

Notice that
\begin{align*}
    A=\begin{pmatrix}
        \left(A_{11}-A_{12}A_{22}^{-1}A_{21}\right)^{-1}&-A_{11}^{-1}A_{12}(A_{22}-A_{21}A_{11}^{-1}A_{21})^{-1}\\
        -(A_{22}-A_{21}A_{11}^{-1}A_{12})^{-1}A_{21}A_{11}^{-1}&(A_{22}-A_{21}A_{11}^{-1}A_{12})^{-1}
    \end{pmatrix}.
\end{align*}
Since $A-A_N$ is non‑positive definite, the matrix
    $\left(A_{11}-A_{12}A_{22}^{-1}A_{21}\right)^{-1}-A_{k}$ is also non‑positive definite,
    where $A_{k}$ denotes the Cartan matrix of $\mathrm{SU}(k+1)$.
    Equivalently, $A_{11}-A_{12}A_{22}^{-1}A_{21}-A_k^{-1}$ is nonnegative definite. Therefore,
\begin{align*}
    x^TA^{-1}x\geq x_1^TA_k^{-1}x_1+C^{-1}\abs{\begin{pmatrix}
        A_{21}&A_{22}
    \end{pmatrix}x}^2.
\end{align*}

Applying this to $x_i=\nabla\left(U_i-\Phi_i\right)$ and using \eqref{eq:TM''}, we obtain
\begin{align*}
    C\geq&\dfrac12\sum_{i,j=1}^{k}a^{ij}_{k}\int_{\Sigma}\hin{\nabla\left(U_i-\Phi_i\right)}{\nabla\left(u_j-\phi_j\right)}\dif\mu_{\Sigma}+\sum_{i=1}^{k}\rho_i\left(\overline{u_i-\phi_i}-\ln\fint_{\Sigma}e^{u_i-\phi_i}\dif\mu_{\Sigma}\right)+C^{-1}\sum_{i=k+1}^N\int_{\Sigma}\abs{\nabla\left(U_i-\Phi_i\right)}^2\dif\mu_{\Sigma},
\end{align*}
where
\begin{align*}
    a_{k}^{ij}=\min\set{i,j}-\dfrac{ij}{k+1},\quad 1\leq i, j\leq k,
\end{align*}
and we used the fact $\bar u_i\geq -C$ for $k+1\leq i\leq N$. The Trudinger-Moser inequality \eqref{eq:TM} then yields
\begin{align*}
    \sum_{i=k+1}^N\int_{\Sigma}\abs{\nabla\left(U_i-\Phi_i\right)}^2\dif\mu_{\Sigma}\leq C
\end{align*}
which implies
\begin{align*}
    \norm{\nabla U_i}_{L^2\left(\Sigma\right)}\leq C,\quad\forall k+1\leq i\leq N.
\end{align*}
 A similar argument shows that if $\rho_i<4\pi$ for all $1\leq i\leq k$, then
\begin{align*}
    \norm{\nabla U_i}_{L^2\left(\Sigma\right)}\leq C,\quad 1\leq i\leq k.
\end{align*}

 \textbf{Case 2.} Assume $\rho_i<4\pi$ for all $k+1\leq i\leq N-m$.

  Write $A^{-1}$ in block form as
\begin{align*}
    A^{-1}=\begin{pmatrix}
        A_{11}&A_{12}&A_{13}\\
        A_{21}&A_{22}&A_{23}\\
        A_{31}&A_{32}&A_{33}
    \end{pmatrix},
\end{align*}
where $A_{11}$ and $A_{33}$ are square matrices of order $k$ and $m$ respectively. We have
\begin{align*}
   \begin{pmatrix}
       1&0&0\\
       0&0&1\\
       0&1&0
   \end{pmatrix} A^{-1}\begin{pmatrix}
       1&0&0\\
       0&0&1\\
       0&1&0
   \end{pmatrix}=\begin{pmatrix}
   B_{11}&B_{12}\\
      B_{21}&A_{22}
   \end{pmatrix},
\end{align*}
which implies
\begin{align*}
  \begin{pmatrix}
       1&0&0\\
       0&0&1\\
       0&1&0
   \end{pmatrix} A\begin{pmatrix}
       1&0&0\\
       0&0&1\\
       0&1&0
   \end{pmatrix}  =\begin{pmatrix}
       \left(B_{11}-B_{12}A_{22}^{-1}B_{21}\right)^{-1}&-B_{11}B_{12}\left(A_{22}-B_{21}B_{11}^{-1}B_{12}\right)^{-1}\\
      -\left(A_{22}-B_{21}B_{11}^{-1}B_{12}\right)^{-1}B_{21}B_{11}^{-1} &\left(A_{22}-B_{21}B_{11}^{-1}B_{12}\right)^{-1}
   \end{pmatrix}.
\end{align*}
Notice that
\begin{align*}
    B_{11}-B_{12}A_{22}^{-1}B_{21}=&\begin{pmatrix}
        A_{11}-A_{12}A_{22}^{-1}A_{21}&A_{13}-A_{12}A_{22}^{-1}A_{23}\\
        A_{31}-A_{32}A_{22}^{-1}A_{21}&A_{33}-A_{32}A_{22}^{-1}A_{23}
    \end{pmatrix}.
\end{align*}
Since
\begin{align*}
    \begin{pmatrix}
       1&0&0\\
       0&0&1\\
       0&1&0
   \end{pmatrix} A_N\begin{pmatrix}
       1&0&0\\
       0&0&1\\
       0&1&0
   \end{pmatrix}=A_N,
\end{align*}
a similar argument mentioned above as in the first case, we conclude that the matrix $B_{11}-B_{12}A_{22}^{-1}B_{21}-A_{k+m}^{-1}$ is nonnegative definite. That is, the matrix 
\begin{align*}
    \begin{pmatrix}
            A_{11}&A_{13}\\
            A_{31}&A_{33}
        \end{pmatrix}-\begin{pmatrix}
            A_{12}\\
            A_{32}
        \end{pmatrix}A_{22}^{-1}\begin{pmatrix}
            A_{21}&A_{23}
        \end{pmatrix}-A_{k+m}^{-1}
\end{align*}
 is nonnegative definite. 

    If we write $x=(x_1^T,x_2^T,x_3^T)^T$, then 
    \begin{align*}
        x^TA^{-1}x=&x_1^TA_{11}x_1+x_2^TA_{22}x_2+x_3^TA_{33}^Tx_3+2x_1^TA_{12}x_2+2x_1^TA_{13}x_3+2x_2^TA_{23}x_3\\
        =&\begin{pmatrix}
            x_1^T&x_3^T
        \end{pmatrix}\begin{pmatrix}
            A_{11}&A_{13}\\
            A_{31}&A_{33}
        \end{pmatrix}\begin{pmatrix}
            x_1\\
            x_3
        \end{pmatrix}+x_2^TA_{22}x_2+2\begin{pmatrix}
            x_1^T&x_3^T
        \end{pmatrix}\begin{pmatrix}
            A_{12}\\
            A_{32}
        \end{pmatrix}x_2\\
    =&\abs{A_{22}^{1/2}x_2+A_{22}^{-1/2}\begin{pmatrix}
        A_{21}&A_{23}
    \end{pmatrix}\begin{pmatrix}
            x_1\\
            x_3
        \end{pmatrix}}^2-\abs{A_{22}^{-1/2}\begin{pmatrix}
        A_{21}&A_{23}
    \end{pmatrix}\begin{pmatrix}
            x_1\\
            x_3
        \end{pmatrix}}^2+\abs{\begin{pmatrix}
            A_{11}&A_{13}\\
            A_{31}&A_{33}
        \end{pmatrix}^{1/2}\begin{pmatrix}
            x_1\\
            x_3
        \end{pmatrix}}^2\\
        =&\begin{pmatrix}
            x_1^T&
            x_3^T
        \end{pmatrix}\left(\begin{pmatrix}
            A_{11}&A_{13}\\
            A_{31}&A_{33}
        \end{pmatrix}-\begin{pmatrix}
            A_{12}\\
            A_{32}
        \end{pmatrix}A_{22}^{-1}\begin{pmatrix}
            A_{21}&A_{23}
        \end{pmatrix}\right)\begin{pmatrix}
            x_1\\
            x_3
        \end{pmatrix}+x^T\begin{pmatrix}
            A_{12}\\
            A_{22}\\
            A_{23}
        \end{pmatrix}A_{22}^{-1}\begin{pmatrix}
            A_{21}&A_{22}&A_{23}
        \end{pmatrix}x.
    \end{align*}
Hence
\begin{align*}
    x^TA^{-1}x\geq&\begin{pmatrix}
        x_1^T&x_3^T
    \end{pmatrix}^TA_{k+m}^{-1}\begin{pmatrix}
        x_1\\
        x_3
    \end{pmatrix}+C^{-1}\abs{\begin{pmatrix}
        A_{21}&A_{22}&A_{23}
    \end{pmatrix}x}^2.
\end{align*}
Consequently,  applying this estimate to $x_i=\nabla\left(U_i-\Phi_i\right)$ and using \eqref{eq:TM''} again, we obtain
\begin{align*}
    C\geq&\dfrac12\sum_{i,j=1}^{k+m}a^{ij}_{k+m+1}\int_{\Sigma}\hin{\nabla\left(\hat u_{i}-\hat \phi_{i}\right)}{\nabla\left(\hat u_{j}-\hat\phi_{j}\right)}\dif\mu_{\Sigma}+4\pi\sum_{i=1}^{k+m}\left(\overline{\hat u_{i}-\hat \phi_{i}}-\ln\fint_{\Sigma}e^{\hat u_{i}-\hat\phi_{i}}\dif\mu_{\Sigma}\right)\\
    &+C^{-1}\sum_{i=k+1}^{N-m}\int_{\Sigma}\abs{\nabla\left(U_i-\Phi_i\right)}^2\dif\mu_{\Sigma}.
\end{align*}
Thus
\begin{align*}
    \norm{\nabla U_i}_{L^2\left(\Sigma\right)}\leq C,\quad k+1\leq i\leq N-m.
\end{align*}

Combining the two cases and proceeding by induction, we finally obtain
\begin{align*}
\rho_i<4\pi\quad\Longrightarrow\quad \norm{\nabla U_i}_{L^2\left(\Sigma\right)}\leq C,
\end{align*}
which completes the proof.

\end{proof}

\section{Blowup analysis}

From now on, we assume $A=A_N$ is the Cartan matrix associated with $\mathrm{SU}(N+1)$. In other words, we shall focus on the Toda flow \eqref{eq:toda}. Furthermore, we assume $0<\rho_i<4\pi$ for all $i\in I\setminus\set{k}$ for some $k\in I$, and $\rho_k=4\pi$. According to  \autoref{thm:global}, the Toda flow \eqref{eq:toda} exists globally.

Define
\begin{align*}
    M_i(t)=\max_{\Sigma}u_i(t),\quad i\in I.
\end{align*}
Using Jensen's inequality, we have
\begin{align*}
    \bar u_i(t)\leq\ln\fint_{\Sigma}e^{u_i(t)}\dif\mu_{\Sigma}\leq M_i(t),\quad i\in I.
\end{align*}
Together with \eqref{eq:conservation}, we obtain
\begin{align*}
    M_i(t)\geq-C,\quad \bar u_i(t)\leq C,\quad i\in I.
\end{align*}
\begin{lem}\label{lem:blowup}
    Along the Toda flow \eqref{eq:toda}, the following three statements are equivalent:
\begin{enumerate}[(1)]
    \item $\sum_{i=1}^NM_i(t)\leq C$,
    \item $\norm{\nabla u(t)}_{L^2\left(\Sigma\right)}\leq C$,
    \item $\sum_{i=1}^N\bar u_i(t)\geq-C$.
\end{enumerate}
\end{lem}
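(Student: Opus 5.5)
The plan is to prove the cycle (1) $\Rightarrow$ (3) $\Rightarrow$ (2) $\Rightarrow$ (1). It rests on three facts established earlier: the uniform regularity bound $\int_\Sigma h_ie^{u_i}\dif\mu_\Sigma\geq C^{-1}$ from the proof of \autoref{thm:global}, mass conservation \eqref{eq:conservation}, and the decay estimates of \autoref{thm:decay}. The common tool is the elliptic form of the flow,
\begin{align*}
-\sum_{j=1}^Na_N^{ij}\Delta u_j=f_i\coloneqq\rho_i\dfrac{h_ie^{u_i}}{\int_{\Sigma}h_ie^{u_i}\dif\mu_{\Sigma}}-Q_i-e^{u_i}\dfrac{\partial u_i}{\partial t}.
\end{align*}
If $f$ is bounded in $L^2(\Sigma)$ uniformly in $t$, then $H^2$-estimates and the embedding $H^2\subset C^0$ give $\norm{u_i(t)-\bar u_i(t)}_{L^\infty(\Sigma)}\leq C$ for all $i$.

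\textbf{(1) $\Rightarrow$ (3).} If $\sum_iM_i\leq C$, then together with $M_i\geq -C$ each $u_i\leq C$. The first two terms of $f_i$ are then bounded in $L^\infty$, using the regularity lower bound on the denominator. For the last term,
\begin{align*}
\int_\Sigma e^{2u_i}\abs{\dfrac{\partial u_i}{\partial t}}^2\dif\mu_\Sigma\leq C\int_\Sigma e^{u_i}\abs{\dfrac{\partial u_i}{\partial t}}^2\dif\mu_\Sigma\leq C
\end{align*}
by \autoref{thm:decay}. Hence $u_i-\bar u_i$ is uniformly bounded, and mass conservation gives $\int_\Sigma e^{u_{i,0}}\dif\mu_\Sigma=\int_\Sigma e^{u_i}\dif\mu_\Sigma\leq e^{\bar u_i+C}$. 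Therefore $\bar u_i\geq -C$ for every $i$, which gives (3).

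\textbf{(3) $\Rightarrow$ (2).} By Jensen and mass conservation, $\bar u_i\leq C$ for all $i$, so (3) yields $\abs{\bar u_i}\leq C$. The monotonicity formula \eqref{eq:monotonicity}, combined with $\int_\Sigma h_ie^{u_i}\dif\mu_\Sigma\leq \norm{h_i}_{L^\infty}\int_\Sigma e^{u_{i,0}}\dif\mu_\Sigma$, gives
\begin{align*}
\dfrac12\sum_{i,j}a_N^{ij}\int_\Sigma\hin{\nabla u_i}{\nabla u_j}\dif\mu_\Sigma+\sum_i\int_\Sigma Q_iu_i\dif\mu_\Sigma\leq C.
\end{align*}
Write $\int_\Sigma Q_iu_i\dif\mu_\Sigma=\rho_i\bar u_i+\int_\Sigma(Q_i-\bar Q_i)(u_i-\bar u_i)\dif\mu_\Sigma$. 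The first part is bounded, and by Poincar\'e and Young the second is bounded below by $-\varepsilon\norm{\nabla u_i}_{L^2}^2-C_\varepsilon$. Positive definiteness of $A_N^{-1}$ then gives $\norm{\nabla u}_{L^2}\leq C$.

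\textbf{(2) $\Rightarrow$ (1).} First, (2) implies (3): the classical Trudinger--Moser inequality \eqref{eq:classical-TM} gives $\ln\int_\Sigma e^{u_i}\dif\mu_\Sigma\leq \bar u_i+C\norm{\nabla u_i}^2_{L^2}+C$, and the left side is a constant by mass conservation, so $\bar u_i\geq -C$. Thus $u$ is bounded in $H^1$, and Trudinger--Moser yields $\int_\Sigma e^{pu_i}\dif\mu_\Sigma\leq C_p$ for every $p$. Hölder's inequality and \autoref{thm:decay} with exponent $4$ then give
\begin{align*}
\int_\Sigma e^{2u_i}\abs{\dfrac{\partial u_i}{\partial t}}^2\dif\mu_\Sigma\leq\Big(\int_\Sigma e^{3u_i}\dif\mu_\Sigma\Big)^{1/2}\Big(\int_\Sigma e^{u_i}\abs{\dfrac{\partial u_i}{\partial t}}^4\dif\mu_\Sigma\Big)^{1/2}\leq C,
\end{align*}
and the other terms of $f_i$ are bounded in $L^2$ in the same way. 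Hence $u_i-\bar u_i$ is uniformly bounded, and with $\bar u_i\leq C$ we obtain $M_i\leq C$, which is (1).

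The main obstacle is controlling the time-derivative term $e^{u_i}\partial_tu_i$ in $L^2$ uniformly in $t$. This is exactly where the decay estimates of \autoref{thm:decay} are needed, in particular the $L^p$ version with $p=4$ in the last step. One must also check that the regularity lower bound on $\int_\Sigma h_ie^{u_i}\dif\mu_\Sigma$ is uniform in time, which the proof of \autoref{thm:global} provides when $\rho_i\leq 4\pi$.
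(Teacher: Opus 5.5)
Your proof is correct. Its elliptic steps match the paper's (1)$\Rightarrow$(2) and (2)$\Rightarrow$(1), and you are more careful there. The paper's ``standard elliptic estimates'' silently require a uniform-in-$t$ bound on $e^{u_i}\partial_t u_i$ in $L^2$. That is exactly where \autoref{thm:decay} is needed: with $p=2$ when $u_i\le C$, and with $p=4$ plus H\"older when only $\int_\Sigma e^{pu_i}\dif\mu_\Sigma\le C_p$ is available.

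The genuine difference is in (2)$\Leftrightarrow$(3). The paper reads both directions off a single two-sided bound on $\frac12\sum_{i,j}a^{ij}\int_\Sigma\hin{\nabla u_i}{\nabla u_j}\dif\mu_\Sigma$ plus a multiple of $\sum_i\bar u_i$. The upper half comes from \eqref{eq:TM''}, i.e.\ monotonicity, and the lower half from the Jost--Wang system inequality \eqref{eq:TM}. You instead get (3)$\Rightarrow$(2) from monotonicity, Poincar\'e--Young and positive definiteness of $A_N^{-1}$ alone. You get (2)$\Rightarrow$(3) from the scalar inequality \eqref{eq:classical-TM} plus mass conservation, which suffices because no sharp constant is needed once $\norm{\nabla u}_{L^2}$ is bounded.

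Your route is more elementary and makes each dependence explicit; the system inequality then enters only through the uniform regularity bound and \autoref{thm:decay}. The paper's route is shorter, obtaining both implications from one display.
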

\begin{proof}
From \eqref{eq:TM''} we have 
\begin{align*}
    -C\leq \dfrac14\sum_{i,j=1}^N\int_{\Sigma}a^{ij}\hin{\nabla u_i}{\nabla u_j}\dif\mu_{\Sigma}+4\pi\sum_{i=1}^N\bar u_i\leq C.
\end{align*}
Hence the last two statements are equivalent. 

If the first statement holds, i.e.,
\begin{align*}
    \sum_{i=1}^NM_i(t)\leq C,
\end{align*}
then in particular 
\begin{align*}
    u_i(t)\leq C,\quad\forall i\in I.
\end{align*}
Standard elliptic estimates then give
\begin{align*}
    \norm{u(t)-\bar u(t)}_{H^2\left(\Sigma\right)}\leq C,
\end{align*}
and consequently
\begin{align*}
    \norm{\nabla u(t)}_{L^2\left(\Sigma\right)}\leq C,
\end{align*}
which is the second statement.

If the second statement holds, then
\begin{align*}
    \norm{u(t)}_{H^1\left(\Sigma\right)}\leq C.
\end{align*}
 Using elliptic regularity again we obtain
\begin{align*}
    \norm{u(t)}_{H^2\left(\Sigma\right)}\leq C.
\end{align*}
By the Sobolev embedding theorem,
\begin{align*}
    \norm{u(t)}_{L^{\infty}\left(\Sigma\right)}\leq C,
\end{align*}
which implies the first statement. 
\end{proof}

We say that \emph{$\set{u(t)}$ blows up} if either one of the statement does not hold in \autoref{lem:blowup}.

For the remainder of this section, we will consider that the Toda flow \eqref{eq:toda} does not converge as time approaches infinity. According to \autoref{thm:convergence-general}, we know that
\begin{align*}
    \limsup_{t\to\infty}\norm{u(t)}_{H^1\left(\Sigma\right)}=\infty.
\end{align*}
In other words, the $\set{u(t)}$ blows up. By the assumption and \autoref{lem:blowup}, we know that
\begin{align*}
    \bar u_i(t)\geq-C,\quad i\neq k,\\
    \liminf_{t\to\infty}\bar u_k(t)=-\infty.
\end{align*}

Fixed a positive number $\epsilon$, for each $i\in I$, we consider the following singular set 
\begin{align*}
    S_i(\epsilon)=\set{p\in\Sigma:\limsup_{\delta\to0}\limsup_{t\to\infty}\int_{B^{\Sigma}_{\delta}(p)}e^{u_i(t)}\dif\mu_{\Sigma}\geq\epsilon}.
\end{align*}
Denote $S(\epsilon)=\cup_{i=1}^NS_i(\epsilon)$ by the total singular set. One can check that $S(\epsilon)$ is a finite subset of $\Sigma$. To see this, consider a set $\set{p_1, \dots, p_m} \subset S_i(\epsilon)$. We select $\delta_0 > 0$ and $t_0 > 0$ such that the following condition holds:
\begin{align*}
    B^{\Sigma}_{\delta_0}(p_i) \cap B^{\Sigma}_{\delta_0}(p_j) = \emptyset, \quad 1 \leq i < j \leq m.
\end{align*}
Moreover, we ensure that
\begin{align*}
    \int_{B^{\Sigma}_{\delta_0}(p_i)} e^{u_i(t_0)} \dif\mu_{\Sigma} \geq \frac{\epsilon}{2}, \quad 1 \leq i \leq m.
\end{align*}
Consequently, we deduce
\begin{align*}
    \dfrac{m\epsilon}{2} \leq \sum_{k=1}^m \int_{B^{\Sigma}_{\delta_0}(p_k)} e^{u_i(t_0)} \dif\mu_{\Sigma} = \int_{\cup_{k=1}^m B^{\Sigma}_{\delta_0}(p_k)} e^{u_i(t_0)} \dif\mu_{\Sigma} \leq \int_{\Sigma} e^{u_i(t_0)} \dif\mu_{\Sigma} \leq C.
\end{align*}
As a result, it follows that
\begin{align*}
    \#S(\epsilon) \leq \dfrac{C}{\epsilon}.
\end{align*}

Recall Brezis-Merle's estimate \cite[Theorem 1]{BreMer91uniform}. 
\begin{lem}[cf. \cite{DinJosLiWan97differential}]\label{lem:BM}Let $\Omega\subset\Sigma$ be a smooth domain. Assume $u\in H^1(\Omega)$ is a weak solution to
\begin{align*}
\begin{cases}
    -\Delta u=f,&\text{in}\ \Omega,\\
    u=0,&\text{on}\ \partial\Omega,
\end{cases}
\end{align*}
where $f\in L^1(\Omega)$. For every $0<\delta<4\pi$, there is a constant $C$ depending only on $\delta$ and $\Omega$ such that
\begin{align*}
    \int_{\Omega}\exp\left(\dfrac{(4\pi-\delta)\abs{u}}{\norm{f}_{L^1(\Omega)}}\right)\dif\mu_{\Sigma} \leq C.
\end{align*}
\end{lem}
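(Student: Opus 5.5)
The plan is to follow the classical Brezis–Merle argument, replacing the Euclidean Newtonian kernel by the Dirichlet Green function of $\Omega\subset\Sigma$. We may assume $f\not\equiv0$, since otherwise $u\equiv0$ and the bound is trivial. The estimate is invariant under $u\mapsto\lambda u$, $f\mapsto\lambda f$, so we may normalize $\norm{f}_{L^1(\Omega)}=1$. We may also first take $f$ smooth. For general $f\in L^1$ we approximate by smooth $f_n\to f$ in $L^1$. The corresponding solutions $u_n$ converge to $u$ in $W^1_q$ for $q<2$, which is the standard $L^1$ theory that identifies the limit with the given weak solution $u\in H^1_0(\Omega)$. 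Fatou's lemma then passes the bound to the limit.

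\textbf{Step 1: Green representation.} Let $G_\Omega(x,y)$ be the Dirichlet Green function of $-\Delta$ on $\Omega$, so that
\begin{align*}
u(x)=\int_\Omega G_\Omega(x,y)f(y)\dif\mu_\Sigma(y).
\end{align*}
By the maximum principle $G_\Omega\geq0$. The key pointwise estimate to establish is
\begin{align*}
0\leq G_\Omega(x,y)\leq\dfrac{1}{2\pi}\ln\dfrac{1}{\mathrm{dist}(x,y)}+C_\Omega,\qquad x,y\in\Omega,\ x\neq y.
\end{align*}
To prove it, cover $\overline\Omega$ by finitely many isothermal coordinate charts. For $y$ in a chart, write $G_\Omega(\cdot,y)=\frac{1}{2\pi}\eta\ln\frac{1}{\mathrm{dist}(\cdot,y)}+R_y$ with a fixed cutoff $\eta$. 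The error $-\Delta R_y$ is bounded in $L^\infty$ (or in $L^p$, $p>1$) uniformly in $y$, because the Laplacian of $\ln\mathrm{dist}$ differs from the flat one by a term of order $\mathrm{dist}^{-1}\cdot O(\mathrm{dist})$. The boundary values of $R_y$ are bounded above uniformly, since the log term is bounded below. The maximum principle, together with an $L^p$ to $L^\infty$ estimate for the Dirichlet problem, then gives $R_y\leq C_\Omega$ uniformly in $y$.

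\textbf{Step 2: Jensen and Fubini.} With $\norm{f}_{L^1}=1$, the measure $\dif\nu=\abs{f}\dif\mu_\Sigma$ is a probability measure. For every $x$ we have $\abs{u(x)}\leq\int G_\Omega(x,y)\dif\nu(y)$, and Jensen's inequality gives
\begin{align*}
\exp\bigl((4\pi-\delta)\abs{u(x)}\bigr)\leq\int_\Omega\exp\bigl((4\pi-\delta)G_\Omega(x,y)\bigr)\dif\nu(y).
\end{align*}
Integrating in $x$, applying Fubini, and inserting the bound from Step 1 yields
\begin{align*}
\int_\Omega e^{(4\pi-\delta)\abs{u}}\dif\mu_\Sigma\leq e^{(4\pi-\delta)C_\Omega}\sup_{y\in\Omega}\int_\Omega\mathrm{dist}(x,y)^{-2+\frac{\delta}{2\pi}}\dif\mu_\Sigma(x).
\end{align*}
Since $\Sigma$ is two-dimensional and $2-\frac{\delta}{2\pi}<2$, the right-hand side is finite and bounded uniformly in $y$ by a constant depending only on $\delta$ and $\Omega$. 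This gives the lemma.

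\textbf{Main obstacle.} I expect the hardest step to be the uniform logarithmic upper bound for $G_\Omega$ in Step 1, uniformly as $y$ approaches $\partial\Omega$ and across charts. The sharp constant $\frac{1}{2\pi}$ is what produces the threshold $4\pi$, so the leading coefficient must not be lost.

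I would first try the comparison with the constant-coefficient Green function in isothermal coordinates. In these coordinates $\Delta_\Sigma=e^{-2\varphi}\Delta_{\mathbb{R}^2}$, so the singular part is exactly the Euclidean one. Patching between charts is handled by the fact that $G_\Omega(\cdot,y)$ is uniformly bounded away from $y$, which follows from the maximum principle and the $W^1_q$ estimates.
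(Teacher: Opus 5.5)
Your argument is correct and is essentially the Brezis--Merle proof that the paper relies on; the paper gives no proof of its own, only the citation. The common ingredients are a nonnegative kernel bounded by $\frac{1}{2\pi}\ln\frac{1}{\mathrm{dist}(x,y)}+C$, Jensen's inequality against the probability measure $\abs{f}\dif\mu_{\Sigma}/\norm{f}_{L^1(\Omega)}$, and Fubini.

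The only difference is that Brezis--Merle avoid Green-function estimates by comparing $\abs{u}$ with the explicit supersolution $\frac{1}{2\pi}\int_{\Omega}\ln\frac{\mathrm{diam}\,\Omega}{\abs{x-y}}\abs{f(y)}\dif y$. In isothermal coordinates, $-\Delta_{\Sigma}u=f$ becomes $-\Delta_{\mathbb{R}^2}u=e^{2\varphi}f$ with $\norm{e^{2\varphi}f}_{L^1(\dif x)}=\norm{f}_{L^1(\dif\mu_{\Sigma})}$, so that comparison already covers the small geodesic balls to which the paper applies the lemma. Your uniform logarithmic bound on $G_\Omega$ goes further and handles an arbitrary smooth $\Omega\subset\Sigma$.
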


As a consequence of \autoref{lem:BM}, we have the following

\begin{lem}\label{lem:basic} There exists a positive number $\epsilon_0$ such that for every compact subset $K\subset\Sigma\setminus S(\epsilon_0)$, there exists a positive constant $C_K$ such that
\begin{align}\label{eq:basic}
    \norm{u(t)-\bar u(t)}_{L^{\infty}(K)}\leq C_K.
\end{align}

\end{lem}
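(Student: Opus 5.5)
We prove \eqref{eq:basic} locally: it suffices to find, for every $p\in\Sigma\setminus S(\epsilon_0)$, a radius $\delta>0$ such that $\norm{u(t)-\bar u(t)}_{L^\infty(B^\Sigma_{\delta}(p))}\leq C$ for all large $t$. Compactness of $K$ then gives \eqref{eq:basic} for $t\geq t_0$. On $[0,t_0]$ we use the smooth bounds from \autoref{lem:smooth}.

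\textbf{Rewriting the flow as an elliptic system.} Invert the Cartan matrix in \eqref{eq:toda} to get, for each $i$,
\begin{align*}
    -\Delta u_i=\sum_{j=1}^N a_{Nij}\left(\rho_j\dfrac{h_je^{u_j}}{\int_\Sigma h_je^{u_j}\dif\mu_\Sigma}-Q_j-\dfrac{\partial e^{u_j}}{\partial t}\right)\eqqcolon f_i(t).
\end{align*}
By \eqref{eq:uniformly_regularity}, $\rho_j h_je^{u_j}/\int_\Sigma h_je^{u_j}\dif\mu_\Sigma$ is bounded by $C\abs{h_j}e^{u_j}$. Also $\abs{\partial_t e^{u_j}}=e^{u_j}\abs{\partial_t u_j}$, and Cauchy--Schwarz gives
\begin{align*}
    \int_{B}e^{u_j}\abs{\partial_t u_j}\dif\mu_\Sigma\leq\left(\int_{B}e^{u_j}\dif\mu_\Sigma\right)^{1/2}\left(\int_\Sigma e^{u_j}\abs{\partial_tu_j}^2\dif\mu_\Sigma\right)^{1/2}.
\end{align*}
By \autoref{thm:decay}, the last factor tends to $0$. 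Hence, if $p\notin S(\epsilon_0)$, we can pick $\delta$ so that for all large $t$
\begin{align*}
    \norm{f_i(t)}_{L^1(B^\Sigma_{2\delta}(p))}\leq C\epsilon_0+o(1).
\end{align*}
This bound holds for every $i$ simultaneously, because $p$ lies outside each $S_j(\epsilon_0)$. The constant $C$ depends only on $\max\abs{h}$, $c_{\text{reg}}$ and the entries of $A_N$.

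\textbf{Splitting $u_i$ on the ball.} Write $u_i=w_i+v_i$ on $B=B^\Sigma_{2\delta}(p)$, where $-\Delta w_i=f_i$ with $w_i=0$ on $\partial B$, and $v_i$ is harmonic. Choose $\epsilon_0$ so small that $C\epsilon_0<\pi$. Then \autoref{lem:BM}, applied with exponent $(4\pi-\delta')/\norm{f_i}_{L^1}\geq 3$, gives $\int_B e^{3\abs{w_i}}\leq C$. For the harmonic part, the mean value property bounds $\sup_{B_\delta}(v_i-\bar u_i)$ by $\norm{v_i-\bar u_i}_{L^1(B)}$. In turn $\norm{v_i-\bar u_i}_{L^1(B)}\leq\norm{u_i-\bar u_i}_{L^1}+\norm{w_i}_{L^1}$, and $\norm{u_i-\bar u_i}_{L^1(\Sigma)}\leq C$ because $\Delta u_i$ is bounded in $L^1(\Sigma)$ uniformly in $t$ (mass conservation plus the decay above), so Green's representation applies. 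Consequently
\begin{align*}
    e^{u_i}\leq e^{\bar u_i}\,e^{\abs{w_i}}\,C\leq Ce^{\abs{w_i}}\quad\text{on }B_\delta,
\end{align*}
using $\bar u_i\leq C$. Hence $e^{u_i}$ is bounded in $L^3(B_\delta)$.

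\textbf{Upgrading to $L^\infty$ (main obstacle).} The term $\partial_t e^{u_j}=e^{u_j}\partial_t u_j$ needs higher integrability, not just smallness in $L^1$. Use Hölder together with \autoref{thm:decay} for some large exponent $p$:
\begin{align*}
    \int_{B_\delta}\left(e^{u_j}\abs{\partial_tu_j}\right)^{q}\dif\mu_\Sigma\leq\left(\int_{B_\delta}e^{u_j}\abs{\partial_tu_j}^{p}\dif\mu_\Sigma\right)^{q/p}\left(\int_{B_\delta}e^{\frac{(q-q/p)}{1-q/p}u_j}\dif\mu_\Sigma\right)^{1-q/p}.
\end{align*}
With $q$ slightly above $1$ and $p$ large, the exponent on $e^{u_j}$ stays below $3$, so the right-hand side is bounded. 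Thus $f_i\in L^q(B_\delta)$ for some $q>1$, uniformly in large $t$. Local elliptic estimates, combined with the bound $\norm{u_i-\bar u_i}_{L^1}\leq C$, then give $\norm{u_i-\bar u_i}_{L^\infty(B_{\delta/2})}\leq C$. A finite cover of $K$ finishes the proof.
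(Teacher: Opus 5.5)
Your proposal is correct and follows essentially the same route as the paper. On a small ball you split $u_i$ into the Dirichlet solution $w_i$ of $-\Delta w_i=f_i$ plus a harmonic remainder. Small local mass, from \eqref{eq:conservation}, \eqref{eq:uniformly_regularity} and \autoref{thm:decay}, lets \autoref{lem:BM} give higher integrability of $e^{\abs{w_i}}$. The harmonic part is then controlled by the mean value property together with $\norm{u_i-\bar u_i}_{L^1(\Sigma)}\leq C$ and $\bar u_i\leq C$, and interior elliptic estimates finish the proof. Your H\"older argument with \eqref{eq:t-lp}, which puts $\partial_t e^{u_j}$ in $L^q$ for some $q>1$ before the $L^\infty$ step, makes explicit what the paper compresses into ``standard elliptic estimates''. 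The same holds for your smaller mass threshold giving $L^3$ rather than $L^p$ with $p$ near $1$.
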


\begin{proof}The proof is standard and we refer the reader to  \cite[Lemma 2.8]{DinJosLiWan97differential} for a single equation. In fact, for each $p_0\notin S$, by definition,
\begin{align*}
    \limsup_{\delta\to0}\limsup_{t\to\infty}\int_{B^{\Sigma}_{\delta}(p_0)}e^{u_i(t)}\dif\mu_{\Sigma}<\epsilon.
\end{align*}
According to \eqref{eq:conservation}, \eqref{eq:uniformly_regularity} and \eqref{eq:t-lp},  
we may assume for some positive numbers $\epsilon_0, \delta$ and $r$,  $B_{4r}^{\Sigma}(p_0)\subset\Sigma\setminus S(\epsilon_0)$
\begin{align}\label{eq:BM-1}
    \int_{B_{4r}^{\Sigma}(p_0)}\abs{\sum_{j=1}^Na_{ij}\left(\rho_j\dfrac{h_je^{u_j(t)}}{\int_{\Sigma}h_je^{u_j(t)}\dif\mu_{\Sigma}}-Q_j-\dfrac{\partial e^{u_j(t)}}{\partial t}\right)}\dif\mu_{\Sigma}\leq4\pi-2\delta,\quad \forall i\in I.
\end{align} 
Solve
\begin{align}\label{eq:w}
    \begin{cases}
        -\Delta w_i(t)=\sum_{j=1}^Na_{ij}\left(\rho_j\frac{h_je^{u_j(t)}}{\int_{\Sigma}h_je^{u_j(t)}\dif\mu_{\Sigma}}-Q_j-\frac{\partial e^{u_j(t)}}{\partial t}\right),&\text{in}\ B_{4r}^{\Sigma}(p_0),\\
        w_i(t)=0,&\text{on}\ \partial B_{4r}^{\Sigma}(p_0).
    \end{cases}
\end{align}
According to \autoref{lem:BM} and \eqref{eq:BM-1}, we have
\begin{align*}
    \norm{e^{\abs{w_i(t)}}}_{L^{p}(B_{4r}^{\Sigma}(p_0))}\leq C,\quad p=\dfrac{4\pi-\delta}{4\pi-2\delta},\quad i\in I.
\end{align*}
In particular, $w(t)$ is uniformly bounded in $L^1(B_{4r}^{\Sigma}(p_0))$. 
 Since $h(t)\coloneqq u(t)-\bar u(t)-w(t)$ is harmonic in $B_{4r}^{\Sigma}(p_0)$, we have by the mean value theorem for harmonic functions
\begin{align*}
    \norm{h(t)}_{L^{\infty}(B_{2r}^{\Sigma}(p_0))}\leq&C\norm{h(t)}_{L^{1}(B_{4r}^{\Sigma}(p_0))}\\
    \leq&C(\norm{u-\bar u(t)}_{L^{1}(B_{4r}^{\Sigma}(p_0))}+\norm{w(t)}_{L^{1}(B_{4r}^{\Sigma}(p_0))})\\
    \leq&C(\norm{u(t)-\bar u(t)}_{L^{1}\left(\Sigma\right)}+\norm{w(t)}_{L^{1}(B_{4r}^{\Sigma}(p_0))})\\
    \leq&C.
\end{align*}
Notice that $\bar u_i(t)\leq C$ for each $i$ by Jensen's inequality. We obtain
\begin{align*}
    \norm{e^{u_i(t)}}_{L^{p}(B_{2r}^{\Sigma}(p_0))}\leq C\norm{e^{w_i(t)}}_{L^{p}(B_{2r}^{\Sigma}(p_0))}\leq C,\quad\forall i\in I.
\end{align*}
Applying the standard elliptic estimates for \eqref{eq:w}, we get
\begin{align*}
    \norm{w(t)}_{L^{\infty}(B_{r}^{\Sigma}(p_0))}\leq C.
\end{align*}
Hence
\begin{align*}
    \norm{u(t)-\bar u(t)}_{L^{\infty}(B_{r}^{\Sigma}(p_0))}\leq&\norm{h(t)+w(t)}_{L^{\infty}(B_{r}^{\Sigma}(p_0))}\\
    \leq&\norm{h(t)}_{L^{\infty}(B_{r}^{\Sigma}(p_0))}+\norm{w(t)}_{L^{\infty}(B_{r}^{\Sigma}(p_0))}\\
    \leq&C.
\end{align*}
We obtain the desired estimate \eqref{eq:basic}. 

\end{proof}

\begin{lem}\label{lem:singular-set} The set $S(\epsilon_0)=\set{p_0}$ comprises a solitary element. Moreover, $h_{k}(p_0)>0$.
\end{lem}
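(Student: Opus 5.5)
We argue along a sequence $t_n\to\infty$ with $\bar u_k(t_n)\to-\infty$, which exists by \autoref{lem:blowup}. Throughout we use the two bounds already available: $\bar u_i(t)\geq -C$ for $i\neq k$, and $\norm{\nabla U_i}_{L^2}\leq C$ for $i\neq k$ from \autoref{thm:partial-H^1}.

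\textbf{Step 1: $S(\epsilon_0)\neq\emptyset$.} If $S(\epsilon_0)$ were empty, \autoref{lem:basic} with $K=\Sigma$ would give $\norm{u(t)-\bar u(t)}_{L^\infty(\Sigma)}\leq C$. Then $e^{u_k(t_n)}\leq Ce^{\bar u_k(t_n)}\to0$ uniformly, which contradicts mass preservation \eqref{eq:conservation}.

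\textbf{Step 2: only $u_k$ concentrates.} Fix $i\neq k$. Off $S(\epsilon_0)$, \autoref{lem:basic} together with $\bar u_k(t_n)\to-\infty$ gives $e^{u_k(t_n)}\to0$ locally uniformly. Hence the mass $\int_\Sigma e^{u_k}$ concentrates on $S(\epsilon_0)$, so some point $p\in S_k(\epsilon_0)$ exists. I will show that no $u_i$ with $i\neq k$ concentrates anywhere. I first try to use $\norm{\nabla U_i}_{L^2}\leq C$. Since $u_i=\sum_j a_{Nij}U_j$ involves $U_k$, this bound does not control $u_i$ directly. Instead I combine it with the equation for $U_i$: integrating the Toda flow over small balls, the measure $\rho_i h_ie^{u_i}/\!\int h_ie^{u_i}-\partial_te^{u_i}$ is the Laplacian of $-U_i$. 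If it carried a Dirac mass $\beta\delta_p$ with $\beta\neq0$, then locally $U_i\approx \frac{\beta}{2\pi}\ln\frac1{|x-p|}$, which is not in $H^1$. This contradicts the uniform $H^1$ bound on $U_i$ via weak lower semicontinuity. The term $\partial_t e^{u_i}$ is negligible in the limit by \eqref{eq:t-lp}. So no measure with an atom can be the weak limit. Positive and negative parts could cancel where $h_i$ changes sign, so I will use instead that $e^{u_i}$, rather than $h_ie^{u_i}$, has no atom. For this I would run Step 2 via Brezis--Merle on a small ball, bounding $e^{u_i}$ through $U_i$ and the nonnegative contribution $U_k\leq$ something. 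This is the delicate point. Concretely I would write $u_i=2U_i-U_{i-1}-U_{i+1}$ and note that only $U_{i\pm1}$ with $i\pm1=k$ is unbounded. The sign of $-U_k$ is favorable because $-\Delta U_k\approx 4\pi\delta_p$ with a positive mass, so $-U_k\sim 2\ln|x-p|$ is bounded above near $p$ up to a constant.

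\textbf{Step 3: a single point with $h_k>0$.} After Step 2, near each point of $S_k(\epsilon_0)$ the function $u_k-\bar u_k$ solves an equation whose right-hand side concentrates, with mass $\sum_j a_N^{kj}\cdot$(measures) $=4\pi\cdot(\text{local fraction of } h_ke^{u_k}/\!\int h_ke^{u_k})$ plus $O(1)$ terms. A local Pohozaev/blowup analysis for the $k$-th equation, with all other components bounded, is the standard single-equation mean-field analysis of Ding--Jost--Li--Wang. It shows that each blowup point of $u_k$ needs mass at least $8\pi$ of $\rho_kh_ke^{u_k}/\!\int h_ke^{u_k}$ measured in the diagonal coefficient $a_N^{kk}$ normalization, that is, $4\pi$ in $\rho_k$ units, and that the blowup point must have $h_k(p)>0$. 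Rescaling at $p$ gives an entire Liouville solution $-\Delta v=h_k(p)e^v$, which requires $h_k(p)>0$. Since the total is $\rho_k=4\pi$ and the negative parts where $h_k<0$ do not concentrate (there $u_k$ stays bounded above by the maximum principle), exactly one point can carry it. I expect Step 2, ruling out concentration of the other components, to be the main obstacle, using the partial $H^1$ bound.
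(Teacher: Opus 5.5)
\textbf{There is a genuine gap: Steps 2 and 3 depend on each other.}

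What is sound: your Step~1 is the paper's argument. The capacity ($H^{-1}$) argument at the start of Step~2 is also correct: for $i\neq k$ it excludes concentration of $e^{u_i}$ at points where $h_i(p)\neq0$. For $\abs{i-k}\geq2$, the identity $u_i=2U_i-U_{i-1}-U_{i+1}$ gives an $H^1$ bound outright.

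The gap is in your treatment of $i=k\pm1$. To bound $e^{u_{k-1}}=e^{2U_{k-1}-U_{k-2}}e^{-U_k}$ on a fixed ball around $p$, uniformly in $t$, you need $-(U_k-\bar U_k)\leq C$ there. Via the maximum principle, this requires $-\Delta U_k=\rho_kh_ke^{u_k}/\int_\Sigma h_ke^{u_k}-Q_k-\partial_t e^{u_k}$ to be nonnegative up to an $L^1$-small error on that ball, i.e.\ $h_k>0$ near $p$. Your ``$-\Delta U_k\approx4\pi\delta_p$ with a positive mass'' assumes this, and more: that the concentrated mass at $p$ is exactly $4\pi$.

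But you only derive $h_k(p)>0$ in Step~3, by rescaling $u_k$ alone to an entire solution of $-\Delta v=c\,h_k(p)e^v$. That reduction is valid only if you already know that no other component blows up at a comparable or faster rate near $p$, which is the output of Step~2. Your claim that $u_k$ stays bounded above on $\set{h_k<0}$ by the maximum principle has the same defect. There $-\Delta u_k$ contains $-\rho_{k\pm1}h_{k\pm1}e^{u_{k\pm1}}/\int_\Sigma h_{k\pm1}e^{u_{k\pm1}}$, which is positive where $h_{k\pm1}<0$. Your sign argument is in fact the one the paper uses, but in \autoref{lem:upper}, \emph{after} $h_k(p_0)>0$ has been established.

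\textbf{How the paper breaks the circle.} It rescales at the maximum over \emph{all} components, $M_n=\max_i\max_{\overline{B^{\Sigma}_\delta(p_0)}}u_i(t_n)$. Every rescaled component is then $\leq0$, and the limit is an entire solution of a (possibly reduced) Toda system. \autoref{athm:classification} forces the coefficient $\rho_jh_j(p_0)$ of each surviving component to be positive, with quantized mass. This yields $j=k$, $h_k(p_0)>0$ and $S(\epsilon_0)=S_k(\epsilon_0)$.

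\textbf{A cheaper repair inside your own framework.} The functions $e^{u_i}$ enter the system only through $h_ie^{u_i}$, and for $i\neq k$ the measure $\abs{h_i}e^{u_i}$ never concentrates. Where $h_i(p)\neq0$ this is your capacity argument. Where $h_i(p)=0$ it is trivial, since $\int_{B^{\Sigma}_\delta(p)}\abs{h_i}e^{u_i}\dif\mu_{\Sigma}\leq\max_{B^{\Sigma}_\delta(p)}\abs{h_i}\int_\Sigma e^{u_{i,0}}\dif\mu_{\Sigma}$. Where $h_k<0$, this gives $-\Delta u_k\leq$ an $L^1$-small function. A one-sided comparison with \autoref{lem:BM}, using \autoref{lem:basic} on $\partial B^{\Sigma}_r(p)$, then shows that nothing concentrates at points where $h_k\leq0$, or where $e^{u_k}$ itself does not concentrate. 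You never need to control $e^{u_{k\pm1}}$ itself.

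\textbf{On the single-point conclusion.} Your mass count needs a sharp lower bound $4\pi$ at every concentration point, and the vanishing of $\int_{\set{h_k<0}}\abs{h_k}e^{u_k}$. The paper instead uses the improved Trudinger--Moser inequality. There, two separated concentrations of $e^{u_k}$, together with $J(u(t))\leq C$, would force a bound on $\int_\Sigma\abs{\nabla u_k}^2\dif\mu_{\Sigma}$ and hence rule out concentration. That argument does not care about the sign of $h_k$.
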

\begin{proof}
    Since $\set{u(t)}$ blows up, according to \autoref{lem:basic}, we know that $S(\epsilon_0)$ is nonempty. 

Fixed $p_0\in S(\epsilon_0)$. Choose $\delta>0$ such that $B^{\Sigma}_{2\delta}(p_0)\cap S(\epsilon_0)=\set{p_0}$. 
We have
\begin{align*}
    \limsup_{t\to\infty}\max_{1\leq i\leq N}\max_{\overline{B^{\Sigma}_{\delta}(p_0)}}u_i(t)=\infty.
\end{align*}
Choose $t_n\to\infty$ and $p_n\in \overline{B^{\Sigma}_{\delta}(p_0)}$ and $j\in\set{1,2,\dots,N}$ such that
\begin{align*}
    M_n\coloneqq u_j(t_n,p_n)=\max_{\overline{B^{\Sigma}_{\delta}(p_0)}}u_j(t_n)=\max_{1\leq i\leq N}\max_{\overline{B^{\Sigma}_{\delta}(p_0)}}u_i(t_n)\to\infty.
\end{align*}
One can check that $p_n$ converges to $p_0$. For otherwise, we may assume, up to a subsequence, $p_n$ converges to $\tilde p_0\neq p_0$, then $\tilde p_0$ is a regular point in the sense that $\tilde p_0\in\Sigma\setminus S(\epsilon_0)$. Notice that $M_n$ is not bounded from above. However, the estimate stated in \autoref{lem:basic} claims that $u(t_n)$ is uniformly bounded from below in a neighborhood  of $\tilde p_0$ which is impossible. 

Without loss of generality,  we assume identify $B^{\Sigma}_{2\delta}(p_0)=B_{2\delta}\subset\mathbb{R}^2$ and the coordinate of $p_n$ is $x_n$.  
We scale $u_i(t_n)$ by
\begin{align*}
    \tilde u_i(t_n)(x)=u_i\left(t_n,x_n+e^{-M_n/2}x\right)-M_n,\quad \abs{x}<e^{M_n/2}\left(\delta-\abs{x_n}\right).
\end{align*}
We have
\begin{align*}
    e^{-M_n/2}e^{\tilde u_i(t_n)}\dfrac{\partial u_i(t_n)}{\partial t}=\sum_{j=1}^Na^{ij}\Delta_{\mathbb{R}^2}\tilde u_j(t_n)+\rho_i\dfrac{h_i\left(x_n+e^{-M_n/2}\cdot\right)e^{\tilde u_i(t_n)}}{\int_{\Sigma}h_ie^{u_i(t_n)}\dif\mu_{\Sigma}}-e^{-M_n}Q_i.
\end{align*}
Moreover
\begin{align*}
    \int_{B_{e^{M_n/2}\left(\delta-\abs{x_n}\right)}(0)}e^{\tilde u_i(t_n)}\dif\mu_{\mathbb{R}^2}\leq\int_{\Sigma}e^{u_i(t_n)}\dif\mu_{\Sigma}\leq C.
\end{align*}
Then a standard blowup analysis together with the classification of the Toda system \autoref{athm:classification} gives $j=k$ and $h_{k}(p_0)>0$.  Consequently, $S(\epsilon_0)=S_k(\epsilon_0)$.

According to an improved Trudinger-Moser inequality, we know that $\#S_k(\epsilon_0)\leq 1$. In fact, if $\#S_k(\epsilon_0)\geq 2$, then we can choose two different points $p_1, p_2\in S_k(\epsilon_0)$ and $\delta>0$ such that $\mathrm{dist}(p_1,p_2)\geq 3\delta$ and 
\begin{align*}
    \int_{B^{\Sigma}_{\delta}(p_1)}e^{u_{k}(t_n)-\phi}\dif\mu_{\Sigma}\geq\dfrac{\epsilon_0}{2},\quad \int_{B^{\Sigma}_{\delta}(p_2)}e^{u_{k}(t_n)-\phi}\dif\mu_{\Sigma}\geq\dfrac{\epsilon_0}{2}
\end{align*}
for some sequence $t_n\to\infty$. Then one can use an improved Trudinger-Moser inequality (see, for example, \cite[Lemma 2.2]{CheLi91prescribing}, \cite[Theorem 2.1]{DinJosLiWan99existence} or \cite[Lemma 2.2]{LiSunYan23boundary})
\begin{align*}
    \ln\fint_{\Sigma}e^{u_{k}(t_n)-\phi}\dif\mu_{\Sigma}\leq\dfrac{1}{24\pi}\int_{\Sigma}\abs{\nabla \left(u_k(t_n)-\phi\right)}^2\dif\mu+\fint_{\Sigma}(u_{k}(t_n)-\phi)\dif\mu_{\Sigma}+C. 
\end{align*}
However, according to the proof of \autoref{thm:partial-H^1}, we know that
\begin{align*}
    J(u(t_n))\geq&\dfrac14\int_{\Sigma}\abs{\nabla \left(u_k(t_n)-\phi\right)}^2\dif\mu+4\pi\left(\fint_{\Sigma}\left(u_k(t_n)-\phi\right)\dif\mu_{\Sigma}-\ln\fint_{\Sigma}e^{u_{k}(t_n)-\phi}\dif\mu_{\Sigma}\right)-C.
\end{align*}
Applying  \eqref{eq:monotonicity}, we conclude that
\begin{align*}
    \bar u_{k}(t_n)\geq-C
\end{align*}
which is a contradiction. Therefore, $S(\epsilon_0)=\set{p_0}$.

\end{proof}

\begin{lem}\label{lem:upper}
    The following estimate holds
    \begin{align*}
        \abs{u_i(t)}\leq C,\quad i<k-1\ or\  i>k+1,\\
        u_{k-1}(t)\leq C,\quad u_{k+1}(t)\leq C.
    \end{align*}
\end{lem}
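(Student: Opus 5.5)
The plan is to work with the Green representation of $u_i-\bar u_i$ for $i\neq k$ and to exploit two facts. First, $\bar u_i(t)$ is bounded for $i\neq k$: it is bounded below by \autoref{thm:partial-H^1} and above by Jensen's inequality. Second, by \autoref{lem:singular-set} the only concentration point is $p_0$, and only $u_k$ concentrates there. So it suffices to bound $u_i-\bar u_i$ from above (and from below when $\abs{i-k}\geq2$) near $p_0$; away from $p_0$, \autoref{lem:basic} already gives $\norm{u(t)-\bar u(t)}_{L^\infty(K)}\leq C_K$.

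Write the Toda flow as $-\Delta u_i=\sum_j a_{Nij}f_j$ with $f_j=\rho_j\frac{h_je^{u_j}}{\int_\Sigma h_je^{u_j}}-Q_j-\frac{\partial e^{u_j}}{\partial t}$. I will split $f_j$ into two parts.
\begin{enumerate}[(a)]
\item The time-derivative part. By Cauchy--Schwarz, $\norm{\partial_t e^{u_j}}_{L^1}\leq (\int_\Sigma e^{u_j})^{1/2}(\int_\Sigma e^{u_j}\abs{\partial_tu_j}^2)^{1/2}\to0$ by \autoref{thm:decay} and \eqref{eq:conservation}. This holds for every $j$, including $j=k$.
\item The nonlocal source part. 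Since $p_0\notin S_j(\epsilon_0)$ for $j\neq k$, the mass of $h_je^{u_j}$ on small balls $B^\Sigma_{4r}(p_0)$ is eventually $<\epsilon_0$ for every $j\neq k$. By \eqref{eq:uniformly_regularity} the normalizing denominators stay bounded below.
\end{enumerate}

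On $B^\Sigma_{4r}(p_0)$ I decompose $u_i-\bar u_i=w_i+z_i+\eta_i$. Here $w_i$ solves the Dirichlet problem with right-hand side $\sum_{j\neq k}a_{Nij}f_j+a_{Nik}\bigl(-Q_k-\partial_te^{u_k}\bigr)$; $z_i$ carries the term $a_{Nik}\rho_k h_ke^{u_k}/\int_\Sigma h_ke^{u_k}$ with zero boundary data; and $\eta_i$ is harmonic. The right-hand side of $w_i$ has $L^1$ norm less than $4\pi-2\delta$ for large $t$ once $r$ and $\epsilon_0$ are small, so \autoref{lem:BM} gives $e^{\abs{w_i}}\in L^p(B^\Sigma_{4r}(p_0))$ for some $p>1$. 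The harmonic part $\eta_i$ is controlled by the $L^1$ bound on $u-\bar u$ on $\Sigma$, exactly as in \autoref{lem:basic}.

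The sign of $a_{Nik}$ now decides the two cases.
\begin{enumerate}[(i)]
\item If $\abs{i-k}\geq2$, then $a_{Nik}=0$ and $z_i=0$.
\item If $i=k\pm1$, then $a_{Nik}=-1$. Since $h_k(p_0)>0$ by \autoref{lem:singular-set}, we may take $h_k>0$ on $B^\Sigma_{4r}(p_0)$. Then $-\Delta z_i\leq0$, and the maximum principle gives $z_i\leq 0$. This one-sided control is exactly why only an upper bound is claimed for $u_{k\pm1}$.
\end{enumerate}
In both cases $e^{u_i}\leq Ce^{w_i}$ near $p_0$, hence $e^{u_i}\in L^p$ near $p_0$ for all $i\neq k$. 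This gives an $L^p$ (not merely $L^1$) bound for $\sum_{j\neq k}a_{Nij}h_je^{u_j}$.

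It remains to upgrade $w_i$ to $L^\infty$ by elliptic estimates. The main obstacle is the term $\partial_te^{u_j}=e^{u_j}\partial_tu_j$, which is only known to be small in $L^1$. For $j\neq k$ I will use Hölder's inequality: $\int\abs{e^{u_j}\partial_tu_j}^q\leq \bigl(\int e^{(2q-1)u_j}\bigr)^{1/2}\bigl(\int e^{u_j}\abs{\partial_tu_j}^{2q}\bigr)^{1/2}$. Combined with \eqref{eq:t-lp} and the $L^p$ bound just obtained (iterated once, so that Brezis--Merle yields arbitrarily high exponents, since the $L^1$ norms of the non-concentrating parts tend to zero), this gives an $L^q$ bound with $q>1$.

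For $j=k$, which enters $w_{k\pm1}$, the term $\partial_te^{u_k}$ is not controlled in $L^q$ near $p_0$. I plan to move it into a separate piece. Its $L^1$ norm tends to $0$, so by \autoref{lem:BM} it contributes a function whose exponential lies in $L^p$ for every $p$; it does not threaten the upper bound. Failing that, I would use the decay \eqref{eq:t-gradient} of $\nabla\partial_tu$ together with the monotonicity integral. With these pieces, $w_i$ is bounded in $L^\infty(B^\Sigma_r(p_0))$. Combining this with $\bar u_i=O(1)$ and $z_{k\pm1}\leq0$ yields both assertions of the lemma.
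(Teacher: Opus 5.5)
\textbf{There is a genuine gap: the pointwise bound $u_{k\pm1}\le C$ near $p_0$ is not established.}

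Your first stage is correct and is essentially the paper's first step. Discarding $-\rho_kh_ke^{u_k}/\int_\Sigma h_ke^{u_k}\dif\mu_\Sigma$ from the equation for $u_{k\pm1}$ (your $z_i\le 0$) and applying \autoref{lem:BM} to the remainder gives $\int_{B^\Sigma_r(p_0)}e^{pu_i(t)}\dif\mu_\Sigma\le C_p$ for every $i\neq k$. For $\abs{i-k}\ge 2$ your local elliptic upgrade also works, because the right-hand side of $w_i$ then contains only $f_j$ with $j\neq k$.

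The problem is the case $i=k\pm1$. There $w_i$ contains, with a plus sign, the piece $\psi$ solving $-\Delta\psi=\partial_te^{u_k}$ in $B^\Sigma_{4r}(p_0)$ with $\psi=0$ on the boundary. The only information you use about its source is $\norm{\partial_te^{u_k}}_{L^1}\to0$. \autoref{lem:BM} then gives $\int e^{p\abs{\psi}}\le C_p$ for every $p$, but it does not give $\psi\le C$. A source of mass $m$ spread over a ball of radius $s$ produces $\psi\approx\frac{m}{2\pi}\ln\frac1s$ at its centre, which is unbounded whenever $m\ln\frac1s\to\infty$, however fast $m\to0$. So your claim that this piece ``does not threaten the upper bound'' is exactly the unproved point. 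What you actually obtain is exponential integrability of $u_{k\pm1}$ near $p_0$, which is weaker than the lemma.

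The fallback via \eqref{eq:t-gradient} and the monotonicity integral is not an argument either. The decay estimates come with no rate, while every $L^q$ norm ($q>1$) of $e^{u_k}$ near $p_0$ blows up with $M_k(t)$. So the H\"older trick you use for $j\neq k$ cannot place $\partial_te^{u_k}$ in $L^q$.

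\textbf{How the paper proceeds, and where the same issue reappears.} The paper never estimates, for the pointwise bound, an equation containing $f_k$. It uses $-\Delta U_i=f_i$ for $i\neq k$. With the integrability from the first step, H\"older and \eqref{eq:t-lp} give $\norm{U_i-\bar U_i}_{W^2_p(\Sigma)}\le C_p$ for $i\neq k$. This yields $\abs{u_i}\le C$ for $\abs{i-k}\ge2$, and a $C^{1+\alpha}$ bound on $2u_{k\pm1}+u_k-\bar u_k$ (note $2u_{k-1}+u_k=3U_{k-1}-2U_{k-2}-U_{k+1}$). The paper then concludes with a maximum-point argument.

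This reduction turns $u_{k\pm1}\le C$ into the lower bound $u_k-\bar u_k\ge -C$ near $p_0$. Equivalently, it needs an upper bound on $\psi-\zeta$, where $\zeta\ge0$ is the Dirichlet potential of $\rho_kh_ke^{u_k}/\int_\Sigma h_ke^{u_k}\dif\mu_\Sigma$. The paper's inference that $u_k(t_n,p_{t_n})\to-\infty$ keeps $p_{t_n}$ away from $p_0$ needs exactly this bound. Indeed, by \autoref{lem:basic}, $u_k(t_n)\to-\infty$ uniformly on compact subsets of $B^\Sigma_\delta(p_0)\setminus\set{p_0}$ whenever $\bar u_k(t_n)\to-\infty$, so that divergence alone says nothing about where $p_{t_n}$ lies.

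So adopting the $U_i$-reduction isolates the difficulty cleanly, but it does not remove it. What your proposal is missing is a genuine one-sided estimate on the potential of $\partial_te^{u_k}$ near the blow-up point, going beyond what Brezis--Merle provides.
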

\begin{proof}
    Firstly, we show that for every $p>0$
    \begin{align}\label{eq:uniformly-l2}
        \int_{\Sigma}e^{pu_i(t)}\dif\mu_{\Sigma}\leq C_p
    \end{align}
    provided $i\neq k$. It suffices to prove
    \begin{align*}
        \int_{B^{\Sigma}_{\delta}(p_0)}e^{pu_{k-1}(t)}\dif\mu_{\Sigma}\leq C_p,\quad \int_{B^{\Sigma}_{\delta}(p_0)}e^{pu_{k+1}(t)}\dif\mu_{\Sigma}\leq C_p
    \end{align*}
    for some $\delta>0$ and $p_0\in S(\epsilon_0)$. In fact, since
    \begin{align*}
        u_i=\sum_{j=1}^Na_{ij}U_j,
    \end{align*}
    according to \autoref{thm:partial-H^1}, we have
    \begin{align*}
        \norm{u_i(t)}_{H^1\left(\Sigma\right)}\leq C,\quad i<k-1\ \text{or}\ i>k+1.
    \end{align*}
    The classical Trudinger-Moser inequality \eqref{eq:classical-TM} implies
    \begin{align*}
        \int_{\Sigma}e^{pu_i(t)}\dif\mu_{\Sigma}\leq C_p,\quad i<k-1\ \text{or}\ i>k+1.
    \end{align*}
    Moreover, since
    \begin{align*}
        \abs{\bar u_{i}(t)}\leq C,\quad\forall i\neq k,
    \end{align*}
    according to \autoref{lem:basic}, we know that the functions $u_{k-1}(t)$ and $u_{k+1}(t)$ is uniformly bounded in $L^{\infty}_{loc}(\Sigma\setminus S(\epsilon_0))$. 

Since 
\begin{align*}
    -\Delta U_i=\rho_i\dfrac{h_ie^{u_i(t)}}{\int_{\Sigma}h_ie^{u_i(t)}\dif\mu_{\Sigma}}-Q_i-\dfrac{\partial e^{u_i(t)}}{\partial t}
\end{align*}
and for $i\neq k$
\begin{align*}
    \norm{\nabla U_i(t)}_{L^2\left(\Sigma\right)}\leq C,
\end{align*}
we know that
\begin{align*}
    \norm{\rho_i\dfrac{h_ie^{u_i(t)}}{\int_{\Sigma}h_ie^{u_i(t)}\dif\mu_{\Sigma}}-\dfrac{\partial e^{u_i(t)}}{\partial t}}_{H^{-1}\left(\Sigma\right)}\leq C.
\end{align*}
In particular, for every Lipschitz function
 \begin{align*}
     \eta(p)=\begin{cases}
         1,&r(p)\coloneqq\mathrm{dist}(p,p_0)\leq\delta,\\
         1+\frac{\ln r(p)-\ln\delta }{\ln\sqrt{\delta}},&\delta<r(p)<\sqrt{\delta},\\
         0,&r(p)\geq\sqrt{\delta},
     \end{cases}
 \end{align*}
 where $p_0\in \Sigma$ and $0<\delta<1$, we have
\begin{align*}
\abs{\int_{\Sigma}\rho_i\dfrac{h_ie^{u_i(t)}}{\int_{\Sigma}h_ie^{u_i(t)}\dif\mu_{\Sigma}}\eta\dif\mu_{\Sigma}}\leq C\norm{\eta}_{H^1\left(\Sigma\right)}+\norm{\dfrac{\partial e^{u_i(t)}}{\partial t}}_{L^1\left(\Sigma\right)}\leq C\left(\sqrt{\delta}+\dfrac{1}{\abs{\ln\delta}^{1/2
}}\right)+\norm{\dfrac{\partial e^{u_i(t)}}{\partial t}}_{L^1\left(\Sigma\right)}.
\end{align*}
Thus
\begin{align*}
    \abs{\int_{B^{\Sigma}_{\delta}(p_0)}\rho_i\dfrac{h_ie^{u_i(t)}}{\int_{\Sigma}h_ie^{u_i(t)}\dif\mu_{\Sigma}}\dif\mu_{\Sigma}}\leq&\abs{\int_{B^{\Sigma}_{\sqrt{\delta}}(p_0)\setminus B^{\Sigma}_{\delta}(p_0)}\rho_i\dfrac{h_ie^{u_i(t)}}{\int_{\Sigma}h_ie^{u_i(t)}\dif\mu_{\Sigma}}\eta\dif\mu_{\Sigma}}+C\left(\sqrt{\delta}+\dfrac{1}{\abs{\ln\delta}^{1/2
}}\right)+\norm{\dfrac{\partial e^{u_i(t)}}{\partial t}}_{L^1\left(\Sigma\right)}.
\end{align*}
According to \autoref{lem:basic}, we know that for every $p_0\in\Sigma$
\begin{align}\label{eq:no-S_i}
    \lim_{\delta\to0}\lim_{t\to\infty}\abs{\int_{B^{\Sigma}_{\delta}(p_0)}\rho_i\dfrac{h_ie^{u_i(t)}}{\int_{\Sigma}h_ie^{u_i(t)}\dif\mu_{\Sigma}}\dif\mu_{\Sigma}}=0,\quad i\neq k.
\end{align}

If $k>1$, then the functions $u_{k-1}(t)$ satisfies
\begin{align*}
    -\Delta u_{k-1}(t)=&2\left(\rho_{k-1}\dfrac{h_{k-1}e^{u_{k-1}(t)}}{\int_{\Sigma}h_{k-1}e^{u_{k-1}(t)}\dif\mu_{\Sigma}}-Q_{k-1}-\dfrac{\partial e^{u_{k-1}(t)}}{\partial t}\right)-\left(\rho_{k-2}\dfrac{h_{k-2}e^{u_{k-2}(t)}}{\int_{\Sigma}h_{k-2}e^{u_{k-2}(t)}\dif\mu_{\Sigma}}-Q_{k-2}-\dfrac{\partial e^{u_{k-2}(t)}}{\partial t}\right)\\
    &-\left(\rho_{k}\dfrac{h_{k}e^{u_{k}(t)}}{\int_{\Sigma}h_{k}e^{u_{k}(t)}\dif\mu_{\Sigma}}-Q_{k}-\dfrac{\partial e^{u_{k}(t)}}{\partial t}\right).
 \end{align*}
 Since $h_k(p_0)>0$, we may assume in a neighborhood $B^{\Sigma}_{\delta}(p_0)$ of $p_0$
 \begin{align*}
     -\Delta u_{k-1}(t)\leq f_{k-1}(t)\coloneqq&2\left(\rho_{k-1}\dfrac{h_{k-1}e^{u_{k-1}(t)}}{\int_{\Sigma}h_{k-1}e^{u_{k-1}(t)}\dif\mu_{\Sigma}}-Q_{k-1}-\dfrac{\partial e^{u_{k-1}(t)}}{\partial t}\right)-\left(\rho_{k-2}\dfrac{h_{k-2}e^{u_{k-2}(t)}}{\int_{\Sigma}h_{k-2}e^{u_{k-2}(t)}\dif\mu_{\Sigma}}-Q_{k-2}-\dfrac{\partial e^{u_{k-2}(t)}}{\partial t}\right)\\
    &-\left(-Q_{k}-\dfrac{\partial e^{u_{k}(t)}}{\partial t}\right),
 \end{align*}
 and according to \eqref{eq:no-S_i}
 \begin{align*}
     \norm{f_{k-1}(t)}_{L^1\left(B^{\Sigma}_{\delta}(p_0)\right)}<\dfrac{4\pi}{p}.
 \end{align*}
 Solve
 \begin{align*}
 \begin{cases}
     -\Delta\tilde w_{k-1}(t)=f_{k-1}(t),&\text{in}\ B^{\Sigma}_{\delta}(p_0),\\
     \tilde w_{k-1}(t)=0,&\text{on}\ \partial B^{\Sigma}_{\delta}(p_0).
 \end{cases}
 \end{align*}
 We have from \autoref{lem:BM}
 \begin{align*}
     \int_{B^{\Sigma}_{\delta}(p_0)}e^{p\abs{\tilde w_{k-1}(t)}}\dif\mu_{\Sigma}\leq C_p.
 \end{align*}
 Since the functions $\tilde h_{k-1}(t)=u_{k-1}(t)-\tilde w_{k-1}(t)$ satisfies
 \begin{align*}
     \begin{cases}
     -\Delta\tilde h_{k-1}(t)\leq0,&\text{in}\ B^{\Sigma}_{\delta}(p_0),\\
     \tilde h_{k-1}(t)=u_{k-1}(t)\leq C,&\text{on}\ \partial B^{\Sigma}_{\delta}(p_0).
 \end{cases}
 \end{align*}
 We conclude that
 \begin{align*}
     u_{k-1}(t)\leq\tilde w_{k-1}(t)+C,\quad \text{in}\ B^{\Sigma}_{\delta}(p_0).
 \end{align*}
 Consequently,
 \begin{align*}
     \int_{B^{\Sigma}_{\delta}(p_0)}e^{pu_{k-1}(t)}\dif\mu_{\Sigma}\leq C_p.
 \end{align*}
 Hence, if $k>1$, then 
 \begin{align*}
     \int_{\Sigma}e^{pu_{k-1}(t)}\dif\mu_{\Sigma}\leq C_p.
 \end{align*}
 Similarly, if $k<N$, then we have
  \begin{align*}
     \int_{\Sigma}e^{pu_{k+1}(t)}\dif\mu_{\Sigma}\leq C_p.
 \end{align*}

 Secondly, we show that
 \begin{align*}
     u_i(t)\leq C,\quad i\neq k.
 \end{align*}
 Notice that
 \begin{align*}
     -\Delta U_i=\rho_i\dfrac{h_ie^{u_i}}{\partial t}-Q_i-\dfrac{\partial e^{u_i}}{\partial t}.
 \end{align*}
 According to \eqref{eq:conservation}, \eqref{eq:t-lp} and \eqref{eq:uniformly-l2},  we have for every $p>1$
 \begin{align*}
     \norm{\dfrac{\partial e^{u_i}}{\partial t}}_{L^p\left(\Sigma\right)}\leq \norm{e^{(2p-1)u_i/(2p)}}_{L^{2p}\left(\Sigma\right)}\norm{e^{u_i/(2p)}\dfrac{\partial u_i}{\partial t}}_{L^{2p}\left(\Sigma\right)}\to 0,\quad i\neq k.
 \end{align*}
 A standard elliptic estimate implies
 \begin{align*}
     \norm{U_i-\bar U_i}_{W^{2}_p\left(\Sigma\right)}\leq C_p,\quad i\neq k.
 \end{align*}
 Consequently, for every $\alpha\in(0,1)$
 \begin{align*}
     \norm{u_i(t)}_{C^{1+\alpha}\left(\Sigma\right)}\leq C,\quad i<k-1\ \text{or}\ i>k+1,\\
     \norm{2u_{k-1}(t)+u_{k}(t)-\bar u_{k}(t)}_{C^{1+\alpha}\left(\Sigma\right)}\leq C,\quad \norm{2u_{k+1}(t)+u_{k}(t)-\bar u_{k}(t)}_{C^{1+\alpha}\left(\Sigma\right)}\leq C.
 \end{align*}
In particular,
\begin{align*}
    \abs{u_i(t)}\leq C,\quad i<k-1\ or\ i>k+1,\\
    2u_{k-1}(t)+u_{k}(t)\leq C,\quad 2u_{k+1}(t)+u_{k}(t)\leq C
\end{align*}

 Choose $p_t\in\Sigma$ such that
 \begin{align*}
     u_{k-1}(t,p_t)=\max_{\Sigma}u_{k-1}(t).
 \end{align*}
 We have for $k>1$
 \begin{align*}
     2u_{k-1}(t,p_t)+u_{k}(t,p_t)\leq C.
 \end{align*}
 If $u_{k-1}(t)$ is not bounded from above, then we have for some sequence $t_n\to\infty$
 \begin{align*}
     \lim_{n\to\infty}u_{k-1}(t_n,p_{t_n})=\infty
 \end{align*}
 which implies
 \begin{align*}
     \lim_{n\to\infty}u_{k}(t_n,p_{t_n})=-\infty.
 \end{align*}
 Consequently, we may assume for some positive number $\delta$ and $t_0$
 \begin{align*}
     \set{p_t:t\geq t_0}\subset\Sigma\setminus B^{\Sigma}_{\delta}(p_0).
 \end{align*}
 This is impossible according to \autoref{lem:basic}. In other words, we obtain
 \begin{align*}
     u_{k-1}(t)\leq C.
 \end{align*}
 Similar argument yields for $k<N$
 \begin{align*}
     u_{k+1}(t)\leq C.
 \end{align*}

\end{proof}

\section{The lower bound of the functional}

In this section, we shall deriving an explicit lower bound of the functional $J(u(t))$ when $\set{u(t)}$ blows up along the Toda flow \eqref{eq:toda}. 

Let $G(\cdot,p_0)=(G_1(\cdot,p_0),\dots,G_N(\cdot,p_0))\in W^1_1\left(\Sigma,\mathbb{R}^N\right)$ be a weak solution to the following system
\begin{align*}
    \begin{cases}
    -\sum_{j=1}^Na^{ij}\Delta G_j(\cdot,p_0)=\rho_i\frac{h_ie^{G_i(\cdot,p_0)}}{\int_{\Sigma}h_ie^{G_i(\cdot,p_0)}\dif\mu_{\Sigma}}-Q_i,\quad\bar G_i(\cdot, p_0)=0,\quad i\neq k,\\
        -\sum_{j=1}^Na^{kj}\Delta G_j(\cdot,p_0)=4\pi\delta_{p_0}-Q_k,\quad \bar G_k(\cdot,p_0)=0.
    \end{cases}
\end{align*}
Moreover, we assume
\begin{align*}
    \int_{\Sigma}h_ie^{G_i(\cdot,p_0)}\dif\mu_{\Sigma}>0,\quad \int_{\Sigma}e^{G_i(\cdot,p_0)}\dif\mu_{\Sigma}<\infty,\quad i\neq k.
\end{align*}
Denote by
\begin{align*}
    H_i(\cdot,p_0)=\sum_{j=1}^Na^{ij}G_j(\cdot,p_0),\quad i\in I.
\end{align*}
We have
\begin{align*}
\begin{cases}
    -\Delta H_i(\cdot,p_0)=\rho_i\frac{h_ie^{G_i(\cdot,p_0)}}{\int_{\Sigma}h_ie^{G_i(\cdot,p_0)}\dif\mu_{\Sigma}}-Q_i,\quad \bar H_i(\cdot,p_0)=0,\quad i\neq k,\\
    -\Delta H_k(\cdot,p_0)=4\pi\delta_{p_0}-Q_k,\quad\bar H_k(\cdot,p_0)=0.
\end{cases}
\end{align*}

It is well know that we have the following expression of $H_k(\cdot,p_0)$ in a normal coordinates $\set{x}$ which is centered at $p_0$
\begin{align*}
    H_k(x,p_0)=-2\ln\abs{x}+R_k+\hin{\beta_k}{x}+O\left(\abs{x}^2\right).
\end{align*}
Consequently, $e^{-H_k(\cdot,p_0)}$ is smooth, and we conclude that $H_i(\cdot,p_0)$ are smooth if $i\neq k$. In fact, if we denote for  $i\in I$
\begin{align*}
    \underline{H_i}=\begin{cases}
        H_i(\cdot,p_0),&i\neq k,\\
        0,&i=k,
    \end{cases}\quad \underline{Q_i}=\begin{cases}
        Q_i,&i\neq k,\\
        0,&i=k,
    \end{cases}\quad \underline{\rho_i}=\begin{cases}
        \rho_i,&i\neq k,\\
        0,&i=k,
    \end{cases}
\end{align*}
and
\begin{align*}
    \underline{h_i}=\begin{cases}
        h_i,& i<k-1\ or\ i>k+1,\\
        h_ie^{-H_k(\cdot,p_0)},&i=k-1\ or\ i=k+1,\\
        1,&i=k,
    \end{cases}\quad \underline{G_i}=\sum_{j=1}^Na_{ij}\underline{H_j}(\cdot,p_0)=G_i(\cdot,p_0)+\begin{cases}
        0,&i<k-1\ or\ i>k+1,\\
        H_k(\cdot,p_0),& i=k-1 \ or\ i=k+1,\\
        -2H_k(\cdot,p_0),& i=k,
    \end{cases}
\end{align*}
then we have
\begin{align*}
    -\Delta\underline{H_i}=\underline{\rho_i}\dfrac{\underline{h_i}e^{\underline{G_i}}}{\int_{\Sigma}\underline{h_i}e^{\underline{G_i}}\dif\mu_{\Sigma}}-\underline{Q_i},\quad\overline{\underline{G_i}}=0,\quad i\in I.
\end{align*}
Thus, the Brezis-Merle argument together with the standard  elliptic estimates yields that as $\underline{H_i}$ are smooth functions. 
In other words,
\begin{align*}
    G_i(\cdot,p_0)\in C^{\infty}\left(\Sigma\right),\quad i<k-1\ or\ i>k+1,\\
    G_{i}(\cdot,p_0)+H_k(\cdot,p_0)\in C^{\infty}\left(\Sigma\right),\quad i=k-1\ or\ i=k+1,\\
    G_{k}(\cdot,p_0)-2H_k(\cdot,p_0)\in C^{\infty}\left(\Sigma\right).
\end{align*}

To state our main result in this section, we consider the following functional
\begin{align}\label{eq:lower-func-1}
    J_k(u)\coloneqq J(u)-\left(\dfrac14\int_{\Sigma}\abs{\nabla u_k}^2\dif\mu_{\Sigma}+\int_{\Sigma}Q_ku_k\dif\mu_{\Sigma}-4\pi\ln\int_{\Sigma}h_ke^{u_k}\dif\mu_{\Sigma}\right),\quad u\in H^1\left(\Sigma,\mathbb{R}^N\right).
\end{align}
Then the  primary results of this section are presented as follows.
\begin{theorem}\label{thm:lower}
   If the Toda flow \eqref{eq:toda} is not convergent at infinity, the following inequality holds:
\begin{align*}
J(u(t))\geq&J_k(G)+\dfrac14\int_{\Sigma}\abs{\nabla\left(H_{k-1}+H_{k+1}\right)}^2\dif\mu_{\Sigma}+\int_{\Sigma}Q_k H_k\dif\mu_{\Sigma}\\
    &-4\pi\left(\lim_{p\to p_0}\left(H_k(p,p_0)+2\mathrm{dist}(p,p_0)\right)+\ln h_k(p_0)\right)-4\pi-4\pi\ln\pi.
\end{align*}
\end{theorem}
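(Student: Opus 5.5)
Since $J(u(t))$ is nonincreasing by \eqref{eq:monotonicity}, it suffices to prove that $\lim_{t\to\infty}J(u(t))$ is bounded below by the right-hand side. It is enough to do this along a sequence $t_n\to\infty$ on which $\bar u_k(t_n)\to-\infty$; such a sequence exists by \autoref{lem:blowup}. The plan follows the Ding--Jost--Li--Wang scheme for the single mean field equation. The new ingredient is that the coupled components $u_{k\pm1}$ must be handled through the partial estimates of \autoref{thm:partial-H^1} and \autoref{lem:upper}.

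\textbf{Step 1: compactness away from $p_0$.} By \autoref{lem:singular-set} the singular set is $\{p_0\}$ with $h_k(p_0)>0$. By \autoref{lem:upper}, the components $u_i$ with $i\neq k$ are bounded above, and those with $|i-k|\ge2$ are bounded in $C^{1,\alpha}$. Combined with \eqref{eq:t-lp}, this shows $\partial_t e^{u_i}\to0$ in $L^p$. Since $\bar u_k(t_n)\to-\infty$ and $\int e^{u_k}$ is conserved, the measure $\rho_k h_k e^{u_k}/\int h_k e^{u_k}$ concentrates as $4\pi\delta_{p_0}$. Passing to the limit in the equations for $U_i$, I expect $u_i(t_n)-\bar u_i(t_n)\to G_i(\cdot,p_0)$ in $C^2_{loc}(\Sigma\setminus\{p_0\})$ and weakly in $W^1_q$ for $q<2$. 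For $i\neq k$, the components converge in $C^1$ away from $p_0$, and in $H^1$ in the combinations dictated by the smooth functions $\underline{H_i}$.

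\textbf{Step 2: splitting the energy.} I split $J(u(t_n))$ into the contribution of $\Sigma\setminus B_\delta(p_0)$ and that of $B_\delta(p_0)$. I rewrite the quadratic form $\frac12\sum a_N^{ij}\int\langle\nabla u_i,\nabla u_j\rangle$ in terms of $u_k$ and the $U_i$ via the block (Schur complement) decomposition used in \autoref{thm:partial-H^1}. This isolates $\frac14\int|\nabla u_k|^2$ plus a part that depends on $u_k$ only through $U_{k\pm1}$, which is where the term $\frac14\int|\nabla(H_{k-1}+H_{k+1})|^2$ will come from. Outside the ball, the energy converges to the corresponding expression for $G$. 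On the ball, I bound $\frac14\int_{B_\delta}|\nabla u_k|^2-4\pi\ln\int_{B_\delta}h_ke^{u_k}$ from below by a local sharp Moser--Trudinger (Carleson--Chang type) argument. Concretely, I let $v$ solve the harmonic extension of the boundary values, which converge to $G_k+\bar u_k$, and compare against the radial extremal estimate. This produces the constants $-4\pi-4\pi\ln\pi$ and $-4\pi\ln h_k(p_0)$. The term $-4\pi\lim_{p\to p_0}(H_k(p,p_0)+2\ln\mathrm{dist}(p,p_0))$ comes from the expansion $H_k=-2\ln|x|+R_k+\dots$ on $\partial B_\delta$, and the $\ln\delta$ contributions from the inside and the outside cancel. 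The $\bar u_k$ terms cancel between $\int Q_ku_k$, $-4\pi\ln\int h_ke^{u_k}$, and the boundary data; the term $\int Q_kH_k$ appears when $G_k$ is rewritten via $H_k$.

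\textbf{Main obstacle.} The hardest step is the local lower bound near $p_0$ for the coupled system. It requires showing that in $B_\delta$ the cross terms involving $u_{k\pm1}$ contribute only $o(1)$ as $\delta\to0$ after $t_n\to\infty$. For this I plan to use that $2u_{k\pm1}+u_k$ is bounded in $C^{1,\alpha}$ (\autoref{lem:upper}), so that $\nabla u_{k\pm1}=-\frac12\nabla u_k+O(1)$ near $p_0$. This should reduce the coupled quadratic form on the ball exactly to $\frac14|\nabla u_k|^2+O(1)$-gradient terms, whose $L^2(B_\delta)$ norm vanishes as $\delta\to0$. I also expect to have to handle $\partial_te^{u_k}$ in the $u_k$-equation on $B_\delta$; here I will use \eqref{eq:t-lp} with $p$ large. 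Finally, I let $n\to\infty$ and then $\delta\to0$.
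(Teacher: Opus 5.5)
Your proposal is correct, but inside the ball $B_\delta(p_0)$ it follows a genuinely different route from the paper.

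\textbf{What the two arguments share.} The quadratic form minus $\frac14\int\abs{\nabla u_k}^2$ depends only on $\nabla U_i$, $i\neq k$. These are bounded in $H^2$, which gives $J_k(u(t_n))\to J_k(G)$. In addition, $\frac14\int_{\Sigma\setminus B_\delta}\abs{\nabla u_k}^2$ converges to the Green-function energy.

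\textbf{What the paper does inside the ball.} It performs an explicit blow-up analysis:
\begin{enumerate}
\item It rescales $u_k(t_n)$ at its maximum with $r_n=e^{-M_k(t_n)/2}$ and identifies the Chen--Li bubble, which gives the exact core energy $4\pi\ln(L_kR^2)-4\pi$.
\item It bounds the neck energy from below by the Dirichlet principle applied to circular averages.
\item It then needs the upper bound on $J(u(t_n))$ to force $\bar u_k(t_n)/\ln r_n\to 2$ before completing the square.
\end{enumerate}

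\textbf{What your splitting does instead.} You write $u_k=v_n+w_n$ with $v_n$ harmonic and $w_n\in H^1_0(B_\delta)$, so the Dirichlet energies add. This replaces all three steps above by a single use of the Carleson--Chang concentration bound, which is the original Ding--Jost--Li--Wang argument. It needs only three inputs:
\begin{itemize}
\item $C^{1}$ convergence of $u_k-\bar u_k$ to $G_k$ near $\partial B_\delta$;
\item $\norm{\nabla w_n}_{L^2}\to\infty$, which follows from mass conservation and $\bar u_k(t_n)\to-\infty$;
\item $w_n/\norm{\nabla w_n}_{L^2}\rightharpoonup 0$, which follows from the $W^1_q$ bound for $q<2$.
\end{itemize}
No rescaled equation and no classification are needed in this step. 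In particular, the part of your ``main obstacle'' about $\partial_t e^{u_k}$ inside the ball disappears. The time derivative only matters on compact sets away from $p_0$. There \eqref{eq:t-lp} gives $L^p$ decay, hence $C^{1,\alpha}_{loc}$ convergence rather than the $C^2_{loc}$ you state, and that suffices.

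\textbf{The price of your route.} Carleson--Chang is used as a black box, and you must use its sharp form
$\limsup_{n\to\infty}\bigl(\ln\int_{B_\delta}e^{w_n}-\frac{1}{16\pi}\int_{B_\delta}\abs{\nabla w_n}^2\bigr)\le\ln(\pi\delta^2e)$.
This is obtained by discarding the set $\{w_n\le\frac{1}{16\pi}\norm{\nabla w_n}_{L^2}^2\}$. The plain Young inequality combined with Carleson--Chang only gives $\ln(\pi\delta^2(1+e))$, which would miss the constant $-4\pi-4\pi\ln\pi$.

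\textbf{One bookkeeping point.} The term $+\frac14\int\abs{\nabla(H_{k-1}+H_{k+1})}^2$ in the statement arises from expanding $\frac14\abs{\nabla G_k}^2$ outside $B_\delta$ with $G_k=2H_k-H_{k-1}-H_{k+1}$. It does not come from the $U_{k\pm1}$-part, which is already contained in $J_k(G)$. The resulting term $\int H_k\Delta(H_{k-1}+H_{k+1})=\int(H_{k-1}+H_{k+1})Q_k-4\pi(H_{k-1}+H_{k+1})(p_0)$ then cancels the $+4\pi(H_{k-1}+H_{k+1})(p_0)$ produced by the boundary value of $v_n$.
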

\begin{proof}
We have already prove from \eqref{eq:conservation}, \eqref{eq:uniformly_regularity} and \eqref{eq:t-lp} that
\begin{align*}
    \int_{\Sigma}e^{u_i(t)}\dif\mu_{\Sigma}=e^{u_{i,0}}\dif\mu_{\Sigma},\quad \int_{\Sigma}h_ie^{u_i(t)}\dif\mu_{\Sigma}\geq C^{-1},\quad \lim_{t\to\infty}\int_{\Sigma}e^{u_i(t)}\abs{\dfrac{\partial u_i}{\partial t}}^2\dif\mu_{\Sigma}=0,\quad i\in I
\end{align*}
and \autoref{lem:upper} that
\begin{align*}
   \abs{\bar u_i(t)}\leq C,\quad  u_i(t)\leq C,\quad\forall i\neq k.
\end{align*}

Applying H\"older's inequality, 
\begin{align*}
    \norm{\dfrac{\partial e^{u_i}}{\partial t}}_{L^1\left(\Sigma\right)}\leq\norm{e^{u_i/2}}_{L^2\left(\Sigma\right)}\norm{e^{u_i/2}\dfrac{\partial u_i}{\partial t}}_{L^2\left(\Sigma\right)},
\end{align*}
we conclude that
\begin{align*}
    \lim_{t\to\infty}\norm{\dfrac{\partial e^{u_i(t)}}{\partial t}}_{L^1\left(\Sigma\right)}=0.
\end{align*}
Since the functions $u_i(t)$ satisfy
\begin{align*}
    -\Delta u_i(t)=\sum_{j=1}^Na_{ij}\left(\rho_j\dfrac{h_je^{u_j(t)}}{\int_{\Sigma}h_je^{u_j(t)}\dif\mu_{\Sigma}}-Q_j-\dfrac{\partial e^{u_j(t)}}{\partial t}\right),\quad i\in I,
\end{align*}
we have
\begin{align*}
    \norm{\Delta u(t)}_{L^1\left(\Sigma\right)}\leq C,
\end{align*}
which implies from the standard potential estimate that for each $1<p<2$
\begin{align*}
    \norm{u(t)-\bar u(t)}_{W^{1}_p\left(\Sigma\right)}\le C_p.
\end{align*}
Notice that the functions $U_i(t)$ satisfy
\begin{align*}
    -\Delta U_i(t)=\rho_i\dfrac{h_ie^{u_i(t)}}{\int_{\Sigma}h_ie^{u_i(t)}\dif\mu_{\Sigma}}-Q_i-\dfrac{\partial e^{u_i(t)}}{\partial t}.
\end{align*}
We have
\begin{align*}
    \norm{\Delta U_i(t)}_{L^2\left(\Sigma\right)}\leq C,\quad i\neq k,
\end{align*}
which gives from the standard elliptic estimate
\begin{align*}
    \norm{U_i(t)-\bar U_i(t)}_{H^2\left(\Sigma\right)}\leq C,\quad i\neq k.
\end{align*}

Since $\set{u(t)}$ blows up, according to \autoref{lem:blowup}, we may choose a sequence of positive numbers
 $\set{t_n}$ such that $t_{n+1}\geq t_n+1$ and
 \begin{align*}
     \lim_{n\to\infty}\bar u_k(t_n)=-\infty.
 \end{align*}
 Without loss of generality, we assume the following statements holds:
\begin{enumerate}[(1)]
    \item $\lim_{n\to\infty}\int_{\Sigma}h_ie^{u_i(t_n)}\dif\mu_{\Sigma}=a_i>0$.
    \item $\lim_{n\to\infty}e^{\bar u_i(t_n)}=b_i$  where $b_i>0$ if $i\neq k$.
    \item $e^{u_i(t_n)}\dif\mu_{\Sigma}$ converges to a nonnegative Radon measure $\mu_i$. 
    \item $u(t_n)-\bar u(t_n)$ converges to $G=(G_1,\dots,G_N)$ weakly in $W^{1,p}\left(\Sigma,\mathbb{R}^N\right)$ for every $1<p<2$ and strongly in $L^q\left(\Sigma,\mathbb{R}^N\right)$ for every $1<q<\infty$. 
    \item $U_i(t_n)-\bar U_i(t_n)$ converges to $H_i$ weakly in $H^2\left(\Sigma\right)$ and strongly in $W^{1,q}\left(\Sigma\right)$ for every $1<q<\infty$ if $i\neq k$. 
\end{enumerate}
Here for convenience, we denote by
\begin{align*}
    G_i=G_i(\cdot,p_0),\quad H_i=H_i(\cdot,p_0),\quad i\in I.
\end{align*}
According to \autoref{lem:singular-set}, we know that
\begin{align*}
    \mu_i=b_ie^{G_i}\dif\mu_{\Sigma},\quad i\neq k,\\
    \mu_k=\int_{\Sigma}e^{u_{k,0}}\dif\mu_{\Sigma}\delta_{p_0}.
\end{align*}

One can check that
\begin{align*}
    \lim_{n\to\infty}\left(\int_{\Sigma}Q_iu_i(t_n)\dif\mu_{\Sigma}-\rho_i\ln\int_{\Sigma}h_ie^{u_i(t_n)}\dif\mu_{\Sigma}\right)=\int_{\Sigma}Q_iG_i\dif\mu_{\Sigma}-\rho_i\ln\int_{\Sigma}h_ie^{G_i}\dif\mu_{\Sigma},\quad i\neq k.
\end{align*}
Consider the following functional
\begin{align*}
    I_k(u)\coloneqq\dfrac12\sum_{i,j=1}^Na^{ij}\int_{\Sigma}\hin{\nabla u_i}{\nabla u_j}\dif\mu_{\Sigma}-\dfrac14\int_{\Sigma}\abs{\nabla u_k}^2\dif\mu_{\Sigma}.
\end{align*}
Notice that
\begin{align*}
    \int_{\Sigma}Q_iu_i\dif\mu_{\Sigma}-\rho_i\ln\int_{\Sigma}h_ie^{u_i}\dif\mu_{\Sigma}=\int_{\Sigma}\left(Q_i-\bar Q_i\right)\left(u_i-\bar u_i\right)\dif\mu_{\Sigma}-\rho_i\ln\int_{\Sigma}h_ie^{u_i-\bar u_i}\dif\mu_{\Sigma}.
\end{align*}
By checking the proof of \autoref{thm:partial-H^1} step by step, we conclude that
\begin{align*}
   \lim_{n\to\infty}\left(\int_{\Sigma}\sum_{i,j=1}^Na^{ij}\hin{\nabla u_i(t_n)}{\nabla u_j(t_n)}\dif\mu_{\Sigma}-\dfrac12\int_{\Sigma}\abs{\nabla u_k(t_n)}^2\dif\mu_{\Sigma}\right)=\int_{\Sigma}\left(\sum_{i,j=1}^Na^{ij}\hin{\nabla G_i}{\nabla G_j}\dif\mu_{\Sigma}-\dfrac12\abs{\nabla G_k}^2\right)\dif\mu_{\Sigma},
\end{align*}
which implies
\begin{align}\label{eq:lower-func-2}
    \lim_{n\to\infty}J_k(u(t_n))=J_k(G).
\end{align}

Choose $p_n\in\Sigma$ such that $=u_{k}(t_n,p_n)=M_k(t_n)=\max_{\Sigma}u_{k}(t_n)$. We may assume 
\begin{align*}
    \lim_{n\to\infty}M_k(t_n)=\infty,\quad\lim_{n\to\infty}p_n=p_0
\end{align*}
Now choose a conformal coordinate $\set{x}$ centered at $x_0$. Without loss of generality,  we identify $B^{\Sigma}_{2\delta}(p_0)=B_{2\delta}\subset\mathbb{R}^2$. 
We scale $u_i(t_n)$ by
\begin{align*}
    \tilde u_i(t_n)(x)=u_i(t_n,x_n+r_nx)-M_k(t_n),\quad \abs{x}<r_n^{-1}\left(\delta-\abs{x_n}\right),
\end{align*}
where $r_n=e^{-M_k(t_n)/2}$. 
We may assume $\tilde u_k(t_n)$ converges to $w_k$ in $H^2_{loc}(\mathbb{R}^2)$ and strongly in $W^{1}_{q,loc}(\mathbb{R}^2)$ for every $1<q<\infty$. Moreover
\begin{align*}
    -\Delta_{\mathbb{R}^2} w_k=\dfrac{8\pi}{\int_{\Sigma}e^{u_{k,0}}\dif\mu_{\Sigma}}e^{w_k},\quad \int_{\mathbb{R}^2}e^{w_k}\dif\mu_{\mathbb{R}^2}<\infty.
\end{align*}
By the classification result for Liouville equation of Chen-Li \cite{CheLi91classification}, we know that
\begin{align*}
    w_k(x)=-2\ln\left(1+L_k\abs{x}^2\right),\quad L_k=\dfrac{\pi}{\int_{\Sigma}e^{u_{k,0}}\dif\mu_{\Sigma}}.
\end{align*}
A simple calculation yields
\begin{align*}
    \dfrac14\int_{B^{\Sigma}_{r_nR}(p_n)}\abs{\nabla u_k}^2\dif\mu_{\Sigma}=&\dfrac14\int_{B_{R}}\abs{\nabla w_k}^2\dif\mu_{\mathbb{R}^2}+o_n(1)\\
    =&4\pi\ln\left(L_kR^2\right)-4\pi+o_R(1)+o_n(1).
\end{align*}
Here and in the following, we use $o_R(1), o_n(1), o_{\delta}(1)$ to denote those functions which converges to zero as $R\to\infty, n\to\infty, \delta\to0$ respectively.

Since 
\begin{align*}
    -\Delta H_k=4\pi\delta_{p_0}-Q_{k},\quad G_k=2H_{k}-H_{k-1}-H_{k+1},
\end{align*}
we get
\begin{align*}
    &\dfrac14\int_{\Sigma\setminus B^{\Sigma}_{\delta}(p_0)}\abs{\nabla u_k}^2\dif\mu_{\Sigma}\\
    =&\dfrac14\int_{\Sigma\setminus B^{\Sigma}_{\delta}(p_0)}\abs{\nabla G_k}^2\dif\mu_{\Sigma}+o_n(1)\\
    =&\int_{\Sigma\setminus B^{\Sigma}_{\delta}(p_0)}\abs{\nabla H_k}^2\dif\mu_{\Sigma}-\int_{\Sigma\setminus B^{\Sigma}_{\delta}(p_0)}\hin{\nabla H_k}{\nabla\left(H_{k-1}+H_{k+1}\right)}\dif\mu_{\Sigma}+\dfrac14\int_{\Sigma\setminus B^{\Sigma}_{\delta}(p_0)}\abs{\nabla\left(H_{k-1}+H_{k+1}\right)}^2\dif\mu_{\Sigma}+o_n(1)\\
    =&-\int_{\Sigma\setminus B^{\Sigma}_{\delta}(p_0)}H_k\Delta H_k\dif\mu_{\Sigma}-\int_{\partial B^{\Sigma}_{\delta}(p_0)}H_k\dfrac{\partial H_k}{\partial\nu}\dif\mu_{\partial B^{\Sigma}_{\delta}(p_0)}\\
    &-\int_{\Sigma}\hin{\nabla H_k}{\nabla\left(H_{k-1}+H_{k+1}\right)}\dif\mu_{\Sigma}+\dfrac14\int_{\Sigma}\abs{\nabla\left(H_{k-1}+H_{k+1}\right)}^2\dif\mu_{\Sigma}+o_n(1)+o_{\delta}(1)\\
    =&-\int_{\Sigma\setminus B^{\Sigma}_{\delta}(p_0)}Q_k H_k\dif\mu_{\Sigma}-\int_{\partial B^{\Sigma}_{\delta}(p_0)}H_k\dfrac{\partial H_k}{\partial\nu}\dif\mu_{\partial B^{\Sigma}_{\delta}(p_0)}\\
    &+\int_{\Sigma}H_k\Delta\left(H_{k-1}+H_{k+1}\right)\dif\mu_{\Sigma}+\dfrac14\int_{\Sigma}\abs{\nabla\left(H_{k-1}+H_{k+1}\right)}^2\dif\mu_{\Sigma}+o_n(1)+o_{\delta}(1)\\
    =&-\int_{\Sigma}Q_k H_k\dif\mu_{\Sigma}-\int_{\partial B^{\Sigma}_{\delta}(p_0)}H_k\dfrac{\partial H_k}{\partial\nu}\dif\mu_{\partial B^{\Sigma}_{\delta}(p_0)}\\
    &+\int_{\Sigma}H_k\Delta\left(H_{k-1}+H_{k+1}\right)\dif\mu_{\Sigma}+\dfrac14\int_{\Sigma}\abs{\nabla\left(H_{k-1}+H_{k+1}\right)}^2\dif\mu_{\Sigma}+o_n(1)+o_{\delta}(1).
\end{align*}
Recall that we have the expression of $H_k$
 \begin{align*}
    H_k(x)=-2\ln\abs{x}+R_k+\hin{\beta_k}{x}+O\left(\abs{x}^2\right),\quad x\to o.
\end{align*}
We have
\begin{align*}
    x\cdot\nabla H_k(x)=&-2+\hin{\beta_k}{x}+O\left(\abs{x}^2\right),\quad x\to o,
\end{align*}
which implies that along the boundary $\partial B^{\Sigma}_{\delta}(p_0)$
\begin{align*}
    H_k\dfrac{\partial H_k}{\partial\nu}=&\dfrac{1}{\abs{x}}\left(4\ln\abs{x}-2R_k\right)+\dfrac{1}{\abs{x}}\left(R_k-2-2\ln\abs{x}\right)\hin{\beta_k}{x}+O\left(\abs{x}\ln\abs{x}\right),\quad x\to o.
\end{align*}
Thus
\begin{align*}
    -\int_{\partial B^{\Sigma}_{\delta}(p_0)}H_k\dfrac{\partial H_k}{\partial\nu}\dif\mu_{\partial B^{\Sigma}_{\delta}(p_0)}=&-8\pi\ln\delta+4\pi R_k(p_0)+o_{\delta}(1).
\end{align*}
We conclude 
\begin{align}\label{eq:lower-func-3}
\begin{split}
     \dfrac14\int_{\Sigma\setminus B^{\Sigma}_{\delta}(p_0)}\abs{\nabla u_k}^2\dif\mu_{\Sigma}=&-\int_{\Sigma}Q_k H_k\dif\mu_{\Sigma}-8\pi\ln\delta+4\pi R_k(p_0)\\
    &+\int_{\Sigma}H_k\Delta\left(H_{k-1}+H_{k+1}\right)\dif\mu_{\Sigma}+\dfrac14\int_{\Sigma}\abs{\nabla\left(H_{k-1}+H_{k+1}\right)}^2\dif\mu_{\Sigma}+o_n(1)+o_{\delta}(1).
\end{split}
\end{align}

Define 
\begin{align*}
    u^*_k(t_n,r)=\dfrac{1}{2\pi}\int_0^{2\pi}u_k\left(t_n,x_n+re^{i\theta}\right)\dif\theta.
\end{align*}
We have for every $r_nR\leq r<s\leq\delta_n\coloneqq\delta-\abs{x_n}$ (see for example \cite[equation (3.4)]{LiWan10weak}),
\begin{align*}
    \int_{B_{s}\setminus B_r}\abs{\nabla u^*_k(t_n)}\dif\mu_{\mathbb{R}^2}\leq\int_{B_{s}\setminus B_r}\abs{\nabla u_k(t_n)}\dif\mu_{\mathbb{R}^2}.
\end{align*}
A direct calculation yields 
\begin{align*}
   u^*_k(t_n,r_nR)+2\ln r_n=&-2\ln\left(L_kR^2\right)+o_{R}(1)+o_n(1),\\
   u^*_k(t_n,\delta_n)-\bar u_k(t_n)=&-4\ln\delta_n+2R_k-H_{k-1}(p_0)-H_{k+1}(p_0)+o_{\delta}(1)+o_n(1),
\end{align*}
By Dirichlet's principle, one can verify directly
\begin{align*}
    \dfrac14\int_{B^{\Sigma}_{\delta_n}(p_n)\setminus B^{\Sigma}_{r_nR}(p_n)}\abs{\nabla u_k(t_n)}^2\dif\mu_{\Sigma}\geq&\dfrac14\int_{B_{\delta_n}\setminus B_{r_nR}}\abs{\nabla u^*_k(t_n)}^2\dif\mu_{\mu_{\mathbb{R}^2}}\\
    \geq&\dfrac{\pi\left(u^*_k(t_n,\delta_n)-u^*_k(t_n,r_nR)\right)^2}{2\left(\ln\delta_n-\ln(r_nR)\right)}.
\end{align*}
Since
\begin{align*}
  \tau_n\coloneqq& u^*_k(t_n,\delta_n)-u^*_k(t_n,r_nR)-2\ln r_n-\bar u_k(t_n)\\
  =&4\ln\dfrac{R}{\delta_n}+2R_k-H_{k-1}(p_0)-H_{k+1}(p_0)+2\ln L_k+o_{\delta}(1)+o_n(1)+o_{R}(1),
\end{align*}
we have
\begin{align}\label{eq:lower-func-4}
\begin{split}
    \dfrac14\int_{B^{\Sigma}_{\delta_n}(p_n)\setminus B^{\Sigma}_{r_nR}(p_n)}\abs{\nabla u_k(t_n)}^2\dif\mu_{\Sigma}\geq&\dfrac{\pi(\tau_n+\bar u_k(t_n)+2\ln r_n)^2}{-2\ln r_n}\left(1-\dfrac{\ln(R/\delta_n)}{-\ln r_n}\right)^{-1}\\
    \geq&\dfrac{\pi}{2}\left(\dfrac{\tau_n}{\ln r_n}+2+\dfrac{\bar u_k(t_n)}{\ln r_n}\right)^2\ln\dfrac{R}{\delta_n}+\dfrac{\pi\left(\tau_n+\bar u_k(t_n)-2\ln r_n\right)^2}{-2\ln r_n}-4\pi\left(\tau_n+\bar u_k(t_n)\right)\\
    =&\dfrac{\pi}{2}\left(\left(\dfrac{\tau_n}{\ln r_n}+2+\dfrac{\bar u_k(t_n)}{\ln r_n}\right)^2-32\right)\ln\dfrac{R}{\delta_n}+\dfrac{\pi\left(\tau_n+\bar u_k(t_n)-2\ln r_n\right)^2}{-2\ln r_n}-4\pi\bar u_k(t_n)\\
    &-8\pi R_k(p_0)+4\pi\left(H_{k-1}(p_0)+H_{k+1}(p_0)\right)-8\pi\ln L_k+o_{\delta}(1)+o_n(1)+o_{R}(1).
    \end{split}
\end{align}

Combining the estimates \eqref{eq:lower-func-1}, \eqref{eq:lower-func-2}, \eqref{eq:lower-func-3} and \eqref{eq:lower-func-4}, we obtain
\begin{align*}
    J(u(t_n))=&J_k(G)+\dfrac14\int_{\Sigma}\abs{\nabla u_k}^2\dif\mu_{\Sigma}+\int_{\Sigma}Q_kG_k\dif\mu_{\Sigma}+4\pi \bar u_k(t_n)-4\pi\ln\left(h_{k}(p_0)\dfrac{\pi}{L_k}\right)+o_n(1)\\
    =&J_k(G)+\int_{\Sigma}Q_kG_k\dif\mu_{\Sigma}+4\pi \bar u_k(t_n)-4\pi\ln\left(h_{k}(p_0)\dfrac{\pi}{L_k}\right)+o_n(1)\\
    &+\dfrac14\int_{\Sigma\setminus B^{\Sigma}_{\delta}(p_0)}\abs{\nabla u_k}^2\dif\mu_{\Sigma}+\dfrac14\int_{B^{\Sigma}_{\delta}(p_0)\setminus B^{\Sigma}_{r_nR}(p_n)}\abs{\nabla u_k(t_n)}^2\dif\mu_{\Sigma}+\dfrac14\int_{B^{\Sigma}_{r_nR}(p_n)}\abs{\nabla u_k}^2\dif\mu_{\Sigma}\\
    \geq&J_k(G)+\int_{\Sigma}Q_kG_k\dif\mu_{\Sigma}+4\pi \bar u_k(t_n)-4\pi\ln\left(h_{k}(p_0)\dfrac{\pi}{L_k}\right)+o_n(1)\\
    &+\dfrac14\int_{\Sigma\setminus B^{\Sigma}_{\delta}(p_0)}\abs{\nabla u_k}^2\dif\mu_{\Sigma}+\dfrac14\int_{B^{\Sigma}_{\delta_n}(p_n)\setminus B^{\Sigma}_{r_nR}(p_n)}\abs{\nabla u_k(t_n)}^2\dif\mu_{\Sigma}+\dfrac14\int_{B^{\Sigma}_{r_nR}(p_n)}\abs{\nabla u_k}^2\dif\mu_{\Sigma}\\
    \geq&J_k(G)+\int_{\Sigma}Q_kG_k\dif\mu_{\Sigma}+4\pi \bar u_k(t_n)-4\pi\ln\left(h_{k}(p_0)\dfrac{\pi}{L_k}\right)+o_n(1)\\
    &-\int_{\Sigma}Q_k H_k\dif\mu_{\Sigma}-8\pi\ln\delta+4\pi R_k(p_0)\\
    &+\int_{\Sigma}H_k\Delta\left(H_{k-1}+H_{k+1}\right)\dif\mu_{\Sigma}+\dfrac14\int_{\Sigma}\abs{\nabla\left(H_{k-1}+H_{k+1}\right)}^2\dif\mu_{\Sigma}+o_n(1)+o_{\delta}(1)\\
    &+\dfrac{\pi}{2}\left(\left(\dfrac{\tau_n}{\ln r_n}+2+\dfrac{\bar u_k(t_n)}{\ln r_n}\right)^2-32\right)\ln\dfrac{R}{\delta_n}+\dfrac{\pi\left(\tau_n+\bar u_k(t_n)-2\ln r_n\right)^2}{-2\ln r_n}-4\pi\bar u_k(t_n)\\
    &-8\pi R_k(p_0)+4\pi\left(H_{k-1}(p_0)+H_{k+1}(p_0)\right)-8\pi\ln L_k+o_{\delta}(1)+o_n(1)+o_{R}(1)\\
    &+4\pi\ln\left(L_kR^2\right)-4\pi+o_R(1)+o_n(1)\\
    =&J_k(G)+\int_{\Sigma}Q_kH_k\dif\mu_{\Sigma}+\dfrac14\int_{\Sigma}\abs{\nabla\left(H_{k-1}+H_{k+1}\right)}^2\dif\mu_{\Sigma}\\
    &+4\pi\left(H_{k-1}(p_0)+H_{k+1}(p_0)\right)-\int_{\Sigma}\left(H_{k-1}+H_{k+1}\right)Q_k\dif\mu_{\Sigma}+\int_{\Sigma}H_k\Delta\left(H_{k-1}+H_{k+1}\right)\dif\mu_{\Sigma}\\
    &+\dfrac{\pi}{2}\left(\left(\dfrac{\tau_n}{\ln r_n}+2+\dfrac{\bar u_k(t_n)}{\ln r_n}\right)^2-16\right)\ln\dfrac{R}{\delta_n}+\dfrac{\pi\left(\tau_n+\bar u_k(t_n)-2\ln r_n\right)^2}{-2\ln r_n}\\
    &-4\pi\left(R_k+\ln h_k(p_0)\right)-4\pi-4\pi\ln\pi +o_{\delta}(1)+o_n(1)+o_{R}(1).
\end{align*}

Since $J(u(t_n))$ is bounded from above by the monotonicity formula \eqref{eq:monotonicity}, we conclude that
\begin{align*}
   \bar u_k(t_n)-2\ln r_n=O\left(\sqrt{-\ln r_n}\right),\quad\text{as}\ n\to\infty,
\end{align*}
which implies
\begin{align*}
    \lim_{n\to\infty}\dfrac{\bar u_k(t_n)}{\ln r_n}=2.
\end{align*}
Thus
\begin{align*}
    J(u(t_n))\geq&J_k(G)+\int_{\Sigma}Q_k\left(H_k-H_{k-1}-H_{k+1}\right)\dif\mu_{\Sigma}+\dfrac14\int_{\Sigma}\abs{\nabla\left(H_{k-1}+H_{k+1}\right)}^2\dif\mu_{\Sigma}+\int_{\Sigma}H_k\Delta\left(H_{k-1}+H_{k+1}\right)\dif\mu_{\Sigma}\\
    &-4\pi\left(R_k-H_{k-1}(p_0)-H_{k+1}(p_0)+\ln h_k(p_0)\right) -4\pi-4\pi\ln\pi+o_{\delta}(1)+o_n(1)+o_{R}(1),
\end{align*}
which implies
\begin{align*}
    \liminf_{n\to\infty}J(u(t_n))\geq&J_k(G)+\int_{\Sigma}Q_k\left(G_k-H_k\right)\dif\mu_{\Sigma}+\dfrac14\int_{\Sigma}\abs{\nabla\left(H_{k-1}+H_{k+1}\right)}^2\dif\mu_{\Sigma}+\int_{\Sigma}H_k\Delta\left(H_{k-1}+H_{k+1}\right)\dif\mu_{\Sigma}\\
    &-4\pi\left(R_k-H_{k-1}(p_0)-H_{k+1}(p_0)+\ln h_k(p_0)\right)-4\pi-4\pi\ln\pi.
\end{align*}

Notice that
\begin{align*}
    -\Delta H_k=4\pi\delta_{p_0}-Q_k,\quad\bar H_k=0.
\end{align*}
We obtain for every smooth function $F$
\begin{align*}
    4\pi F(p_0)=\int_{\Sigma}FQ_k\dif\mu_{\Sigma}-\int_{\Sigma}H_k\Delta F\dif\mu_{\Sigma}
\end{align*}
which implies
\begin{align*}
  \int_{\Sigma} H_k\Delta\left(H_{k-1}+H_{k+1}\right)\dif\mu_{\Sigma}=&\int_{\Sigma} \left(H_{k-1}+H_{k+1}\right)Q_k\dif\mu_{\Sigma}-4\pi\left(H_{k-1}(p_0)+H_{k+1}(p_0)\right).
\end{align*}
Hence
\begin{align*}
J(u(t))\geq&J_k(G)+\dfrac14\int_{\Sigma}\abs{\nabla\left(H_{k-1}+H_{k+1}\right)}^2\dif\mu_{\Sigma}+\int_{\Sigma}Q_k H_k\dif\mu_{\Sigma}\\
    &-4\pi\left(R_k+\ln h_k(p_0)\right)-4\pi-4\pi\ln\pi.
\end{align*}
We complete the proof.
\end{proof}

\begin{rem}
  Consider the following transformation
    \begin{align*}
        g\mapsto\tilde g=e^{2\psi}g,\quad h_i\mapsto\tilde h_i=h_ie^{-\psi-\phi_i},\quad Q_i\mapsto\rho_i,\quad u_i\to\mapsto \tilde u_i=u_i+\phi_i,\quad i\in I,
    \end{align*}
    where $\phi_i$ solves the following elliptic system
    \begin{align*}
    -\sum_{j=1}^Na^{ij}\Delta_{g}\phi_j=Q_i-\rho_i\dfrac{e^{2\psi}}{\int_{\Sigma}e^{2\psi}\dif\mu_{g}},\quad\int_{\Sigma}\phi_ie^{2\psi}\dif\mu_{g}=0,\quad i\in I,
\end{align*}
Then
\begin{align*}
   -\sum_{j=1}^N\Delta_{\tilde g}\tilde u_i=\rho_i\left(\dfrac{\tilde h_ie^{\tilde u_i}}{\int_{\Sigma}\tilde h_ie^{\tilde u_i}\dif\mu_{\tilde g}}-\dfrac{1}{\abs{\Sigma}_{\tilde g}}\right)-e^{-2\psi-\phi_i}\dfrac{\partial e^{\tilde u_i}}{\partial t},\quad i\in I.
\end{align*}
The corresponding functions
\begin{align*}
    \tilde G_i=G_i-\dfrac{\int_{\Sigma}G_ie^{2\psi}\dif\mu_{g}}{\int_{\Sigma}e^{2\psi}\dif\mu_{g}}+\phi_i,\quad\tilde H_i=H_i-\dfrac{\int_{\Sigma}H_ie^{2\psi}\dif\mu_{g}}{\int_{\Sigma}e^{2\psi}\dif\mu_{g}}+\sum_{j=1}^Na^{ij}\phi_j,\quad i\in I.
\end{align*}
The distance function satisfies
\begin{align*}
    \ln\tilde r(x)=\ln\tilde r(x)+\psi(x)+O\left(\abs{x}^2\right),\quad x\to o
    .
\end{align*}

If we write
\begin{align*}
J(u,g,h,Q)\coloneqq&\dfrac12\sum_{i,j=1}^Na^{ij}\int_{\Sigma}\hin{\nabla_g u_i}{\nabla_g u_j}_{g}\dif\mu_{g}+\sum_{i=1}^N\left(\int_{\Sigma}Q_iu_i\dif\mu_{g}-\rho_i\ln\int_{\Sigma}h_ie^{u_i}\dif\mu_{g}\right),
\end{align*}
then a direct computation yields
\begin{align*}
J(u,g,h,Q)=J(\tilde u,\tilde g,\tilde h,\rho)-\dfrac12\sum_{i,j=1}^Na^{ij}\int_{\Sigma}\hin{\nabla_{g}\phi_i}{\nabla_{g}\phi_j}_{g}\dif\mu_{g}.
\end{align*}

Denote
\begin{align*}
    F(p_0,g,h,Q)\coloneqq&I_{k,g}(G)+\sum_{i\neq k}\left(\int_{\Sigma}Q_iG_i\dif\mu_{g}-\rho_i\int_{\Sigma}h_ie^{G_i}\dif\mu_{g}\right)\\
    &+\dfrac14\int_{\Sigma}\abs{\nabla_g\left(H_{k-1}+H_{k+1}\right)}_g^2\dif\mu_{g}+\int_{\Sigma}Q_kH_k\dif\mu_{g}\\
    &-4\pi\left(\lim_{p\to p_0}\left(H_k(p,p_0)+2\mathrm{dist}(p,p_0)\right)+\ln h_k(p_0)\right)-4\pi-4\pi\ln\pi.
\end{align*}
Then a straightforward verification yields
\begin{align*}
   F(p_0,g,h,Q)= F(p_0,\tilde g,\tilde h,\rho)-\dfrac12\sum_{i,j=1}^Na^{ij}\int_{\Sigma}\hin{\nabla_{g}\phi_i}{\nabla_{g}\phi_j}_{g}\dif\mu_{g}.
\end{align*}

 Therefore, to obtain the desired lower bound of the functional $J$ along the flow, without loss of generality, we may assume the area of $\Sigma$ is $1$ and identify a neighborhood of $p_0$ as the Euclidean domain. Moreover, we may assume
    \begin{align*}
        Q_i=\rho_i,\quad \int_{\Sigma}e^{u_{i,0}}\dif\mu_{\Sigma}=1,\quad i\in I.
    \end{align*}

\end{rem}

\section{Test functions}

Consider the following function
\begin{align*}
    p_0\mapsto F(p_0)\coloneqq& J_k\left(G(\cdot,p_0)\right)+\dfrac14\int_{\Sigma}\abs{\nabla\left(H_{k-1}(\cdot,p_0)+H_{k+1}(\cdot,p_0)\right)}^2\dif\mu_{\Sigma}+\int_{\Sigma}Q_kH_k(\cdot,p_0)\dif\mu_{\Sigma}\\
&-4\pi\left(\lim_{p\to p_0}\left(H_k(\cdot,p_0)+2\ln\mathrm{dist}(p,p_0)\right)+\ln h_k(p_0)\right)-4\pi-4\pi\ln\pi.
\end{align*}
For convenience, denote
\begin{align*}
    G_i=G_i(\cdot,p_0),\quad H_i=H_i(\cdot,p_0),\quad i\in I.
\end{align*}

Let $\Gamma(\cdot,p)$ be the Green function satisfies
\begin{align*}
    -\Delta\Gamma(\cdot,p)=4\pi\left(\delta_p-\dfrac{1}{\abs{\Sigma}}\right),\quad \bar\Gamma(\cdot,p)=0.
\end{align*}
Denote by $B(p)$ the regularity part of $G(\cdot,p)$, i.e.,
\begin{align*}
    B(p)=\lim_{q\to p}\left(\Gamma(q,p)+2\ln\mathrm{dist}(q,p)\right).
\end{align*}
Choose a normal coordinates $\set{x}$ which is centered at $p_0$. We have the following expression
\begin{align*}
    \Gamma(x,p)=-2\ln\abs{x}+a(p)+\hin{\alpha(p)}{x}+O\left(\abs{x}^2\right),\quad as\ x\to o.
\end{align*}
Since the Green function is symmetric, we conclude that
\begin{align*}
   B(p)=a(p),\quad \nabla B(p)=2\alpha(p).
\end{align*}

Notice that
\begin{align*}
    -\Delta(\Gamma(\cdot,p_0)-H_k(\cdot,p_0))=Q_k-\dfrac{4\pi}{\abs{\Sigma}},\quad\overline{\Gamma(\cdot,p_0)-H_k(\cdot,p_0)}=0.
\end{align*}
We know that $\Theta_k\coloneqq\Gamma(\cdot,p_0)-H_k(\cdot,p_0)$ is a smooth function which is independent of $p_0$. Recall
 \begin{align*}
     H_k(x,p)=-2\ln\abs{x}+R_k(p)+\hin{\beta_k(p)}{x}+O\left(\abs{x}^2\right).
 \end{align*}
 We have
\begin{align*}
     R_k=B-\Theta,\quad \beta_k=\alpha-\nabla\Theta.
\end{align*}
Thus
\begin{align*}
    \nabla R_k=\nabla B-\nabla\Theta=2\alpha-\nabla\Theta=2\beta_k+\nabla\Theta.
\end{align*}

Choose a smooth curve $\gamma:(-\epsilon,\epsilon)\To\Sigma$ such that 
\begin{align*}
    \gamma(0)=p_0,\quad\gamma'(0)=X.
\end{align*}
We consider the following function
\begin{align*}
    t\mapsto f(t)\coloneqq F(\gamma(t)).
\end{align*}
We will compute $f'(0)$. 

Use the same notation as in the beginning of the above section, a direct computation yields
\begin{align*}
    F=&J(\underline{G},g,\underline{h},\underline{Q})+\int_{\Sigma}\left(Q_k-Q_{k-1}-Q_{k+1}\right)H_k\dif\mu_{\Sigma}-4\pi\left(R_k+\ln h_k(p_0)\right)-4\pi-4\pi\ln\pi.
\end{align*}
Thus
\begin{align*}
    f'(0)=&\rho_{k-1}\dfrac{\int_{\Sigma}h_{k-1}e^{G_{k-1}}H_k'\dif\mu_{\Sigma}}{\int_{\Sigma}h_{k-1}e^{G_{k-1}}\dif\mu_{\Sigma}}+\rho_{k+1}\dfrac{\int_{\Sigma}h_{k+1}e^{G_{k+1}}H_k'\dif\mu_{\Sigma}}{\int_{\Sigma}h_{k+1}e^{G_{k+1}}\dif\mu_{\Sigma}}
    \\&+\int_{\Sigma}\left(Q_k-Q_{k-1}-Q_{k+1}\right)H'_k\dif\mu_{\Sigma}-4\pi\left(\nabla_XR_k(p_0)+\nabla_X\ln h_k(p_0)\right).
\end{align*}
Here
\begin{align*}
    H_k'=\dfrac{\partial}{\partial t}H_k
\end{align*}
satisfies for every smooth function $\zeta$
\begin{align*}
    4\pi\nabla_X\zeta(p_0)=-\int_{\Sigma}H_k'\Delta\zeta\dif\mu_{\Sigma}.
\end{align*}
Hence
\begin{align*}
    f'(0)=&-\int_{\Sigma}\Delta\left(H_{k-1}+H_{k+1}\right)H_k'\dif\mu_{\Sigma}+\int_{\Sigma}Q_kH'_k\dif\mu_{\Sigma}-4\pi\left(\nabla_XR_k(p_0)+\nabla_X\ln h_k(p_0)\right).
\end{align*}

Therefore,
\begin{align*}
    f'(0)=&-\int_{\Sigma}\Delta\left(H_{k-1}+H_{k+1}\right)H_k'\dif\mu_{\Sigma}+\int_{\Sigma}\Delta\Theta_kH'_k\dif\mu_{\Sigma}-4\pi\left(\nabla_XR_k(p_0)+\nabla_X\ln h_k(p_0)\right)\\
    =&4\pi\nabla_X\left(H_{k-1}+H_{k+1}\right)(p_0)-4\pi\nabla_X\Theta_k(p_0)-4\pi\left(\nabla_XR_k(p_0)+\nabla_X\ln h_k(p_0)\right)\\
    =&-4\pi\left(2\hin{\alpha(p_0)}{X}-\nabla_X\left(H_{k-1}+H_{k+1}\right)(p_0)+\nabla_X\ln h_k(p_0)\right).
\end{align*}
In other words,
\begin{align}\label{eq:gradient F}
    \nabla F=-4\pi\nabla\left(B-H_{k-1}-H_{k+1}+\ln h_k\right)
\end{align}

If we set
\begin{align*}
    \tilde g=e^{2\psi}g,\quad\tilde h_i=h_ie^{-2\psi-\phi_i},\quad\tilde u_i=u_i+\phi_i
\end{align*}
where
\begin{align*}
    -\sum_{j=1}^Na^{ij}\Delta_{g}\phi_j=Q_i-\rho_i\dfrac{e^{2\psi}}{\int_{\Sigma}e^{2\psi}\dif\mu_{g}},\quad\int_{\Sigma}\phi_ie^{2\psi}\dif\mu_{g}=0,\quad i\in I,
\end{align*}
Then a direct computation yields
\begin{align*}
    \Delta_{\tilde g}\ln\tilde h_i+\sum_{j=1}^Na_{ij}\dfrac{\rho_j}{\abs{\Sigma}_{\tilde g}}-2\kappa_{\tilde g}=e^{-2\psi}\left(\Delta_{g}\ln h_k+\sum_{j=1}^Na_{ij}Q_j-2\kappa_g\right).
\end{align*}

\begin{theorem}\label{thm:test}
    Fixed a point $p_0\in\Sigma$ such that $h_k(p_0)>0$. 
 There exists a sequence of smooth functions $u^{(\epsilon)}$ as $\epsilon\searrow 0$ such that
    \begin{align*}
        J(u^{(\epsilon)})=&F(p_0)-\pi\left(\dfrac{1}{16\pi^2}\abs{\nabla F(p_0)}^2+\Delta \ln h_k(p_0)+\sum_{j=1}^Na_{ij}Q_j(p_0)-2\kappa(p_0) \right)\epsilon^2\ln\epsilon^{-2}+O\left(\epsilon^2\ln(-\ln\epsilon)\right).
    \end{align*}
    Here $\kappa$ is the Gaussian curvature of $\Sigma$. 
\end{theorem}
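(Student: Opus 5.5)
We construct the test functions in the style of Ding--Jost--Li--Wang \cite{DinJosLiWan97differential} and Jost--Lin--Wang \cite{JosLinWan06analytic}, and then expand $J$ term by term. By the preceding remark we may normalize: $\abs{\Sigma}=1$, $Q_i=\rho_i$, and we work in conformal (isothermal) coordinates $x$ centered at $p_0$, with metric $e^{2\varphi}\abs{dx}^2$, $\varphi(0)=0$ and $\nabla\varphi(0)=0$. The parameters are $\epsilon\searrow0$, a radius $\delta=\delta_\epsilon$ with $\delta\to0$ slowly (e.g.\ $\delta^{-2}\sim\ln\epsilon^{-1}$, which produces the $\epsilon^2\ln(-\ln\epsilon)$ error), and a translation vector $a\in\mathbb{R}^2$ with $\abs{a}=O(\epsilon^2\ln\epsilon^{-1})$, to be chosen to absorb the gradient term.

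\textbf{Construction.} For $i\neq k$ we set $u^{(\epsilon)}_i=G_i(\cdot,p_0)+\eta_i$, where $\eta_i$ is a correction supported in $B_{2\delta}$. Concretely, we replace the singular part $-H_k$ appearing in $G_{k\pm1}$ (recall $G_{k\pm1}+H_k$ is smooth) by the regularized profile $\ln(\epsilon^2+\abs{x-a}^2)$. For $i=k$ we set
\[
u^{(\epsilon)}_k(x)=\ln\frac{\epsilon^2}{(\epsilon^2+\pi\abs{x-a}^2)^2}\quad\text{in }B_\delta,\qquad
u^{(\epsilon)}_k=G_k+C_\epsilon-\text{(cutoff interpolation)}\quad\text{outside }B_{2\delta}.
\]
Here $C_\epsilon$ is chosen so that the inner bubble and the outer Green's function match to second order on $\partial B_\delta$, using $H_k=-2\ln\abs{x}+R_k+\hin{\beta_k}{x}+O(\abs{x}^2)$. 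We then evaluate $J$ by splitting each quadratic term into three pieces: $\Sigma\setminus B_\delta$, the bubble region, and the neck.

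\textbf{Outer region.} Here the computation is exactly the one in the proof of \autoref{thm:lower}, but run in reverse with equalities in place of inequalities. Using the Green's identities and the splitting $G_k=2H_k-H_{k-1}-H_{k+1}$, the functional $J_k(u^{(\epsilon)})$ converges to $J_k(G)$, and the outer Dirichlet energy of $u_k$ produces $-\int Q_kH_k-8\pi\ln\delta+4\pi R_k+\dots$.

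\textbf{Inner region.} The bubble gives $4\pi\ln(\pi\delta^2/\epsilon^2)-4\pi$ plus explicit $\epsilon^2\ln\epsilon^{-1}$ corrections.

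\textbf{Nonlinear term.} The remaining term is $-4\pi\ln\int_\Sigma h_ke^{u_k}$. Expand $h_k e^{2\varphi}$ together with the smooth outer factors $e^{-(H_{k-1}+H_{k+1})}$ and $e^{R_k}$ to second order at $p_0$. Integrating against the bubble density, the first-order terms contribute $\hin{\nabla(\ln h_k-H_{k-1}-H_{k+1}+B)}{a}$, which by \eqref{eq:gradient F} is $-\frac{1}{4\pi}\hin{\nabla F(p_0)}{a}$. The second-order terms contribute a Laplacian term weighted by $\int\abs{x}^2$ against the bubble, which equals $\pi\epsilon^2\ln\epsilon^{-2}/\pi^2$ times a constant.

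The Laplacian combination that appears is
\[
\Delta\ln h_k+2\Delta\varphi-\Delta(H_{k-1}+H_{k+1})+\dots .
\]
To identify it, use $-2\Delta\varphi=2\kappa$ in conformal coordinates, together with $-\Delta H_{k\pm1}=\rho_{k\pm1}(\cdots)-Q_{k\pm1}$ and $-\Delta H_k=-Q_k$ away from $p_0$. With the normalization, $\sum_j a_{kj}Q_j(p_0)$ arises from $2Q_k-Q_{k-1}-Q_{k+1}$; the exponential terms cancel against the $\eta_{k\pm1}$ correction energies. This yields
\[
\Delta\ln h_k+\sum_j a_{kj}Q_j-2\kappa .
\]

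\textbf{Optimizing the translation.} Minimizing over $a$ the quadratic-plus-linear expression $\pi\abs{a}^2/(\epsilon^2\ln\epsilon^{-2})\cdot c+\hin{\nabla F}{a}$ gives the term $-\frac{1}{16\pi}\abs{\nabla F}^2\epsilon^2\ln\epsilon^{-2}$.

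\textbf{Main obstacle.} The hardest part is the bookkeeping: verifying that all $O(1)$ constants cancel exactly to $F(p_0)$, and that the $\epsilon^2\ln\epsilon^{-1}$ coefficient is precisely the stated combination, with no leftover cross terms between the bubble and the corrections $\eta_{k\pm1}$. In particular, the coupling term $\int\hin{\nabla u_k}{\nabla(u_{k-1}+u_{k+1})}$ inside the bubble must be checked to be $O(\epsilon^2)$ rather than $O(\epsilon^2\ln\epsilon^{-1})$. I would handle this first by choosing $\eta_{k\pm1}$ so that $u_{k\pm1}+\frac12u_k$ is smooth, which makes these cross terms integrate by parts into lower-order contributions.
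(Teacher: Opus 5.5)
There is a genuine gap. The test functions you specify cannot reach the order $\epsilon^2\ln\epsilon^{-1}$, and the translation mechanism you propose is not what produces the gradient term.

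\textbf{Matching and gluing.} A constant $C_\epsilon$ cannot make the pure bubble on $B_\delta$ agree with $G_k+C_\epsilon$ beyond order zero. Near $p_0$,
$G_k+4\ln\abs{x}=2R_k-(H_{k-1}+H_{k+1})(p_0)+\hin{b}{x}+O(\abs{x}^2)$, with $b=2\beta_k-\nabla(H_{k-1}+H_{k+1})(p_0)$.
In the paper's normalisation $b=-\frac{1}{4\pi}\nabla F(p_0)-\nabla\ln h_k(p_0)$, which is generically nonzero even at a critical point of $F$.

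Your shift $\abs{a}=O(\epsilon^2\ln\epsilon^{-1})$ only adds the dipole $4\hin{a}{x}/\abs{x}^2=O(\abs{a}/\delta)$ near $\partial B_\delta$, so it cannot absorb $\hin{b}{x}$. Compared with a correctly matched profile, your $u_k$ is therefore off by $-\hin{b}{x}+O(\abs{x}^2)$ on all of $B_\delta$. The first variation of $J$ at the matched profile in that direction is only $O(\epsilon^2\ln\epsilon^{-1})$. What remains is a positive Dirichlet cost of order $\abs{b}^2\delta^2$, and still of order $\delta^4$ if the linear term were matched.

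With $\delta^{-2}\sim\ln\epsilon^{-1}$ this cost is of order $(\ln\epsilon^{-1})^{-1}$. It swamps the $\epsilon^2\ln\epsilon^{-1}$ term, and for $b\neq0$ your family does not even go below $F(p_0)$.

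Gluing a pure bubble at a smaller radius $r$ does not help. Truncating its tail $\ln(1+\epsilon^2/\abs{x}^2)$ costs an amount of order $(\epsilon/r)^4$. That cannot be made $O(\epsilon^2\ln\ln\epsilon^{-1})$ at the same time as $\abs{b}^2r^2$.

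So the regular part of $G_k$ must be carried into the bubble region, at least to first order. The paper carries the linear part: $U_k^{(\epsilon)}=-\ln(\abs{x}^2+\epsilon^2)+\hin{\alpha}{x}$ on $B_{L_\epsilon\epsilon}$, with $U_i=H_i$ for $i\neq k$ and $u_k=2U_k-H_{k-1}-H_{k+1}$. Only the quadratic remainder $S$ is then cut off.

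Even this costs a positive $\int\abs{\nabla(\eta_\epsilon S)}^2\asymp(L_\epsilon\epsilon)^4$. So one needs $L_\epsilon\epsilon$ of size about $\sqrt{\epsilon}$, keeping the $\ln(L_\epsilon\epsilon)$ terms of the outer integral exactly; they combine with the inner $\ln(L_\epsilon^2+1)$ terms into $\ln\epsilon^{-2}$. The paper's choice $L_\epsilon\epsilon=1/\ln\epsilon^{-1}$ actually makes its own $O(L_\epsilon^4\epsilon^4)$ remainder larger than the main term.

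Cleaner still is to carry the whole regular part. Take $U_k^{(\epsilon)}=H_k+\chi\ln\frac{\abs{x}^2}{\abs{x}^2+\epsilon^2}$ with a fixed cut-off $\chi$, i.e.\ $u_k^{(\epsilon)}=G_k+2\chi\ln\frac{\abs{x}^2}{\abs{x}^2+\epsilon^2}$. This is consistent with your prescription for $u_{k\pm1}$, and the gluing errors become $O(\epsilon^2)$.

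\textbf{The gradient term and the coupling.} Your translation step is not a derivation. The penalty $c\pi\abs{a}^2/(\epsilon^2\ln\epsilon^{-2})$ is asserted with $c$ left free. The actual cost of moving the bubble against a Green's function pinned at $p_0$ comes from the dipole mismatch at the gluing radius and carries no factor $\epsilon^{-2}$.

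The step also conflicts with your own construction. Where $u_k$ is the pure bubble, $h_ke^{u_k}$ sees only the weight $h_ke^{2\varphi}$, not $e^{-(H_{k-1}+H_{k+1})}$ or $e^{R_k}$.

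No translation is needed. In the paper the weight in the bubble region is $W=h_ke^{-H_{k-1}-H_{k+1}}e^{2\hin{\alpha}{x}}$. Its circle average is $\frac{1}{2\pi r}\int_{\partial B_r}W=W(p_0)+\frac{r^2}{4}\Delta W(p_0)+O(r^4)$. Here $\Delta W/W=\Delta\ln W+\abs{\nabla\ln W}^2$, and $\nabla\ln W(p_0)=2\alpha-\nabla(H_{k-1}+H_{k+1})(p_0)+\nabla\ln h_k(p_0)=-\frac{1}{4\pi}\nabla F(p_0)$. This produces $\frac{1}{16\pi^2}\abs{\nabla F(p_0)}^2$ automatically, next to $\Delta\ln h_k$.

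Do not try to prove that the coupling is $O(\epsilon^2)$, because it is not. In the $U$-variables it is $\int U_k^{(\epsilon)}\Delta(H_{k-1}+H_{k+1})$. By \eqref{eq:test-2} it contributes $-\pi\epsilon^2\ln(L_\epsilon^2+1)\Delta(H_{k-1}+H_{k+1})(p_0)$. This is exactly what cancels the $-\Delta(H_{k-1}+H_{k+1})(p_0)$ contained in $\Delta\ln W$ in \eqref{eq:test-5}.

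Also, $\Delta(H_{k-1}+H_{k+1})(p_0)=Q_{k-1}(p_0)+Q_{k+1}(p_0)$ holds simply because $e^{G_{k\pm1}}$ vanishes at $p_0$. It does not come from a cancellation with the $\eta_{k\pm1}$.

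Your use of $2\Delta\varphi(0)=-2\kappa(p_0)$ to produce the curvature term is fine.
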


\begin{proof}
Without loss of generality, we assume the area of $\Sigma$ is $1$ and identify  $B^{\Sigma}_{\delta}(p_0)=B_{\delta}\subset\mathbb{R}$ as the Euclidean ball in $\mathbb{R}^2$ which is centered at the origin point $o\in\mathbb{R}^2$.
We also assume
\begin{align*}
    Q_i=\rho_i,\quad\int_{\Sigma}h_ie^{G_i}\dif\mu_{\Sigma}=1,\quad i\in I.
\end{align*}
Here the functions $G_i$ satisfy the following system
\begin{align*}
\begin{cases}
     -\Delta H_i=\rho_i(h_ie^{G_i}-1),\quad\bar H_i=0,\quad i\neq k,\\
    -\Delta H_k=4\pi\left(\delta_{p_0}-1\right),\quad\bar H_k=0,
\end{cases}
\end{align*}
where
\begin{align*}
    H_i=\sum_{j=1}^Na^{ij}G_j,\quad i\in I.
\end{align*}
We have proved that $H_i\in C^{\infty}\left(\Sigma\right)$ for $i\neq k$ and $H_k+2\ln\mathrm{dist}(\cdot,p_0)\in C^{\infty}\left(\Sigma\right)$. In a neighborhood of $p_0$, we have
\begin{align*}
    H_k(x)=-2\ln\abs{x}+\pi\abs{x}^2+P(x)
\end{align*}
where $P$ is a harmonic function defined in $B_{\delta}$. Thus
\begin{align*}
    H_k(x)=-2\ln\abs{x}+a+\hin{\alpha}{x}+S(x),\quad S(x)=O\left(\abs{x}^2\right),\quad as\ x\to o.
\end{align*}
The subsequent result will be frequently utilized: given any smooth function $F$ and for sufficiently small $\rho > 0$, it holds that
\begin{align*}
    \int_{\partial B_{\rho}} F \dif \sigma = 2\pi F(p_0)\rho + \frac{\pi}{2}\Delta F(p_0)\rho^3 + O\left(\rho^5\right),\quad as\ \rho\to 0.
\end{align*}

Notice that
\begin{align*}
    J(u)=&\dfrac12\sum_{i,j=1}^Na^{ij}\int_{\Sigma}\hin{\nabla u_i}{\nabla u_j}\dif\mu_{\Sigma}+\sum_{i=1}^N\rho_i\left(\bar u_i-\ln\int_{\Sigma}h_ie^{u_i}\dif\mu_{\Sigma}\right)\\
    =&I_k(u)+\dfrac14\int_{\Sigma}\abs{\nabla u_k}^2\dif\mu_{\Sigma}+\sum_{i=1}^N\rho_i\left(\bar u_i-\ln\int_{\Sigma}h_ie^{u_i}\dif\mu_{\Sigma}\right)\\
    =&I_k(u)+\int_{\Sigma}\abs{\nabla U_k}^2\dif\mu_{\Sigma}+\dfrac14\int_{\Sigma}\abs{\nabla\left(U_{k-1}+U_{k+1}\right)}^2\dif\mu_{\Sigma}-\int_{\Sigma}\hin{\nabla U_k}{\nabla\left(U_{k-1}+U_{k+1}\right)}\dif\mu_{\Sigma}\\
    &+\sum_{i=1}^N\rho_i\left(\bar u_i-\ln\int_{\Sigma}h_ie^{u_i}\dif\mu_{\Sigma}\right).
\end{align*}
Here
\begin{align*}
    U_i=\sum_{j=1}^Na^{ij}u_j,\quad i\in I.
\end{align*}

For every positive number $\epsilon$, we choose $U_i^{(\epsilon)}=H_i$ if $i\neq k$ and
\begin{align*}
    U_k^{(\epsilon)}(x)=\begin{cases}
       -\ln\left(\abs{x}^2+\epsilon^2\right)+\hin{\alpha}{x},&x\in B_{L_\epsilon\epsilon},\\
        H_k(x)-\eta_{\epsilon}(x) S(x)+\beta_{\epsilon}, &x\in B_{2L_{\epsilon}\epsilon}\setminus B_{L_{\epsilon}\epsilon},\\
        H_k(x)+\beta_{\epsilon},&else.
    \end{cases}
\end{align*}
Here $\eta_{\epsilon}(x)=\eta\left(\frac{\abs{x}}{L_\epsilon\epsilon}\right)$ and $\eta\in C_0^{\infty}([0,2])$ satisfies 
\begin{align*}
  0\leq\eta \leq 1,\quad \eta\vert_{[0,1]}=1,\quad \eta'\leq0.
\end{align*}
Thus, $\eta_\epsilon\in C_0^{\infty}(B_{2L_\epsilon\epsilon})$ and satisfies
\begin{align*}
    0\leq\eta_\epsilon\leq 1,\quad \eta_{\epsilon}\vert_{B_{L_\epsilon\epsilon}}=1, \quad\abs{\nabla\eta_\epsilon}\leq\dfrac{C}{L_\epsilon\epsilon},\quad\abs{\nabla^2\eta_\epsilon}\leq\dfrac{C}{L_\epsilon
    ^2\epsilon^2}.
\end{align*}
The constant $\beta_{\epsilon}$ is given by
\begin{align*}
    \beta_{\epsilon}=-a-\ln\dfrac{L_{\epsilon}^2+1}{L_{\epsilon}^2}.
\end{align*}
That is, $U_k^{(\epsilon)}$ is smooth. 
The positive constant $L_{\epsilon}$ will be specified subsequently and must satisfy the conditions $L_{\epsilon}\to\infty$ and $L_{\epsilon}\epsilon\to 0$ as $\epsilon\to 0$. 
Finally, we define
\begin{align*}
    u_i^{(\epsilon)}=\sum_{j=1}^Na_{ij}U_j^{(\epsilon)},\quad i\in I.
\end{align*}
We have
\begin{align}\label{eq:test-0}
   \begin{split}
J(u^{(\epsilon)})=&J_k(G)+\dfrac14\int_{\Sigma}\abs{\nabla\left(H_{k-1}+H_{k+1}\right)}^2\dif\mu_{\Sigma}\\
    &-\rho_{k-1}\ln\int_{\Sigma}h_{k-1}e^{2H_{k-1}-H_{k-2}}e^{-U^{(\epsilon)}_k}\dif\mu_{\Sigma}-\rho_{k+1}\ln\int_{\Sigma}h_{k+1}e^{2H_{k+1}-H_{k+2}}e^{-U^{(\epsilon)}_k}\dif\mu_{\Sigma}\\
   &+\int_{\Sigma}\abs{\nabla U^{(\epsilon)}_k}^2\dif\mu_{\Sigma}+\int_{\Sigma}\hin{U^{(\epsilon)}_k}{\Delta\left(H_{k-1}+H_{k+1}\right)}\dif\mu_{\Sigma}+\left(8\pi-\rho_{k-1}-\rho_{k+1}\right)\bar U^{(\epsilon)}_k\\
    &-4\pi\ln\int_{\Sigma}h_{k}e^{-H_{k-1}-H_{k+1}}e^{2U^{(\epsilon)}_{k}}\dif\mu_{\Sigma}.
    \end{split}
\end{align}

One can check that
\begin{align*}
    \bar U_{k}^{(\epsilon)}=&\beta_\epsilon-\int_{B_{2L_\epsilon\epsilon}\setminus B_{L_\epsilon\epsilon}}\eta_\epsilon S\dif x-\int_{B_{L_\epsilon\epsilon}}\left(H_k+\ln\left(\abs{x}^2+\epsilon^2\right)-\hin{\alpha}{x}+\beta_\epsilon\right)\dif x\\
    =&\beta_\epsilon-\int_{B_{L_\epsilon\epsilon}}\left(\ln\dfrac{\abs{x}^2+\epsilon^2}{\abs{x}^2}-\ln\dfrac{L_\epsilon^2+1}{L_\epsilon^2}\right)\dif x+O\left(L_\epsilon^4\epsilon^4\right).
\end{align*}
A simple calculation yields
\begin{align*}
    \int_{B_{L_\epsilon\epsilon}}\left(\ln\dfrac{\abs{x}^2+\epsilon^2}{\abs{x}^2}-\ln\dfrac{L_\epsilon^2+1}{L_\epsilon^2}\right)\dif x=&\pi\epsilon^2\ln\left(L_\epsilon^2+1\right).
\end{align*}
We get
\begin{align}\label{eq:test-1}
    \bar U_{k}^{(\epsilon)}=-a-\ln\dfrac{L_{\epsilon}^2+1}{L_{\epsilon}^2}-\pi\epsilon^2\ln\left(L_\epsilon^2+1\right)+O\left(L_\epsilon^4\epsilon^4\right).
\end{align}
Similarly, we have for every smooth function $F$,
\begin{align*}
    \int_{\Sigma} U^{(\epsilon)}_k\Delta F\dif\mu_{\Sigma}=&\int_{\Sigma}H_k\Delta F\dif\mu_{\Sigma}-\int_{ B_{2L_{\epsilon}\epsilon}\setminus B_{L_{\epsilon}\epsilon}}\eta_{\epsilon}S \Delta F\dif x-\int_{ B_{L_{\epsilon}\epsilon}}\left(H_k+\ln\left(\abs{x}^2+\epsilon^2\right)-\hin{\alpha}{x}+\beta_\epsilon\right)\Delta F\dif x\\
    =&4\pi\bar F-4\pi F(p_0)-\int_{ B_{L_{\epsilon}\epsilon}}\left(\ln\dfrac{\abs{x}^2+\epsilon^2}{\abs{x}^2}-\ln\dfrac{L_\epsilon^2+1}{L_\epsilon^2}\right)\Delta F\dif x+O\left(L_\epsilon^4\epsilon^4\right)\\
    =&4\pi\bar F-4\pi F(p_0)-\pi\epsilon^2\ln\left(L_\epsilon^2+1\right)\Delta F(p_0)+O\left(L_\epsilon^2\epsilon^4\right)+O\left(L_\epsilon^4\epsilon^4\right),
\end{align*}
which implies
\begin{align}\label{eq:test-2}
  \int_{\Sigma}U^{(\epsilon)}_k\Delta \left(H_{k-1}+H_{k+1}\right)\dif\mu_{\Sigma}=&-4\pi \left(H_{k-1}+H_{k+1}\right)(p_0)-\pi\epsilon^2\ln\left(L_\epsilon^2+1\right)\Delta \left(H_{k-1}+H_{k+1}\right)(p_0)+O\left(L_\epsilon^4\epsilon^4\right).
\end{align}

We compute
\begin{align*}
    \int_{\Sigma}Fe^{-U^{(\epsilon)}_k}\dif\mu_{\Sigma}=&\int_{\Sigma}Fe^{-H_k-\beta_\epsilon}\dif\mu_{\Sigma}+\int_{B_{2L_\epsilon\epsilon}\setminus B_{L_\epsilon\epsilon}}Fe^{-H_k-\beta_\epsilon}(e^{\eta_\epsilon S}-1)\dif\mu_{\Sigma}\\
    &+\int_{B_{L_\epsilon\epsilon}}F\left(\left(\abs{x}^2+\epsilon^2\right)-\dfrac{L_\epsilon^2+1}{L_\epsilon^2}\abs{x}^2e^{-S}\right)e^{-\hin{\alpha}{x}}\dif\mu_{\Sigma}\\
    =&\int_{\Sigma}Fe^{-H_k-\beta_\epsilon}\dif\mu_{\Sigma}+O\left(L_\epsilon^4\epsilon^4\right).
\end{align*}
Thus, if 
\begin{align*}
    \int_{\Sigma}Fe^{-H_k}\dif\mu_{\Sigma}>0,
\end{align*}
then
\begin{align*}
    \ln\int_{\Sigma}Fe^{-U^{(\epsilon)}_k}\dif\mu_{\Sigma}=&\ln\int_{\Sigma}Fe^{-H_k}\dif\mu_{\Sigma}-\beta_\epsilon+O\left(L_\epsilon^4\epsilon^4\right).
\end{align*}
Therefore,
\begin{align}\label{eq:test-3}
\begin{split}
   &-\rho_{k-1}\ln\int_{\Sigma}h_{k-1}e^{2H_{k-1}-H_{k-2}}e^{-U^{(\epsilon)}_k}\dif\mu_{\Sigma}-\rho_{k+1}\ln\int_{\Sigma}h_{k+1}e^{2H_{k+1}-H_{k+2}}e^{-U^{(\epsilon)}_k}\dif\mu_{\Sigma}\\
   =&-\left(\rho_{k-1}+\rho_{k+1}\right)\left(a+\ln\dfrac{L_{\epsilon}^2+1}{L_{\epsilon}^2}\right)+O\left(L_\epsilon^4\epsilon^4\right).
   \end{split}
\end{align}

We compute
\begin{align*}
    \int_{\Sigma}\abs{\nabla U^{(\epsilon)}_k}^2\dif\mu_{\Sigma}=&-\int_{\Sigma}U^{(\epsilon)}_k\Delta U^{(\epsilon)}_k\dif\mu_{\Sigma}\\
    =&-\int_{\Sigma}H_k\Delta U^{(\epsilon)}_k\dif\mu_{\Sigma}+\int_{ B_{2L_{\epsilon}\epsilon}\setminus B_{L_{\epsilon}\epsilon}}\eta_{\epsilon}S \Delta U^{(\epsilon)}_k\dif x+\int_{ B_{L_{\epsilon}\epsilon}}\left(H_k+\ln\left(\abs{x}^2+\epsilon^2\right)-\hin{\alpha}{x}+\beta_\epsilon\right)\Delta U^{(\epsilon)}_k\dif x\\
    =&-4\pi\bar U^{(\epsilon)}_k-8\pi\ln\epsilon+\int_{ B_{2L_{\epsilon}\epsilon}\setminus B_{L_{\epsilon}\epsilon}}\eta_{\epsilon}S\left(4\pi-\Delta(\eta_\epsilon S)\right)\dif x\\
    &-\int_{ B_{L_{\epsilon}\epsilon}}\left(\ln\dfrac{\abs{x}^2+\epsilon^2}{\abs{x}^2}-\ln\dfrac{L_\epsilon^2+1}{L_\epsilon^2}+S\right)\dfrac{4\epsilon^2}{\left(\abs{x}^2+\epsilon^2\right)^2}\dif x\\
    =&-4\pi\bar U^{(\epsilon)}_k-8\pi\ln\epsilon-\int_{ B_{L_{\epsilon}\epsilon}}\left(\ln\dfrac{\abs{x}^2+\epsilon^2}{\abs{x}^2}-\ln\dfrac{L_\epsilon^2+1}{L_\epsilon^2}+\pi\abs{x}^2\right)\dfrac{4\epsilon^2}{\left(\abs{x}^2+\epsilon^2\right)^2}\dif x+O\left(L_\epsilon^4\epsilon^4\right)\\
    =&-4\pi\bar U^{(\epsilon)}_k-8\pi\ln\epsilon-4\pi\dfrac{L_\epsilon^2}{L_\epsilon^2+1}-4\pi^2\epsilon^2\left(\ln\left(L_\epsilon^2+1\right)-\dfrac{L_\epsilon^2}{L_\epsilon^2+1}\right)+O\left(L_\epsilon^4\epsilon^4\right).
\end{align*}
We obtain
\begin{align*}
    \int_{\Sigma}\abs{\nabla U^{(\epsilon)}_k}^2\dif\mu_{\Sigma}=&4\pi a+4\pi\ln\dfrac{L_{\epsilon}^2+1}{L_{\epsilon}^2}+4\pi^2\epsilon^2\ln\left(L_\epsilon^2+1\right)\\
    &-8\pi\ln\epsilon-4\pi\dfrac{L_\epsilon^2}{L_\epsilon^2+1}-4\pi^2\epsilon^2\left(\ln\left(L_\epsilon^2+1\right)-\dfrac{L_\epsilon^2}{L_\epsilon^2+1}\right)+O\left(L_\epsilon^4\epsilon^4\right),
\end{align*}
which gives
\begin{align}\label{eq:test-4}
    \int_{\Sigma}\abs{\nabla U^{(\epsilon)}_k}^2\dif\mu_{\Sigma}=&4\pi a-8\pi\ln\epsilon+4\pi\ln\dfrac{L_{\epsilon}^2+1}{L_{\epsilon}^2}-4\pi\dfrac{L_\epsilon^2}{L_\epsilon^2+1}+4\pi^2\epsilon^2\dfrac{L_\epsilon^2}{L_\epsilon^2+1}+O\left(L_\epsilon^4\epsilon^4\right).
\end{align}

 We compute
\begin{align*}
    \int_{\Sigma}Fe^{2U^{(\epsilon)}_k}\dif\mu_{\Sigma}
    =&\int_{\Sigma\setminus B_{\delta}}Fe^{2U^{(\epsilon)}_k}\dif\mu_{\Sigma}+\int_{B_\delta\setminus B_{2L_\epsilon\epsilon}}Fe^{2U^{(\epsilon)}_k}\dif\mu_{\Sigma}+\int_{B_{2L_\epsilon\epsilon}\setminus B_{L_\epsilon\epsilon}}Fe^{2U^{(\epsilon)}_k}\dif\mu_{\Sigma}+\int_{B_{L_\epsilon\epsilon}}Fe^{2U^{(\epsilon)}_k}\dif\mu_{\Sigma}\\
   = &\int_{\Sigma\setminus  B_{\delta}}Fe^{2H_k+2\beta_{\epsilon}}\dif\mu_{\Sigma}+\int_{B_{\delta}\setminus B_{2L_\epsilon\epsilon}}Fe^{2H_k+2\beta_{\epsilon}}\dif x\\
   &+\int_{B_{2L_\epsilon\epsilon}\setminus B_{L_\epsilon\epsilon}}Fe^{2H_k+2\beta_{\epsilon}-2\eta_{\epsilon}S}\dif x+\int_{B_{L_\epsilon\epsilon}}\dfrac{1}{\left(\abs{x}^2+\epsilon^2\right)^2}Fe^{2\hin{\alpha}{x}}\dif x\\
   =&\dfrac{L_\epsilon^4}{\left(L_\epsilon^2+1\right)^2}\int_{B_{\delta}\setminus B_{L_\epsilon\epsilon}}\abs{x}^{-4}Fe^{2\hin{\alpha}{x}+2S}\dif x+\int_{B_{L_\epsilon\epsilon}}\dfrac{1}{\left(\abs{x}^2+\epsilon^2\right)^2}Fe^{2\hin{\alpha}{x}}\dif x+O(1)\\
   =&\dfrac{L_\epsilon^4}{\left(L_\epsilon^2+1\right)^2}\int_{L_\epsilon\epsilon}^{\delta}\abs{r}^{-3}\left(2\pi F(p_0)+\dfrac{\pi}{2}F(p_0)\left(\Delta\ln F(p_0)+8\pi+\abs{\nabla\ln F(p_0)+2\alpha}^2\right)r^2\right)\dif r\\
   &+\int_0^{L\epsilon}\dfrac{r}{\left(r^2+\epsilon^2\right)^2}\left(2\pi F(p_0)+\dfrac{\pi}{2}F(p_0)\left(\Delta\ln F(p_0)+\abs{\nabla\ln F(p_0)+2\alpha}^2\right)r^2\right)\dif r+O(1)\\
   =&\dfrac{L_\epsilon^4}{\left(L_\epsilon^2+1\right)^2}\pi F(p_0)\left(\dfrac{1}{L_\epsilon^2\epsilon^2}-2\left(2\pi +\abs{\alpha+\dfrac12\nabla\ln F(p_0)}^2+\dfrac{1}{4}\Delta\ln  F(p_0)\right)\ln\left(L_\epsilon\epsilon\right)\right)\\
   &+\pi F(p_0)\left(\dfrac{L_\epsilon^2}{\left(L_\epsilon^2+1\right)\epsilon^2}+\left(\abs{\alpha+\dfrac12\nabla\ln F(p_0)}^2+\dfrac{1}{4}\Delta \ln F(p_0)\right)\left(\ln\left(L_\epsilon^2+1\right)-\dfrac{L_\epsilon^2}{L_\epsilon^2+1}\right)\right)+O(1).
\end{align*}
Thus
\begin{align*}
    \int_{\Sigma}Fe^{2U^{(\epsilon)}_k}\dif\mu_{\Sigma}
    =&\dfrac{\pi F(p_0)}{\epsilon^2}\left(1-\dfrac{1}{\left(L_\epsilon^2+1\right)^2}-\dfrac{4\pi L_\epsilon^4}{\left(L_\epsilon^2+1\right)^2}\epsilon^2\ln\left(L_\epsilon\epsilon\right)\right)\\
    &+\pi F(p_0)\left(\abs{\alpha+\dfrac12\nabla\ln F(p_0)}^2+\dfrac{1}{4}\Delta \ln F(p_0)\right)\left(\ln\left(L_\epsilon^2+1\right)-\dfrac{L_\epsilon^4}{\left(L_\epsilon^2+1\right)^2}\ln\left(L_\epsilon^2\epsilon^2\right)\right)+O(1).
\end{align*}
We obtain
\begin{align}\label{eq:test-5}
    \ln\int_{\Sigma}Fe^{2U^{(\epsilon)}_k}\dif\mu_{\Sigma}=&\ln\dfrac{\pi F(p_0)}{\epsilon^2}+\left(\abs{\alpha+\dfrac12\nabla\ln F(p_0)}^2+\dfrac{1}{4}\Delta \ln F(p_0)\right)\epsilon^2\ln\left(L_\epsilon^2+1\right)+O\left(L_\epsilon^{-4}\right)+O\left(\epsilon^2\ln\left(L_\epsilon\epsilon\right)\right).
\end{align}

Combining the above computations \eqref{eq:test-0}, \eqref{eq:test-1},  \eqref{eq:test-2}, \eqref{eq:test-3}, \eqref{eq:test-4} and  \eqref{eq:test-5}, we obtain
\begin{align*}
    J(u^{(\epsilon)})=&J_k(G)+\dfrac14\int_{\Sigma}\abs{\nabla\left(H_{k-1}+H_{k+1}\right)}^2\dif\mu_{\Sigma}\\
    &-\left(\rho_{k-1}+\rho_{k+1}\right)\left(a+\ln\dfrac{L_{\epsilon}^2+1}{L_{\epsilon}^2}\right)+O\left(L_\epsilon^4\epsilon^4\right)\\
   &+4\pi a-8\pi\ln\epsilon+4\pi\ln\dfrac{L_{\epsilon}^2+1}{L_{\epsilon}^2}-4\pi\dfrac{L_\epsilon^2}{L_\epsilon^2+1}+4\pi^2\epsilon^2\dfrac{L_\epsilon^2}{L_\epsilon^2+1}+O\left(L_\epsilon^4\epsilon^4\right)\\
   &-4\pi \left(H_{k-1}+H_{k+1}\right)(p_0)-\pi\epsilon^2\ln\left(L_\epsilon^2+1\right)\Delta \left(H_{k-1}+H_{k+1}\right)(p_0)+O\left(L_\epsilon^4\epsilon^4\right)\\
   &+\left(8\pi-\rho_{k-1}-\rho_{k+1}\right)\left(-a-\ln\dfrac{L_{\epsilon}^2+1}{L_{\epsilon}^2}-\pi\epsilon^2\ln\left(L_\epsilon^2+1\right)+O\left(L_\epsilon^4\epsilon^4\right)\right)\\
    &-4\pi\left(\ln\dfrac{\pi F(p_0)}{\epsilon^2}+\left(\abs{\alpha+\dfrac12\nabla\ln F(p_0)}^2+\dfrac{1}{4}\Delta \ln F(p_0)\right)\epsilon^2\ln\left(L_\epsilon^2+1\right)+O\left(L_\epsilon^{-4}\right)+O\left(\epsilon^2\ln\left(L_\epsilon\epsilon\right)\right)\right).
\end{align*}
Here $F=h_ke^{-H_{k-1}-H_{k+1}}$. Therefore,
\begin{align*}
    J(u^{(\epsilon)})=&J_k(G)+\dfrac14\int_{\Sigma}\abs{\nabla\left(H_{k-1}+H_{k+1}\right)}^2\dif\mu_{\Sigma}-4\pi-4\pi\ln\pi-4\pi\left(a+\ln h_k(p_0)\right)\\
    &-\pi\left(\abs{2\alpha-\nabla\left(H_{k-1}+H_{k+1}\right)(p_0)+\nabla\ln h_k(p_0)}^2+\Delta \ln h_k(p_0)+\left(8\pi-\rho_{k-1}-\rho_{k+1}\right)\right)\epsilon^2\ln\left(L_\epsilon^2+1\right)\\
    &+O\left(L_\epsilon^{-4}\right)+O\left(\epsilon^2\ln\left(L_\epsilon\epsilon\right)\right).
\end{align*}
Choose $L_\epsilon$ so that $L_{\epsilon}\epsilon=\frac{1}{\ln\epsilon^{-1}}$. We obtain
\begin{align*}
    J(u^{(\epsilon)})=&J_k(G)+\dfrac14\int_{\Sigma}\abs{\nabla\left(H_{k-1}+H_{k+1}\right)}^2\dif\mu_{\Sigma}-4\pi-4\pi\ln\pi-4\pi\left(a+\ln h_k(p_0)\right)\\
   &-\pi\left(\abs{2\alpha-\nabla\left(H_{k-1}+H_{k+1}\right)(p_0)+\nabla\ln h_k(p_0)}^2+\Delta \ln h_k(p_0)+\left(8\pi-\rho_{k-1}-\rho_{k+1}\right)\right)\epsilon^2\ln\epsilon^{-2}\\
   &+O\left(\epsilon^2\ln(-\ln\epsilon)\right).
\end{align*}
At last, by \eqref{eq:gradient F}, discussions above \autoref{thm:test} and switching back to the metric $g$ on $\Sigma$, we complete the proof of  \autoref{thm:test}.
\end{proof}

Now we can prove the main \autoref{main:thm2} as following.
\begin{proof}
\autoref{main:thm2} is an immediate consequence of \autoref{thm:lower} and \autoref{thm:test}.
\end{proof}

\appendix

\section{A Classification of Toda system}
Let $A=(a_{ij})_{N\times N}$ be the Cartan matrix for $\mathrm{SU}(N+1)$ and $\rho=(\rho_1,\dotsc,\rho_N)\in\mathbb{R}^{N}$. Consider the following system
\begin{align}\label{aeq:toda}
\Delta_{\mathbb{C}} u_i+\sum_{j=1}^Na_{ij}\rho_je^{u_j}=0,\quad \text{in}\ \mathbb{C},\quad\int_{\mathbb{C}}e^{u_i}\dif\mu_{\mathbb{C}}<+\infty,\quad\forall 1\leq i\leq N,
\end{align}
where $\dif\mu_{\mathbb{C}}=\frac{1}{2\sqrt{-1}}\dif z\wedge\dif\bar z$.

\begin{theorem}\label{athm:classification} Every solution of the Toda system \eqref{aeq:toda} can be expressed in terms of a polynomial map $f:\mathbb{C}\To\mathbb{C}^{N+1}, z\mapsto f(z)=\sum_{j=1}^Nv_jz^j/j!$ with $\det(v_0,\dotsc,v_N)=1$: 
\begin{align*}
u_i+\ln\rho_i=\ln\dfrac{4\abs{\Lambda_{i+1}(f)}^2\abs{\Lambda_{i-1}(f)}^2}{\abs{\Lambda_i(f)}^4},\quad\forall 1\leq i\leq N,
\end{align*}
where $\Lambda_{i+1}(f)=f\wedge f^{(1)}\wedge\dotsm\wedge f^{(i)}$ and $f^{(i)}=\frac{\partial^if}{\partial z^i}$. In particular
\begin{align*}
\sum_{j=1}^Na_{ij}\rho_j\int_{\mathbb{C}}e^{u_j}\dif\mu_{\mathbb{C}}=8\pi,\quad\forall 1\leq i\leq N,
\end{align*}
and consequently,
\begin{align*}
    \rho_i\int_{\mathbb{C}}e^{u_i}\dif\mu_{\mathbb{C}}\in 4\pi\mathbb{N}^*,\quad\forall 1\leq i\leq N.
\end{align*}
\end{theorem}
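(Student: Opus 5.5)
The plan is to follow the holomorphic-curve approach of Jost and Wang \cite{JosWan02classification}. First I normalize the system, then build $N$ holomorphic invariants and show they vanish. That vanishing turns the system into a linear holomorphic ODE, and the representation through $f$ and the mass identities follow from the Plücker formulas.

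\textbf{Normalization and asymptotics.} Replacing $u_i$ by $u_i+\ln\rho_i$ (up to a harmless additive constant fixed by the factor $4$), I may assume $\rho_i$ is absorbed, so that $-\Delta u_i=\sum_j a_{ij}e^{u_j}$ with $e^{u_i}\in L^1(\mathbb{C})$. Set
\begin{align*}
U_i=\sum_{j=1}^N a^{ij}u_j,
\end{align*}
so that $-\Delta U_i=e^{u_i}$ and $u_i=2U_i-U_{i-1}-U_{i+1}$, with $U_0=U_{N+1}=0$. Using \autoref{lem:BM} in the Brezis--Merle style, each $u_i$ is bounded above, and the $U_i$ are given up to harmonic parts by logarithmic potentials. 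A Liouville-type argument removes those harmonic parts. This yields the asymptotics
\begin{align*}
U_i(z)=-\frac{\gamma_i}{2\pi}\ln\abs{z}+O(1),\qquad \gamma_i=\int_{\mathbb{C}}e^{u_i},
\end{align*}
with matching derivative bounds $\partial_z^{\,l}U_i=O(\abs{z}^{-l})$. Integrability of $e^{u_i}$ forces the exponents of $u_i$ at infinity to exceed $2$.

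\textbf{Holomorphic invariants, the main obstacle.} Write $\Phi=e^{-U_1}$. The Toda system is the integrability condition for the operator
\begin{align*}
L=(\partial_z-\partial_zU_N)\cdots(\partial_z-\partial_zU_1)+\cdots,
\end{align*}
expanded as $\partial_z^{N+1}\Phi+\sum_{j=2}^{N+1}W_j\,\partial_z^{N+1-j}\Phi=0$. The coefficients $W_j$ are differential polynomials in the $\partial_z^lU_i$, and the Toda equations imply $\partial_{\bar z}W_j=0$. For $N=1$, this is the classical $W=\partial_z^2u-\frac12(\partial_zu)^2$. Verifying holomorphy for general $N$ is a direct but heavy computation, best organized through the Wronskian or Frenet-frame structure. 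The asymptotics give $W_j(z)=O(\abs{z}^{-j})$, so the $W_j$ are entire functions tending to zero and hence $W_j\equiv0$. I expect this step to be the hardest: both the algebra of the invariants and the sharp derivative decay needed at infinity are delicate.

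\textbf{Solving and the masses.} With $W_j\equiv0$, I get $\partial_z^{N+1}e^{-U_1}=0$, and by conjugation $\partial_{\bar z}^{N+1}e^{-U_1}=0$. Since $e^{-U_1}$ is real and positive, $e^{-U_1}=\abs{f}^2$ for a polynomial map $f=\sum_{j=0}^N v_jz^j/j!$ into $\mathbb{C}^{N+1}$. The relation $-\Delta U_i=e^{u_i}$ together with the recursive structure gives $e^{-U_i}=\abs{\Lambda_i(f)}^2$, up to the normalization $\det(v_0,\dots,v_N)=1$, which comes from $\Lambda_{N+1}(f)=\mathrm{const}$ and $U_{N+1}=0$. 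Substituting into $u_i=2U_i-U_{i-1}-U_{i+1}$ yields the stated formula.

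For the mass identities, integrate $-\Delta U_i=e^{u_i}$ using the growth $\abs{\Lambda_i(f)}^2\sim\abs{z}^{2i(N+1-i)}$. This gives $\int e^{u_i}=4\pi i(N+1-i)$ in the normalized variables, which belongs to $4\pi\mathbb{N}^*$. Applying $A$ gives $\sum_j a_{ij}\cdot 4\pi j(N+1-j)=8\pi$.
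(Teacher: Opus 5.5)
Your proposal has a genuine gap in its first step. In the appendix $\rho\in\mathbb{R}^N$ is arbitrary. The real content of the theorem beyond \cite{JosWan02classification} is that any solution forces $\rho_i>0$ for every $i$. This is implicit in the $\ln\rho_i$ in the formula and in $\rho_i\int_{\mathbb{C}}e^{u_i}\dif\mu_{\mathbb{C}}\in4\pi\mathbb{N}^*$. It is also exactly what \autoref{lem:singular-set} uses to conclude $h_k(p_0)>0$ when the $h_i$ change sign. Your normalization $u_i\mapsto u_i+\ln\rho_i$ already assumes $\rho_i>0$, so the cases $\rho_i\le 0$ are never excluded. Your later step, writing the real function $e^{-U_1}$ as $\abs{f}^2$, needs the Hermitian coefficient form to be positive definite, and that is exactly where the signs of the $\rho_i$ would have to enter. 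The holomorphic invariants are sign-independent, but your argument never extracts positivity.

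The missing idea is what the paper does before invoking Jost--Wang. Work with the unnormalized system $-\Delta u_i=\sum_j a_{ij}\rho_je^{u_j}$, whose right-hand side is in $L^1$ whatever the signs. The Brezis--Merle argument gives $u_i\le C$. The Green representation then gives $u_i(z)=-\gamma_i\ln\abs{z}+O(1)$, with $\gamma_i=\frac{1}{2\pi}\sum_j a_{ij}\rho_j\int_{\mathbb{C}}e^{u_j}\dif\mu_{\mathbb{C}}$. Integrability of $e^{u_i}$ forces $\gamma_i>2$; you note this yourself, but only in the normalized variables, where it carries no sign information. The inverse Cartan matrix has strictly positive entries $a^{ij}=\min\{i,j\}-ij/(N+1)$, with $\sum_j a^{ij}=i(N+1-i)/2$. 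Inverting therefore gives
\[
\frac{1}{2\pi}\rho_i\int_{\mathbb{C}}e^{u_i}\dif\mu_{\mathbb{C}}=\sum_{j=1}^Na^{ij}\gamma_j>i(N+1-i)>0,
\]
hence $\rho_i>0$ for all $i$. Only after this may you normalize and run your holomorphic-invariant argument. The rest of your sketch is a reasonable outline of the Jost--Wang classification, which the paper simply cites rather than reproves.
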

\begin{proof}
For every $\varepsilon>0$, there exists $R>0$ such that
\begin{align*}
\max_{i\in I}\int_{\mathbb{C}\setminus\mathrm{B}_{R}}e^{u_i}\dif\mu_{\mathbb{C}}<\varepsilon.
\end{align*}
By Brezis-Merle's argument \cite{BreMer91uniform}, we conclude that
\begin{align*}
\sup_{\mathbb{C}}\max_{i\in I}u_i\leq C.
\end{align*}
Applying the Green representation formula for $u_i$, we obtain
\begin{align*}
u_i(z)=\dfrac{1}{2\pi}\int_{\mathbb{C}}\ln\dfrac{1+\abs{w}}{\abs{z-w}}\sum_{j=1}^Na_{ij}\rho_je^{u_j(w)}\dif\mu_{\mathbb{C}}(w)+c_i,\quad\forall 1\leq i\leq N.
\end{align*}
The potential analysis (cf. \cite{CheLi95waht,JosWan02classification}) implies
\begin{align*}
u_{i}(z)=-\gamma_i\ln\abs{z}+a_i+O(1/\abs{z}),\quad\text{as}\ z\to\infty,\quad\forall 1\leq i\leq N
\end{align*}
where 
\begin{align*}
\gamma_i=\dfrac{1}{2\pi}\sum_{j=1}^Na_{ij}\rho_j\int_{\mathbb{C}}e^{u_j}\dif\mu_{\mathbb{C}}>2,\quad\forall 1\leq i\leq N.
\end{align*}
Therefore,
\begin{align*}
\dfrac{1}{2\pi}\rho_i\int_{\mathbb{C}}e^{u_i}\dif\mu_{\mathbb{C}}=\sum_{j=1}^Na^{ij}\gamma_j>i(N+1-i),\quad\forall 1\leq i\leq N.
\end{align*}
In particular, $\min_{1\leq i\leq N}\rho_i>0$. Then applying a classification result for Toda system of Jost and Wang \cite{JosWan02classification}, we complete the proof.
\end{proof}



\end{document}